\documentclass[twoside,11pt,reqno]{amsart}
\usepackage{amsmath,amssymb,amscd,mathrsfs,epic}
\usepackage{latexsym,amsthm,amscd}

\makeatletter

\hfuzz 5pt
\vfuzz 2pt

\textheight 214mm
\textwidth 132mm

\raggedbottom

\newif\iffomin@
\fomin@false

\@addtoreset{equation}{section}

\setcounter{tocdepth}{2}

\newtheorem{Proposition}{Proposition}[section]
\newtheorem{Lemma}[Proposition]{Lemma}
\newtheorem{Theorem}[Proposition]{Theorem}
\newtheorem{Corollary}[Proposition]{Corollary}

\newtheorem{Remark}[Proposition]{Remark}

\def\phantomsubsection#1{\vspace{2mm}\noindent{\bf\em #1.}}

\newcommand{\fin}{{f\:\!\!in}}

\def\longline{\text{--}}

\def\lonestar{{{^{*\!}}}}
\def\red{\operatorname{red}}
\def\upperred{\red}
\def\lowerred{\red}
\def\caps{{\operatorname{caps}}}
\def\cups{{\operatorname{cups}}}
\def\op{{\operatorname{op}}}
\def\wt{{\operatorname{wt}}}
\def\flip{\operatorname{flip}}

\newcommand{\sF}{\mathscr{F}\!}

\newcommand{\cO}{\mathcal{O}}

\newbox\squ  
\setbox\squ=\hbox{\vrule width.6pt
   \vbox{\hrule height.6pt width.4em\kern1ex
         \hrule height.6pt}%
   \vrule width.6pt}

\def\Seq{\mathscr{R}}

\def\deg{\operatorname{deg}}
\def\defect{\operatorname{def}}
\def\sh{{\operatorname{sh}}}
\def\rk{{\operatorname{rk}}}
\def\op{\operatorname{op}}

\def\down{{\scriptstyle\vee}}
\def\up{{\scriptstyle\wedge}}
\def\cross{{\scriptstyle\times}}

\def\Proj#1{\operatorname{Proj}(#1)}

\def\mod#1{#1\!\operatorname{-mod}}
\def\Mod#1{#1\!\operatorname{-mod}_{l\!f}}

\def\rad{\operatorname{rad}}

\def\C{{\mathbb C}}
\def\E{{\mathbf e}}
\def\F{{\mathbf f}}
\def\G{{\mathbf g}}

\def\Z{{\mathbb Z}}

\def\0{{\bar 0}}
\def\1{{\bar 1}}

\def\HHH{{\operatorname{H}}}
\def\hom{{\operatorname{Hom}}}

\def\End{{\operatorname{End}}}

\def\res{{\operatorname{res}}}
\def\ind{{\operatorname{ind}}}
\def\ires{{i\text{-}\!\operatorname{res}}}
\def\iind{{i\text{-}\!\operatorname{ind}}}

\def\adj{{\operatorname{adj}}}
\def\residue{{\operatorname{cont}}}
\def\im{{\operatorname{im}}}
\def\soc{{\operatorname{soc}\:}}

\def\eps{{\varepsilon}}
\def\phi{{\varphi}}

\def\la{{\lambda}}
\def\La{{\Lambda}}
\def\Ga{{\Gamma}}
\def\ga{{\gamma}}

\def\De{{\Delta}}
\def\al{{\alpha}}
\def\be{{\beta}}

\def\bi{\text{\boldmath$i$}}
\def\bj{\text{\boldmath$j$}}
\def\bmathj{\text{\boldmath$\jmath$}}
\def\bk{\text{\boldmath$k$}}

\def\tP{{\mathtt P}}
\def\tQ{{\mathtt Q}}

\def\tT{{\mathtt T}}
\def\tU{{\mathtt U}}
\def\tV{{\mathtt V}}
\def\tX{{\mathtt X}}
\def\tY{{\mathtt Y}}
\def\tS{{\mathtt S}}

\newdimen\hoogte    \hoogte=14pt    
\newdimen\breedte   \breedte=14pt   
\newdimen\dikte     \dikte=0.5pt    

\newenvironment{young}{\begingroup
       \def\vr{\vrule height0.8\hoogte width\dikte depth 0.2\hoogte}
       \def\fbox##1{\vbox{\offinterlineskip
                    \hrule height\dikte
                    \hbox to \breedte{\vr\hfill##1\hfill\vr}
                    \hrule height\dikte}}
       \vbox\bgroup \offinterlineskip \tabskip=-\dikte \lineskip=-\dikte
            \halign\bgroup &\fbox{##\unskip}\unskip  \crcr }
       {\egroup\egroup\endgroup}
\def\diagram#1{\relax\ifmmode\vcenter{\,\begin{young}#1\end{young}\,}\else%
              $\vcenter{\,\begin{young}#1\end{young}\,}$\fi}
\begin{document}

\title[Walled Brauer algebras]{\boldmath 
Gradings on walled Brauer algebras and Khovanov's arc algebra}
\author{Jonathan Brundan and Catharina Stroppel}

\address{Department of Mathematics, University of Oregon, Eugene, OR 97403, USA}
\email{brundan@uoregon.edu}
\address{Department of Mathematics, University of Bonn, 53115 Bonn, Germany}
\email{stroppel@math.uni-bonn.de}

\thanks{2010 {\it Mathematics Subject Classification}: 17B10, 16S37.}
\thanks{First author supported in part by NSF grant no. DMS-0654147.}

\begin{abstract}
We introduce some $\Z$-graded versions of the walled Brauer algebra
$B_{r,s}(\delta)$,
working over a field of characteristic zero. 
This allows us to prove that $B_{r,s}(\delta)$ is Morita equivalent to
an idempotent truncation of a certain
infinite dimensional version of Khovanov's arc algebra.
We deduce that the walled Brauer algebra is Koszul
whenever $\delta \neq 0$.
\end{abstract}
\maketitle
\tableofcontents

\section{Introduction and weight dictionary}

Let $\delta \in \C$ be a fixed parameter.
The {\em walled Brauer algebra} $B_{r,s}(\delta)$ is
a subalgebra of the classical 
Brauer algebra $B_{r+s}(\delta)$; see $\S$\ref{s2} for the precise definition.
It was introduced independently by Turaev \cite{T} and Koike \cite{K}
in the late 1980s
motivated in part by a Schur-Weyl duality between
$B_{r,s}(m)$ and the general linear group $GL_m(\C)$ 
arising from
mutually commuting actions on the ``mixed'' tensor space
$V^{\otimes r} \otimes W^{\otimes s}$, where $V$
is the natural representation of $GL_m(\C)$ and $W := V^*$;
see also \cite{BCH}.

If $\delta \notin \Z$ then the algebra $B_{r,s}(\delta)$ is semisimple,
and its representation theory 
can be described using character-theoretic methods 
analogous to ones used in the study of the complex representation theory
of the symmetric group; see e.g. \cite{H}, \cite{N}.
In this article we are mainly interested in the non-semisimple cases,
when 
the representation theory is still surprisingly tractable.
For example, it is known that $B_{r,s}(\delta)$ is a quasi-hereditary algebra
if $\delta \neq 0$, and in these cases
explicit closed formulae for 
the composition multiplicites of standard modules
and the corresponding Kazhdan-Lusztig polynomials
have been worked out by Cox and De Visscher; see \cite[Theorem 4.10]{CD} and
\cite[Corollary 9.3]{CD}.

Assuming $\delta \in \Z$, the work just mentioned reveals 
some striking combinatorial coincidences between the
representation theory of $B_{r,s}(\delta)$ and
a certain infinite dimensional 
generalisation $K(\delta)$ of Khovanov's arc algebra. 
The latter algebra was introduced 
originally in a special case in \cite{K2}
where it plays a role in Khovanov's categorification of the Jones polynomial.
Khovanov's definition was extended in \cite{BS1} to include various infinite dimensional algebras like $K(\delta)$. 

The main goal of this article is to explain these combinatorial coincidences
by proving that $B_{r,s}(\delta)$ is Morita equivalent to an idempotent
truncation of $K(\delta)$.
This adds a new twist to the results of \cite{BS3}, \cite{BS4} which
construct
equivalences between 
other generalisations of Khovanov's arc algebra from \cite{BS1} and 
various naturally occuring algebras and categories including 
level two cyclotomic Hecke algebras and Khovanov-Lauda-Rouquier algebras,
perverse sheaves on Grassmannians,
and rational representations of
the supergroup
$GL_{m|n}(\C)$.

To formulate our results in more detail, and to make the definition of the algebra $K(\delta)$ precise, we must recall the key combinatorial 
observation from \cite[$\S$4]{CD}.
This gives a ``weight dictionary''
between the 
bipartitions which appear when studying the representation theory of $B_{r,s}(\delta)$ and the weight diagrams of \cite{BS1} which 
appear in the definition of the algebra $K(\delta)$.
Recall a {\em bipartition}
means a pair $\lambda = (\lambda^{\mathrm L},\lambda^{\mathrm R})$ of partitions.
Let $\Lambda$ be the set of all bipartitions and 
\begin{align}\label{ladef}
\Lambda_{r,s} &:= \left\{\lambda \in \Lambda\:\big|\:|\lambda^{\mathrm L}|
=r-t \text{ and } |\lambda^{\mathrm R}| = s-t\text{ for }0 \leq t \leq \min(r,s)\right\},\\
\dot\Lambda_{r,s} &:= \left\{\begin{array}{ll}
\Lambda_{r,s}&\text{if $\delta \neq 0$ or $r \neq s$ or $r=s=0$,}\\
\Lambda_{r,s}\setminus\{(\varnothing,\varnothing)\}&\text{if $\delta
  =0$
and $r=s > 0$.}
\end{array}\right.\label{hing}
\end{align}
For each $\la \in \La_{r,s}$, there is a 
{\em cell module} $C_{r,s}(\lambda)$ for
$B_{r,s}(\delta)$; see $\S$\ref{s2} below.
Moreover $C_{r,s}(\lambda)$ has irreducible head denoted
$D_{r,s}(\lambda)$ whenever $\la \in \dot \La_{r,s}$, and 
the modules $\{D_{r,s}(\lambda)\:|\:\lambda \in \dot \La_{r,s}\}$
give a complete set of pairwise non-isomorphic irreducible $B_{r,s}(\delta)$-modules.

On the other hand a {\em weight diagram} in the sense of \cite{BS1} is a 
horizontal ``number line'' with vertices 
at all integers labelled by 
one of the symbols $\circ, \up, \down$ and $\cross$; we require
moreover that it is impossible to find a vertex labelled $\down$ to the left of a vertex labelled $\up$ outside of some finite subset of the vertices.
To identify bipartitions with certain weight diagrams,
take $\lambda \in \Lambda$ and let
\begin{align}
I_\up(\la)&:= \{\lambda^{\mathrm L}_1,\lambda^{\mathrm L}_2-1,\lambda^{\mathrm L}_3-2,\dots\},\label{Iu}\\
 I_\down(\la)&:=\{1-\delta-\lambda^{\mathrm R}_1, 2-\delta-\lambda^{\mathrm R}_2,3-\delta-\lambda^{\mathrm R}_3,\dots\},\label{Id}
\end{align}
where 
$\lambda^{\mathrm L}_1 \geq \lambda^{\mathrm L}_2 \geq \cdots$ are the parts of $\lambda^{\mathrm L}$
and
$\lambda^{\mathrm R}_1 \geq \lambda^{\mathrm R}_2 \geq \cdots$ are the parts of $\lambda^{\mathrm R}$.
Then we identify $\la$ with the weight diagram whose $i$th vertex is labelled
\begin{equation}\label{dict}
\left\{
\begin{array}{ll}
\circ&\text{if $i$ does not belong to either $I_\down(\la)$ or $I_\up(\la)$,}\\
{\up} &\text{if $i$ belongs to $I_\up(\la)$ but not to
$I_\down(\la)$,}\\
{\down} &\text{if $i$ belongs to $I_\down(\la)$ but not to
$I_\up(\la)$,}\\
\cross&\text{if $i$ belongs to both $I_\down(\la)$ and $I_\up(\la)$,}
\end{array}\right.
\end{equation}
for each $i \in \Z$.
Of course this depends implicitly on the fixed parameter $\delta \in \Z$.
For example, if $\delta=-2$ then
\begin{align*}
(\varnothing,\varnothing)&=
\!\!\!\begin{picture}(240,12)
\put(90.5,14){$_0$}
\put(144.5,14){$_{1-\delta}$}
\put(8,-.3){$\cdots$}
\put(227,-.3){$\cdots$}
\put(25,2.3){\line(1,0){198}}
\put(30,-2.4){$\up$}
\put(50,-2.4){$\up$}
\put(70,-2.4){$\up$}
\put(90,-2.4){$\up$}
\put(130,-.4){$\circ$}
\put(110,-.4){$\circ$}
\put(150,2.4){$\down$}
\put(170,2.4){$\down$}
\put(190,2.4){$\down$}
\put(210,2.4){$\down$}
\end{picture}\,
\\ 
((2^2,1),(3,2))&=
\!\!\!\begin{picture}(240,0)
\put(8,-.3){$\cdots$}
\put(227,-.3){$\cdots$}
\put(25,2.3){\line(1,0){198}}
\put(30,-2.4){$\up$}
\put(70,-2.4){$\up$}
\put(110,-2.4){$\up$}
\put(50,-.4){$\circ$}
\put(130,.4){$\cross$}
\put(150,-.4){$\circ$}
\put(170,-.4){$\circ$}
\put(90,2.4){$\down$}
\put(190,2.4){$\down$}
\put(210,2.4){$\down$}
\end{picture}\,
\end{align*}
 (where all the omitted vertices on the left are labelled $\up$
and the ones on the right are labelled $\down$).
In this way,
the set $\Lambda$ of bipartitions is identified with a 
set of weight diagrams in the sense of \cite{BS1}. 
Now the general construction from \cite[$\S$4]{BS1} attaches 
a positively graded, locally unital algebra $K(\delta)$ to this
set 
$\Lambda$ of weight diagrams; see $\S\S$\ref{s4}--\ref{s4b} below.
Finally for each $\la \in \La$
there are some {\em standard modules} 
$V(\lambda)$ for $K(\delta)$,
each of which has one-dimensional head denoted $L(\la)$.
The modules $\{L(\la)\:|\:\la \in \La\}$ give a complete set of pairwise
non-isomorphic irreducible $K(\delta)$-modules.

The main result of the article can now be formulated as follows.

\begin{Theorem}\label{fin}
There exists a Morita equivalence between the walled Brauer algebra $B_{r,s}(\delta)$ and the finite dimensional algebra $K_{r,s}(\delta) := e_{r,s} K(\delta) e_{r,s}$, where
\begin{equation}\label{ers}
e_{r,s} := 
\sum_{\lambda \in \dot\Lambda_{r,s}} e_\lambda\in K(\delta).
\end{equation}
The equivalence can be chosen so that the cell module
$C_{r,s}(\lambda)$ 
corresponds to the $K_{r,s}(\delta)$-module
$V_{r,s}(\lambda) := 
e_{r,s} V(\lambda)$ for each $\lambda \in \Lambda_{r,s}$. Hence the irreducible
module $D_{r,s}(\lambda)$ corresponds to 
$L_{r,s}(\lambda) := e_{r,s} L(\lambda)$ for each $\lambda \in \dot\Lambda_{r,s}$.
\end{Theorem}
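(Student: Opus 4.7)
The plan is to identify $K_{r,s}(\delta)$ with the basic algebra of the category of $B_{r,s}(\delta)$-modules. By construction $K(\delta)$ is a basic locally unital algebra with each $L(\lambda)$ one-dimensional, so $K_{r,s}(\delta) = e_{r,s} K(\delta) e_{r,s}$ is a finite-dimensional basic algebra whose simple modules $L_{r,s}(\lambda) = e_{r,s} L(\lambda)$ are parametrized exactly by $\dot\Lambda_{r,s}$. This matches the parametrization of the irreducible $B_{r,s}(\delta)$-modules, so the theorem reduces to exhibiting an algebra isomorphism
\[ K_{r,s}(\delta) \;\cong\; \End_{B_{r,s}(\delta)}\!\Bigl(\textstyle\bigoplus_{\lambda \in \dot\Lambda_{r,s}} P_{r,s}(\lambda)\Bigr)^{\op}, \]
where $P_{r,s}(\lambda)$ is the projective cover of $D_{r,s}(\lambda)$.

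The central ingredient is a Cartan-invariant computation. On the $B_{r,s}(\delta)$ side, the multiplicities $[C_{r,s}(\lambda):D_{r,s}(\mu)]$ are given by the explicit Kazhdan--Lusztig formula of Cox--De Visscher in \cite[Theorem~4.10]{CD}. On the $K(\delta)$ side, the diagrammatic construction of \cite{BS1} expresses $[V(\lambda):L(\mu)]$ via cap diagrams attached to weight diagrams. The weight dictionary recalled in \eqref{Iu}--\eqref{dict} is designed precisely so that the two combinatorial rules produce the same numbers, and together with the identification of the weight posets this shows that the Cartan matrices agree.

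To upgrade this numerical coincidence into an actual algebra isomorphism, I would construct an explicit homomorphism $\varphi\colon B_{r,s}(\delta) \to \End_{K_{r,s}(\delta)}(Q)^{\op}$ for a suitable projective generator $Q$, by assigning to each walled Brauer diagram generator a concrete element built from the arc-diagram basis of $K(\delta)$, and then verifying that the defining walled Brauer relations hold. When $\delta \neq 0$, both algebras are quasi-hereditary with the same weight poset, so once $\varphi$ induces a bijection on simples and respects the highest-weight structure it must be an isomorphism onto the basic algebra; the case $\delta = 0$, $r = s > 0$ is handled separately using the cellular structure. The correspondence $C_{r,s}(\lambda) \leftrightarrow V_{r,s}(\lambda)$ then follows by uniqueness of standard objects in a highest-weight category with a fixed poset of simples.

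The main obstacle I expect is the construction of $\varphi$ itself: matching walled Brauer crossings, caps, and cups to explicit elements of $K(\delta)$, and simultaneously verifying all walled Brauer relations at the level of arc-diagram multiplication, requires substantial diagrammatic bookkeeping. It is likely that one must first pass through an intermediate $\Z$-graded lift of $B_{r,s}(\delta)$ (compatibly with the grading on $K(\delta)$) to make the assignment transparent, in which case the initial task is to define this graded version and prove that it collapses to $B_{r,s}(\delta)$ after forgetting the grading.
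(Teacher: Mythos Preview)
Your proposal correctly identifies the shape of the argument—one needs a projective generator $Q$ for $K_{r,s}(\delta)$-mod together with an isomorphism $B_{r,s}(\delta)\cong\End_{K_{r,s}(\delta)}(Q)^{\op}$—but the mechanism you propose for producing that isomorphism has a genuine gap. Matching Cartan matrices (via Cox--De Visscher versus the diagrammatic formula of \cite{BS1}) is a necessary sanity check, but it does \emph{not} pin down an algebra up to isomorphism, even in the quasi-hereditary case: two quasi-hereditary algebras with the same weight poset and the same decomposition numbers can be non-isomorphic (different higher Ext structure). Your sentence ``once $\varphi$ induces a bijection on simples and respects the highest-weight structure it must be an isomorphism'' is not a theorem. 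So the entire weight falls on actually building $\varphi$, and here you only say you would ``assign to each walled Brauer diagram generator a concrete element built from the arc-diagram basis.'' There is no natural dictionary between walled Brauer crossings/cups/caps and elements of $K(\delta)$, and no reason to expect one can be found by inspection.

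The paper takes a completely different route. It \emph{defines} the intermediate graded algebra you anticipate in your last paragraph as $B_R(\delta):=\End_{K(\delta)}(\lonestar R\,P(\eta))^{\op}$, where $\lonestar R$ is a composite of special projective functors applied to the projective indecomposable at the empty bipartition; Morita equivalence of $B_R(\delta)$ with $K_{r,s}(\delta)$ is then automatic once one checks $\lonestar R\,P(\eta)$ is a projective generator. The hard step—showing $B_R(\delta)\cong B_{r,s}(\delta)$ as ungraded algebras—is \emph{not} done by generators and relations. Instead the paper passes through mixed Schur--Weyl duality for $\mathfrak{gl}_{m|n}(\C)$: for $m,n$ large, $B_{r,s}(\delta)\cong\End_{\mathfrak{g}}(V^{\otimes r}\otimes W^{\otimes s})^{\op}$, and the equivalence $\mathscr{F}(m|n)\simeq\mod{K(m|n)}$ from \cite{BS4} (together with a combinatorial trick matching diagram bases) identifies the latter with $B_R(\delta)$. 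The cell-module correspondence $C_{r,s}(\lambda)\leftrightarrow V_{r,s}(\lambda)$ is then proved by induction on $r+s$ using compatibility of the equivalence with $i$-induction and $i$-restriction functors, not by abstract uniqueness of standard objects. The external input from \cite{BS4} is the crucial ingredient missing from your plan.
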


The set $\La_{r,s}$ is an upper set in $\La$ with respect to the
natural Bruhat ordering. Combined with known properties of 
$K(\delta)$ from \cite{BS2}, it follows that
the idempotent truncation 
$K_{r,s}(\delta)$ 
is a standard Koszul algebra when $\La_{r,s}
= \dot\La_{r,s}$, i.e. it is both graded quasi-hereditary
and Koszul; see Theorem~\ref{iscell}.
So we obtain the following corollary.

\begin{Corollary}
If $\Lambda_{r,s} = \dot\Lambda_{r,s}$ then
$B_{r,s}(\delta)$ admits a Koszul grading.
In particular, $B_{r,s}(\delta)$ is standard Koszul when $\delta \neq 0$.
\end{Corollary}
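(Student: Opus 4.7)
The plan is to derive the corollary almost immediately from Theorem~\ref{fin} combined with the (forthcoming) Theorem~\ref{iscell}. Under the hypothesis $\Lambda_{r,s} = \dot\Lambda_{r,s}$, Theorem~\ref{iscell} tells us that the finite dimensional idempotent truncation $K_{r,s}(\delta) = e_{r,s}K(\delta)e_{r,s}$ is standard Koszul, meaning both graded quasi-hereditary and Koszul with respect to the positive grading inherited from $K(\delta)$. The essential input here is that $\Lambda_{r,s}$ is an upper set in the Bruhat order on $\Lambda$, which ensures that idempotent truncation by $e_{r,s}$ preserves both the quasi-hereditary structure (by the general theory of Cline--Parshall--Scott) and the Koszul property (by the machinery of \cite{BS2} applied to $K(\delta)$).

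Given this, the corollary reduces to transferring the Koszul grading on $K_{r,s}(\delta)$ across the Morita equivalence of Theorem~\ref{fin}. First I would observe that $K_{r,s}(\delta)$ is itself basic: the idempotents $\{e_\lambda \mid \lambda \in \dot\Lambda_{r,s}\}$ are primitive and pairwise orthogonal, and by the construction of $K(\delta)$ they index the complete list $\{L_{r,s}(\lambda)\}$ of pairwise non-isomorphic irreducibles. Via Theorem~\ref{fin} these match up bijectively with the irreducibles $\{D_{r,s}(\lambda)\}$ of $B_{r,s}(\delta)$, so $K_{r,s}(\delta)$ is isomorphic, as an ungraded algebra, to the basic algebra of $B_{r,s}(\delta)$. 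A standard construction then lifts the Koszul grading to $B_{r,s}(\delta) \cong \End(P)^{\op}$ for a suitable projective generator $P$ carrying an appropriate graded lift, yielding a Koszul grading on $B_{r,s}(\delta)$ itself.

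For the ``in particular'' clause I would simply check against definition \eqref{hing}: when $\delta \neq 0$ one has $\dot\Lambda_{r,s} = \Lambda_{r,s}$ by definition, so the hypothesis of the first assertion holds automatically, and $B_{r,s}(\delta)$ is standard Koszul.

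The main conceptual obstacle is that Koszulity is not strictly Morita invariant at the level of ungraded algebras: producing a Koszul grading on $B_{r,s}(\delta)$ itself, as opposed to on its basic algebra $K_{r,s}(\delta)$, requires a little care in constructing graded lifts of the projective indecomposables so that the induced grading on the endomorphism algebra remains positive. Beyond this piece of graded bookkeeping, the proof is really just a direct invocation of Theorem~\ref{fin} and Theorem~\ref{iscell}.
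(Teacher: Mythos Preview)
Your proposal is correct and follows essentially the same approach as the paper: the corollary is deduced from the Morita equivalence of Theorem~\ref{fin} together with Theorem~\ref{iscell}(7), which shows $K_{r,s}(\delta)$ is standard Koszul when $\Lambda_{r,s}=\dot\Lambda_{r,s}$, and the ``in particular'' clause is read off directly from \eqref{hing}. Your discussion of transferring the grading across the Morita equivalence via a graded lift of a projective generator is more explicit than the paper's treatment, but this is precisely the standard bookkeeping the paper leaves implicit (and later makes concrete by constructing the algebras $B_R(\delta)$).
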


The Koszul grading on $B_{r,s}(\delta)$ is only
unique up to automorphism (see \cite[$\S$2.5]{BGS}), 
and we don't expect that there is any distinguished way to realise it explicitly.
Instead, we construct some graded algebras $B_R(\delta)$, which we call
{\em graded walled Brauer algebras},
indexed by elements $R$ of the set $\Seq_{r,s}$ of sequences consisting 
of $r$ $E$'s and $s$ $F$'s.
We show that $B_R(\delta)$ is
graded Morita
equivalent 
to $K_{r,s}(\delta)$ in a canonical way; see Theorem~\ref{morita}.
Moreover we show that $B_R(\delta)$ is 
isomorphic (though not canonically)
to $B_{r,s}(\delta)$ as an ungraded algebra; 
see Corollary~\ref{isocor}.
Putting these two things together, we obtain a family of
equivalences of categories
\begin{equation}\label{gs}
\G_R:\mod{K_{r,s}(\delta)} \rightarrow \mod{B_{r,s}(\delta)},
\end{equation}
one for each $R \in \Seq_{r,s}$,
each of which satisfies the hypotheses of Theorem~\ref{fin};
see Theorem~\ref{mt}.

To give a little more idea about the definition of $B_R(\delta)$,
recall
for the walled Brauer algebra that there are two basic branching rules, one
from $B_{r,s}(\delta)$ to $B_{r-1,s}(\delta)$ (``$E$-restriction'') and the other 
from $B_{r,s}(\delta)$ to
$B_{r,s-1}(\delta)$ (``$F$-restriction''). 
The sequences $R \in \Seq_{r,s}$ also parametrise
the $\binom{r+s}{r}$ different ways
to restrict from $B_{r,s}(\delta)$ all the way down to the trivial subalgebra
$B_{0,0}(\delta)$ by iterating these basic branching rules,
e.g. 
$E^r F^s$ and $F^s E^r$ encode 
\begin{align*}
B_{r,s}(\delta) \downarrow B_{r,s-1}(\delta) \downarrow \cdots \downarrow B_{r,0}(\delta)
\downarrow B_{r-1,0}(\delta) \downarrow \cdots \downarrow
B_{0,0}(\delta),\\
B_{r,s}(\delta) \downarrow B_{r-1,s}(\delta) \downarrow \cdots \downarrow B_{0,s}(\delta)
\downarrow B_{0,s-1}(\delta) \downarrow \cdots \downarrow
B_{0,0}(\delta),
\end{align*}
respectively.
The graded walled Brauer 
algebra $B_R(\delta)$ has a distinguished basis indexed by
certain paths
in the Bratelli diagram which describes the branching of cell
modules with respect to the sequence $R$;
see $\S\ref{s5}$ for some 
examples.
Multiplication is defined by representing the basis elements
diagrammatically then applying an explicit combinatorial procedure
similar in spirit to the multiplication in Khovanov's arc algebra;
this diagrammatics is quite different from the sort that appears in the
definition of $B_{r,s}(\delta)$.
We also construct a graded cellular basis for each
$B_R(\delta)$, hence defining 
{\em graded cell modules}
$C_R(\la)$; see Theorem~\ref{maincell}
and Corollary~\ref{othercellid}.
Moreover we compute the graded dimensions of the irreducible representations;
see Theorem~\ref{irreddimensions}.

Once the algebras $B_R(\delta)$ have been constructed, 
the main problem is to
prove that $B_{r,s}(\delta) \cong B_R(\delta)$.
We do this by
exploiting the
super analogue of the mixed Schur-Weyl duality mentioned at the
beginning of the introduction. Assume for this that
$\delta = m-n$ for 
integers $m,n \geq 0$.
Let $G$ denote the general linear supergroup $GL_{m|n}(\C)$ with
natural
module $V$ and dual natural module $W$.
There is a natural right action of $B_{r,s}(\delta)$ on
$V^{\otimes r} \otimes W^{\otimes s}$ by $G$-module endomorphisms.
We first show that
the induced homomorphism
\begin{equation}
\Psi_{r,s}^{m,n}:B_{r,s}(\delta) \rightarrow 
\End_G(V^{\otimes r} \otimes W^{\otimes s})^{\op}
\end{equation}
is surjective; see Theorem~\ref{main1}. Moreover it is an isomorphism if and
only if \begin{equation}\label{prefer}
r+s < (m+1)(n+1).
\end{equation}
Then we apply the results of \cite{BS4} to 
describe
$\End_G(V^{\otimes r} \otimes W^{\otimes s})^{\op}$ in purely
diagrammatic terms. Assuming (\ref{prefer}) holds, we get exactly
the algebra $B_R(\delta)$, hence 
obtain
the desired 
isomorphism
$B_{r,s}(\delta)
\cong
B_R(\delta)$.
We expect that this isomorphism is 
independent of the particular choice of $m$ and $n$, but 
are unable to prove this at present; see Remark~\ref{drawback} for further
discussion of this issue.

Our approach has several other consequences. 
First, 
if either $m = 0$ or $n=0$, i.e. $G$ is just the general linear group
not the general linear supergroup, we are able to give an explicit
description of 
$\ker \Psi_{r,s}^{m,n}$, showing it is generated by an idempotent; see
Theorem~\ref{lehrerzhang}. This result is an analogue for the walled Brauer algebra
of a result about Brauer algebras 
obtained recently by Lehrer and Zhang \cite{LZ}.
For $m,n > 0$, the ideal $\ker \Psi_{r,s}^{m,n}$ need not be generated by an
idempotent; see Remark~\ref{tempt}.

We also determine the irreducible
representations of the quotient algebra $B_{r,s}(\delta) / \ker
\Psi_{r,s}^{m,n}$, showing up to
isomorphism that they are modules
$D_{r,s}(\la)$ for $\la \in \dot\La_{r,s}$ such that $\la$ is an
$(m,n)$-cross bipartition in the sense of Comes and Wilson \cite{CW};
see Theorem~\ref{irrclass2}.
This allows us to recover the classification of indecomposable summands of
$V^{\otimes r} \otimes W^{\otimes s}$ obtained originally in
\cite{CW} by a different approach; see Theorem~\ref{soupedup}. 
Our approach gives a little more information about these interesting modules,
e.g. we compute their irreducible socles and heads explicitly.

The work of Comes and Wilson
is motivated by the study of Deligne's
tensor category
$\underline{\operatorname{Re}}\!\operatorname{p}(GL_\delta)$;
see also \cite{Deligne}, \cite{Knop}.
For $\delta \in \Z$,
we conjecture 
that Deligne's category
is equivalent (on forgetting the
tensor structure) to the
category $\Proj{K(\delta)}$ of (locally unital) finitely generated projective
$K(\delta)$-modules.
The main obstacle preventing us from proving this conjecture using the
techniques of the article is related to the 
question of independence of $m$ and $n$ mentioned earlier.

\vspace{2mm}
\noindent{\em Acknowledgements.}
The results in this article were obtained during the program
``On the Interaction of Representation Theory with Geometry and Combinatorics''
at the Hausdorff Institute for Mathematics in 
Bonn in January, 2011, and written up in part during a visit by the first
author to the University of Sydney in March--May, 2011.
We thank J. Comes for some helpful discussions
about $\underline{\operatorname{Re}}\!\operatorname{p}(GL_\delta)$ and for the proof of Lemma~\ref{comeslem}.

\section{Refined branching rules for the walled Brauer algebra}\label{s2}

In this preliminary section, we fix $r, s \geq 0$ and $\delta \in \C$.
We are going to recall the definition of the walled
Brauer algebra $B_{r,s}(\delta)$
and establish some basic facts about its representation theory.
In particular, we will define some refined induction and restriction functors
and describe their effect on cell modules.

\phantomsubsection{The walled Brauer algebra}
As a $\C$-vector space, $B_{r,s}(\delta)$ has
a basis consisting of isotopy classes of {\em walled Brauer diagrams}.
These are certain diagrams drawn in a rectangle with $(r+s)$ vertices 
on its top and bottom edges,
numbered 
$1,\dots,r+s$ in order from left to right.
Each vertex must be connected
to exactly one other vertex by a smooth curve drawn in the interior of
the rectangle; curves can cross transversally, no triple intersections. 
We refer to the curves whose endpoints are on different edges of the rectangle
as {\em vertical strands}, and the curves whose endpoints are on
the same edge of the rectangle are {\em horizontal strands}.
In addition, there is a vertical wall separating vertices $1,\dots,r$
from vertices $r+1,\dots,r+s$. We require that
the endpoints of each vertical strand are on the same side of the wall
and the endpoints of each horizontal strand are on opposite sides of the wall.
For example here are two 
basis vectors in $B_{2,2}(\delta)$:
\begin{equation*}
\begin{picture}(150,76)
\put(-24,34){$\sigma=$}
\put(8,74){$_1$}
\put(28,74){$_2$}
\put(48,74){$_3$}
\put(68,74){$_4$}
\put(8,4.2){$\scriptstyle\bullet$}
\put(28,4.2){$\scriptstyle\bullet$}
\put(48,4.2){$\scriptstyle\bullet$}
\put(68,4.2){$\scriptstyle\bullet$}
\put(8,64.2){$\scriptstyle\bullet$}
\put(28,64.2){$\scriptstyle\bullet$}
\put(48,64.2){$\scriptstyle\bullet$}
\put(68,64.2){$\scriptstyle\bullet$}
\put(0,6){\line(1,0){80}}
\put(0,6){\line(0,1){60}}
\put(0,66){\line(1,0){80}}
\put(80,66){\line(0,-1){60}}
\dashline{3}(40,6)(40,66)
\put(30,6){\line(-1,3){20}}
\put(70,6){\line(0,1){60}}
\put(30.3,6){\oval(40,30)[t]}
\put(40.3,66){\oval(20,20)[b]}
\end{picture}
\begin{picture}(80,76)
\put(-24,34){$\tau=$}
\put(8,74){$_1$}
\put(28,74){$_2$}
\put(48,74){$_3$}
\put(68,74){$_4$}
\put(8,4.2){$\scriptstyle\bullet$}
\put(28,4.2){$\scriptstyle\bullet$}
\put(48,4.2){$\scriptstyle\bullet$}
\put(68,4.2){$\scriptstyle\bullet$}
\put(8,64.2){$\scriptstyle\bullet$}
\put(28,64.2){$\scriptstyle\bullet$}
\put(48,64.2){$\scriptstyle\bullet$}
\put(68,64.2){$\scriptstyle\bullet$}
\put(0,6){\line(1,0){80}}
\put(0,6){\line(0,1){60}}
\put(0,66){\line(1,0){80}}
\put(80,66){\line(0,-1){60}}
\dashline{3}(40,6)(40,66)
\put(10,6){\line(1,3){20}}
\put(50,6){\line(1,3){20}}
\put(50.3,6){\oval(40,30)[t]}
\put(30.3,66){\oval(40,30)[b]}
\end{picture}
\end{equation*}
Multiplication is by the following concatenation procedure: $\sigma\tau$ is obtained by putting $\sigma$ {\em under} $\tau$, removing internal circles, then
multiplying the resulting basis vector
by $\delta^t$ where $t$ is the number of internal circles removed.
For example, for $\sigma$ and $\tau$ as above, we have:
$$
\begin{picture}(150,76)
\put(-30,34){$\sigma\tau=$}
\put(8,74){$_1$}
\put(28,74){$_2$}
\put(48,74){$_3$}
\put(68,74){$_4$}
\put(8,4.2){$\scriptstyle\bullet$}
\put(28,4.2){$\scriptstyle\bullet$}
\put(48,4.2){$\scriptstyle\bullet$}
\put(68,4.2){$\scriptstyle\bullet$}
\put(8,64.2){$\scriptstyle\bullet$}
\put(28,64.2){$\scriptstyle\bullet$}
\put(48,64.2){$\scriptstyle\bullet$}
\put(68,64.2){$\scriptstyle\bullet$}
\put(0,6){\line(1,0){80}}
\put(0,6){\line(0,1){60}}
\put(0,66){\line(1,0){80}}
\put(80,66){\line(0,-1){60}}
\dashline{3}(40,6)(40,66)
\put(70,6){\line(0,1){60}}
\put(30,6){\line(0,1){60}}
\put(30.3,6){\oval(40,30)[t]}
\put(30.3,66){\oval(40,30)[b]}
\end{picture}
\begin{picture}(80,76)
\put(-39,34){$\tau\sigma=\delta \cdot $}
\put(8,74){$_1$}
\put(28,74){$_2$}
\put(48,74){$_3$}
\put(68,74){$_4$}
\put(8,4.2){$\scriptstyle\bullet$}
\put(28,4.2){$\scriptstyle\bullet$}
\put(48,4.2){$\scriptstyle\bullet$}
\put(68,4.2){$\scriptstyle\bullet$}
\put(8,64.2){$\scriptstyle\bullet$}
\put(28,64.2){$\scriptstyle\bullet$}
\put(48,64.2){$\scriptstyle\bullet$}
\put(68,64.2){$\scriptstyle\bullet$}
\put(0,6){\line(1,0){80}}
\put(0,6){\line(0,1){60}}
\put(0,66){\line(1,0){80}}
\put(80,66){\line(0,-1){60}}
\dashline{3}(40,6)(40,66)
\put(10,6){\line(0,1){60}}
\put(50,6){\line(1,3){20}}
\put(50.3,6){\oval(40,30)[t]}
\put(40.3,66){\oval(20,20)[b]}
\end{picture}
$$

It is useful to compare $B_{r,s}(\delta)$ with the group algebra $\C \Sigma_{r+s}$ of the symmetric group. Of course the latter can also be viewed diagrammatically if we identify $\sigma \in \Sigma_{r+s}$ 
with its {\em permutation diagram}, 
the diagram drawn in the same rectangle as before with a vertical 
strand connecting the $a$th vertex on the top to the $\sigma(a)$th vertex on the bottom for each $1 \leq a \leq r+s$.
It is clear from the definitions that there is {vector space} isomorphism
\begin{equation}\label{oldflip}
\flip_{r,s}:\C \Sigma_{r+s} \stackrel{\sim}{\rightarrow} B_{r,s}(\delta)
\end{equation}
mapping a permutation diagram to the walled Brauer diagram 
obtained by adding a
wall between the $r$th and $(r+1)$th vertices, 
then
flipping the part of the diagram that is to the right 
of the wall in its horizontal axis without disconnecting any strands. 
For example the walled Brauer diagrams $\sigma$ and $\tau$ above 
arise by applying $\flip_{2,2}$ to the following permutation diagrams, respectively:
$$
\begin{picture}(150,76)
\put(-44,34){$(1\,2\,3)=$}
\put(8,74){$_1$}
\put(28,74){$_2$}
\put(48,74){$_3$}
\put(68,74){$_4$}
\put(8,4.2){$\scriptstyle\bullet$}
\put(28,4.2){$\scriptstyle\bullet$}
\put(48,4.2){$\scriptstyle\bullet$}
\put(68,4.2){$\scriptstyle\bullet$}
\put(8,64.2){$\scriptstyle\bullet$}
\put(28,64.2){$\scriptstyle\bullet$}
\put(48,64.2){$\scriptstyle\bullet$}
\put(68,64.2){$\scriptstyle\bullet$}
\put(0,6){\line(1,0){80}}
\put(0,6){\line(0,1){60}}
\put(0,66){\line(1,0){80}}
\put(80,66){\line(0,-1){60}}
\put(30,6){\line(-1,3){20}}
\put(70,6){\line(0,1){60}}
\put(50,6){\line(-1,3){20}}
\put(10,6){\line(2,3){40}}
\end{picture}
\begin{picture}(80,76)
\put(-52,34){$(1\,3\,4\,2)=$}
\put(8,74){$_1$}
\put(28,74){$_2$}
\put(48,74){$_3$}
\put(68,74){$_4$}
\put(8,4.2){$\scriptstyle\bullet$}
\put(28,4.2){$\scriptstyle\bullet$}
\put(48,4.2){$\scriptstyle\bullet$}
\put(68,4.2){$\scriptstyle\bullet$}
\put(8,64.2){$\scriptstyle\bullet$}
\put(28,64.2){$\scriptstyle\bullet$}
\put(48,64.2){$\scriptstyle\bullet$}
\put(68,64.2){$\scriptstyle\bullet$}
\put(0,6){\line(1,0){80}}
\put(0,6){\line(0,1){60}}
\put(0,66){\line(1,0){80}}
\put(80,66){\line(0,-1){60}}
\put(10,6){\line(1,3){20}}
\put(70,6){\line(-1,3){20}}
\put(50,6){\line(-2,3){40}}
\put(30,6){\line(2,3){40}}
\end{picture}
$$
It follows in particular that
\begin{equation}
\label{brauerdim}
\dim B_{r,s}(\delta) = \dim \C \Sigma_{r+s} = (r+s)!.
\end{equation}

\phantomsubsection{\boldmath Generators, relations, and the central element $z_{r,s}$}
The algebra $\C \Sigma_{r+s}$ is generated by the
basic transpositions $\sigma_a := (a\:a\!+\!1)$ for
$a=1,\dots,r+s-1$, subject only to the usual Coxeter relations.
Let $\tau_a := \flip_{r,s}(\sigma_a) \in B_{r,s}(\delta)$.
So for $a \neq r$ the diagram
$\tau_a$ is just the same as $\sigma_a$ (with the addition of the wall),
while $\tau_r$ is the diagram
$$
\begin{picture}(150,78)
\dashline{3}(80,6)(80,66)
\put(-28,34){$\tau_r=$}
\put(8,74){$_1$}
\put(23,34){${\cdots}$}
\put(43,74){$_{r-1}$}
\put(68.5,74){$_r$}
\put(82.7,74){$_{r+1}$}
\put(103,74){$_{r+2}$}
\put(123,34){${\cdots}$}
\put(143,74){$_{r+s}$}
\put(8,4.2){$\scriptstyle\bullet$}
\put(48,4.2){$\scriptstyle\bullet$}
\put(68,4.2){$\scriptstyle\bullet$}
\put(88,4.2){$\scriptstyle\bullet$}
\put(108,4.2){$\scriptstyle\bullet$}
\put(148,4.2){$\scriptstyle\bullet$}
\put(8,64.2){$\scriptstyle\bullet$}
\put(48,64.2){$\scriptstyle\bullet$}
\put(68,64.2){$\scriptstyle\bullet$}
\put(88,64.2){$\scriptstyle\bullet$}
\put(108,64.2){$\scriptstyle\bullet$}
\put(148,64.2){$\scriptstyle\bullet$}
\put(0,6){\line(1,0){160}}
\put(0,6){\line(0,1){60}}
\put(0,66){\line(1,0){160}}
\put(160,66){\line(0,-1){60}}
\put(10,6){\line(0,1){60}}
\put(50,6){\line(0,1){60}}
\put(110,6){\line(0,1){60}}
\put(150,6){\line(0,1){60}}
\put(80.3,66){\oval(20,20)[b]}
\put(80.3,6){\oval(20,20)[t]}
\end{picture}
$$
It is an exercise to see that $B_{r,s}(\delta)$
is generated by the elements $\tau_1,\dots,\tau_{r+s-1}$
and that the following relations hold whenever they make sense:
\begin{align}
\tau_a^2 &= 1 \quad(a \neq r), &\tau_a \tau_b &= \tau_b \tau_a\hspace{7mm}(|a-b| > 1),\\
\tau_r^2 &= \delta \tau_r,&
\tau_a\tau_{b} \tau_a &= \tau_{b}\tau_a \tau_{b}
\quad(|a-b|=1,a\neq r\neq b),\\
\tau_r &= \tau_r \tau_{r-1} \tau_r,&
\tau_r \tau_{r-1}\tau_{r+1}\tau_r &= 
\tau_{r-1}\tau_{r+1}
\tau_r \tau_{r-1}\tau_{r+1}\tau_r,\\
\tau_r &= \tau_r \tau_{r+1} \tau_r,&\tau_r \tau_{r-1}\tau_{r+1}\tau_r &= 
\tau_r \tau_{r-1}\tau_{r+1}\tau_r \tau_{r-1}\tau_{r+1}.
\end{align}
Although never needed in the present article,
it is known moreover 
that this is a full set of relations for the algebra $B_{r,s}(\delta)$;
see \cite[Corollary 4.5]{H} for a special case 
or \cite[Theorem 4.1]{N} in general.

More generally, for $1 \leq a, b \leq r+s$ with $a \neq b$,
we introduce the following ``transpositions'' in $B_{r,s}(\delta)$:
\begin{equation*}
\overline{(a\ b)} := \left\{
\begin{array}{rl}
\flip_{r,s}((a\ b))&\text{if $a$ and $b$ are on the same side of the wall,}\\
-\flip_{r,s}((a\ b))&\text{if $a$ and $b$ are on opposite sides of the wall.}
\end{array}
\right.
\end{equation*}
We stress the presence of the sign in the second case; it means that
$\tau_a = \overline{(a\ a\!+\!1)}$ for $a \neq r$ but
$\tau_r = -\overline{(r\ r\!+\!1)}$.
Then let
\begin{align}\label{zrs}
z_{r,s} &:= 
\sum_{1 \leq a < b \leq r+s} \overline{(a\ b)}.
\end{align}

\begin{Lemma}\label{centprops}
The element $z_{r,s}$ belongs to the center of $B_{r,s}(\delta)$.
\end{Lemma}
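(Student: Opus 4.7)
The strategy is to check the commutator $\tau_c z_{r,s} = z_{r,s} \tau_c$ for each of the generators $\tau_1, \ldots, \tau_{r+s-1}$ of $B_{r,s}(\delta)$, split into two cases according to whether $c \neq r$ (so that $\tau_c$ is a permutation diagram) or $c = r$ (the cap-cup generator).

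For $c \neq r$, both strands of $\tau_c$ lie on the same side of the wall, so $\tau_c = \overline{(c\ c+1)}$ and $\tau_c^2 = 1$. For each pair $1 \leq a < b \leq r+s$ I would verify pictorially the identity
$$\tau_c \, \overline{(a\ b)} \, \tau_c = \overline{(\sigma_c(a)\ \sigma_c(b))},$$
where $\sigma_c$ is the transposition $(c\ c+1) \in \Sigma_{r+s}$. Indeed, pre- and post-multiplying by $\tau_c$ simply swaps positions $c$ and $c+1$ in any walled Brauer diagram, and since $c, c+1$ lie on the same side of the wall the sign in the definition of $\overline{(\cdot\ \cdot)}$ is unaffected. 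Summing over $a < b$ and using that $\sigma_c$ permutes the unordered pairs, we obtain $\tau_c z_{r,s} \tau_c = z_{r,s}$, from which $\tau_c z_{r,s} = z_{r,s} \tau_c$ follows on multiplication by $\tau_c$.

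For $c = r$ the element $\tau_r$ is not invertible, so I would instead prove commutation term by term after grouping the summands of $z_{r,s}$ according to the intersection $\{a, b\} \cap \{r, r+1\}$. If $\{a,b\} \cap \{r, r+1\} = \emptyset$, then $\tau_r$ and $\overline{(a\ b)}$ commute by direct stacking: the cap and cup of $\tau_r$ act only at positions $r, r+1$ while $\overline{(a\ b)}$ acts only at positions $a, b$, and the diagrams obtained in either order coincide. Next, I pair $\overline{(a\ r)}$ with $\overline{(a\ r+1)}$ for each $a < r$. Exactly one of these two elements, namely the latter, carries a sign $-1$ from the definition of $\overline{(\cdot\ \cdot)}$, since $a$ and $r+1$ lie on opposite sides of the wall. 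A direct stacking calculation shows that $\tau_r \overline{(a\ r)}$ and $\tau_r \overline{(a\ r+1)}$ yield the very same unsigned walled Brauer diagram, namely the one with a cap from $a$ to $r+1$ on top, a cup from $r$ to $r+1$ on the bottom, a strand from top-$r$ to bottom-$a$, and vertical strands elsewhere, so that the opposite signs force $\tau_r\bigl(\overline{(a\ r)} + \overline{(a\ r+1)}\bigr) = 0$. The same argument on the right gives $\bigl(\overline{(a\ r)} + \overline{(a\ r+1)}\bigr)\tau_r = 0$, and a symmetric analysis handles the pair $\overline{(r\ b)} + \overline{(r+1\ b)}$ for each $b > r+1$. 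Finally the term $\overline{(r\ r+1)} = -\tau_r$ commutes with $\tau_r$ trivially.

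The main work, and the only place where the sign convention in the definition of $\overline{(a\ b)}$ really matters, is the pictorial identification in the paired case: one must recognize that multiplying by the cap-cup $\tau_r$ reroutes the crossing strand of the transposition diagram $\overline{(a\ r)}$ through the cap of $\tau_r$ to produce exactly the same underlying diagram that $\tau_r$ produces when composed with the horizontal cap/cup diagram $\overline{(a\ r+1)}$. Once this coincidence is recognized, the sign cancellation is immediate and the centrality follows.
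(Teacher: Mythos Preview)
Your proof is correct and follows exactly the approach the paper indicates: the paper's own proof says only that ``It suffices to show that $z_{r,s}$ commutes with the generators of $B_{r,s}(\delta)$, which is a routine exercise,'' and what you have done is carry out that routine exercise in detail. Your grouping of the summands of $z_{r,s}$ in the $c=r$ case and the observation that the paired terms $\overline{(a\ r)}+\overline{(a\ r+1)}$ are annihilated by $\tau_r$ on either side is precisely the kind of verification the paper is alluding to.
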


\begin{proof}
It suffices to show that $z_{r,s}$ commutes with the generators of $B_{r,s}(\delta)$, which is a routine exercise. 
\end{proof}

\phantomsubsection{Cell modules}
Recall the set $\La_{r,s}$ of bipartitions from (\ref{ladef}).
For each $\la \in \La_{r,s}$, there is a corresponding 
$B_{r,s}(\delta)$-module $C_{r,s}(\la)$,
which is a
{\em cell module} in the sense of \cite{GL}
arising from a certain cellular algebra structure on 
$B_{r,s}(\delta)$ 
discussed in \cite[$\S$2]{CDDM}.
We won't recall the definition of this
cellular algebra structure here,
as it is enough for our purposes to work with the explicit
construction of $C_{r,s}(\la)$ given in \cite[$\S$3]{CDDM}.

Let $J_{r,s}^t$ be the two-sided ideal
of $B_{r,s}(\delta)$
spanned by the
walled Brauer diagrams with at 
least $t$ horizontal strands at the top and bottom, giving a chain
$$
B_{r,s}(\delta) = J_{r,s}^0 \supset J_{r,s}^1 \supset \cdots \supset 
J_{r,s}^{\min(r,s)} \supset \{0\}.
$$
The images of the walled Brauer diagrams with exactly $t$ horizontal
strands at the top and bottom give a basis for the quotient $J_{r,s}^t / J_{r,s}^{t+1}$.

Now take $\la \in \La_{r,s}$ and let $t := r-|\la^{\mathrm L}| = s - |\la^{\mathrm R}|$.
Let $I_{r,s}^t$ be the subspace of $J_{r,s}^t / J_{r,s}^{t+1}$
spanned by the images of the 
diagrams with 
exactly $t$ horizontal strands at the top 
connecting the 
$(r+1-k)$th vertex to the $(r+k)$th vertex for each $k=1,\dots,t$.
Identify $\C (\Sigma_{r-t} \times \Sigma_{s-t})$ with the subalgebra of $B_{r,s}(\delta)$
generated by
$\tau_1,\dots,\tau_{r-t-1}, \tau_{r+t+1},\dots,\tau_{r+s-1}$ in the obvious way.
Then $I_{r,s}^t$ is invariant under left multiplication by elements of
$B_{r,s}(\delta)$ and right multiplication by elements of $\C (\Sigma_{r-t} \times\Sigma_{s-t})$, hence it is a
$\left(B_{r,s}(\delta), \C (\Sigma_{r-t} \times \Sigma_{s-t})\right)$-bimodule.
Let $S(\la^{\mathrm L})$ 
be the irreducible $\Sigma_{r-t}$-module parametrised
by $\la^{\mathrm L}$
and
$S(\la^{\mathrm R})$ be the irreducible $\Sigma_{s-t}$-module parametrised by
$\la^{\mathrm R}$. Let
$S(\la)$ denote their outer tensor product 
$S(\la^{\mathrm L}) \boxtimes S(\la^{\mathrm R})$, which is an
irreducible $\Sigma_{r-t} \times \Sigma_{s-t}$-module.
Then we have by definition that
\begin{equation}\label{cell}
C_{r,s}(\la) := I_{r,s}^t \otimes_{\C (\Sigma_{r-t} \times \Sigma_{s-t})} 
S(\la).
\end{equation}

The bimodule $I_{r,s}^t$ is free as a 
right $\C (\Sigma_{r-t} \times \Sigma_{s-t})$-module, with basis $X_{r,s}^t$
given by images of the walled Brauer diagrams 
with exactly $t$ horizontal strands
at the top connecting the 
$(r+1-k)$th vertex to the $(r+k)$th vertex for $k=1,\dots,t$
in which no two vertical strands cross.
So as a vector space we have that
\begin{equation}\label{vecd}
C_{r,s}(\la) = \bigoplus_{\tau \in X_{r,s}^t} \tau \otimes S(\la).
\end{equation}
In particular, this implies that
\begin{equation}
\label{celldim}
\dim C_{r,s}(\la) = 
\frac{r!s!}{(r-t)!t!(s-t)!} \dim S(\la^{\mathrm L}) \dim S(\la^{\mathrm R}).
\end{equation}

To compute the action of
a walled Brauer diagram $\sigma  \in B_{r,s}(\delta)$ on a vector 
$\tau \otimes v$ for $\tau \in X_{r,s}^t$ and $v \in 
S(\la)$ we proceed as follows.
If $\sigma$ has more than $t$ horizontal strands
then $\sigma(\tau\otimes v) = 0$; otherwise 
$\sigma (\tau \otimes v) = \tau' \otimes (c\sigma' v)$ 
where $\sigma\tau= c \tau'\sigma'$ 
for $c\in \C, \tau' \in X_{r,s}^t$ and $\sigma' \in \Sigma_{r-t}\times
\Sigma_{s-t}$.
It follows easily from this that
$C_{r,s}(\la)$ is a cyclic $B_{r,s}(\delta)$-module generated
by any vector of the form $\tau \otimes v$ for $\tau \in X_{r,s}^t$
and $0 \neq v \in S(\la)$.

\begin{Lemma}\label{schur}
For any $\la \in \La_{r,s}$ we have that
$\End_{B_{r,s}(\delta)}(C_{r,s}(\la)) \cong \C$.
\end{Lemma}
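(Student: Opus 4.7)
My plan is to prove this by combining cyclicity of $C_{r,s}(\la)$ with Schur's lemma applied to the irreducible module $S(\la)$ of the semisimple group algebra $A := \C(\Sigma_{r-t} \times \Sigma_{s-t})$, viewed as a subalgebra of $B_{r,s}(\delta)$ via the identification introduced in the construction of the cell module. Fix the ``identity'' walled Brauer diagram $\tau_0 \in X_{r,s}^t$, namely the one with standard horizontal strands at top and bottom and identity vertical strands on the remaining $2(r-t)+2(s-t)$ vertices, and pick a nonzero $v_0 \in S(\la)$. Set $g_0 := \tau_0 \otimes v_0 \in C_{r,s}(\la)$. The paragraph immediately preceding the lemma records that $g_0$ generates $C_{r,s}(\la)$ as a $B_{r,s}(\delta)$-module, so any $\phi \in \End_{B_{r,s}(\delta)}(C_{r,s}(\la))$ is determined by the single element $\phi(g_0)$.

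The key observation is that $\sigma \tau_0 = \tau_0 \sigma$ in $B_{r,s}(\delta)$ for every $\sigma \in A$: both products equal the diagram obtained from $\tau_0$ by letting $\sigma$ permute the outer vertical strands, since $\sigma$ is identity on the middle $2t$ vertices (where $\tau_0$'s horizontal strands live) and $\tau_0$ is identity on the outer vertices (where $\sigma$ permutes). Consequently $\sigma \cdot g_0 = \tau_0 \otimes \sigma v_0$, the subspace $V_0 := \tau_0 \otimes S(\la) \subset C_{r,s}(\la)$ is $A$-stable and $A$-isomorphic to $S(\la)$, and the linear map $F \colon S(\la) \to C_{r,s}(\la)$ defined by $F(v) := \phi(\tau_0 \otimes v)$ is $A$-equivariant. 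Once we know that $F$ actually lands in $V_0$, the classical Schur lemma for the irreducible $A$-module $S(\la)$ forces $F$ to be multiplication by a scalar $c \in \C$; cyclicity then gives $\phi = c \cdot \id$, finishing the proof.

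The main obstacle is to show $\phi(g_0) \in V_0$. Expanding $\phi(g_0) = \sum_{\tau \in X_{r,s}^t} \tau \otimes w_\tau$ with $w_\tau \in S(\la)$, we need to prove $w_\tau = 0$ for every $\tau \neq \tau_0$. I plan to exhibit, for each such $\tau$, an element $\xi \in B_{r,s}(\delta)$ with $\xi \cdot g_0 = 0$ whose action on $\tau \otimes S(\la)$ detects $w_\tau$; the identity $\xi \cdot \phi(g_0) = \phi(\xi \cdot g_0) = 0$ then yields the required vanishing. A natural family of test elements consists of the single-horizontal-strand diagrams $e_{(i,j)}$ with horizontal strand on top and bottom at positions $1 \leq i \leq r$ and $r+1 \leq j \leq r+s$ (identity vertical strands elsewhere). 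A routine diagrammatic computation shows that, modulo $J_{r,s}^{t+1}$, the element $e_{(i,j)}$ acts on a basis vector $\tau \otimes v$ as multiplication by $\delta$ when $(i,j)$ is one of $\tau$'s bottom horizontal strands (creating one internal circle), and otherwise produces a diagram with strictly more than $t$ horizontal strands at the top, hence dies in the quotient. In particular $e_{(i,j)} \cdot g_0 = \delta g_0$ at the $t$ standard positions and $e_{(i,j)} \cdot g_0 = 0$ at all others; feeding these relations into $\phi(g_0)$ gives enough linear constraints on the $w_\tau$ to force the vanishing claim, provided either $\delta \neq 0$ or there are enough outer vertical positions available. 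The degenerate case $\delta = 0$ with $t = \min(r,s)$ requires a variant argument using the boundary transpositions $\tau_{r-t}$ and $\tau_{r+t}$ that shift horizontal strand endpoints across the edge of the standard block; these provide the additional constraints needed to pin down the remaining $w_\tau$. Modulo this diagrammatic bookkeeping, the lemma reduces to Schur's lemma for the irreducible $A$-module $S(\la)$.
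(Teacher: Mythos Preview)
Your overall strategy coincides with the paper's: show that any endomorphism $\phi$ preserves the subspace $V_0 = \tau_0 \otimes S(\la)$, and then apply Schur's lemma to the irreducible $\Sigma_{r-t}\times\Sigma_{s-t}$-module $S(\la)$. The divergence, and the gap, is entirely in how you try to establish $\phi(g_0)\in V_0$.

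Your ``routine diagrammatic computation'' for the elements $e_{(i,j)}$ is incorrect. You claim that when $(i,j)$ is not among the bottom horizontal strands of $\tau$, the product $e_{(i,j)}\tau$ acquires more than $t$ horizontal strands at the top and hence dies in $I^t_{r,s}$. This is false in the mixed case where exactly one of $i,j$ lies at a bottom horizontal endpoint of $\tau$ and the other at a bottom vertical endpoint: the top horizontal of $e_{(i,j)}$ then gets absorbed into the existing bottom horizontal of $\tau$ instead of creating a new top horizontal. Concretely, take $r=s=2$, $t=1$, so $\tau_0$ has its single bottom horizontal at $(2,3)$, and set $(i,j)=(1,3)$. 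A direct trace shows $e_{(1,3)}\tau_0$ is the element of $X_{2,2}^1$ with bottom horizontal $(1,3)$ and verticals $1\leftrightarrow 2$, $4\leftrightarrow 4$; it has exactly one top horizontal and no internal circle, so $e_{(1,3)}\cdot g_0$ is a nonzero vector in a component $\tau\otimes S(\la)$ with $\tau\neq\tau_0$. Thus your supply of test elements $\xi$ with $\xi\cdot g_0=0$ is much smaller than asserted, and the constraint-counting (already hedged as ``enough linear constraints'') does not go through; this is independent of the $\delta=0$ issue you flag.

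The paper sidesteps all of this by producing, in one stroke, an element $\sigma\in B_{r,s}(\delta)$ with $\sigma\tau_0=\tau_0$ and $\sigma\,C_{r,s}(\la)=\tau_0\otimes S(\la)$; then $\phi(g_0)=\phi(\sigma g_0)=\sigma\,\phi(g_0)\in V_0$ immediately. For $t<r$ one takes $\sigma=\tau_0\,\tau_{r-t}\cdots\tau_{r-2}\tau_{r-1}$, for $t<s$ the symmetric choice, and for $t=r=s>0$ a slight variant built from an element of $X_{r,s}^{t-1}$. Your remark about ``boundary transpositions $\tau_{r-t}$ and $\tau_{r+t}$'' in the degenerate case is pointing toward this idea, but you need such a projection element in every case, not just the degenerate one.
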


\begin{proof}
Set $t := r-|\la^{\mathrm L}| = s - |\la^{\mathrm R}|$.
Let $\tau$ be the unique walled Brauer diagram
in $X^t_{r,s}$
with horizontal strands
connecting the 
$(r+1-k)$th vertex to the $(r+k)$th vertex for $k=1,\dots,t$
at both the top and the bottom.

We first treat the case that $t < r$.
Let $\sigma := \tau \tau_{r-t} \cdots \tau_{r-2} \tau_{r-1}$.
On drawing the diagrams, it is easy to check that $\sigma \tau = \tau$, and that $\sigma C_{r,s}(\la) = \tau
\otimes S(\la)$ in the decomposition (\ref{vecd}).
It follows for any $f \in \End_{B_{r,s}(\delta)}(C_{r,s}(\la))$ 
and $v \in S(\la)$
that
$f(\tau \otimes v) = f(\sigma\tau \otimes v) = \sigma f(\tau \otimes
v)
= \tau \otimes \bar f(v)$ for 
a unique $\bar f(v) \in S(\la)$.
In other words, $f$ induces a linear map $\bar f:S(\la) \rightarrow
S(\la)$, which is easily seen to be a 
$\Sigma_{r-t} \times \Sigma_{s-t}$-module
homomorphism.
Hence $\bar f(v) = c v$ for a unique scalar $c \in \C$ by
Schur's lemma,
i.e. $f(\tau \otimes v) = c (\tau \otimes v)$.
Since $\tau \otimes v$ generates
$C_{r,s}(\la)$, we deduce that $f = c\cdot \operatorname{id}$.

The case that $t < s$ is similar.

It remains to treat the case that $t = r = s$. 
This is trivial if $t = 0$, so assume moreover that $t > 0$.
Let $\sigma := \tau' \tau_1 \cdots \tau_{r-1}$
where $\tau'$ is the 
 unique walled Brauer diagram
in $X^{t-1}_{r,s}$
with horizontal strands
connecting the 
$(r+1-k)$th vertex to the $(r+k)$th vertex for $k=1,\dots,t-1$
at both the top and the bottom.
Then again we have that $\sigma \tau = \tau$
and $\sigma C_{r,s}(\la) = \tau \otimes S(\la)$.
Now the proof can be completed as before.
\end{proof}

\phantomsubsection{\boldmath Action of $z_{r,s}$ on cell modules}
We draw
the Young diagram of a partition $\la$ in the usual English way, and
let $[\la]$ denote its set of boxes.
For $A \in [\la]$, its {\em content} $\residue(A)$ is the integer $(j-i)$
if $A$ is in row $i$ and column $j$.
For example, here is the Young diagram of the partition $\la = (5,4,4)$
with boxes labelled by their contents:
$$
\diagram{0&1&2&3&4\cr -1&0&1&2\cr -2 & -1 &0 & 1\cr}.
$$
For $\la \in \La_{r,s}$, let
\begin{equation*}
z_{r,s}(\la) := 
\sum_{A \in [\la^{\mathrm L}]} \residue(A) + \sum_{A \in [\la^{\mathrm R}]} \residue(A)
- t \delta \in \C,
\end{equation*}
where $t := r-|\la^{\mathrm L}| = s - |\la^{\mathrm R}|$.

\begin{Lemma}\label{eigenvalue}
For $\la \in \La_{r,s}$,
the central element $z_{r,s} \in B_{r,s}(\delta)$ acts on the cell module
$C_{r,s}(\la)$ by multiplication by the scalar $z_{r,s}(\la)$.
\end{Lemma}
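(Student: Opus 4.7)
The strategy is to combine Lemmas~\ref{centprops} and~\ref{schur}: centrality of $z_{r,s}$ places left multiplication by it in $\End_{B_{r,s}(\delta)}(C_{r,s}(\la))$, and Lemma~\ref{schur} forces this endomorphism to be multiplication by a scalar. So it suffices to compute $z_{r,s}(\tau \otimes v)$ on a single conveniently chosen vector. I would take $\tau$ to be the unique element of $X^t_{r,s}$ having horizontal strands connecting $(r{+}1{-}k)$ and $(r{+}k)$ at both top and bottom (the same diagram used in the proof of Lemma~\ref{schur}) and fix any $0 \neq v \in S(\la)$.

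Expand $z_{r,s}$ using (\ref{zrs}) and partition the summands $\overline{(a\ b)}$ according to where $a, b$ sit relative to the wall and to the ``inner'' region $\{r{-}t{+}1, \ldots, r{+}t\}$ holding the horizontal strand endpoints of $\tau$. For $a < b$ both in the left outer region $\{1, \ldots, r{-}t\}$, stacking $\overline{(a\ b)}$ below $\tau$ simply crosses two vertical strands on the left, so $\overline{(a\ b)}(\tau \otimes v) = \tau \otimes ((a\ b)\cdot v)$ with $(a\ b) \in \Sigma_{r-t}$ acting on $S(\la)$ from the right. Summing over such pairs gives the standard Jucys--Murphy central element of $\Sigma_{r-t}$, whose eigenvalue on the Specht module $S(\la^{\mathrm L})$ is the classical content sum $\sum_{A \in [\la^{\mathrm L}]} \residue(A)$. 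The right outer pairs contribute $\sum_{A \in [\la^{\mathrm R}]} \residue(A)$ symmetrically.

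It remains to analyse the ``coupled'' pairs, namely those with at least one of $a, b$ in the inner region. A direct strand-tracing calculation shows that each such $\overline{(a\ b)}\tau$ is of one of three types: (i) a diagram with more than $t$ horizontal strands, vanishing modulo $J^{t+1}_{r,s}$; (ii) a scalar multiple of some $\tau' \in X^t_{r,s}$ different from $\tau$; or (iii) $\pm\delta \cdot \tau$ after removing a single closed loop formed in the middle gluing. The $t$ cross-wall transpositions $\overline{(r{+}1{-}k,\,r{+}k)}$ for $k = 1, \ldots, t$ fall into case (iii): each meets the corresponding horizontal strand of $\tau$ to produce exactly one closed loop, and the sign $-1$ built into the cross-wall $\overline{(a\ b)}$ turns the raw $\delta \cdot \tau$ into $-\delta \cdot (\tau\otimes v)$, yielding $-t\delta$ in total. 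The crucial combinatorial claim is that the case~(ii) contributions cancel: summing the same-side coupled transpositions (sign $+1$) and the cross-wall coupled transpositions (sign $-1$) producing a given $\tau' \neq \tau$ always gives zero.

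The main obstacle is verifying this last cancellation. The cleanest route is a direct case analysis, showing that for every off-diagonal $\tau' \in X^t_{r,s}$ the number of same-side coupled pairs producing $\tau'$ matches the number of cross-wall coupled pairs producing $\tau'$; the reason is essentially that a given change of horizontal strand pairing from $\tau$ to $\tau'$ can be implemented either by a pure crossing within one side (giving a $+\tau'$ contribution) or by a cross-wall jump (giving a $-\tau'$ contribution from the sign convention), and these two routes match in number. Alternatively, since the purported scalar $z_{r,s}(\la)$ is a polynomial of degree $\leq 1$ in $\delta$, one can extract it via the semisimple case (generic $\delta \notin \Z$) and extend by a polynomial identity argument. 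Once the cancellation is in hand, collecting the three contributions yields $z_{r,s}(\tau \otimes v) = z_{r,s}(\la)(\tau \otimes v)$, as required.
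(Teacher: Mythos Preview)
Your approach is essentially the paper's, but your case analysis has a small gap and misses the organising trick that makes the computation clean.

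First, your partition omits the pairs with $a$ in the left outer region and $b$ in the right outer region (both outside the ``inner'' block but on opposite sides of the wall). For such a cross-wall pair $\overline{(a\ b)}$, stacking under $\tau$ produces $t+1$ horizontal strands at the top, so these terms vanish individually modulo $J^{t+1}_{r,s}$; you should record this case.

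Second, and more importantly, the paper avoids your ``global cancellation'' obstacle entirely by writing the remaining coupled contributions as sums of explicit pairs such as $\overline{(a\ b)} + \overline{(a\ \tilde b)}$ (and similarly $\overline{(\tilde a\ b)} + \overline{(\tilde a\ \tilde b)}$), where $\tilde b = 2r+1-b$ is the mirror of $b$ across the wall. For each such pair, the two diagrams produce \emph{identical} walled Brauer diagrams when composed with $\tau$, but with opposite signs (one same-side, one cross-wall), so the pair annihilates $\tau\otimes v$ on the nose. There is no off-diagonal bookkeeping to do: your case~(ii) never survives at the level of a single pair. This is exactly the ``two routes match'' phenomenon you describe, but packaged so that the matching is a bijection $b\leftrightarrow\tilde b$ rather than a counting argument over all $\tau'$. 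Once you see this, the proof is a one-line check per pair, and the alternative semisimple/polynomial-identity route is unnecessary.
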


\begin{proof}
Let $t := r-|\la^{\mathrm L}| = s - |\la^{\mathrm R}|$,
$I := \{1,\dots,r-t\}$, $J := \{r+t+1,\dots,r+s\}$, and
$K := \{r-t+1,\dots,r\}$.
For $a \in K$, let $\tilde a := 2r+1-a$.
Recalling (\ref{cell}), let $\tau$ be the image in $I_{r,s}^t$ of 
the walled Brauer diagram with 
a vertical strand
connecting the $a$th vertex at the top to the $a$th vertex at the bottom
for each $a \in I \cup J$,
and a horizontal strand connecting $a$ to $\tilde a$
both at the top and the bottom for each $a \in K$.
Also pick any non-zero vector $v \in S(\la)$.
As $C_{r,s}(\la)$ is generated by
$\tau \otimes v$ and $z_{r,s}$ is central, it suffices to show that
$z_{r,s}(\tau \otimes v) = z_{r,s}(\la) (\tau \otimes v)$.
We have that
$z_{r,s} = z_{I} + z_J + z_K + z'_{K}
+ z_K'' +z_{IJ} + z_{IK} + z_{JK}$
where
\begin{align*}
z_{I} &:= \sum_{\substack{a, b \in I \\ a < b}} \overline{(a\ b)},
&z_{IJ} &:= \sum_{\substack{a \in I\\b \in J}} \overline{(a\ b)},\\
z_{J} &:= \sum_{\substack{a, b \in J\\ a < b}} \overline{(a\,b)},
&z_{IK} &:=\sum_{\substack{a \in I \\  b\in K}} \left(\overline{(a\ b)} +
  \overline{(a\,\tilde b)}\right),\\
z_K &:= \sum_{a \in K} \overline{(a\ \tilde a)},&
z_{JK} &:=
\sum_{\substack{a \in J\\ b\in K}} \left(\overline{(a\ b)} +
  \overline{(a\,\tilde b)}\right),\\
z_{K}' &:= 
\sum_{\substack{a, b\in K \\ a <b}}
\left(\overline{(a\ b)} + \overline{(a\ \tilde b)}
\right),
&z_K'' &:= 
\sum_{\substack{a, b\in K \\ a <b}}
\left(\overline{(\tilde a\ b)} + \overline{(\tilde a\ \tilde b)}
\right).
\end{align*}
Now we observe that 
$$
z_{IJ} (\tau \otimes v) = z_{IK} (\tau \otimes v)
= z_{JK} (\tau \otimes v) = z_{K}' (\tau \otimes v)
= z_K'' (\tau \otimes v) = 0
$$
because each of the terms in the summations above already act as zero.
Moreover 
$z_{I} (\tau \otimes v) = \sum_{A \in [\la^{\mathrm L}]} \residue(A) (\tau
\otimes v)$, 
$z_{J} (\tau \otimes v) = \sum_{A \in [\la^{\mathrm R}]} \residue(A) (\tau
\otimes v)$,
and $z_K (\tau \otimes v) = -t \delta (\tau
\otimes v)$.
The first two equalities follow because $z_{I}$ and $z_{J}$ are the sums of all transpositions
in $\Sigma_{r-t}$ and $\Sigma_{s-t}$, respectively, and it is well known
(e.g. as a consequence of Young's orthogonal form or by the Murnaghan-Nakayama rule)
that these act on $S(\la^{\mathrm L})$ and $S(\la^{\mathrm R})$ by the scalars indicated.
\end{proof}

\phantomsubsection{Refined induction and restriction functors}
There are 
embeddings
\begin{equation}\label{iotas}
\iota_{r,s}^{r+1,s}:B_{r,s}(\delta) \hookrightarrow B_{r+1,s}(\delta),
\qquad
\iota_{r,s}^{r,s+1}:B_{r,s}(\delta) \hookrightarrow B_{r,s+1}(\delta)
\end{equation}
defined by inserting a vertical strand immediately to the left or right
of the wall, respectively.
Thus 
$\iota_{r,s}^{r+1,s}(\tau_r) = \tau_{r} \tau_{r+1} \tau_{r}$,
$\iota_{r,s}^{r,s+1}(\tau_r) = \tau_{r+1} \tau_{r} \tau_{r+1}$,
 and both $\iota_{r,s}^{r+1,s}$ and $\iota_{r,s}^{r,s+1}$ map $\tau_a \mapsto \tau_a$
for $1 \leq a \leq r-1$
and $\tau_b \mapsto \tau_{b+1}$ for $r+1 \leq b \leq r+s-1$.
Let 
$\big(\ind^{r+1,s}_{r,s}, \res^{r+1,s}_{r,s}\big)$
and
$\big(\ind^{r,s+1}_{r,s}, \res^{r,s+1}_{r,s}\big)$ be the adjoint pairs
of induction and restriction functors
attached to the embeddings $\iota_{r,s}^{r+1,s}$ and $\iota_{r,s}^{r,s+1}$, respectively.

We want to define refined versions of these 
functors. Fix $i \in \C$. Let
\begin{align}
x^{r+1,s}_{r,s} &:= \label{xl}
z_{r+1,s} - \iota_{r,s}^{r+1,s}(z_{r,s})
=\sum_{\substack{1 \leq a \leq r+s+1 \\ a \neq r}} \overline{(a\ r)}\hspace{-5mm} &&\in B_{r+1,s}(\delta),\\
x^{r,s+1}_{r,s} &:= z_{r,s+1} - \iota_{r,s}^{r,s+1}(z_{r,s})
=\sum_{\substack{1 \leq a \leq r+s+1 \\ a \neq r+1}} \overline{(a\ r\!+\!1)}
 \hspace{-5mm}&&\in B_{r,s+1}(\delta).\label{xr}
\end{align}
The elements $x^{r+1,s}_{r,s}$ and $x^{r,s+1}_{r,s}$
centralise
$\iota_{r,s}^{r+1,s}(B_{r,s}(\delta))$
and $\iota_{r,s}^{r,s+1}(B_{r,s}(\delta))$, respectively. 
In the finite dimensional commutative subalgebra of $B_{r+1,s}(\delta)$
generated by $x_{r,s}^{r+1,s}$, there is
a unique idempotent $1_{r,s;i}^{r+1,s}$ 
that acts on any module as the projection onto the generalised $i$-eigenspace of $x_{r,s}^{r+1,s}$.
Similarly in the subalgebra of $B_{r,s+1}(\delta)$
generated by $x_{r,s}^{r,s+1}$, there is
a unique idempotent $1_{r,s;i}^{r,s+1}$ 
that acts as the projection onto the generalised $(-i-\delta)$-eigenspace of $x_{r,s}^{r,s+1}$.
Let
\begin{align}\label{iotai}
\iota_{r,s;i}^{r+1,s}&:B_{r,s}(\delta) \rightarrow B_{r+1,s}(\delta),
&\iota_{r,s;i}^{r,s+1}&:B_{r,s}(\delta) \rightarrow B_{r,s+1}(\delta)
\end{align}
be the non-unital 
algebra homomorphisms defined by first applying the embedding
$\iota_{r,s}^{r+1,s}$ then multiplying by the centralising idempotent $1_{r,s;i}^{r+1,s}$,
or by applying $\iota_{r,s}^{r,s+1}$ then multiplying by $1_{r,s;i}^{r,s+1}$, respectively.
Let 
$(\iind^{r+1,s}_{r,s},\ires_{r,s}^{r+1,s})$
and
$(\iind^{r,s+1}_{r,s}, \ires^{r,s+1}_{r,s})$
be the adjoint pairs of {\em $i$-induction} and {\em $i$-restriction}
functors attached to the homomorphisms
$\iota_{r,s;i}^{r+1,s}$ and $\iota_{r,s;i}^{r,s+1}$, respectively.
Thus,
\begin{align}\label{fred1}
\ires^{r+1,s}_{r,s} 
&:\mod{B_{r+1,s}(\delta)} \rightarrow \mod{B_{r,s}(\delta)},\\
\ires^{r,s+1}_{r,s} 
&:\mod{B_{r,s+1}(\delta)} \rightarrow \mod{B_{r,s}(\delta)}\label{fred2}
\end{align}
are the exact functors defined by multiplication by
the idempotents $1_{r,s;i}^{r+1,s}$ and $1_{r,s;i}^{r,s+1}$, respectively, viewing the
result as a left $B_{r,s}(\delta)$-module via the homomorphisms (\ref{iotai}).
Also 
\begin{align}\label{fred3}
\iind^{r+1,s}_{r,s} 
&:\mod{B_{r,s}(\delta)} \rightarrow \mod{B_{r+1,s}(\delta)},\\
\iind^{r,s+1}_{r,s} 
&:\mod{B_{r,s}(\delta)} \rightarrow \mod{B_{r,s+1}(\delta)}\label{fred4}
\end{align}
are the functors
$B_{r+1,s}(\delta) 1_{r,s;i}^{r+1,s} \ \otimes_{B_{r,s}(\delta)}?$
and 
$B_{r,s+1}(\delta) 1_{r,s;i}^{r,s+1} \ \otimes_{B_{r,s}(\delta)}?$, respectively,
where the right $B_{r,s}(\delta)$-module structure is again defined
using (\ref{iotai}).
Since $\sum_{i \in \C} 1_{r,s;i}^{r+1,s} = 1 \in B_{r+1,s}(\delta)$
and 
$\sum_{i \in \C} 1_{r,s;i}^{r,s+1} \in B_{r,s+1}(\delta)$,
we have that
\begin{align*}
\res^{r+1,s}_{r,s} &= \bigoplus_{i \in \C}
\ires^{r+1,s}_{r,s},&
\ind^{r+1,s}_{r,s} &= \bigoplus_{i \in \C}
\iind^{r+1,s}_{r,s},\\
\res^{r,s+1}_{r,s} &= \bigoplus_{i \in \C}
\ires^{r,s+1}_{r,s},&
\ind^{r,s+1}_{r,s} &= \bigoplus_{i \in \C}
\iind^{r,s+1}_{r,s}.
\end{align*}
The following lemma 
realises the $i$-restriction and $i$-induction functors 
in terms of taking certain generalised eigenspaces.

\begin{Lemma}\label{altdef}
Let $M$ be a finite dimensional indecomposable $B_{r,s}(\delta)$-module
and $\la_M \in \C$ be the unique eigenvalue of the (not necessarily
diagonalisable) endomorphism of $M$ defined by multiplication by $z_{r,s}$.
\begin{itemize}
\item[(1)] The module $\ires^{r,s}_{r-1,s} M$
is the generalised $(\la_M-i)$-eigenspace of $z_{r-1,s}$
on $\res^{r,s}_{r-1,s} M$.
\item[(2)] The module 
$\ires^{r,s}_{r,s-1} M$
is the generalised $(\la_M+i+\delta)$-eigenspace of $z_{r,s-1}$
on $\res^{r,s}_{r,s-1} M$.
\item[(3)] The module $\iind^{r+1,s}_{r,s} M$
is the generalised $(\la_M+i)$-eigenspace of $z_{r+1,s}$
on $\ind^{r+1,s}_{r,s} M$.
\item[(4)] The module $\iind^{r,s+1}_{r,s} M$
is the generalised $(\la_M-i-\delta)$-eigenspace of $z_{r,s+1}$
on $\ind^{r+1,s}_{r,s} M$.
\end{itemize}
\end{Lemma}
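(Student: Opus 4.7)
The plan is to exploit the defining identities $x^{r+1,s}_{r,s} = z_{r+1,s} - \iota^{r+1,s}_{r,s}(z_{r,s})$ and $x^{r,s+1}_{r,s} = z_{r,s+1} - \iota^{r,s+1}_{r,s}(z_{r,s})$, together with the fact that $z_{r,s}$ acts on the indecomposable module $M$ with a single generalised eigenvalue $\la_M$ (because $\End_{B_{r,s}(\delta)}(M)$ is local and $z_{r,s}$ acts via an element of this ring by Lemma~\ref{centprops}). Everywhere below, the operators involved commute: $z_{r,s}$ is central, and $x^{r+1,s}_{r,s}$ (resp.\ $x^{r,s+1}_{r,s}$) was constructed to centralise $\iota^{r+1,s}_{r,s}(B_{r,s}(\delta))$ (resp.\ $\iota^{r,s+1}_{r,s}(B_{r,s}(\delta))$).

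For part (1), I would first note that $\ires^{r,s}_{r-1,s} M = 1^{r,s}_{r-1,s;i} M$ is by construction the generalised $i$-eigenspace of $x^{r,s}_{r-1,s}$ on $M$. Since $z_{r,s}$ and $x^{r,s}_{r-1,s}$ commute and $z_{r,s} - \la_M$ is nilpotent on all of $M$, the identity
\[
\iota^{r,s}_{r-1,s}(z_{r-1,s}) - (\la_M - i) = (z_{r,s} - \la_M) - (x^{r,s}_{r-1,s} - i)
\]
expresses the left-hand side as a sum of two commuting operators, one globally nilpotent and the other locally nilpotent precisely on the generalised $i$-eigenspace of $x^{r,s}_{r-1,s}$. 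Hence the generalised $i$-eigenspace of $x^{r,s}_{r-1,s}$ coincides with the generalised $(\la_M-i)$-eigenspace of $z_{r-1,s}$ acting via $\iota^{r,s}_{r-1,s}$. Part (2) follows by the identical argument with $i$ replaced by $-i-\delta$, matching the sign convention used to define $1^{r,s+1}_{r,s;i}$.

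For parts (3) and (4), the key observation is that centrality of $z_{r+1,s}$ makes its generalised $(\la_M+i)$-eigenspace on $\iind^{r+1,s}_{r,s} M$ a $B_{r+1,s}(\delta)$-submodule, so it is enough to check that $z_{r+1,s} - (\la_M+i)$ is nilpotent on the generating set $1^{r+1,s}_{r,s;i} \otimes M$. For $m \in M$, using that $1^{r+1,s}_{r,s;i}$ commutes with $\iota^{r+1,s}_{r,s}(z_{r,s})$ and that the right $B_{r,s}(\delta)$-action on $B_{r+1,s}(\delta) 1^{r+1,s}_{r,s;i}$ passes through the embedding, one computes
\[
\bigl(z_{r+1,s} - (\la_M+i)\bigr)(1^{r+1,s}_{r,s;i} \otimes m) = (x^{r+1,s}_{r,s} - i)\, 1^{r+1,s}_{r,s;i} \otimes m + 1^{r+1,s}_{r,s;i} \otimes (z_{r,s} - \la_M)m.
\]
The two terms are produced by commuting operators, each locally nilpotent on the generating subspace, so their sum is locally nilpotent there as well. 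Combined with the direct sum decomposition $\ind^{r+1,s}_{r,s} M = \bigoplus_{i \in \C} \iind^{r+1,s}_{r,s} M$ and the fact that distinct $i$ give distinct scalars $\la_M+i$, the identification in (3) follows; part (4) is identical up to the sign shift $i \mapsto -i-\delta$.

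The only delicate point is a careful bookkeeping in parts (3) and (4): one must track how $\iota^{r+1,s}_{r,s}(z_{r,s})$ acts by being pushed through the tensor product to act as $z_{r,s}$ on $M$, and verify that it commutes there with the left multiplication by $x^{r+1,s}_{r,s}$; once these two nilpotency statements are in place the conclusion is immediate from the standard fact that a sum of two commuting locally nilpotent operators is locally nilpotent.
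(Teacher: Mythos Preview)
Your proof is correct and is precisely an unpacking of the paper's one-line ``This follows from the definitions.'' You have made explicit the key identities $x = z_{\text{big}} - \iota(z_{\text{small}})$, the commutation of the relevant operators, and the standard fact that a sum of two commuting locally nilpotent operators is locally nilpotent; this is exactly the content the paper is leaving to the reader.
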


\begin{proof}
This follows from the definitions.
\end{proof}

We give the set $\La$ of all bipartitions 
the structure of a labelled
directed graph,
with a directed edge $\la \stackrel{i}{\rightarrow} \mu$
labelled by $i \in \C$
if $\mu$ is obtained from $\la$ {either} by adding a box of content $i$
to the Young diagram of $\la^{\mathrm L}$
{or} by removing a box of content $-i-\delta$ from the Young diagram of $\la^{\mathrm R}$.
Equivalently, 
$\la \stackrel{i}{\leftarrow} \mu$ if $\mu$ is obtained from $\la$
by removing a box of content $i$ from the Young diagram of $\la^{\mathrm L}$ or
 by adding a box of content $-i-\delta$ to the Young diagram of $\la^{\mathrm R}$.
The following theorem is a refinement of \cite[Theorem 3.3]{CDDM}.

\begin{Theorem}\label{ibranch}
The following hold for $\la \in \La_{r,s}$ and $i \in \C$.
\begin{itemize}
\item[(1)]
The module $\ires^{r,s}_{r-1,s} C_{r,s}(\la)$
has a filtration with sections isomorphic 
to 
$\{C_{r-1,s}(\mu)\:|\:\text{for all $\mu \in \La_{r-1,s}$ with }
\mu \stackrel{i}{\rightarrow} \la\}$.
\item[(2)]
The module $\ires^{r,s}_{r,s-1} C_{r,s}(\la)$ has a filtration with sections isomorphic to 
$\{C_{r,s-1}(\mu)\:|\:\text{for all $\mu \in \La_{r,s-1}$ with }
\mu \stackrel{i}{\leftarrow} \la\}$.
\item[(3)]
The module $\iind^{r+1,s}_{r,s} C_{r,s}(\la)$ has a filtration with sections isomorphic 
to 
$\{C_{r+1,s}(\mu)\:|\:\text{for all $\mu \in \La_{r+1,s}$ with }
\la \stackrel{i}{\rightarrow} \mu\}$
\item[(4)]
The module $\iind^{r,s+1}_{r,s} C_{r,s}(\la)$ has a filtration with sections isomorphic to 
$\{C_{r,s+1}(\mu)\:|\:\text{for all $\mu \in \La_{r,s+1}$ with }
\la \stackrel{i}{\leftarrow} \mu\}$.
\end{itemize}
In all cases, the filtration is of length at most two. 
When it is of length exactly two, the sections 
are
indexed by
bipartitions $\mu'$ and $\mu''$ such that 
$|(\mu')^{\mathrm L}| < |(\mu'')^{\mathrm L}|$ and
$|(\mu')^{\mathrm R}| < |(\mu'')^{\mathrm R}|$, and the following hold:
\begin{itemize}
\item[(a)]
for $M = \ires^{r,s}_{r-1,s} C_{r,s}(\la)$ or $M = \ires^{r,s}_{r,s-1} C_{r,s}(\la)$, there is 
a {unique} submodule $M'$ isomorphic to the cell module indexed by $\mu'$;
the quotient $M / M'$ is isomorphic to the cell module indexed by $\mu''$;
\item[(b)]
for $M = \iind^{r+1,s}_{r,s} C_{r,s}(\la)$ or $M = \iind^{r,s+1}_{r,s} C_{r,s}(\la)$, there is 
a {unique} submodule $M'$ such that $M / M'$ is isomorphic to the cell module indexed by $\mu''$; the submodule $M'$ is isomorphic to the cell module indexed by $\mu'$.
\end{itemize}
\end{Theorem}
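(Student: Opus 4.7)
The plan is to combine the unrefined cell-module branching rule of Cox--De~Visscher--Doty--Martin \cite[Theorem~3.3]{CDDM} with the eigenspace description of the refined functors supplied by Lemma~\ref{altdef}, together with the explicit scalar action of $z_{r,s}$ on cell modules from Lemma~\ref{eigenvalue}. The starting point for (1) is the unrefined statement: $\res^{r,s}_{r-1,s} C_{r,s}(\la)$ admits a canonical two-step filtration $0 \subseteq N \subseteq \res^{r,s}_{r-1,s} C_{r,s}(\la)$, with submodule $N$ the direct sum of the cell modules $C_{r-1,s}(\mu)$ for $\mu$ obtained from $\la$ by removing a box from $\la^{\mathrm L}$, and quotient the direct sum of the cell modules $C_{r-1,s}(\mu)$ for $\mu$ obtained from $\la$ by adding a box to $\la^{\mathrm R}$; analogous filtrations describe $\res^{r,s}_{r,s-1} C_{r,s}(\la)$, $\ind^{r+1,s}_{r,s} C_{r,s}(\la)$ and $\ind^{r,s+1}_{r,s} C_{r,s}(\la)$.

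Next I would carry out the key content calculation. Expanding the definition of $z_{r,s}(\mu)$, if $\mu$ is obtained from $\la$ by removing a box of content $c$ from $\la^{\mathrm L}$ the horizontal-strand parameter $t$ is unchanged and $z_{r-1,s}(\mu) = z_{r,s}(\la) - c$; while if $\mu$ is obtained by adding a box of content $c$ to $\la^{\mathrm R}$ the parameter $t$ drops by one, yielding $z_{r-1,s}(\mu) = z_{r,s}(\la) + c + \delta$. Taking $c = i$ in the first case and $c = -i - \delta$ in the second, both expressions equal $z_{r,s}(\la) - i$, and these two situations precisely encode $\mu \stackrel{i}{\rightarrow} \la$. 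Three parallel calculations match up Lemma~\ref{altdef}(2)--(4) with the arrows $\mu \stackrel{i}{\leftarrow} \la$, $\la \stackrel{i}{\rightarrow} \mu$ and $\la \stackrel{i}{\leftarrow} \mu$, respectively.

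Putting these together via Lemma~\ref{altdef}(1): since $z_{r,s}$ acts on $C_{r,s}(\la)$ by the scalar $z_{r,s}(\la)$, the refined module $\ires^{r,s}_{r-1,s} C_{r,s}(\la)$ is the generalised $(z_{r,s}(\la) - i)$-eigenspace of $z_{r-1,s}$ on $\res^{r,s}_{r-1,s} C_{r,s}(\la)$. The central element $z_{r-1,s}$ preserves the unrefined filtration and acts on each cell-module section as a scalar, so the filtration restricts to this generalised eigenspace, and its sections are the direct sums of the $C_{r-1,s}(\mu)$ with $\mu \stackrel{i}{\rightarrow} \la$. Since a Young diagram admits at most one removable (respectively addable) box of any prescribed content, each layer contributes at most one summand, giving a filtration of length at most two; the inequalities $|(\mu')^{\mathrm L}| < |(\mu'')^{\mathrm L}|$ and $|(\mu')^{\mathrm R}| < |(\mu'')^{\mathrm R}|$ drop out of the box count, and the placement of $\mu'$ in the submodule position is inherited from the CDDM filtration. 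The three remaining assertions follow identically using Lemma~\ref{altdef}(2)--(4).

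The most delicate point is the uniqueness of $M'$ in (a) and of the quotient-realising submodule in (b). For (a), any submodule $N \subseteq M$ isomorphic to $C_{r-1,s}(\mu')$ yields via composition with $M \twoheadrightarrow M/M'$ a morphism $C_{r-1,s}(\mu') \to C_{r-1,s}(\mu'')$; since $t_{\mu'} > t_{\mu''}$, a cellular-algebra argument using that $C_{r-1,s}(\mu'')$ is annihilated by the ideal $J^{t_{\mu'}}_{r-1,s}$ (which acts nontrivially on $C_{r-1,s}(\mu')$) forces this morphism to vanish, so $N \subseteq M'$, and $N = M'$ then follows from Lemma~\ref{schur}. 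The dual argument treats (b). This final cellular-vanishing step is the main obstacle; the rest of the proof is routine residue bookkeeping.
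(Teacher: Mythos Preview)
Your argument for the existence of the filtrations and the bound on their length is essentially the paper's: you correctly combine \cite[Theorem~3.3]{CDDM} with Lemma~\ref{eigenvalue} and Lemma~\ref{altdef}, and the content computation is exactly right.

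For the uniqueness in (a)--(b) you take a genuinely different route. The paper argues via adjunction: for instance in case (a) with $M = \ires^{r,s}_{r-1,s} C_{r,s}(\la)$, it computes
\[
\dim \hom_{B_{r-1,s}(\delta)}\big(C_{r-1,s}(\mu'),\, M\big)
= \dim \hom_{B_{r,s}(\delta)}\big(\iind^{r,s}_{r-1,s} C_{r-1,s}(\mu'),\, C_{r,s}(\la)\big),
\]
observes that $\iind^{r,s}_{r-1,s} C_{r-1,s}(\mu') \cong C_{r,s}(\la)$ (the filtration in (3) collapses to a single section here, since $(\mu')^{\mathrm R} = \la^{\mathrm R}$ has no removable box of content $-i-\delta$), and then invokes Lemma~\ref{schur}. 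Your approach instead proves $\hom(C_{r-1,s}(\mu'), C_{r-1,s}(\mu'')) = 0$ directly via the ideal filtration. This is a perfectly good alternative and avoids the adjunction step, but note that your phrasing ``acts nontrivially on $C_{r-1,s}(\mu')$'' is not quite what you need: you must show $J^{t_{\mu'}}_{r-1,s} \cdot C_{r-1,s}(\mu') = C_{r-1,s}(\mu')$, since only then does any morphism to a module annihilated by $J^{t_{\mu'}}_{r-1,s}$ vanish. This is true --- the element $\sigma$ constructed in the proof of Lemma~\ref{schur} lies in $J^{t_{\mu'}}_{r-1,s}$ and fixes a generator of $C_{r-1,s}(\mu')$ --- but you should say so explicitly. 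With that correction your argument is complete; the paper's adjunction trick is perhaps more streamlined, while yours stays closer to the cellular machinery.
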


\begin{proof}
The existence of the filtrations
follows from 
\cite[Theorem 3.3]{CDDM}
 and Lemmas~\ref{eigenvalue}--\ref{altdef}.
The fact that the filtrations are of length at most two follows because $\la^{\mathrm L}$ has at most one addable or removable box of content $i$
and $\la^{\mathrm R}$ has at most one addable or removable box of content $-i-\delta$.

For the final uniqueness statements (a)--(b), we just explain the proof of (a)
for $M = \ires^{r,s}_{r-1,s} C_{r,s}(\la)$, since the 
other cases are similar. 
So 
assume both that $\la^{\mathrm L}$ has an removable box of content $i$ (hence $r > 0$) and $\la^{\mathrm R}$ has an addable box of content $-i-\delta$. Let $\mu'$ and $\mu''$
be obtained from $\la$ by removing the former box from $\la^{\mathrm L}$ or by adding the latter box to $\la^{\mathrm R}$, respectively.
The existence argument indicated in the previous paragraph
actually shows that $M$ has a submodule $M' \cong C_{r-1,s}(\mu')$
such that $M / M' \cong C_{r-1,s}(\mu'')$.
To get the uniqueness, it remains to observe that
$$
\dim \hom_{B_{r-1,s}(\delta)}(C_{r-1,s}(\mu'), \ires^{r,s}_{r-1,s} C_{r,s}(\la)) = 1,
$$
as follows from Lemma~\ref{schur}
using 
the fact that
$\iind^{r,s}_{r-1,s} C_{r-1,s}(\mu') \cong C_{r,s}(\la)$
and adjointness.
\end{proof}

\phantomsubsection{Jucys-Murphy elements}
The elements (\ref{xl})--(\ref{xr}) are examples of {\em Jucys-Murphy
elements} in the walled Brauer algebra. 
The definition of these elements in general depends also on a fixed choice
of a sequence $R  = R^{(1)} R^{(2)} \cdots R^{(r+s)}$
consisting of $r$ $E$'s and $s$ $F$'s, i.e. an element of the set $\Seq_{r,s}$
from the introduction.
The corresponding Jucys-Murphy elements
are denoted $x_1^{R},\dots,x_{r+s}^{R}$ and are defined recursively
as follows.
The base case $r+s=0$ is vacuous.
For the induction step,
let $RE$ and $RF$ be the sequences obtained from $R$
by adding an $E$ or an $F$ to the end.
Assuming $x_1^R,\dots,x_{r+s}^R \in B_{r,s}(\delta)$
have been defined already, let
$x_1^{RE},\dots,x_{r+s+1}^{RE} \in B_{r+1,s}(\delta)$
and
$x_1^{RF},\dots,x_{r+s+1}^{RF} \in B_{r,s+1}(\delta)$
be defined by setting
\begin{align}\label{jm1}
x_a^{RE} := \iota_{r,s}^{r+1,s}(x_a^{R}),
\qquad
x^{RE}_{r+s+1} := x_{r,s}^{r+1,s},\\
x_a^{RF} := \iota_{r,s}^{r,s+1}(x_a^{R}),
\qquad
x^{RF}_{r+s+1} := x_{r,s}^{r,s+1},
\label{jm2}\end{align}
for $a=1,\dots,r+s$.
When it is not ambiguous,
we drop the superscript $R$, denoting the Jucys-Murphy
elements in $B_{r,s}(\delta)$ simply by $x_1,\dots,x_{r+s}$.
Note always that $x_1=0$.

It is clear from the definition that $x_1,\dots,x_{r+s}$
generate a finite dimensional commutative subalgebra 
of $B_{r,s}(\delta)$.
For $\bi \in \C^{r+s}$, define the {\em weight idempotent}
$e(\bi)$ to be the unique idempotent in this commutative
subalgebra with the property that
$e(\bi) M = M_\bi$ for any finite dimensional module $M$,
where
\begin{equation}\label{jmid}
M_\bi = \left\{v\in M\:\bigg|\:
\begin{array}{l}
\text{$(x_a - i_a)^N v = 0$ for $N \gg 0$ if $R^{(a)} = E$,}\\
\text{$(x_a + i_a+\delta)^N v = 0$ for $N \gg 0$ if $R^{(a)} = F$}
\end{array}\right\}.
\end{equation}
The idempotents 
$\{e(\bi)\:|\:\bi \in \C^{r+s}\}$
are mutually orthogonal, all but finitely many of them are zero,
and they sum to $1$.

\begin{Remark}\rm
One can check that any symmetric polynomial in the Jucys-Murphy
elements
$x_1,\dots,x_{r+s}$ belongs to the center of $B_{r,s}(\delta)$.
For example, the central element $z_{r,s}$ from (\ref{zrs}) is
$x_1+\cdots+x_{r+s}$.
We conjecture by analogy with a well-known result about the symmetric
group
that every element of the center of $B_{r,s}(\delta)$ 
can be expressed as a symmetric polynomial in $x_1,\dots,x_{r+s}$.
\end{Remark}

\section{Arc algebras and special projective functors}\label{s4}

Next we review the construction and basic properties of 
Khovanov's arc algebras and their generalisations from \cite{K2}, \cite{BS1}.
Actually for notational convenience 
we work only with 
the universal arc algebra $K$, which
is the (infinite) direct sum of all the indecomposable arc algebras considered in \cite{BS1}.
We also recall the definition of projective
functors following \cite{BS2},
and introduce certain 
{special
projective
functors} which play a key role in all the subsequent applications.

\phantomsubsection{\boldmath The universal arc algebra}
Recall the definition of a {\em weight diagram} from the introduction.
There is a partial order $\leq$ on the set of weight diagrams called
the {\em Bruhat order}, which is
generated by the basic operation of switching a pair of (not
necessarily neighbouring) labels $\down\: \up$;
getting bigger in the Bruhat order means $\down$'s move to the right.
Also let $\sim$ be the equivalence relation 
defined by declaring that $\lambda\sim\mu$ if $\mu$ can be obtained
from $\lambda$ by permuting $\down$'s and $\up$'s (and doing nothing
to $\circ$'s and $\cross$'s).
We refer to $\sim$-equivalence classes of weight diagrams as {\em blocks}.

Given any weight diagram $\lambda$, there is a general procedure to close
$\lambda$ below to obtain its {\em cup diagram} $\underline{\lambda}$; see \cite[$\S$2]{BS1}.
Briefly, to construct $\underline{\lambda}$, 
one connects as many
$\down\: \up$ pairs of vertices as possible together 
by anti-clockwise cups then adds
vertical rays down to infinity at all other vertices labelled $\up$ or 
$\down$,
without allowing any crossings of cups and/or rays.
For example, if
$$
\begin{picture}(53,10)
\put(-108,0.5){$\la=\cdots$}
\put(152,0.5){$\cdots$}
\put(-68.8,3){\line(1,0){215}}
\put(-67.7,3.1){$\down$}
\put(-44.7,-1.6){$\up$}
\put(46.8,1){$\cross$}
\put(93.2,.2){$\circ$}
\put(-21.7,-1.6){$\up$}
\put(70.3,-1.6){$\up$}
\put(1.3,3.1){$\down$}
\put(116.3,-1.6){$\up$}
\put(139.3,3.1){$\down$}
\put(24.3,3.1){$\down$}
\end{picture}
$$
(where all the omitted vertices to the left are labelled $\up$ and
the ones to the right are labelled $\down$)
then 
$$
\begin{picture}(53,30)
\put(-108,21){$\underline{\la}=\cdots$}
\put(152,21){$\cdots$}
\put(-68.8,24){\line(1,0){215}}
\put(142,24){\line(0,-1){28}}
\put(-53.5,24){\oval(23,23)[b]} 
\put(61.5,24){\oval(115,52)[b]}
\put(50,24){\oval(46,23)[b]}
\put(-19,24){\line(0,-1){28}} 
\end{picture}
$$
(with a ray at 
all other 
vertices).
There is a similar procedure to close a weight diagram 
$\mu$ above to obtain its {\em cap diagram} $\overline{\mu}$; 
it is just the mirror image of $\underline{\mu}$ in the horizontal axis.
We stress that the vertices in $\underline{\lambda}$
and $\overline{\mu}$
are not labelled. 

Given weight diagrams $\la,\mu\sim\al$,
we can concatenate to obtain 
the labelled diagrams $\underline{\la} \al$ and
$\al \overline{\mu}$.
We say that these diagrams 
are {\em consistently oriented}
if each cup or cap 
has exactly one endpoint labelled $\down$
and one labelled $\up$,
and moreover all the rays labelled $\up$ are to the left of
all the rays labelled $\down$.
Introduce the shorthand $\lambda \subset \alpha$
to indicate
that $\lambda \sim \alpha$ and $\underline{\la} \alpha$
is consistently oriented.
By \cite[Lemma 2.3]{BS1}, this implies automatically that $\lambda \leq \alpha$
in the Bruhat order, indeed, $\leq$ is the transitive closure of the relation $\subset$.
Similarly we write $\alpha \supset \mu$ if $\alpha\sim\mu $
and $\alpha \overline{\mu}$ is 
consistently oriented, which implies that $\alpha \geq \mu$.
Finally
if $\lambda \subset \alpha \supset \mu$ we can 
draw the oriented cup diagram $\underline{\la}\alpha$
{under} the oriented cap diagram $\alpha \overline{\mu}$
to obtain the {\em oriented circle diagram} $\underline{\la}\al\overline{\mu}$. Here is an example of such a diagram:
$$
\begin{picture}(53,56)
\put(-122,21){$\underline{\la}\al\overline{\mu}=\cdots$}
\put(152,21){$\cdots$}
\put(-44.7,23.1){$\down$}
\put(-67.7,18.4){$\up$}
\put(-21.7,18.4){$\up$}
\put(1.3,23.1){$\down$}
\put(70.3,23.1){$\down$}
\put(46.8,21){$\cross$}
\put(24.3,18.4){$\up$}
\put(93.2,20.4){$\circ$}
\put(116.3,18.4){$\up$}
\put(139.3,23.1){$\down$}
\put(-68.8,23){\line(1,0){215}}
\put(-53.5,23){\oval(23,23)[b]} 
\put(61.5,23){\oval(115,52)[b]}
\put(50,23){\oval(46,23)[b]}
\put(-19,23){\line(0,-1){23}} 
\put(-53.5,23){\oval(23,23)[b]} 
\put(-7.5,23){\oval(23,23)[t]} 
\put(-7.5,23){\oval(69,40)[t]}
\put(4,23){\oval(138,57)[t]}
\put(-19,23){\line(0,-1){28}} 
\put(119,23){\line(0,1){28}}
\put(142,-5){\line(0,1){56}}
\end{picture}
$$
We define the {\em degree}
of any consistently oriented diagram to be the total number of clockwise cups and caps that it contains; the example above is of degree $4$.

Now we follow the construction in \cite{BS1}
to define a positively graded algebra $K$, which we call the {\em
  universal arc algebra}.
As a graded vector space, $K$ has the homogeneous basis
\begin{equation}\label{base1}
\left\{(\underline{\la}\alpha\overline{\mu})\:\big|\:
\text{for all weight diagrams }\alpha\text{ and }\la\subset\alpha\supset\mu
\right\},
\end{equation}
and the degree of the
vector $(\underline{\la} \al \overline{\mu})$ 
is the degree of the underlying diagram.
For $\la \subset \al \supset \mu$ and $\nu \subset \be \supset \omega$,
the product 
$(\underline{\la}\al\overline{\mu})
(\underline{\nu}\be\overline{\omega})$
is zero whenever $\mu \neq \nu$. If $\mu = \nu$
the product
is computed
by drawing $\underline{\la} \alpha \overline{\mu}$ under $\underline{\nu}\beta \overline{\omega}$, joining up corresponding pairs of rays,
then iterating a certain (generalised)
{\em surgery procedure} to contract the symmetric
$\overline{\mu}\underline{\nu}$-section 
of the diagram to obtain a (possibly zero) sum of new basis vectors
of the form $(\underline{\la} \gamma \overline{\omega})$ with $\al \leq 
\ga \geq \be$.
This surgery procedure is explained in detail in \cite[$\S$6]{BS1}; see especially
the example \cite[(6.3)]{BS1} and also \cite[Corollary 4.5]{BS1}.
The fact that the multiplication in $K$ is well defined
independent of the order chosen for the surgery procedures,
and that it is associative, is established in \cite[$\S$4]{BS1} by comparing with
Khovanov's topological definition of the multiplication in terms
of the two-dimensional TQFT 
associated to the commutative Frobenius algebra
$H^*(\mathbb{P}^1,\C) \cong \C[x] / (x^2)$.

The degree zero subalgebra $K_0$ has a basis
consisting of mutually orthogonal primitive idempotents
\begin{equation}\label{locun}
\{e_\lambda\:|\:\text{for all weight diagrams }\lambda\}
\end{equation}
defined from $e_\lambda := (\underline{\lambda}\lambda \overline{\lambda})$.
For $\mu \subset \al \supset \nu$, we have that
\begin{align}\label{idem1}
e_\la ( \underline{\mu} \al \overline{\nu} )&=
\begin{cases}
( \underline{\mu}\al\overline{\nu} )&\text{if $\la=\mu$,}\\
0&\text{otherwise,}
\end{cases}
&( \underline{\mu}\al \overline{\nu} ) e_\la
&=
\begin{cases}
( \underline{\mu}\al\overline{\nu} ) &\text{if $\la = \nu$,}\\
0&\text{otherwise.}
\end{cases}
\end{align}
The algebra $K$ is not unital. Instead the
idempotents
(\ref{locun}) form a system of local units, i.e. we have that
$K = \bigoplus_{\la,\mu} e_\la K e_\mu$ summing over all weight
diagrams $\la$ and $\mu$.
This is clear from (\ref{idem1}).

For any block $\Ga$, we let
$K_\Ga := \bigoplus_{\la,\mu \in \Ga} e_\la K e_\mu$, which is a
subalgebra of $K$.
We then have that
\begin{equation}\label{blockdec}
K = \bigoplus_{\text{all blocks }\Ga} K_\Ga
\end{equation}
as an algebra.
Each of the subalgebras $K_\Ga$ is indecomposable; this follows by
\cite[(5.9)]{BS1} as $\subset$ generates the partial order $\sim$. 
So (\ref{blockdec}) is
the decomposition of the algebra $K$ into blocks.
If $\Ga$ is {finite} then $K_\Ga$ is a finite dimensional
unital algebra with $1 = \sum_{\la \in \Ga} e_\la$. If $\Ga$ is infinite
then $K_\Ga$ is only locally unital, and the idempotents
$\{e_\la\:|\:\la \in \Ga\}$ provide a system of local units.

\phantomsubsection{\boldmath Graded modules}
Let $\Mod{K}$ be the category
of graded left $K$-modules $M$ such that
\begin{itemize}
\item[(1)]
$M$ is locally finite dimensional, i.e. 
$\dim M_i < \infty$ for each $i \in \Z$;
\item[(2)]
$M$ is bounded below, i.e. $M_i = \{0\}$ for $i \ll 0$;
\item[(3)]
$M$ is 
locally unital
in the sense that $M = \bigoplus_{\lambda} e_\lambda M$ summing over
all weight diagrams $\la$.
\end{itemize}
We write $M \langle j \rangle$ for the module $M$ shifted up in degree 
by $j$, so $M \langle j\rangle_i = M_{i-j}$.
Also for $M, N \in \Mod{K}$ we write 
$\hom_{K}(M, N)_j$ for the space of all homogeneous
$K$-module homomorphisms of degree $j$, that is, the
ones that map $M_i$ to $N_{i+j}$ for each $i \in \Z$, then define the
graded vector space
\begin{equation*}
\hom_{K}(M,N) = \bigoplus_{j \in \Z} \hom_{K}(M,N)_j.
\end{equation*}

For a weight diagram $\mu$, 
we have the {\em projective indecomposable module}
$P(\mu) := K e_\mu$,
which has basis
\begin{equation}\label{pbj}
\{(\underline\la \alpha \overline{\mu})\:|\:
\text{for all 
$\la \subset \alpha$ such that $\alpha \supset \mu$}\}.
\end{equation}
The 
vectors $(\underline{\la} \al \overline{\mu})$ from this basis
with $\alpha \neq \mu$
span a proper submodule of $P(\mu)$,
and the corresponding quotient is the
{\em standard module}
$V(\mu)$; see \cite[Theorem 5.1]{BS1}.
Finally we have the one-dimensional 
module $L(\mu) := P(\mu) / \rad P(\mu)
= V(\mu) / \rad V(\mu)$. The modules $\{L(\mu)\}$ give a full set of pairwise non-isomorphic irreducible $K$-modules (up to grading shift).
For later use, let \begin{equation}\label{proj}
p_\mu:P(\mu) \twoheadrightarrow L(\mu)
\end{equation} 
denote the canonical 
quotient map.

\begin{Theorem}[{\cite{BS1},\cite{BS2}}]\label{basict}
The following hold for weight diagrams $\lambda$ and $\mu$.
\begin{itemize}
\item[(1)]
The graded composition multiplicity $d_{\la,\mu}(q)
= \sum_{i \in \Z} d_{\la,\mu}^{(i)} q^i$
defined from 
$d_{\la,\mu}^{(i)} := [V(\mu):L(\la)\langle i 
\rangle]$ 
is given explicitly by
$$
d_{\la,\mu}(q) = \left\{
\begin{array}{ll}
q^{\deg(\underline{\la}\mu)}&\text{if $\la \subset \mu$,}\\
0&\text{otherwise.}
\end{array}
\right.
$$
\item[(2)]
The projective indecomposable module $P(\la)$ has a 
finite length filtration whose sections are
isomorphic to degree-shifted standard modules, with
$V(\la)$ 
at the top and 
$V(\mu)\langle j \rangle$ appearing with 
multiplicity $d_{\la,\mu}^{(j)}$.
\item[(3)]
Let $p_{\lambda,\mu}(q) = \sum_{i \geq 0}
p_{\lambda,\mu}^{(i)} q^i \in \mathbb{N}[q]$ 
be the polynomial
from \cite[(5.4)]{BS2}, which
is a certain Kazhdan-Lusztig polynomial for a
Grassmannian (see \cite[Remark 5.1]{BS2})
with $p_{\lambda,\mu}(q) = 0$
if $\la \not\leq \mu$.
Then the matrices $(d_{\lambda,\mu}(q))_{\lambda,\mu}$
and $(p_{\lambda,\mu}(-q))_{\lambda,\mu}$
are inverse to each other.
\item[(4)]
The standard module $V(\lambda)$ has a canonical linear projective resolution
$$
\cdots \rightarrow P_1(\lambda) \rightarrow P_0(\lambda) \rightarrow V(\lambda)
\rightarrow 0
$$
in which $P_0(\lambda) = P(\lambda)$ and $P_i(\lambda) =
\bigoplus_{\mu \geq \la} p_{\lambda,\mu}^{(i)} P(\mu) \langle i\rangle.$
Moreover the
irreducible modules $L(\lambda)$
also have linear projective resolutions, hence the algebra $K$ is Koszul.
\end{itemize}
\end{Theorem}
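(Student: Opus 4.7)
The plan is to derive (1), (2), and (3) essentially formally from the cellular basis combinatorics of $K$, and to establish (4) by an explicit construction of a linear projective resolution.

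For (1), I would work with the explicit realisation of $V(\mu)$ as the quotient of $P(\mu) = Ke_\mu$ by the submodule spanned by those basis vectors $(\underline{\la}\al\overline{\mu})$ with $\al \neq \mu$. The quotient has basis $\{v_\la : \la \subset \mu\}$, where $v_\la$ is the image of $(\underline{\la}\mu\overline{\mu})$, and $\deg(v_\la) = \deg(\underline{\la}\mu)$ since the cap portion $\mu\overline{\mu}$ is anticlockwise oriented and contributes nothing to the total degree. By (\ref{idem1}), $e_\nu v_\la$ vanishes unless $\nu = \la$, so each $v_\la$ sits in its own weight space. Any linear extension of the Bruhat order on $\{\la : \la \subset \mu\}$ then yields a filtration of $V(\mu)$ with one-dimensional sections isomorphic to $L(\la)\langle \deg(\underline{\la}\mu)\rangle$, producing the formula for $d_{\la,\mu}(q)$.

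For (2), I would filter $P(\la) = Ke_\la$ using the explicit basis (\ref{pbj}), grouping the basis vectors $(\underline{\nu}\al\overline{\la})$ according to the ``middle'' weight $\al \supset \la$ and filtering compatibly with a linear extension of $\leq$. A surgery-based computation shows that the section attached to each $\al$ is isomorphic as a graded left $K$-module to $V(\al)\langle \deg(\al\overline{\la})\rangle$. The reflection symmetry of cup and cap diagrams gives $\deg(\al\overline{\la}) = \deg(\underline{\la}\al)$, which is the exponent controlling $d_{\la,\al}(q)$ from (1), so $V(\al)\langle j\rangle$ appears in the standard filtration of $P(\la)$ with multiplicity $d_{\la,\al}^{(j)}$. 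Part (3) is then a formal consequence: (1) and (2) together give graded BGG reciprocity, so the graded Cartan matrix of $K$ is determined by the matrix $D := (d_{\la,\mu}(q))$, and the inversion identity for the Kazhdan-Lusztig polynomials of type $A$ Grassmannians supplies the matrix identity between $(d_{\la,\mu}(q))$ and $(p_{\la,\mu}(-q))$.

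Part (4) is the main obstacle. My plan is to construct the linear projective resolution of $V(\la)$ inductively along the Bruhat order, following the BGG-resolution strategy. The zeroth map is the canonical surjection $P(\la) \twoheadrightarrow V(\la)$; its kernel has a standard filtration whose lowest layers are the modules $V(\mu)\langle 1\rangle$ indexed by the covers $\mu$ of $\la$, each contributing a cup diagram $\underline{\la}\mu$ of degree $1$. I would cover this kernel by $\bigoplus_{\mu} p_{\la,\mu}^{(1)} P(\mu)\langle 1\rangle$ via surgery-based maps. The inductive step mirrors the recursion for the Kazhdan-Lusztig polynomials of Grassmannians, producing $P_i(\la) = \bigoplus_{\mu \geq \la} p_{\la,\mu}^{(i)} P(\mu)\langle i\rangle$, with linearity enforced by the degree bookkeeping. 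The delicate points are to choose the differentials explicitly using the multiplication in $K$ and to verify exactness; the latter I would check by comparing graded dimensions against the inversion identity of (3). Once the standards admit linear projective resolutions, Koszulity of $K$ follows by the Beilinson-Ginzburg-Soergel criterion: combine these linear standard resolutions with the standard filtrations of the projectives from (2) to assemble linear projective resolutions of all simples $L(\la)$.
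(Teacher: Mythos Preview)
The paper does not prove this theorem at all: its ``proof'' is a list of citations to \cite{BS1} and \cite{BS2}, where these statements are established. So there is no argument in the present paper to compare against; what you have written is a sketch of how one might reproduce those cited results.

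Your outlines for (1) and (2) are in line with the cellular-basis arguments in \cite{BS1}, and the reduction of (3) to an inversion identity for Grassmannian Kazhdan--Lusztig polynomials is the right idea (though note that in \cite{BS2} the polynomials $p_{\lambda,\mu}(q)$ are \emph{defined} by an explicit recursion, and the inversion is then a combinatorial computation rather than an appeal to general KL theory; your phrasing ``formal consequence'' undersells the work involved).

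The real gap is in (4). You propose to verify exactness of the candidate linear complex $P_\bullet(\lambda)\to V(\lambda)$ by ``comparing graded dimensions against the inversion identity of (3).'' That only checks the Euler characteristic $\sum_i (-1)^i [P_i(\lambda)] = [V(\lambda)]$ in the Grothendieck group, which does not imply exactness: a complex with vanishing Euler characteristic in each graded piece can still have nonzero homology in cancelling pairs. In \cite{BS2} exactness is obtained not by dimension counting but by an inductive construction: one produces short exact sequences relating the standard modules (coming from the combinatorial recursion that defines the $p_{\lambda,\mu}(q)$), and splices together already-known linear resolutions for smaller standards. Each inductive step is a genuine exactness statement, not a numerical identity. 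Your final paragraph gestures at this (``the inductive step mirrors the recursion for the Kazhdan--Lusztig polynomials''), but then retreats to the dimension check, which will not close the argument. You need to commit to the inductive/splicing route and actually verify that the cone of the comparison map is acyclic at each stage. The passage from linear resolutions of standards to Koszulity via \cite{ADL}/\cite{BGS} is fine.
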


\begin{proof}
See \cite[Theorem 5.2]{BS1}, \cite[Theorem 5.1]{BS1}, \cite[Corollary 5.4]{BS2}
and \cite[Theorem 5.3, Corollary 5.13]{BS2}, respectively.
\end{proof}

\phantomsubsection{Geometric bimodules and projective functors}
Recall from \cite[$\S$2]{BS2} that a {\em crossingless matching}
is a diagram obtained by drawing 
a cap diagram underneath a cup diagram,
then joining 
pairs of rays according to some order-preserving bijection between 
the vertices at their endpoints. This means that $t$ is 
a union of finitely many cups and caps and (possibly infinitely many)
line segments.
Let $\cups(t)$ and $\caps(t)$ denote the number of cups and caps
in $t$, respectively, and let 
$t^*$ be the mirror image of $t$ in its horizontal axis.
Given blocks $\De$ and $\Ga$,
a {\em $\De\Ga$-matching} means a crossingless matching
$t$ such that the vertices 
at the bottom (resp.\ top) of $t$ that are not at the ends of 
cups, caps or line segments
are in exactly the same positions as the vertices labelled $\circ$ or $\cross$
in the weight diagrams from $\De$ (resp. $\Ga$).

Assume we are 
given a $\De\Ga$-matching $t$.
For $\alpha \in \Delta$ and $\beta \in \Gamma$,
we can form the labelled diagram $\al  t \beta$ (so $\al$ labels the bottom number line and $\beta$ labels the top).
We say $\alpha t \beta$ is {\em consistently oriented}
if each cup and cap has exactly one endpoint labelled $\down$ and one labelled
$\up$, and the endpoints of each line segment are labelled by the same 
symbol.
We write $\alpha \stackrel{t}{\longline} \beta$
to indicate 
 that $\alpha \in \Delta, \beta \in \Gamma$
and $\alpha t \beta$ is consistently oriented.
This is a slight abuse of notation:
we are assuming that $t$ remembers the blocks $\De$ and $\Ga$.
For $\la \subset \alpha \stackrel{t}{\longline} \beta \supset \mu$, we 
can concatenate (in order from bottom to top as usual)
to get the composite diagram $\underline{\la}\alpha t \beta \overline{\mu}$.

The {\em geometric bimodule} 
$K^t_{\De\Ga}$
is a certain graded 
$(K,K)$-bimodule constructed explicitly in \cite[$\S$3]{BS2}
(extending Khovanov's definition in a special case from \cite{K2}).
As a graded vector space, it has homogeneous basis
\begin{equation*}
\big\{
(\underline{\la} \alpha t \beta \overline{\mu})\:\big|\:
\text{for all }
\la \subset \alpha \stackrel{t}{\longline} \beta \supset \mu
\big\}
\end{equation*}
where the degree of a basis vector is the degree of the diagram as 
above.
The bimodule structure is defined as follows.
Given basis vectors
$(\underline{\nu} \alpha \overline{\omega}) \in K$ 
and
$(\underline{\la} \ga t \beta \overline{\mu}) \in K^t_{\De\Ga}$,
the product
$(\underline{\nu} \ga \overline{\omega}) (\underline{\la} \al t \beta \overline{\mu})$
is zero unless $\omega = \la$, in which case it is computed by
drawing $\underline{\nu} \ga \overline{\omega}$ underneath
$\underline{\la}\al t \beta\overline{\mu}$
then applying the surgery procedure to contract
the symmetric $\overline{\omega}\underline{\la}$-section of the diagram.
The product
$(\underline{\la} \al t \beta \overline{\mu})
(\underline{\nu} \alpha \overline{\omega})$
is defined similarly, this time putting
$\underline{\nu} \alpha \overline{\omega}$
on top.
Note in particular that all blocks from (\ref{blockdec})
other than $K_\Delta$
act on the left by zero; similarly 
all blocks other than $K_\Ga$ act on the right by zero.

The next two lemmas summarise some basic facts about geometric bimodules.
For the first, suppose we are given 
another block $\Pi$ and a $\Pi\De$-matching $s$.
We can glue $s$ under $t$ 
to obtain the {\em composite matching} 
$st$, which has 
three number lines corresponding in order from bottom to top to the blocks
$\Pi, \De$ and $\Ga$.
Its {\em reduction} $\red(st)$ is the $\Pi\Ga$-matching obtained 
by removing the middle number line and all internal circles; see \cite[$\S$2]{BS2}
for an example.
Let
$K^{st}_{\Pi\De\Ga}$ be the graded vector space with homogeneous basis
\begin{equation*}
\big\{(\underline{\la} \alpha s \beta t \gamma \overline{\mu})\:\big|\:\text{for all }\la \subset \alpha \stackrel{s}{\longline} \beta \stackrel{t}{\longline} \gamma \supset \mu\big\}.
\end{equation*}
Mimicking the construction in the previous paragraph,
we make $K^{st}_{\Pi\De\Ga}$ into a graded $(K,K)$-bimodule.

\begin{Lemma}[{\cite[Theorem 3.5(iii), Theorem 3.6]{BS2}}]\label{composition}
Let 
$s$ be a $\Pi\De$-matching,
$t$ be a $\De\Ga$-matching
and $u := \red(st)$.
There are canonical graded bimodule isomorphisms
$$
K_{\Pi\De}^s \otimes_{K} K_{\De\Ga}^t\cong K_{\Pi\De\Ga}^{st}
\cong \bigoplus_{\beta\:\text{with}\:\al \stackrel{s}{\,\longline\,} \beta \stackrel{t}{\,\longline\,} \ga}
K_{\Pi\Ga}^{u}\langle 
\deg(\al s \be t \ga) - \deg(\al u \ga)\rangle
$$
where in the direct sum 
$\al$ and $\ga$ are fixed weight diagrams satisfying
$\al \stackrel{u}{\longline} \ga$; the sum should be 
interpreted as zero if no such $\al$ and $\ga$ exist.
\end{Lemma}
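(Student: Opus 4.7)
The plan is to construct both isomorphisms explicitly on the diagrammatic bases, with the generalised surgery procedure of \cite[$\S$4]{BS1} as the main computational tool.

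For the first isomorphism $K^s_{\Pi\De} \otimes_K K^t_{\De\Ga} \cong K^{st}_{\Pi\De\Ga}$, I would define the map on basis elements by sending $(\underline{\la}\al s \be \overline{\mu}) \otimes (\underline{\nu}\ga t \zeta \overline{\omega})$ to zero if $\mu \neq \nu$, and otherwise to the element obtained by stacking the two diagrams and then contracting the symmetric $\overline{\mu}\underline{\nu}$-section by iterating the surgery procedure. The result is naturally a sum of basis vectors of $K^{st}_{\Pi\De\Ga}$ with middle number line labelled by some $\be' \in \De$. To verify this descends to a well-defined homomorphism one invokes the same ``independence of the order of surgeries'' argument already used for associativity in \cite[$\S$4]{BS1}. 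For bijectivity, use that $K^s_{\Pi\De}$ decomposes as a right $K$-module as a direct sum of projectives $P(\be)$ for $\be \in \De$, so the tensor product has a basis of the form $(\underline{\la} \al s \be \overline{\be}) \otimes (\underline{\be} \be t \zeta \overline{\omega})$, and under the map these go to the basis elements $(\underline{\la} \al s \be t \zeta \overline{\omega})$ of $K^{st}_{\Pi\De\Ga}$ in a degree-preserving way.

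For the second isomorphism, I would fix $\al, \ga$ with $\al \stackrel{u}{\longline} \ga$ and exhibit a degree-homogeneous bijection between the set of basis diagrams $(\underline{\la}\al s \be t \ga \overline{\mu})$ as $\be$ varies through middle weights satisfying $\al \stackrel{s}{\longline} \be \stackrel{t}{\longline} \ga$ and certain graded copies of the basis $(\underline{\la}\al u \ga \overline{\mu})$ of $K^u_{\Pi\Ga}$. The bijection is given by the reduction procedure: strip out the middle number line from the composite matching and remove internal circles to get $u = \red(st)$; the key bookkeeping is that by construction of $\red$ the orientations on $\al u \ga$ are determined, and the degree drops by exactly the number of clockwise cups/caps lost during the reduction, which is $\deg(\al s \be t \ga) - \deg(\al u \ga)$. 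The resulting grading shift is absorbed into the $\langle\cdot\rangle$ on the right-hand side.

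The main obstacle will be verifying compatibility of this reduction with the bimodule structure: one must check that the surgery procedure defining the left and right $K$-actions on $K^{st}_{\Pi\De\Ga}$, which contracts $\overline{\mu}\underline{\mu}$ or $\overline{\omega}\underline{\omega}$-sections only at the outermost number lines, commutes with the inner reduction collapsing the middle line. In principle this should follow from the topological description in terms of the two-dimensional TQFT on the Frobenius algebra $\C[x]/(x^2)$ underlying Khovanov's construction \cite{K2}, since both operations correspond to composing cobordisms in disjoint regions; but to make it precise diagrammatically, I would proceed by induction on the number of circles removed in each surgery step, reducing to checking a few local moves where a cup or cap in the middle interacts with an outer surgery. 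Once that case analysis is complete, the two isomorphisms are manifestly bimodule maps and the theorem follows.
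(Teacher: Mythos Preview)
The paper does not give its own proof of this lemma; it is quoted directly from \cite[Theorem 3.5(iii), Theorem 3.6]{BS2}, so there is no in-paper argument to compare against. Your sketch is in the spirit of the proofs in \cite{BS2}: the first map is the surgery map with well-definedness coming from associativity of surgery, and the second isomorphism is the reduction bijection on diagram bases with exactly the degree shift you describe, the bimodule compatibility following ultimately from the TQFT interpretation.

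There is, however, a genuine gap in your bijectivity argument for the first isomorphism. You assert that $K^s_{\Pi\De}$ decomposes as a right $K$-module into copies of $e_\be K$ indexed by triples $(\la,\al,\be)$ with generator $(\underline{\la}\,\al\, s\, \be\, \overline{\be})$, and hence that the tensors $(\underline{\la}\,\al\, s\,\be\,\overline{\be}) \otimes (\underline{\be}\,\be\, t\,\zeta\,\overline{\omega})$ form a basis of $K^s_{\Pi\De}\otimes_K K^t_{\De\Ga}$. But for fixed $(\la,\al,\be)$ the span of $\{(\underline{\la}\,\al\, s\,\be\,\overline{\mu}):\be\supset\mu\}$ is not a right $K$-submodule --- the right action can alter $\al$ through line segments of $s$ --- and its dimension does not equal $\dim e_\be K$ (the latter also counts a weight $\ga$ with $\be\subset\ga$). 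What your argument actually establishes is \emph{surjectivity} of the surgery map: your ``good'' tensors hit the basis of $K^{st}_{\Pi\De\Ga}$ bijectively, since contracting $\overline{\be}\,\underline{\be}$ with all anti-clockwise circles is the identity. Injectivity still requires a separate dimension count or an explicit inverse. In \cite{BS2} the right-projective structure of $K^s_{\Pi\De}$ is analysed via upper reduction $\upperred(s\overline{\mu})$, and it is that decomposition (not the one you propose) which yields the matching dimension.
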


\begin{Lemma}[{\cite[Theorem 4.7]{BS2}}]\label{adunctions}
There is a canonical graded bimodule isomorphism 
$K^{t}_{\De\Ga}\langle -\caps(t) \rangle 
\cong
\hom_{K}(K^{t^*}_{\Ga\De}\langle-\caps(t)\rangle, K).$
\end{Lemma}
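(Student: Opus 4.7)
The plan is to construct the claimed bimodule isomorphism directly on the distinguished bases, using a ``trace pairing'' coming from the stacking operation. Given a basis vector $x = (\underline{\la}\al t \be \overline{\mu}) \in K^t_{\De\Ga}$, I would define a candidate left-$K$-linear map $\theta_x \colon K^{t^*}_{\Ga\De} \to K$ by the rule that $\theta_x(y)$, for a basis vector $y = (\underline{\nu}\ga t^* \delta \overline{\om}) \in K^{t^*}_{\Ga\De}$, vanishes when $\mu \neq \nu$, and otherwise is obtained by drawing $x$ below $y$, joining the aligned rays across the $\overline{\mu}\underline{\nu}$-interface, and iteratively applying the generalised surgery procedure from \cite[$\S$6]{BS1} to contract the entire central $(\overline{\mu}\underline{\mu}, t, t^*)$-region until only a sum of basis vectors of $K$ of the form $(\underline{\la}\eta\overline{\om})$ remains. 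Setting $\theta(x) := \theta_x$ then yields the candidate bimodule map $\theta : K^t_{\De\Ga} \to \hom_K(K^{t^*}_{\Ga\De}, K)$.

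My next step would be to verify that $\theta$ is a homogeneous bimodule homomorphism of the correct degree. Independence of the order of surgeries in the construction of $\theta_x(y)$ is immediate from the associativity of multiplication in $K$, itself justified by the underlying TQFT description. Left $K$-linearity of $\theta_x$ follows because the left action on $K^{t^*}_{\Ga\De}$ only affects the top $\underline{\nu}$-portion of $y$ and so may be commuted past the central surgeries by the same associativity; right $K$-linearity of $x \mapsto \theta_x$ is then checked symmetrically by commuting actions through the $\overline{\om}$-portion. The degree computation reduces to the observation that each of the $\caps(t)$ caps of $t$ meets, after stacking, the corresponding cup of $t^*$ to form a closed circle whose removal via the Frobenius algebra $\C[x]/(x^2)$ contributes a total degree shift which is balanced precisely by placing $\langle -\caps(t)\rangle$ on each side of the asserted isomorphism.

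Finally, bijectivity would be established by exhibiting an explicit inverse: each $\phi \in \hom_K(K^{t^*}_{\Ga\De}, K)$ is determined by its values on a distinguished set of left $K$-module generators of $K^{t^*}_{\Ga\De}$ (one for each weight diagram $\om$ occurring as the bottom label of a basis vector), and reading off $\phi$ on those generators recovers a unique preimage in $K^t_{\De\Ga}$. Alternatively, Lemma~\ref{composition} applied to $K^t_{\De\Ga}\otimes_K K^{t^*}_{\Ga\De}$ provides the graded dimension count needed to upgrade injectivity of $\theta$ to bijectivity. The main technical obstacle I anticipate is the careful bookkeeping of degrees contributed by the closed circles formed during the central contraction: each such circle produces a mixture of degree $0$ and degree $2$ terms from the Frobenius structure on $\C[x]/(x^2)$, and verifying that these aggregate into exactly the shift $\langle -\caps(t)\rangle$ prescribed in the statement is the heart of the argument.
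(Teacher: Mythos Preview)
The paper does not prove this lemma at all: it is simply quoted from \cite[Theorem 4.7]{BS2}, with no argument given. So there is nothing in the paper's own text to compare your proof to beyond the bare citation. Your approach---building a pairing $K^{t}_{\De\Ga}\times K^{t^*}_{\Ga\De}\to K$ by stacking diagrams and contracting---is indeed the idea behind the construction in \cite{BS2}, and is consistent with how the isomorphism is used later in this paper (e.g.\ in the proof of Lemma~\ref{adjun} and in the definition of the maps $f_{\alpha\beta\gamma}$ in \S\ref{sd}).

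One point to sharpen: your description ``iteratively apply the generalised surgery procedure to contract the entire central $(\overline{\mu}\underline{\mu},t,t^*)$-region'' is not quite right as stated. The surgery procedure of \cite[\S6]{BS1} contracts a symmetric $\overline{\mu}\underline{\mu}$-section, but the remaining $t|t^*$-portion is not of that form and cannot be handled by ordinary surgeries. What actually happens (and what the paper later packages as the \emph{extended surgery procedure} just after (\ref{allhom})) is: first check that each mirror-image pair of closed circles in $t$ and $t^*$ is oppositely oriented, returning zero otherwise; then replace $t\overline{\mu}$ and $\underline{\mu}t^*$ by their reductions $\upperred(t\overline{\mu})$ and $\lowerred(\underline{\mu}t^*)$; and only then apply ordinary surgery to the resulting symmetric section. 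This refinement is exactly the ``careful bookkeeping of degrees contributed by the closed circles'' you anticipate, and it is where the shift by $\caps(t)$ comes from. With that correction your sketch matches the construction in \cite{BS2}.
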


Now we introduce the {\em indecomposable projective functors}
\begin{align}\label{legg1}
G_{\De\Ga}^t &:= K_{\De\Ga}^t \langle -\caps(t)\rangle \otimes_{K} ?
:\Mod{K} \rightarrow \Mod{K},\\
\lonestar G_{\De\Ga}^t &:= 
K_{\Ga\De}^{t^*} \langle -\caps(t)\rangle \otimes_{K} ?
:\Mod{K} \rightarrow \Mod{K}.\label{legg2}
\end{align}
More generally, a {\em projective functor} means any endofunctor 
of $\Mod{K}$ that is isomorphic
to a direct sum of such functors (possibly shifted in degree).
By
Lemma~\ref{composition}, the composition of two projective functors is itself
a projective functor.

By Lemma~\ref{adunctions}, the indecomposable projective functor $G^{t}_{\De\Ga}$ is 
isomorphic to the functor
$\hom_{K}(K^{t^*}_{\Ga\De}\langle-\caps(t)\rangle, ?)$,
which is right adjoint to 
$\lonestar G^t_{\De\Ga}$ by adjointness of tensor and hom.
Hence there is a canonical adjunction making
$(\lonestar G_{\De\Ga}^t, G^{t}_{\De\Ga})$
into an adjoint pair of functors.
We deduce that $G^t_{\De\Ga}$ is exact.
Similarly so is 
\begin{equation}\label{other}
\lonestar G^t_{\De\Ga} \cong G^{t^*}_{\Ga\De} \langle \cups(t)-\caps(t)\rangle,
\end{equation} 
and both functors send projectives to projectives.

We note finally that projective functors map finite dimensional modules
to finite dimensional modules; see \cite[Corollary 4.12]{BS2}.

\phantomsubsection{Projective functors on irreducible modules}
Continue with $t$ being a 
$\De\Ga$-matching.
For $\la \in \De$ and $\mu \in \Ga$,
we can concatenate in order from bottom to top
as usual
to obtain the composite diagrams $\underline{\la} t$
and $t \overline{\mu}$.
The {\em lower reduction} $\lowerred(\underline{\la} t)$ 
is the ordinary cup diagram
obtained from $\underline{\la} t$ by removing the bottom number line
and all 
connected components 
that do not extend up to
the top number line.
Similarly the {\em upper reduction} $\upperred(t \overline{\mu})$
is the ordinary cap diagram obtained from $t \overline{\mu}$ 
by removing the top number line.

For $\mu \in \Ga$,
the $K$-module $G^t_{\De\Ga} P(\mu)$ is identified with 
$K^t_{\De\Ga} e_\mu \langle -\caps(t)\rangle$, hence it has the diagram basis
\begin{equation}\label{pfp}
\big\{(\underline{\lambda} \al t \beta \overline{\mu})\:\big|\:
\text{for all $\la\subset\al \stackrel{t}{\longline}\be$
such that $\be \supset \mu$}
\big\}.
\end{equation}
Applying $G^t_{\De\Ga}$ 
to (\ref{proj}),
we 
get a surjection
$G^t_{\De\Ga}(p_\mu):G^t_{\De\Ga} P(\mu) \twoheadrightarrow G^t_{\De\Ga} L(\mu)$
which identifies $G^t_{\De\Ga} L(\mu)$ 
with a quotient of $G^t_{\De\Ga} P(\mu)$.
The following theorem
describes this quotient explicitly.

\begin{Theorem}\label{ipf}
Let $t$ be a $\De\Ga$-matching and $\mu \in \Ga$.
Then $G^t_{\De\Ga} L(\mu)$
is the quotient of $G^t_{\De\Ga} P(\mu)$ by the submodule spanned by all the 
vectors from
(\ref{pfp}) in which either $\lowerred(\underline{\la}t) \neq \underline{\mu}$ or $\be \neq \mu$.
Hence $G^t_{\De\Ga} L(\mu)$ is finite dimensional
with basis given by 
the image of the vectors
\begin{equation}\label{klot}
\big\{(\underline{\la} \al t \mu \overline{\mu})\:\big|\:
\text{for all $\la \subset \al \stackrel{t}{\longline} \mu$
such that $\lowerred(\underline{\la}t) = \underline{\mu}$}\big\}.
\end{equation}
Moreover, $G^t_{\De\Ga} L(\mu)$ is non-zero if and only if there exists a 
(necessarily unique) $\lambda$ such that $\lambda \stackrel{t}{\longline} \mu$ and $\deg(\la t \mu) = 0$, in which case
$G^t_{\De\Ga} L(\mu)$ is a self-dual indecomposable module with
head 
$L(\la)\langle-\caps(t)\rangle$ and socle
$L(\la)\langle\caps(t)\rangle$.
\end{Theorem}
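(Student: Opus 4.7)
The plan is to apply the exact functor $G^t_{\De\Ga}$ to the short exact sequence
\[
0 \to R_\mu \to P(\mu) \stackrel{p_\mu}{\twoheadrightarrow} L(\mu) \to 0,
\]
where $R_\mu := \ker p_\mu$ is the graded radical of $P(\mu)$, which by Theorem~\ref{basict} and the explicit description of $V(\mu)$ has diagram basis $\{(\underline{\la}\al\overline{\mu}) : \la\subset\al\supset\mu,\,(\la,\al)\neq(\mu,\mu)\}$. After identifying $G^t_{\De\Ga}P(\mu)$ with $K^t_{\De\Ga}e_\mu\langle-\caps(t)\rangle$, exactness of $G^t_{\De\Ga}$ reduces the basis claim to showing that the submodule $K^t_{\De\Ga}\cdot R_\mu$ coincides with the submodule $N$ spanned by those basis vectors (\ref{pfp}) with $\be\neq\mu$ or $\lowerred(\underline{\la}t)\neq\underline{\mu}$.

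Both inclusions come from a direct surgery analysis using the multiplication rule of \cite[$\S$4,6]{BS1} for geometric bimodules. For $N\subseteq K^t_{\De\Ga}\cdot R_\mu$: when $\be\neq\mu$ the vector $(\underline{\be}\be\overline{\mu})$ lies in $R_\mu$ and the product $(\underline{\la}\al t\be\overline{\be})\cdot(\underline{\be}\be\overline{\mu})$ reduces cleanly to $(\underline{\la}\al t\be\overline{\mu})$, since the symmetric $\overline{\be}\underline{\be}$-section is a nest of uniformly oriented circles on which surgery acts trivially; when $\be=\mu$ but $\lowerred(\underline{\la}t)\neq\underline{\mu}$ one selects an intermediate $\al'\supset\mu$ with $\al'\neq\mu$ and realises $(\underline{\la}\al t\mu\overline{\mu})$ as a controlled surgery output from $(\underline{\la}? t ?\overline{\al'})\cdot(\underline{\al'}\al'\overline{\mu})$, the assumed failure of the lower reduction providing exactly the orientation data needed to survive the contraction. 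Conversely, for $K^t_{\De\Ga}\cdot R_\mu\subseteq N$, one expands any product $(\underline{\nu}\ga t\de\overline{\la})\cdot(\underline{\la}\al\overline{\mu})$ with $(\la,\al)\neq(\mu,\mu)$ by the surgery procedure and checks case by case that every resulting basis vector either has top weight $\al\neq\mu$, landing in $N$, or has $\al=\mu$ and a bottom cup diagram whose lower reduction differs from $\underline{\mu}$ because $\la\neq\mu$.

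The non-vanishing criterion now follows directly from (\ref{klot}): $G^t_{\De\Ga}L(\mu)\neq 0$ iff there exist $\la\subset\al\stackrel{t}{\longline}\mu$ with $\lowerred(\underline{\la}t)=\underline{\mu}$. Taking $\al=\la$ this becomes the existence of $\la\stackrel{t}{\longline}\mu$ with $\lowerred(\underline{\la}t)=\underline{\mu}$. Unpacking the ``anti-clockwise'' labelling rule for consistent orientations shows this is equivalent to $\deg(\la t\mu)=0$, and the same rule determines the labels of $\la$ uniquely in terms of $\mu$ and $t$, yielding the uniqueness assertion.

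For the structural claims, one observes that the basis vector $(\underline{\la}\la t\mu\overline{\mu})$ sits in the minimal degree $-\caps(t)$ of $G^t_{\De\Ga}L(\mu)$, generates the module (using Theorem~\ref{basict}(2) applied to $G^t_{\De\Ga}P(\mu)$ together with the quotient description), and projects to a one-dimensional head isomorphic to $L(\la)\langle-\caps(t)\rangle$. Self-duality of $G^t_{\De\Ga}L(\mu)$ follows because the bimodule $K^t_{\De\Ga}$ is preserved up to a $\cups(t)-\caps(t)$ grading shift by the mirror-symmetric anti-involution on $K$ from \cite[$\S$6]{BS1}, under which $L(\mu)$ is self-dual; this forces the socle to be $L(\la)\langle\caps(t)\rangle$ and implies indecomposability since head and socle are then both simple. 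The main obstacle is the surgery case analysis in the second paragraph, and specifically the construction of the auxiliary intermediate weight $\al'$ in the subcase $\be=\mu,\,\lowerred(\underline{\la}t)\neq\underline{\mu}$; once the diagram basis is pinned down the remaining assertions assemble formally from the adjunction and the duality.
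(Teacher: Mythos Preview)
Your approach differs substantially from the paper's, and the surgery analysis that you yourself flag as the ``main obstacle'' has a genuine gap.

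The paper does not attempt to establish the equality $N = K^t_{\De\Ga}\cdot R_\mu$ by a two-sided surgery argument. Instead it imports from \cite[Theorem~4.11, Corollary~4.12]{BS2} the Grothendieck group identity
\[
[G^t_{\De\Ga} L(\mu)]
= \sum_{\substack{\la \subset \al \stackrel{t}{\longline} \mu\\ \lowerred(\underline{\la} t) = \underline{\mu}}}
[L(\la)\langle \deg(\underline{\la}\al t\mu)-\caps(t)\rangle],
\]
together with the self-duality, indecomposability and head/socle statements. The identity gives $\dim G^t_{\De\Ga}L(\mu)$ directly, so only the one inclusion $N\subseteq\ker G^t_{\De\Ga}(p_\mu)$ is needed; equality follows by dimension. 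For $\be\neq\mu$ this inclusion comes from the description of $G^t_{\De\Ga}V(\mu)$ in \cite[Theorem~4.5]{BS2}. For $\lowerred(\underline{\la}t)\neq\underline{\mu}$ the Grothendieck group formula yields $e_\la\,G^t_{\De\Ga}L(\mu)=0$ immediately, so every vector with that bottom weight is killed.

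Your gap is exactly the subcase $\be=\mu$, $\lowerred(\underline{\la}t)\neq\underline{\mu}$. You propose to realise $(\underline{\la}\al t\mu\overline{\mu})$ as a controlled surgery output from $(\underline{\la}\,?\,t\,?\,\overline{\al'})\cdot(\underline{\al'}\al'\overline{\mu})$ for some auxiliary $\al'\neq\mu$, but you neither specify $\al'$ nor control the other terms the surgery produces, and right multiplication by an element of $R_\mu$ only alters the top portion of the diagram, so the condition on $\lowerred(\underline{\la}t)$ plays no visible role in your construction. This subcase is precisely the non-trivial content of \cite[Theorem~4.11(i)]{BS2}, which the paper invokes rather than reproves. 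Your reverse-inclusion sketch has a related problem: in the case $\al=\mu$, $\la\neq\mu$, you assert that any output of $(\underline{\nu}\ga t\de\overline{\la})\cdot(\underline{\la}\mu\overline{\mu})$ with top weight $\mu$ must have $\lowerred(\underline{\nu}t)\neq\underline{\mu}$ ``because $\la\neq\mu$'', but $\nu$ is the bottom weight of the \emph{left} factor and is not constrained by $\la$, so this implication is not justified as stated.
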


\begin{proof}
The final statement is \cite[Theorem 4.11(ii)--(iii)]{BS2}.
For the other parts, \cite[Theorem 4.11(i)]{BS2} 
and \cite[Corollary 4.12]{BS2}
show that
$G^t_{\De\Ga} L(\mu)$ is finite dimensional with
\begin{equation}\label{ld}
[G^t_{\De\Ga} L(\mu)]
= \sum_{\substack{\la \subset \al \stackrel{t}{\text{---}} \mu\\
\lowerred(\underline{\la} t) = \underline{\mu}
}} 
[L(\la)\langle \deg(\underline{\la} \alpha t \mu)
- \caps(t) \rangle],
\end{equation}
equality written 
in the Grothendieck group of $\Mod{K}$.
In particular the dimension of 
$G^t_{\De\Ga} L(\mu)$ is exactly the
number of vectors listed in (\ref{klot}).
It remains to show that
$(\underline{\la}\al t \be \overline{\mu}) \in G^t_{\De\Ga} P(\mu)$ 
belongs to 
the kernel of the canonical
map $G^t_{\De\Ga}(p_\mu) :G^t_{\De\Ga} P(\mu)\twoheadrightarrow G^t_{\De\Ga} L(\mu)$
either if $\be \neq \mu$ or if
$\lowerred(\underline{\la}t) \neq \underline{\mu}$.
The first paragraph of the proof of \cite[Theorem 4.5]{BS2}
shows that 
$G^t_{\De\Ga} V(\mu)$
is the quotient of $G^t_{\De\Ga}P(\mu)$
by the submodule spanned by all the vectors 
$(\underline{\la}\al t \be \overline{\mu})$
with $\be \neq \mu$.
Hence all of these vectors lie in $\ker G^t_{\De\Ga} (p_\mu)$.
Moreover (\ref{ld}) implies
$e_\la G^t_{\De\Ga} L(\mu) = \{0\}$ for all $\la \in \De$
with $\lowerred(\underline{\la} t) \neq \underline{\mu}$, hence all the vectors
$(\underline{\la}\al t \be \overline{\mu})$
with 
$\lowerred(\underline{\la}t) \neq \underline{\mu}$ lie in
the kernel too.
\end{proof}

\begin{Corollary}\label{socbase}
Let $t$ be a $\De\Ga$-matching and $\mu \in \Ga$
such that
$G^t_{\De\Ga} L(\mu)$ is non-zero.
Let $\la$ be the unique weight diagram
with $\la \stackrel{t}{\longline} \mu$
and $\deg(\la t \mu) = 0$,
and let $\la'$
be obtained from $\la$ by 
reversing the orientation of all the caps in $\la t \mu$.
Then $G^t_{\De\Ga} L(\mu)$ is a cyclic module generated by the image of
the vector $(\underline{\la} \la t \mu \overline{\mu})$, and its irreducible
socle is generated by the image of the vector
$(\underline{\la} \la' t \mu \overline{\mu})$.
\end{Corollary}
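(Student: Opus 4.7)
Let $M := G^t_{\De\Ga} L(\mu)$. By Theorem~\ref{ipf}, $M$ is a finite-dimensional indecomposable self-dual $K$-module with head $L(\la)\langle-\caps(t)\rangle$, socle $L(\la)\langle+\caps(t)\rangle$, and basis given by the images of the vectors in \eqref{klot}. The plan is to compute the extremal graded components $e_\la M_{\pm\caps(t)}$ directly from \eqref{klot} and identify them with the head and socle respectively.

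\emph{Part 1 (Generator).} A basis vector $(\underline\la \al t\mu\overline\mu)$ contributing to $e_\la M_{-\caps(t)}$ has underlying diagram of degree $0$. Nonnegativity of the three summands $\deg(\underline\la\al)$, $\deg(\al t\mu)$, $\deg(\mu\overline\mu)$ forces each to vanish: the first gives $\al = \la$ and the second is the defining property of our $\la$. So $e_\la M_{-\caps(t)}$ is one-dimensional, spanned by the image of $v := (\underline\la\la t\mu\overline\mu)$. Because $K$ is nonnegatively graded with semisimple degree-zero part, the radical $\rad M = K_{>0}M$ vanishes in the bottom degree $-\caps(t)$, so $M_{-\caps(t)}$ equals the head in that degree. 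Thus $v$ projects to a nonzero element of the one-dimensional head, and indecomposability of $M$ forces $v$ to generate all of $M$.

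\emph{Part 2 (Socle).} The top-degree piece $M_{+\caps(t)}$ is killed by $K_{>0}$ and hence lies inside the socle $L(\la)\langle+\caps(t)\rangle$. Conversely, the socle is concentrated at $e_\la$ in degree $+\caps(t)$, so is contained in $e_\la M_{+\caps(t)}$. Combining, $M_{+\caps(t)} = e_\la M_{+\caps(t)} = \text{socle}$, all one-dimensional. Therefore it suffices to exhibit a nonzero basis element from \eqref{klot} at $e_\la$ of diagram degree $2\caps(t)$; we claim $w := (\underline\la\la' t\mu\overline\mu)$ is such an element. For $w$ to be a basis vector from \eqref{klot}, we need $\la \subset \la' \stackrel{t}{\longline} \mu$ (the condition $\lowerred(\underline\la t) = \underline\mu$ only depends on $\underline\la$ and $t$ and so is inherited from the setup of $v$). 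Reversing the $\la$-labels at the cap endpoints of $t$ turns each anti-clockwise cap into a clockwise one, preserving consistency of $\la' t\mu$; for $\underline\la\la'$ to be consistent, every cup of $\underline\la$ must have either both or neither of its endpoints flipped, which follows because the hypothesis $\lowerred(\underline\la t) = \underline\mu$ rules out a cup of $\underline\la$ sharing exactly one endpoint with a cap of $t$ (otherwise the lower reduction would produce a cup diagram different from $\underline\mu$).

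Finally we compute the degree. The $\caps(t)$ reversed caps contribute $+\caps(t)$ to $\deg(\la' t\mu)$. Moreover, the cups of $\underline\la$ whose endpoint-pairs coincide with cap-endpoint-pairs of $t$ flip from anti-clockwise to clockwise in $\underline\la\la'$; the pairing induced by $\lowerred(\underline\la t) = \underline\mu$ forces this count to be exactly $\caps(t)$, contributing a further $+\caps(t)$ to $\deg(\underline\la\la')$. Summing gives $2\caps(t)$, so the $M$-degree of $w$ is $2\caps(t) - \caps(t) = +\caps(t)$, as required, and its image spans the socle.

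\emph{Main obstacle.} The crux is the combinatorial analysis in Part 2: teasing out from $\lowerred(\underline\la t) = \underline\mu$ the precise pairing between cups of $\underline\la$ and caps of $t$ that simultaneously guarantees the consistency of $\underline\la\la'$ and the exact degree count of $2\caps(t)$.
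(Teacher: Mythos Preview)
Your approach is essentially the same as the paper's: invoke the head/socle description from Theorem~\ref{ipf} and identify the two named vectors as the basis elements from~\eqref{klot} sitting at $e_\la$ in the extremal degrees $\mp\caps(t)$. The paper's proof is much terser --- it simply \emph{asserts} that $(\underline\la\la t\mu\overline\mu)$ and $(\underline\la\la' t\mu\overline\mu)$ lie in the basis~\eqref{klot} with those degrees and says nothing more --- so your expanded argument is a genuine elaboration rather than a different route.

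Your Part~1 is clean and complete. In Part~2 there are two small points worth tightening. First, the sentence ``The top-degree piece $M_{+\caps(t)}$ is killed by $K_{>0}$'' presupposes that $+\caps(t)$ is the maximal degree of $M$; this is true (e.g.\ by self-duality, since the minimal degree is $-\caps(t)$; or because any homogeneous piece above degree $\caps(t)$ would sit inside the socle by positivity of the grading, contradicting $\soc M = L(\la)\langle\caps(t)\rangle$), but you should say why. Second, as you yourself flag, the pairing argument between caps of $t$ and cups of $\underline\la$ is sketched rather than proved: you have not verified that no \emph{ray} of $\underline\la$ meets a cap-endpoint of $t$ (needed so that the ray condition in the consistency of $\underline\la\la'$ is preserved), nor have you given a full argument that exactly $\caps(t)$ cups of $\underline\la$ become clockwise. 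The paper does not supply these details either; they ultimately rest on the structural analysis of the condition $\lowerred(\underline\la t)=\underline\mu$ carried out in \cite{BS2}.
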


\begin{proof}
This follows from the descriptions of the 
head and socle of
$G^t_{\De\Ga} L(\mu)$
in Theorem~\ref{ipf}, on noting that the vectors 
$(\underline{\la}\la t \mu \overline{\mu})$
and
$(\underline{\la}\la' t \mu \overline{\mu})$
in the basis (\ref{klot})
are of degrees $-\caps(t)$ and $\caps(t)$, respectively.
\end{proof}

\phantomsubsection{\boldmath Special projective functors}
For $i \in \Z$, we introduce the following {\em special projective functors}:
\begin{align}\label{proje}
E_i &:= \bigoplus_{\Ga,\De,s} G^s_{\Ga\De},
&F_i &:= \bigoplus_{\De,\Ga,t} G^{t}_{\De\Ga},
\\
\lonestar E_i &:= \bigoplus_{\Ga,\De,s} \lonestar G^s_{\Ga\De},
&
\lonestar F_i &:= \bigoplus_{\De,\Ga,t} \lonestar G^{t}_{\De\Ga},\label{projf}
\end{align}
where the direct sums are over all blocks $\Ga,\De$
and crossingless matchings $s, t$ such that
\begin{itemize}
\item[(1)] the labels $\circ$ and $\cross$ on the 
$i$th and $(i+1)$th vertices of weights in $\Ga$ and $\De$ match one of the configurations displayed in (\ref{here});
\item[(2)] the labels $\circ$ and $\cross$ on all other vertices are in the exactly same positions in weights from $\Ga$ and from $\De$;
\item[(3)] $s$ is the $\Ga\De$-matching
and $t$ is the $\De\Ga$-matching
displayed in the strip between the $i$th and $(i+1)$th vertices in 
(\ref{here}), and $s$ and $t$ have vertical line segments
connecting all other vertices not labelled $\circ$ or $\cross$
in weights from $\Ga$ and $\De$.
\end{itemize}

\begin{equation*}
\begin{picture}(75,20)
\put(152.6,12){$\De$}
\put(153.5,0.8){$s$}
\put(152,-11.4){$\Ga$}
\put(-84,15){\line(1,0){33}}
\put(-84,-9){\line(1,0){33}}
\put(-67.5,-9){\oval(23,23)[t]}
\put(-59.3,13.1){$\cross$}
\put(-81.8,12.4){$\circ$}
\put(-24,15){\line(1,0){33}}
\put(-24,-9){\line(1,0){33}}
\put(-7.5,15){\oval(23,23)[b]}
\put(1.2,-11.6){$\circ$}
\put(-22.2,-10.9){$\cross$}
\put(36,15){\line(1,0){33}}
\put(36,-9){\line(1,0){33}}
\put(38.2,12.4){$\circ$}
\put(61.2,-11.6){$\circ$}
\qbezier(41,-9)(41,3)(52.5,3)
\qbezier(64,15)(64,3)(52.5,3)
\put(96,15){\line(1,0){33}}
\put(96,-9){\line(1,0){33}}
\put(97.7,-10.9){$\cross$}
\put(120.7,13.1){$\cross$}
\qbezier(101,15)(101,3)(112.5,3)
\qbezier(124,-9)(124,3)(112.5,3)
\end{picture}
\end{equation*}
\begin{equation}
\begin{picture}(75,22)
\put(-81,2){$\text{$_i$\quad\:\,$_{i+1}$}$}
\put(-21,2){$\text{$_i$\quad\:\,$_{i+1}$}$}
\put(36,2){$\text{$_i$\quad\:\,$_{i+1}$}$}
\put(99.5,2){$\text{$_i$\quad\:\,$_{i+1}$}$}
\end{picture}
\label{here}\end{equation}
\begin{equation*}
\begin{picture}(75,21)
\put(152,12){$\Ga$}
\put(153.5,0.8){$t$}
\put(152.6,-11.4){$\De$}
\put(-84,15){\line(1,0){33}}
\put(-84,-9){\line(1,0){33}}
\put(-67.5,15){\oval(23,23)[b]}
\put(-59.3,-10.9){$\cross$}
\put(-81.8,-11.6){$\circ$}
\put(-24,15){\line(1,0){33}}
\put(-24,-9){\line(1,0){33}}
\put(-7.5,-9){\oval(23,23)[t]}
\put(1.2,12.4){$\circ$}
\put(-22.2,13.1){$\cross$}
\put(36,15){\line(1,0){33}}
\put(36,-9){\line(1,0){33}}
\put(61.2,12.4){$\circ$}
\put(38.2,-11.6){$\circ$}
\qbezier(64,-9)(64,3)(52.5,3)
\qbezier(41,15)(41,3)(52.5,3)
\put(96,15){\line(1,0){33}}
\put(96,-9){\line(1,0){33}}
\put(120.7,-10.9){$\cross$}
\put(97.7,13.1){$\cross$}
\qbezier(124,15)(124,3)(112.5,3)
\qbezier(101,-9)(101,3)(112.5,3)
\end{picture}
\end{equation*}
\vspace{1mm}

\noindent
So $E_i, F_i, \lonestar E_i$ and $\lonestar F_i$
are the endofunctors of $\Mod{K_\La}$ defined by tensoring with the graded $(K_\La,K_\La)$-bimodules
\begin{align}\label{bim}
\widetilde{E}_i &:= \bigoplus_{\Ga,\De,s} K^s_{\Ga\De}\langle-\caps(s)\rangle,
&\widetilde{F}_i &:= \bigoplus_{\De,\Ga, t} K^{t}_{\De\Ga}\langle-\caps(t)\rangle,\\
\lonestar\widetilde{E}_i &:= \bigoplus_{\Ga,\De, s} K^{s^*}_{\De\Ga}\langle-\caps(s)\rangle,
&\lonestar\widetilde{F}_i &:= \bigoplus_{\De,\Ga,t} K^{t^*}_{\Ga\De}\langle-\caps(t)\rangle,
\label{bim2}
\end{align}
respectively,
summing over the same sets as before.
By the general theory of projective functors
reviewed earlier,
there are canonical adjunctions making
$(\lonestar E_i, E_i)$ and $(\lonestar F_i, F_i)$ into adjoint pairs.
All four functors are exact,
they map projectives to projectives, and they map finite dimensional
modules to finite dimensional modules.

\phantomsubsection{Special projective functors on standard modules}
Given weight diagrams $\la$ and $\mu$ and $i \in \Z$, we write
$\la \stackrel{i}{\rightarrow} \mu$ if $\la$ and $\mu$
have the same labels on all but the $i$th and $(i+1)$th vertices,
and the $i$th and $(i+1)$th vertices of $\la$ and $\mu$ are labelled
as in one of the following cases:
\begin{align}
\label{break1}
\la \stackrel{i}{\rightarrow} \mu:\qquad
\begin{array}{|c|c|c|c|c|c|c|c|c|c|c|}
\hline
\la&\down\circ&\up\circ&\cross\down&\cross\up&\cross\circ&\cross\circ&\down\up&\up\down\\
\hline
\mu&\circ\down&\circ\up&\down\cross&\up\cross&\down\up&\up\down&\circ\cross&\circ\cross
\\\hline
\text{degree}
&0&0&0&0&0&1&-1&0\\
\hline
\end{array}
\end{align}
Similarly, we write
$\la \stackrel{i}{\leftarrow} \mu$ if $\la$ and $\mu$
have the same labels on all but the $i$th and $(i+1)$th vertices,
and the $i$th and $(i+1)$th vertices of $\la$ and $\mu$ are labelled
as in one of the following cases:
\begin{align}\label{break2}
\la \stackrel{i}{\leftarrow} \mu:
\qquad
\begin{array}{|c|c|c|c|c|c|c|c|c|c|c|}
\hline
\la&\circ\down&\circ\up&\down\cross&\up\cross&\down\up&\up\down&\circ\cross&\circ\cross\\
\hline
\mu&\down\circ&\up\circ&\cross\down&\cross\up&\cross\circ&\cross\circ&\down\up&\up\down\\
\hline
\text{degree}&0&0&0&0&-1&0&0&1\\
\hline
\end{array}
\end{align}
Then define the {\em edge degrees}
\begin{align}\label{edgedeg1}
\deg(\la \stackrel{i}{\rightarrow} \mu) &:=
\#\{\mu' < \mu\:|\:\la \stackrel{i}{\rightarrow} \mu'\}
-\#\{\la' > \la\:|\:\la' \stackrel{i}{\rightarrow} \mu\},\\
\deg(\la \stackrel{i}{\leftarrow} \mu) &:=
\#\{\mu' < \mu\:|\:\la \stackrel{i}{\leftarrow} \mu'\}
-\#\{\la' > \la\:|\:\la' \stackrel{i}{\leftarrow} \mu\}.\label{edgedeg2}
\end{align}
The explicit values of these degrees are the numbers
listed in the third rows of (\ref{break1}) and (\ref{break2}),
respectively.
Note that $\la \stackrel{i}{\rightarrow} \mu$
if and only if $\mu \stackrel{i}{\leftarrow} \la$; however
the degrees $\deg(\la \stackrel{i}{\rightarrow} \mu)$
and $\deg(\mu \stackrel{i}{\leftarrow} \la)$ are in general different.

\begin{Theorem}\label{wasaremark}
The following hold for any weight diagram $\la$ and $i \in \Z$.
\begin{itemize}
\item[(1)]
The module $E_i V(\la)$ has a filtration with sections isomorphic 
to the modules
$\{V(\mu) \langle \deg(\mu \stackrel{i}{\rightarrow}
\la)\rangle\:|\:\text{for all }\mu\text{ such that }\mu \stackrel{i}{\rightarrow} \la\}$.
\item[(2)]
The module $F_i V(\la)$ has a filtration with sections isomorphic to the modules
$\{V(\mu) \langle \deg(\mu \stackrel{i}{\leftarrow}
\la)\rangle\:|\:\text{for all }\mu\text{ such that }\mu \stackrel{i}{\leftarrow} \la\}$.
\item[(3)]
The module $\lonestar E_i V(\la)$ has a filtration with sections isomorphic 
to the modules
$\{V(\mu) \langle \deg(\la \stackrel{i}{\rightarrow}
\mu)\rangle\:|\:\text{for all }\mu\text{ such that }\la \stackrel{i}{\rightarrow} \mu\}$.
\item[(4)]
The module $\lonestar F_i V(\la)$ has a filtration with sections isomorphic to the modules
$\{V(\mu) \langle \deg(\la \stackrel{i}{\leftarrow}
\mu)\rangle\:|\:\text{for all }\mu\text{ such that }\la \stackrel{i}{\leftarrow} \mu\}$.
\end{itemize}
In all cases, the filtration is of length at most two; when it is
of length exactly two the standard module indexed by the biggest weight in the Bruhat ordering $\geq$ appears as a submodule and the other is a quotient.
\end{Theorem}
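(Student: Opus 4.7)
My plan is to prove parts (1) and (2) directly by diagrammatic computation, and then derive (3) and (4) from them via the adjunction identity (\ref{other}). For the adjunction step, starring the matchings appearing in the definition of $E_i$ (resp.\ $F_i$) produces precisely the matchings appearing in $F_i$ (resp.\ $E_i$), with the roles of $\Ga$ and $\De$ swapped, so $\lonestar E_i$ and $F_i$ decompose into direct sums of the same indecomposable projective summands $G^{t}_{\De\Ga}$ but with differing degree shifts $\cups(s)-\caps(s)$ coming from (\ref{other}). The difference in shift for each configuration of matching ($-1$ for the cap, $+1$ for the cup, $0$ for the two crossing types) is exactly what converts the edge degrees in (\ref{edgedeg2}) into those in (\ref{edgedeg1}), so (3) will follow formally from (2) and (4) from (1); this can be verified on a single case and then extended by the uniformity of the construction.

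For part (1), I would fix $\la$ and decompose $E_iV(\la)=\bigoplus G^s_{\Ga\De}V(\la)$; only summands with $\De$ equal to the block of $\la$ contribute. A case check against the four configurations of the matching $s$ in the top row of (\ref{here}) shows that for each of the eight patterns of $\la$-labels at positions $i,i+1$ listed in the second row of (\ref{break1}), exactly one configuration contributes: configuration 1 (cap at bottom) for $\circ\cross$; configuration 2 (cup at top) for $\down\up$ or $\up\down$; and configurations 3 and 4 (crossings) for the remaining six patterns. This accounts for the claimed length of the filtration, with length two occurring precisely when the labels of $\la$ at positions $i,i+1$ are $\circ\cross$, since the cap in configuration 1 then admits two orientations in the target block $\Ga$, yielding two distinct weights $\mu$ with labels $\down\up$ and $\up\down$. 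The analogous analysis for $F_i$ against the bottom row of (\ref{here}) and (\ref{break2}) handles part (2), with length two occurring when the labels of $\la$ at positions $i,i+1$ are $\cross\circ$.

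For each contributing $G^s_{\Ga\De}V(\la)$, I would compute directly using the basis $\{(\underline{\la'}\la\overline{\la})\:|\:\la'\subset\la\}$ of $V(\la)$ from Theorem \ref{basict} together with the surgery procedure for the bimodule action of $K^s_{\Ga\De}$. Because $s$ is the identity matching outside positions $i,i+1$, the surgery is localised there and produces one basis vector (or two, in the length-two cases) in a standard module $V(\mu)$ indexed by a weight $\mu$ with $\mu\stackrel{i}{\rightarrow}\la$ (resp.\ $\mu\stackrel{i}{\leftarrow}\la$). Accumulating the degree contributions from the $\langle -\caps(s)\rangle$ twist and from the local surgery step reproduces the edge degrees (\ref{edgedeg1})--(\ref{edgedeg2}). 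In the length-two cases the sub/quotient structure is pinned down by the Bruhat comparison between the two $\mu$'s: the Bruhat-larger $\mu$ yields the lower-degree basis vector, which via the adjunction $(\lonestar E_i,E_i)$ and the universal property of $V(\mu)$ generates a submodule isomorphic to $V(\mu)$, forcing the Bruhat-smaller $\mu$ to index the quotient.

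The main technical obstacle is the last diagrammatic step in the length-two cases, where one must track the local surgery on the $\circ\cross$-cap (for $E_i$) or $\cross\circ$-cup (for $F_i$) carefully enough to confirm that both orientations produce the two predicted standard modules with the correct degrees $-1$ and $0$ (respectively $0$ and $1$ for the other cases), and that the two-section filtration has the Bruhat-larger weight on top. The remaining six configurations are routine because the local surgery there is a single step yielding a single standard section.
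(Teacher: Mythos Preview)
Your derivation of (3) and (4) from (2) and (1) via (\ref{other}) is exactly what the paper does. For (1) and (2), the paper simply cites \cite[Theorem 4.5]{BS2}; your direct diagrammatic computation is effectively a reproduction of that argument specialised to the matchings in (\ref{here}), and your configuration-by-configuration case analysis is correct.

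There is, however, a genuine gap in your argument for the sub/quotient structure in the length-two cases. First, your appeal to the adjunction $(\lonestar E_i,E_i)$ is circular: to identify a submodule of $E_iV(\la)$ via $\hom_K(V(\mu),E_iV(\la))\cong\hom_K(\lonestar E_iV(\mu),V(\la))$ you would need to know $\lonestar E_iV(\mu)$, but that is part (3), which you are deriving \emph{from} (1)--(2). Second, your degree bookkeeping is off: in the $E_i$ length-two case ($\la$ labelled $\circ\cross$), the Bruhat-larger $\mu$ (labelled $\up\down$) corresponds to the \emph{clockwise} orientation of the cap in $s$, hence to the \emph{higher}-degree basis vector, not the lower one; and the submodule is therefore the Bruhat-larger section, not ``on top'' as your final paragraph states (contradicting your third paragraph). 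The correct argument, which is what \cite[Theorem 4.5]{BS2} actually does, is entirely local: the span of those basis vectors $(\underline{\mu'}\mu t\la\overline{\la})$ in which the cap is clockwise is preserved under left multiplication by $K$ (this is immediate from the surgery rules, since anti-clockwise circles act as $1$ and clockwise cannot become anti-clockwise), giving the submodule $V(\mu_{\text{big}})$; the quotient is then $V(\mu_{\text{small}})$. No adjunction is needed.
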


\begin{proof}
Parts (1) and (2) follow from 
\cite[Theorem 4.5]{BS2}.
Then (3) and (4) follow from (2) and (1), respectively, using (\ref{other}).
\end{proof}

\begin{Remark}\rm\label{newremark}
To make the connection with Theorem~\ref{ibranch} in the previous
section, fix $\delta \in \Z$, and suppose that $\La$ is the 
set of all bipartitions identified with a set of weight diagrams
according to the weight dictionary 
(\ref{Iu})--(\ref{dict}).
Then $\La$ is a connected component of the labelled directed graph
just defined, and the labelled directed edges 
between elements of $\La$ are exactly the same as in the labelled
directed graph introduced
just before Theorem~\ref{ibranch}.
\end{Remark}

\phantomsubsection{Special projective functors on PIMs}
For a weight diagram $\la, i \in \Z$
and a symbol $x \in \{\circ,\up,\down,\cross\}$
we write $\la^i_x$ for the weight diagram obtained from $\la$
by relabelling the $i$th vertex by $x$.
Then for $i < j$ and symbols $x,y$
we set $\la^{ij}_{xy} := (\la^i_x)^j_y$, and so on.

\begin{Theorem}\label{ga}
The following hold for any weight diagram $\la$ and $i \in \Z$.
\begin{itemize}
\item[(1)]
If $\la = \la^{i(i+1)}_{{\down}{\textstyle\circ}}$ then
$\lonestar E_i P(\la) \cong P(\la^{i(i+1)}_{{\textstyle\circ}{\down}})$.
\item[(2)]
If $\la = \la^{i(i+1)}_{{\scriptscriptstyle\up}{\textstyle\circ}}$ then
$\lonestar E_i P(\la) \cong P(\la^{i(i+1)}_{{\textstyle\circ}{\up}})$.
\item[(3)]
If $\la = \la^{i(i+1)}_{\cross\down}$ then
$\lonestar E_i P(\la) \cong P(\la^{i(i+1)}_{\down\cross})$.
\item[(4)]
If $\la = \la^{i(i+1)}_{\cross\up}$ then
$\lonestar E_i P(\la) \cong P(\la^{i(i+1)}_{\up\cross})$.
\item[(5)]
If $\la = \la^{i(i+1)}_{\cross{\textstyle\circ}}$ then
$\lonestar E_i P(\la) \cong P(\la^{i(i+1)}_{\down\up})$.
\item[(6)]
If $\la = \la^{i(i+1)}_{{\down\up}}$ then
$\lonestar E_i P(\la) \cong
P(\la^{i(i+1)}_{{\textstyle\circ}\cross})\langle 1 \rangle\oplus
P(\la^{i(i+1)}_{{\textstyle\circ}\cross})\langle -1\rangle$.
\item[(7)]
If $\la = \la^{i(i+1)}_{{\up\down}}$ then
$\lonestar E_i P(\la)\cong P(\la^{i(i+1)}_{{\textstyle\circ}\cross})$.
\item[(8)]
If
$\la = \la^{i(i+1)}_{{\down\down}}$
and there is a cap connecting vertex $i+1$ to vertex $j > i+1$ in
$\overline{\la}$
then
$\lonestar E_i P(\la) \cong P(\la^{i(i+1)j}_{{\textstyle\circ}\cross\down})$.
\item[(9)]
If
$\la = \la^{i(i+1)}_{{\up\up}}$
and there is a cap connecting vertex $i$ to vertex $j < i$ in $\overline{\la}$
then
$\lonestar E_i P(\la) \cong P(\la^{ji(i+1)}_{\up{\textstyle\circ}\cross})$.
\item[(10)]
For all other $\la$ we have that
$\lonestar E_i P(\la) = \{0\}$.
\end{itemize}
For the analogous results with $\lonestar E_i$ replaced by $\lonestar F_i$,
interchange all occurrences of $\circ$ and $\cross$.
\end{Theorem}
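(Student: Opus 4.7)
The plan is to compute the head of $\lonestar E_i P(\la)$ and then appeal to the fact that, since $\lonestar E_i$ is a projective functor, its output is projective and is therefore determined up to isomorphism by its head. The adjunction giving $(\lonestar E_i, E_i)$ as an adjoint pair yields
\[
\hom_K(\lonestar E_i P(\la), L(\nu)) \;\cong\; \hom_K(P(\la), E_i L(\nu)) \;=\; e_\la\, E_i L(\nu)
\]
as graded vector spaces for every weight diagram $\nu$, so the multiplicity of $P(\nu)\langle -j\rangle$ in $\lonestar E_i P(\la)$ is $\dim(e_\la\, E_i L(\nu))_j$. The right-hand side is controlled entirely by Theorem~\ref{ipf}: $E_i L(\nu) = \bigoplus_s G^s L(\nu)$ is nonzero only when some matching $s$ of the four types in (\ref{here}) is a $\Ga\De$-matching with $\Ga$ the block of $\la$ and $\nu\in\De$, and then $e_\la G^s L(\nu)$ has basis given by the vectors $(\underline{\la}\al s\nu\overline{\nu})$ with $\la\subset\al\stackrel{s}{\longline}\nu$ and $\lowerred(\underline{\la}s) = \underline{\nu}$, each of degree $\deg(\underline{\la}\al s\nu\overline{\nu})-\caps(s)$.

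For each case (1)--(9) I would go through this enumeration. Cases (1)--(5) and (7) reduce to a single pair $(\al,\nu)$ in degree zero, yielding a single unshifted PIM $P(\kappa)$ matching the theorem. Case (6), where $\underline\la$ has a cup at $\{i,i+1\}$, contributes two pairs corresponding to the two orientations of that cup; after the shift $-\caps(s)=-1$ the two contributions land in degrees $\pm 1$ and give the direct sum $P(\la^{i(i+1)}_{\circ\cross})\langle 1\rangle\oplus P(\la^{i(i+1)}_{\circ\cross})\langle -1\rangle$. Case (10) is the statement that the local $\circ/\cross$-configuration of $\la$ at $\{i,i+1\}$ rules out all four matching types, so $E_i L(\nu)=0$ for every $\nu$ in any block $\De$ related to $\Ga$ by a matching.

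The subtle cases are (8) and (9). Here the condition $\lowerred(\underline{\la}s)=\underline{\nu}$ couples $\la$ and $\nu$ non-locally via the cap in $\overline{\la}$ from vertex $i+1$ (resp.\ $i$) to the distant vertex $j$, and it is this cap that produces the third affected vertex in the stated weight. The main obstacle is the careful bookkeeping needed to see that exactly one pair $(\al,\nu)$ survives, that its $\nu$ is $\la^{i(i+1)j}_{\circ\cross\down}$ (resp.\ $\la^{ji(i+1)}_{\up\circ\cross}$), and that it sits in degree zero; this is a finite diagrammatic check but requires some care with the definition of $\lowerred$ and the cup/cap orientations across three vertices.

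As a sanity check one can independently compute the character $[\lonestar E_i P(\la)]$ in the graded Grothendieck group using the standard filtration of $P(\la)$ from Theorem~\ref{basict}(2) together with the branching of $\lonestar E_i$ on standard modules from Theorem~\ref{wasaremark}(3), and verify that it matches $[P(\kappa)]$ (or the claimed sum in case~(6)). The final assertion of the theorem, for $\lonestar F_i$ in place of $\lonestar E_i$, follows from the same argument applied to the lower row of (\ref{here}), which at the level of local configurations is precisely the operation of interchanging $\circ$ and $\cross$ throughout.
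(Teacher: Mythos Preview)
Your adjunction approach is correct and self-contained, relying only on Theorem~\ref{ipf}; the paper instead cites \cite[Theorem~4.2]{BS2}, which identifies $G^{s^*}_{\De\Ga}P(\la)$ directly as a PIM by matching its diagram basis~(\ref{pfp}) with that of some $Ke_\kappa$. The two routes are essentially dual: you compute the head, the cited result computes the module itself, and both reduce to the same local case analysis at vertices $i,i+1$.

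Your account of case~(10) needs one correction. The claim that ``the local $\circ/\cross$-configuration of $\la$ rules out all four matching types'' handles the seven patterns with $\la^i=\circ$ or $\la^{i+1}=\cross$, but \emph{not} the residual subcases $\la^{i(i+1)}=\down\down$ with $i+1$ a ray in $\overline{\la}$, or $\la^{i(i+1)}=\up\up$ with $i$ a ray. There the first matching in~(\ref{here}) applies and $E_iL(\nu)$ is nonzero for suitable $\nu\in\De$; it is $e_\la E_iL(\nu)$ that vanishes. The mechanism is different: $i$ and $i+1$ are then both $\down$-rays (resp.\ $\up$-rays) in $\underline{\la}$, and the cap condition $\al^{i(i+1)}\in\{\down\up,\up\down\}$ combined with the block constraint $\al\sim\la$ admits no $\al$ with $\la\subset\al$ (any compensating $\up\leftrightarrow\down$ swap needed to keep $\al$ in $\Ga$ forces a $\down$-ray to the left of an $\up$-ray in $\underline{\la}\al$). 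This extra check is short, but it is not the argument you wrote.
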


\begin{proof}
This can be deduced using \cite[Theorem 4.2]{BS2}.
\end{proof}

\section{Idempotent truncations of $K(\delta)$}\label{s4b}

In this section, we fix $\delta \in \Z$.
We are going to establish some basic properties of 
the algebra $K_{r,s}(\delta)$ from 
Theorem~\ref{fin}, i.e. the algebra which 
will eventually be shown to be Morita
equivalent to the walled Brauer algebra $B_{r,s}(\delta)$.
In particular we introduce counterparts for $K_{r,s}(\delta)$
of the 
walled Brauer algebras's $i$-induction and
$i$-restriction
functors,
culminating in a result which is parallel to
Theorem~\ref{ibranch}.

\phantomsubsection{Graded cellular algebras}
We begin the section by recalling briefly the definition of a {\em graded cellular
algebra}
from \cite{GL}, \cite{HM}.
This means an
associative unital algebra $H$
together with a {\em cell datum} $(X, I, C, \deg)$ such that
\begin{itemize}
\item[(1)]
$X$ is a finite partially ordered set;
\item[(2)] 
$I(\la)$
is a finite set
 for each $\la \in X$;
\item[(3)]
$C:\dot\bigcup_{\la \in X} I(\la)
\times I(\la) \rightarrow H,
(i,j) \mapsto C^\la_{i,j}$ is an injective map
whose image is a basis for $H$;
\item[(4)]
the map $H \rightarrow H, C^\la_{i,j} \mapsto C_{j,i}^\la$
is an algebra anti-automorphism;
\item[(5)]
if $\la \in X$ and $i,j \in I(\la)$
then for any $x \in H$ we have that
$$
x\, C_{i,j}^\la
\equiv \sum_{i' \in I(\la)} r_x(i',i) C_{i',j}^\la
\pmod{H_{> \la}}$$
where the scalar $r_x(i',i)$ is independent of $j$ and
$H_{> \la}$ denotes the subspace of $H$
spanned by $\{C_{k,l}^{\mu}\:|\:\mu > \la,
k,l \in I(\mu)\}$;
\item[(6)]
$\deg:\dot\bigcup_{\la \in X}
I(\la) \rightarrow \Z, i \mapsto \deg^\la_i$
is a function such that
the $\Z$-grading on $H$ defined by declaring each
$C^\la_{i,j}$ is of degree $\deg^\la_i+\deg^\la_j$
makes $H$ into a graded algebra.
\end{itemize}
Assuming $H$ is a graded cellular algebra,
we get for $\la \in X$ an associated {\em graded cell
  module} $V(\la)$ with 
basis $\{v_i\:|\:i \in I(\la)\}$.
It is a graded left $H$-module with 
$\deg(v_i) := \deg^\la_i$
and the action of $x \in H$ defined from
$x\, v_i := \sum_{i' \in I(\la)} r_x(i',i)
v_{i'}$.
Moreover there is a canonically defined 
homogeneous symmetric bilinear form $(.,.)$ 
on
$V(\la)$ with radical $\rad V(\la)$.
The quotient
$L(\la) := V(\la) / \rad V(\la)$ 
is either zero or an irreducible graded $H$-module,
and the non-zero $L(\la)$'s give a complete set of pairwise
inequivalent irreducible graded $H$-modules (up to degree shift).
See \cite{GL}, \cite{HM} for details.

\phantomsubsection{\boldmath Graded cellular structure on $K_{r,s}(\delta)$}
Let $\La$ be the set of all bipartitions
identified with a set of weight diagrams
using the weight dictionary 
(\ref{Iu})--(\ref{dict}). 
Let $K(\delta)$ be the subalgebra $\bigoplus_{\la,\mu \in \La} e_\la K
e_\mu$
of the universal arc algebra $K$ from the previous section.
Note that $\La$ is a union of blocks.
So the algebra $K(\delta)$ 
is a sum of some of the blocks of $K$ from (\ref{blockdec}):
\begin{equation}\label{blockdec2}
K(\delta) = \bigoplus_{\Ga\in \La / \sim} K_\Ga.
\end{equation}
Let $\Mod{K(\delta)}$ be the full subcategory of $\Mod{K}$
consisting of all modules $M$ such that $M = \bigoplus_{\la \in
  \La} e_\la M$. 
For $\la \in \La$, the $K$-modules
$P(\la)$, $V(\la)$ and $L(\la)$ 
from the previous section belong to $\Mod{K(\delta)}$. 
Moreover Theorem~\ref{basict} specialises in an
obvious way.

Let $e_{r,s} =
\sum_{\lambda \in \dot\Lambda_{r,s}} e_\lambda
\in K(\delta)$ be the idempotent from (\ref{ers})
and set
\begin{equation}\label{krs}
K_{r,s}(\delta) := e_{r,s} K(\delta) e_{r,s}.
\end{equation}
It is immediate from (\ref{idem1}) that 
$K_{r,s}(\delta)$ has a basis consisting of diagrams
$(\underline{\la} \al \overline{\mu})$ for all $\la \subset \al \supset \mu$
with $\la,\mu \in \dot \La_{r,s}$ and $\al \in \La$.
The weight $\al$ here automatically belongs to $\La_{r,s}$; this follows
because $\La_{r,s}$ is an upper set in the poset $(\La, \leq)$,
i.e. $\la \in \La_{r,s}, \mu \in \La, \la \leq \mu \Rightarrow \mu \in \La_{r,s}$.
So the basis for $K_{r,s}(\delta)$ can be denoted
\begin{equation*}
\{(\underline{\la} \al \overline{\mu}\:|\:\text{for all }
\la \subset \al \supset \mu\text{ with }\la,\mu \in \dot\La_{r,s}\text{ and } \al \in \La_{r,s}\}.
\end{equation*}
In particular $K_{r,s}(\delta)$ is a 
positively graded, finite dimensional algebra with identity $e_{r,s}$.

Multiplication by the idempotent $e_{r,s}$ defines an exact
functor
\begin{equation*}
e_{r,s}:\Mod{K(\delta)} \rightarrow \Mod{K_{r,s}(\delta)},
\end{equation*}
where $\Mod{K_{r,s}(\delta)}$ is the category of graded left
$K_{r,s}(\delta)$-modules that are locally finite dimensional
and bounded below.

\begin{Lemma}\label{stupidone}
Let $M$ and $N$ be left $K(\delta)$-modules.
If $M$ is a finite direct sum of summands of
$K(\delta) e_{r,s}$ then the natural 
map 
$$
\hom_{K(\delta)}(M,N) \rightarrow \hom_{K_{r,s}(\delta)}(e_{r,s} M, e_{r,s} N)
$$
defined by the functor $e_{r,s}$ is an isomorphism.
\end{Lemma}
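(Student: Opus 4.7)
The plan is to reduce to the special case $M = K(\delta) e_{r,s}$ and then extend by additivity and passage to direct summands. For the base case, I would use evaluation at the idempotent $e_{r,s}$ on both sides. Concretely, the map
\[
\phi: \hom_{K(\delta)}(K(\delta) e_{r,s}, N) \stackrel{\sim}{\longrightarrow} e_{r,s} N, \qquad f \mapsto f(e_{r,s})
\]
is an isomorphism: it is well-defined since $f(e_{r,s}) = f(e_{r,s}^2) = e_{r,s} f(e_{r,s}) \in e_{r,s} N$, and its inverse sends $n \in e_{r,s} N$ to the homomorphism $x e_{r,s} \mapsto x n$, which is well-defined because $N$ is locally unital and $e_{r,s}$ is an idempotent. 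Similarly, since $K_{r,s}(\delta)$ is a unital algebra with identity $e_{r,s}$, evaluation gives an isomorphism
\[
\psi: \hom_{K_{r,s}(\delta)}(K_{r,s}(\delta), e_{r,s} N) \stackrel{\sim}{\longrightarrow} e_{r,s} N, \qquad g \mapsto g(e_{r,s}).
\]
Under the natural identification $e_{r,s}(K(\delta) e_{r,s}) = K_{r,s}(\delta)$, the truncation map sends $f$ to its restriction $e_{r,s} f$, and then $\psi(e_{r,s} f) = (e_{r,s} f)(e_{r,s}) = f(e_{r,s}) = \phi(f)$. So the truncation map corresponds to the identity on $e_{r,s} N$ and is an isomorphism.

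For the general case, I would use the observation that a finite direct sum of summands of $K(\delta) e_{r,s}$ is itself a direct summand of $(K(\delta) e_{r,s})^{\oplus k}$ for some $k \geq 0$. Both the source and target of the truncation map are additive contravariant functors in $M$ (using that $e_{r,s}(M \oplus M') = e_{r,s} M \oplus e_{r,s} M'$), and the truncation map is natural in $M$. Hence the truncation map for $(K(\delta) e_{r,s})^{\oplus k}$ is a direct sum of $k$ copies of the one for $K(\delta) e_{r,s}$ and is an isomorphism by the base case; the map for the summand $M$ is then a direct summand of an isomorphism, so is itself an isomorphism.

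There is essentially no obstacle to this plan: the argument is a standard Yoneda-type reduction, and the only care required is in the locally-unital bookkeeping, namely the verification that $e_{r,s}$ is an idempotent of $K(\delta)$ (immediate from orthogonality of the $e_\la$'s) and that $e_{r,s}(K(\delta) e_{r,s}) = e_{r,s} K(\delta) e_{r,s} = K_{r,s}(\delta)$ as left $K_{r,s}(\delta)$-modules.
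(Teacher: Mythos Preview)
Your proposal is correct and is exactly the argument the paper has in mind: the paper's proof consists of the single sentence ``It is obviously an isomorphism for $M = K(\delta) e_{r,s}$,'' relying implicitly on the same Yoneda evaluation and the same passage to finite direct sums and summands that you spell out. You have simply unpacked what the paper leaves as ``obvious.''
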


\begin{proof}
It is obviously an isomorphism for $M = K(\delta) e_{r,s}$.
\end{proof}

\begin{Lemma}\label{stupidtwo}
Let $M$ be a right $K(\delta)$-module and $N$ be a left $K(\delta)$-module.
If $M$ is a direct sum of summands of $e_{r,s}K(\delta)$
or $N$ is a direct sum of summands of $K(\delta) e_{r,s}$,
then the natural map
$$
M e_{r,s} \otimes_{K_{r,s}(\delta)} e_{r,s} N
\rightarrow
M \otimes_{K(\delta)} N,
\qquad
m \otimes n \mapsto m \otimes n
$$
is an isomorphism.
\end{Lemma}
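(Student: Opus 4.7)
The plan is a standard Morita-theoretic reduction to the case $M = e_{r,s} K(\delta)$ (or symmetrically $N = K(\delta) e_{r,s}$), after which the claim becomes essentially a unit-isomorphism for tensor product.

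First I would observe that both the source $M e_{r,s} \otimes_{K_{r,s}(\delta)} e_{r,s} N$ and the target $M \otimes_{K(\delta)} N$ of the natural map are additive in $M$, and the natural map itself commutes with direct sums. Given the hypothesis that $M = \bigoplus_i M_i$ with each $M_i$ a direct summand of $e_{r,s} K(\delta)$, this reduces the claim to the case of a single summand $M_i$. Now write $e_{r,s} K(\delta) = M_i \oplus M_i'$. By naturality, the natural map for $e_{r,s} K(\delta)$ is the direct sum of the natural maps for $M_i$ and $M_i'$; in particular, if the former is an isomorphism, both its summands are. So it suffices to prove the statement when $M = e_{r,s} K(\delta)$.

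In that base case we have $M e_{r,s} = e_{r,s} K(\delta) e_{r,s} = K_{r,s}(\delta)$. The left hand side is then $K_{r,s}(\delta) \otimes_{K_{r,s}(\delta)} e_{r,s} N \cong e_{r,s} N$ via the canonical unit isomorphism, while the right hand side is $e_{r,s} K(\delta) \otimes_{K(\delta)} N \cong e_{r,s} N$ via the map $a \otimes n \mapsto a n$ (with two-sided inverse $e_{r,s} n \mapsto e_{r,s} \otimes n$, well defined using local unitality of $N$). Chasing through these identifications shows the natural map corresponds to the identity on $e_{r,s} N$, so it is an isomorphism. The alternative hypothesis, in which $N$ is a direct sum of summands of $K(\delta) e_{r,s}$, is handled by the entirely symmetric argument: one reduces to $N = K(\delta) e_{r,s}$ using additivity in $N$ and the splitting trick, then invokes the analogous identification $M \otimes_{K(\delta)} K(\delta) e_{r,s} \cong M e_{r,s}$.

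I do not expect any genuine obstacle here; the reduction is routine. The one point requiring a brief check is that $K(\delta)$ has only a system of local units \{$e_\lambda$\} rather than a global identity, so that the tensor-unit identifications $e_{r,s} K(\delta) \otimes_{K(\delta)} N \cong e_{r,s} N$ and $M \otimes_{K(\delta)} K(\delta) e_{r,s} \cong M e_{r,s}$ must be verified directly. Both are immediate from the fact that every element of $M$ or $N$ is acted on as the identity by a sufficiently large sum of the idempotents $e_\lambda$, which is precisely condition (3) defining $\Mod{K(\delta)}$ in Section~\ref{s4}.
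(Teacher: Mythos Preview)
Your proposal is correct and follows the same approach as the paper, which simply asserts that the statement is obviously true when $M = e_{r,s} K(\delta)$ or $N = K(\delta) e_{r,s}$; you have merely spelled out the standard reduction to this base case and the unit identification that the paper leaves implicit. One small remark: the identification $e_{r,s} K(\delta) \otimes_{K(\delta)} N \cong e_{r,s} N$ does not actually require local unitality of $N$, since for $a \in e_{r,s} K(\delta)$ one has $a \otimes n = e_{r,s} a \otimes n = e_{r,s} \otimes a n$ already in the tensor product over $K(\delta)$.
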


\begin{proof}
It is obviously true if $M = e_{r,s} K(\delta)$ or $N = K(\delta) e_{r,s}$.
\end{proof}

For $\la\in \La$, let
\begin{align}
P_{r,s}(\la) &:= e_{r,s} P(\la),\\
V_{r,s}(\la) &:= e_{r,s} V(\la),\label{as}\\
L_{r,s}(\mu) &:= e_{r,s}L(\mu).
\end{align}

\begin{Lemma}\label{zeroic}
The following statements hold for any $\la \in \La$.
\begin{itemize}
\item[(1)] $L_{r,s}(\la) \neq \{0\}$ if and only if $\la \in \dot\La_{r,s}$.
\item[(2)] $V_{r,s}(\la) \neq \{0\}$ if and only if $\la \in \La_{r,s}$.
\end{itemize}
\end{Lemma}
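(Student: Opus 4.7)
The plan is to read off both statements from the explicit diagram bases of $L(\la)$ and $V(\la)$ recorded in Theorem~\ref{basict} and the discussion around (\ref{pbj}).

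For (1), each $L(\la)$ is one-dimensional, and (\ref{idem1}) shows that $e_\mu$ acts on it as $\delta_{\la,\mu}\cdot\id$. Multiplication by $e_{r,s}=\sum_{\mu\in\dot\La_{r,s}}e_\mu$ therefore returns $L(\la)$ if $\la\in\dot\La_{r,s}$ and zero otherwise.

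For (2), recall that $V(\la)$ has diagram basis $\{(\underline{\mu}\la\overline{\la})\,|\,\mu\subset\la\}$ obtained by imposing $\al=\la$ in (\ref{pbj}); by (\ref{idem1}) each such basis vector is fixed by $e_\mu$ and killed by every other $e_{\mu'}$. So $V_{r,s}(\la)\neq\{0\}$ if and only if there exists $\mu\in\dot\La_{r,s}$ with $\mu\subset\la$. For the ``only if'' direction, $\mu\subset\la$ forces $\mu\leq\la$ in the Bruhat order (as recalled earlier in the paper), and $\La_{r,s}$ is an upper set in $(\La,\leq)$ (noted just after Theorem~\ref{fin}); hence $\la\notin\La_{r,s}$ rules out any $\mu\in\dot\La_{r,s}\subseteq\La_{r,s}$ with $\mu\subset\la$.

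For the ``if'' direction of (2), when $\la\in\dot\La_{r,s}$ we take $\mu=\la$ and use that $\la\subset\la$. The only remaining case is $\la\in\La_{r,s}\setminus\dot\La_{r,s}$, which by (\ref{hing}) forces $\delta=0$, $r=s>0$, and $\la=(\varnothing,\varnothing)$. Here I produce the explicit element $\mu:=((1),(1))\in\dot\La_{r,s}$: a direct check using (\ref{Iu})--(\ref{dict}) with $\delta=0$ shows that the weight diagram of $\mu$ differs from that of $(\varnothing,\varnothing)$ only at vertices $0$ and $1$, with $(\varnothing,\varnothing)$ carrying $\up\down$ there and $\mu$ carrying $\down\up$. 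Consequently $\underline{\mu}$ consists of a single anti-clockwise cup from vertex $0$ to vertex $1$ with vertical rays at all other vertices, and overlaying the labels of $(\varnothing,\varnothing)$ yields a consistently oriented cup plus rays having all $\up$'s to the left of all $\down$'s, verifying $\mu\subset(\varnothing,\varnothing)$. The whole argument amounts to bookkeeping with (\ref{idem1}); the only step requiring any thought is this explicit construction in the special case.
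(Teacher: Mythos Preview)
Your proof is correct and follows essentially the same approach as the paper's. The only difference is cosmetic: for part~(2) the paper phrases the argument in terms of composition factors of $V(\la)$ (invoking Theorem~\ref{basict}(1)), whereas you work directly with the diagram basis $\{(\underline{\mu}\la\overline{\la})\mid\mu\subset\la\}$; both routes boil down to the same combinatorial condition, and in the exceptional case $\la=(\varnothing,\varnothing)$ both you and the paper exhibit the same witness $\mu=((1),(1))$.
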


\begin{proof}
(1) This is immediate from the definition of the idempotent $e_{r,s}$.

\noindent
(2) Suppose first that $\la \in \La \setminus \La_{r,s}$.
All composition factors of $V(\la)$
are of the form $L(\mu)\langle j \rangle$ for $\mu \leq \la$,
hence $\mu \notin \La_{r,s}$, so we get that $e_{r,s} V(\la) = \{0\}$ by (1).
Conversely if $\la \in \dot\La_{r,s}$
then $V(\la)$ has head $L(\la)$, so $e_{r,s} V(\la) \neq \{0\}$ by (1).
Finally if $\la = (\varnothing,\varnothing) \in \La_{r,s}\setminus \dot\La_{r,s}$,
then $V(\la)$ has a composition factor isomorphic to $L(\mu) \langle 1 \rangle$
where $\mu = ((1),(1)) \in \dot\La_{r,s}$ by Theorem~\ref{basict}(1),
hence again $e_{r,s} V(\la) \neq \{0\}$.
\end{proof}

\begin{Theorem}\label{iscell}
The algebra $K_{r,s}(\delta)$ is a
graded cellular algebra 
with cell datum $(X, I, C,\deg)$
defined by
\begin{align*}
X &:= \La_{r,s},
&I(\alpha) &:= 
\{\la \in 
\dot\La_{r,s}\:|\: \la \subset \alpha\},\\
C^\alpha_{\la,\mu} &:= (\underline{\la} \alpha \overline{\mu}),
&\deg^\alpha_\la &:=
\deg(\underline{\la} \alpha)
\end{align*}
 for
$\alpha \in \La_{r,s}$ and
$\la,\mu \in I(\alpha)$.
Moreover the following hold.
\begin{itemize}
\item[(1)]
The graded cell modules 
are the modules
$\{V_{r,s}(\la)\:|\:\la \in \La_{r,s}\}$.
\item[(2)]
For $\la \in \dot\La_{r,s}$ the cell module 
$V_{r,s}(\la)$ has one-dimensional head $L_{r,s}(\la)$,
and
the modules $\{L_{r,s}(\la)\:|\:\la \in \dot\La_{r,s}\}$ give a complete set of
pairwise non-isomorphic irreducible $K_{r,s}(\delta)$-modules (up to degree shift).
\item[(3)]
The projective cover of $L_{r,s}(\la)$
is $P_{r,s}(\la) = K_{r,s}(\delta) e_\la$ for each $\la \in \dot \La_{r,s}$.
\item[(4)]
For $\la \in \dot\La_{r,s}$ and $\mu \in \La_{r,s}$,
the graded composition multiplicity 
$$
d_{\la,\mu}(q)
= \sum_{i \in \Z} [V_{r,s}(\mu):L_{r,s}(\la)\langle i 
\rangle] q^i
$$
is given explicitly by the formula from Theorem~\ref{basict}(1).
\item[(5)]
Irreducible modules $L_{r,s}(\la)$ and $L_{r,s}(\mu)$
for $\la, \mu \in \dot \La_{r,s}$ belong to the same block if and only if
$\la \sim \mu$.
\item[(6)]
For each $\la \in \La_{r,s}$, there is a resolution
$$
\cdots \rightarrow e_{r,s} P_1(\lambda) \rightarrow e_{r,s}P_0(\lambda) \rightarrow V_{r,s}(\lambda)
\rightarrow 0
$$
in which $e_{r,s}P_i(\lambda) = \bigoplus_{\mu \in \Lambda_{r,s}} p_{\lambda,\mu}^{(i)} P_{r,s}(\mu) \langle i\rangle$,
and $p_{\lambda,\mu}(q) = \sum_{i \geq 0}
p_{\lambda,\mu}^{(i)} q^i$ is as in Theorem~\ref{basict}(3).
\item[(7)]
If $\La_{r,s} = \dot \La_{r,s}$, then $K_{r,s}(\delta)$ is a 
graded quasi-hereditary algebra in the sense of \cite{CPS3},
and the $V_{r,s}(\la)$'s are its graded standard modules.
Moreover the resolution in (6) is a linear projective resolution, hence
$K_{r,s}(\delta)$ is standard Koszul in the sense of \cite{ADL}.
\end{itemize}
\end{Theorem}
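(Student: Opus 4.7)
The plan is to derive everything from the diagrammatic structure and homological results of Section~\ref{s4}, exploiting that $\La_{r,s}$ is an upper set in $(\La,\leq)$ and that $e_{r,s}$ gives an exact truncation functor.

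First, to establish graded cellularity I would verify the axioms on the basis $\{(\underline{\la}\al\overline{\mu})\}$ of $K_{r,s}(\delta)$ directly, using the multiplication in $K(\delta)$. The surgery-based formula from \cite{BS1} tells us that $(\underline{\nu}\beta\overline{\om})(\underline{\la}\al\overline{\mu})$ is zero unless $\om = \la$, and when $\om = \la$ equals a linear combination of basis vectors $(\underline{\nu}\ga\overline{\mu})$ with $\ga \geq \al,\beta$ whose coefficients depend only on $\nu,\beta,\la,\al$ and not on $\mu$. This is exactly the upper-triangular left-action required by the cellular axiom, and the horizontal-reflection anti-involution $(\underline{\la}\al\overline{\mu})\mapsto(\underline{\mu}\al\overline{\la})$ converts it into the required right-action form. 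Because $\La_{r,s}$ is an upper set, any basis vector in $K_{r,s}(\delta) = e_{r,s}K(\delta)e_{r,s}$ automatically has its central label $\al$ in $\La_{r,s}$, so the truncated basis is exactly the one displayed. Finiteness of $X = \La_{r,s}$, finiteness of the $I(\al)$, and the grading $\deg^\al_\la = \deg(\underline{\la}\al)$ are all immediate.

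Second, to prove (1) I would identify the abstract cell module at $\al \in \La_{r,s}$ with $V_{r,s}(\al) = e_{r,s}V(\al)$ by comparing bases: the abstract cell module has basis $\{v_\la \mid \la \in I(\al)\}$ while $V_{r,s}(\al)$ has the diagram basis $\{(\underline{\la}\al\overline{\al}) \mid \la \in \dot\La_{r,s}, \la \subset \al\}$ obtained from (\ref{pbj}) after passing to the quotient $V(\al)$ and cutting by $e_{r,s}$; the $K_{r,s}(\delta)$-action is upper-triangular with the same coefficients in both cases, so the identification is canonical. For (2)--(6) I would then translate the corresponding statements for $K(\delta)$ (the block decomposition (\ref{blockdec2}) and Theorem~\ref{basict}) through the exact functor $e_{r,s}$, combined with Lemma~\ref{zeroic}. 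One-dimensionality and irreducibility of $L_{r,s}(\la)$ for $\la\in\dot\La_{r,s}$ are inherited from $L(\la)$, and completeness of the list follows from the standard fact that irreducibles of $eAe$ are precisely the nonzero $eL$ for $L$ irreducible over $A$. The identity $P_{r,s}(\la) = K_{r,s}(\delta)e_\la$ holds since $e_{r,s}e_\la = e_\la$ for $\la\in\dot\La_{r,s}$. The graded composition multiplicity formula (4) is inherited because a composition series of $V(\mu)$ for $\mu\in\La_{r,s}$ contains no factor $L(\nu)$ with $\nu\notin\dot\La_{r,s}$ (the only candidate $\nu=(\varnothing,\varnothing)$ is Bruhat-minimal and is excluded as a factor of $V(\mu)$ by Theorem~\ref{basict}(1)). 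The block description (5) descends from (\ref{blockdec2}). The resolution (6) is the image of the canonical linear resolution of $V(\la)$ from Theorem~\ref{basict}(4), which stays linear because $e_{r,s}P(\mu) = P_{r,s}(\mu)$ is still projective and generated in the correct degree.

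Finally for (7), if $\dot\La_{r,s} = \La_{r,s}$ then every $\al \in X$ satisfies $\al \in I(\al)$, so every cell module has a one-dimensional head $L_{r,s}(\al)$, and the standard cellular-to-quasi-hereditary criterion of Graham--Lehrer, in the graded form of \cite{HM}, upgrades the cellular structure to a graded quasi-hereditary one with $V_{r,s}(\al)$ as standard modules. The resolution of (6) then exhibits each standard module with a linear projective resolution, which is standard Koszulity in the sense of \cite{ADL}. The main technical obstacle I foresee is handling the degenerate case $\dot\La_{r,s} \subsetneq \La_{r,s}$ (so $\delta=0$ and $r=s>0$): one needs that the cellular bilinear form on $V_{r,s}((\varnothing,\varnothing))$ vanishes identically so that the abstract cellular irreducible at that label is zero, consistent with Lemma~\ref{zeroic}(1) and with the requirement in \cite{HM} that the listed irreducibles be pairwise non-isomorphic up to degree shift. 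This is not automatic from the general setup but can be checked directly via the surgery procedure, using that $((1),(1)) \subset (\varnothing,\varnothing)$ is the unique $\la \subset (\varnothing,\varnothing)$ in $\dot\La_{r,s}$.
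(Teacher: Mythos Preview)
Your approach is essentially the same as the paper's: deduce cellularity from the multiplication formula in \cite{BS1}, identify the cell modules with $e_{r,s}V(\la)$ by comparing bases, and pull (2)--(7) through the exact truncation functor from Theorem~\ref{basict} and Lemma~\ref{zeroic}. Two small corrections. First, $(\varnothing,\varnothing)$ is Bruhat-\emph{maximal}, not minimal (all $\down$'s are already as far right as possible in (\ref{othergroundstate})); your extra claim in (4) that no $L(\nu)$ with $\nu\notin\dot\La_{r,s}$ occurs in $V(\mu)$ is therefore false when $\mu=(\varnothing,\varnothing)$, but fortunately it is also unnecessary --- exactness of $e_{r,s}$ alone gives $[V_{r,s}(\mu):L_{r,s}(\la)]=[V(\mu):L(\la)]$ for $\la\in\dot\La_{r,s}$. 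Second, for (5) the block decomposition (\ref{blockdec2}) only yields the implication $\la\not\sim\mu\Rightarrow$ different blocks; to get the converse you must show each $e_{r,s}K_\Ga e_{r,s}$ remains indecomposable, which is not automatic for idempotent truncations. The paper handles this instead via (4) and the standard description of blocks of a cellular algebra through its decomposition matrix, which is the cleaner route.
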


\begin{proof}
The cellularity is deduced from \cite[Theorem 4.4]{BS1}
in exactly the same way as \cite[Corollary 4.6]{BS1}.

\noindent
(1) By definition,
the graded cell module 
indexed
by $\la \in \La_{r,s}$ has
a distinguished homogeneous 
basis $$
\{v_\mu\:|\:\text{for all }\mu \subset \la\text{ with }\mu \in \dot\La_{r,s}\}.
$$
Moreover the action of a basis vector
$(\underline{\nu} \alpha \overline{\omega}) \in K_{r,s}(\delta)$
on $v_\mu$
is computed as follows.
First compute the product $(\underline{\nu} \alpha \overline{\omega})(\underline{\mu} \la \overline{\la})$
in the algebra $K_{r,s}(\delta)$
to get a (possibly zero) sum of basis vectors of the form
$(\underline{\nu} \beta \overline{\la})$.
Then
$(\underline{\nu} \alpha \overline{\omega}) v_\mu$
is equal to $v_\nu$ if $(\underline{\nu} \la \overline{\la})$
appears in this sum, and 
$(\underline{\nu} \alpha \overline{\omega}) v_\mu = 0$ otherwise.
On the other hand, using the original definition of $V(\la)$
as a quotient of $P(\la)$ given just after (\ref{pbj}),
we see that $V_{r,s}(\la)$
has a basis given by the canonical images of the 
vectors $$
\{(\underline{\mu} \la \overline{\la})\:|\:\text{for all }\mu \subset \la
\text{ with }\mu\in\dot\La_{r,s}\},
$$ and the action is defined by the algorithm just explained.
Hence $V_{r,s}(\la)$ is isomorphic to the graded cell module parametrised by $\la$.

\noindent
(2), (3) These are standard consequences of Lemma~\ref{zeroic}(1),
since $L(\la)$ is the irreducible head of both $V(\la)$ and $P(\la)$.

\noindent
 (4) This follows from Theorem~\ref{basict}(1).

\noindent
 (5) This is a combinatorial exercise using
(4) and the usual description of the blocks of a cellular algebra
in terms of its decomposition matrix.

\noindent
 (6) This follows by applying 
$e_{r,s}$ to the resolution in Theorem~\ref{basict}(4),
using also Lemma~\ref{zeroic}(2).

\noindent
 (7) The fact that $K_{r,s}(\delta)$ is quasi-hereditary follows
from the general theory of cellular algebras 
since the number of isomorphism classes of irreducible modules
is equal to the number of cell modules when $\La_{r,s} = \dot\La_{r,s}$.
The fact that the resolution is a linear projective resolution
follows from (3).
\end{proof}

\phantomsubsection{Idempotent truncations of special projective functors}
In view of Remark~\ref{newremark}, the special projective functors 
from (\ref{proje})--(\ref{projf}) restrict to well-defined functors
\begin{equation*}
E_i, F_i, \lonestar E_i, \lonestar F_i: \Mod{K(\delta)} \rightarrow \Mod{K(\delta)}.
\end{equation*}
By definition, these functors are defined by tensoring with the graded
bimodules from (\ref{bim})--(\ref{bim2}).
Tensoring with various truncations of 
these bimodules
gives us the following right exact functors:
\begin{align}\label{never1}
\ires^{r+1,s}_{r,s}
:= e_{r,s} \widetilde{E}_i e_{r+1,s}\, \otimes_{K_{r+1,s}(\delta)}\, ?&:
\Mod{K_{r+1,s}(\delta)} \rightarrow \Mod{K_{r,s}(\delta)},\\
\ires^{r,s+1}_{r,s}
:= e_{r,s} \widetilde{F}_i e_{r,s+1}\, \otimes_{K_{r,s+1}(\delta)}\, ?&:
\Mod{K_{r,s+1}(\delta)} \rightarrow \Mod{K_{r,s}(\delta)},\\
\iind^{r+1,s}_{r,s}
:= e_{r+1,s} \lonestar\widetilde{E}_i e_{r,s}\, \otimes_{K_{r,s}(\delta)}\, ?&:
\Mod{K_{r,s}(\delta)} \rightarrow \Mod{K_{r+1,s}(\delta)},
\\
\iind^{r,s+1}_{r,s}
:= e_{r,s+1} \lonestar\widetilde{F}_i e_{r,s}\, \otimes_{K_{r,s}(\delta)}\, ?&:
\Mod{K_{r,s}(\delta)} \rightarrow \Mod{K_{r,s+1}(\delta)}.\label{never4}
\end{align}

\begin{Lemma}\label{genee}
For $\la \in \dot \La_{r,s}$ and $i \in \Z$, 
$\lonestar E_i P(\la)$ is isomorphic to a direct sum of degree-shifted copies of 
$P(\mu)$
for some $\mu \in \dot\La_{r+1,s}$.
Conversely, given $\mu \in \dot \La_{r+1,s}$, it is possible to choose
$\la \in \dot \La_{r,s}$ and $i \in \Z$ such that
$\lonestar E_i P(\la)$ has a summand isomorphic (up to a degree shift) to $P(\mu)$.
The same two statements hold with $\lonestar E_i$ replaced by $\lonestar F_i$ and $\dot\La_{r+1,s}$
replaced by $\dot \La_{r,s+1}$.
\end{Lemma}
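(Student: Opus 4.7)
The proof applies Theorem~\ref{ga}, which writes $\lonestar E_i P(\la)$ explicitly as a direct sum of degree-shifted copies of $P(\mu)$ indexed by the local configuration of the weight diagram of $\la$ at positions $i, i+1$ (and, in cases (8)--(9), one auxiliary vertex $j$). For the first assertion, I go through the cases one by one and verify that each summand $P(\mu)$ has $\mu \in \dot\La_{r+1,s}$. Every such $\mu$ automatically lies in $\La$ by the well-definedness of $\lonestar E_i$ as an endofunctor of $\Mod{K(\delta)}$ recalled at the start of the subsection. Using the weight dictionary together with Remark~\ref{newremark}, cases (1)--(7) correspond to the bipartition moves of adding a box of content $i$ to $\la^L$ or removing a box of content $-i-\delta$ from $\la^R$, both of which land inside $\La_{r+1,s}$ by the counting in (\ref{ladef}). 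Cases (8)--(9) admit a direct computation on $I_\up$ and $I_\down$: one element of $I_\up(\la)$ is replaced by a strictly smaller value and one element of $I_\down(\la)$ by a strictly larger one, giving $|\mu^L| \leq |\la^L|$, $|\mu^R| \leq |\la^R|$ and $|\mu^L|-|\mu^R| = (r+1)-s$, hence $\mu \in \La_{r+1,s}$ again. To exclude $\mu = (\varnothing,\varnothing)$ in the parameter range $\delta=0$, $r+1=s>0$: the weight diagram of $(\varnothing,\varnothing)$ then has neither $\circ$ nor $\cross$ and its unique non-homogeneous adjacent pair is $\up\down$ at $(0,1)$, whereas every case of Theorem~\ref{ga} produces a $\mu$ carrying a $\circ$, a $\cross$, or a $\down\up$ pattern somewhere.

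For the converse, given $\mu \in \dot\La_{r+1,s}$ I construct $(\la,i)$ directly on bipartitions. If $\mu^L \neq \varnothing$, remove a removable box of content $i$ from $\mu^L$ to obtain $\la$; then $\la \in \La_{r,s}$, and the four sub-cases determined by the $I_\down$-membership of $i$ and $i+1$ match cases (2), (3), (5), (7) of Theorem~\ref{ga}, exhibiting $P(\mu)$ as a summand of $\lonestar E_i P(\la)$. If $\mu^L = \varnothing$, instead add an addable box of content $c$ to $\mu^R$ and take $i = -c-\delta$; the possible patterns (a), (d), (g) from (\ref{break1}) are covered by Theorem~\ref{ga}(1), (4), (6). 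The one pattern (f) of (\ref{break1}), $\la = \cross\circ$ with $\mu = \up\down$, is not listed in Theorem~\ref{ga} and arises only when $c = -\delta$. Since distinct addable boxes of a partition have distinct contents, one can always select an addable box of content $\neq -\delta$ to avoid it; the unique obstruction is $\mu^R = \varnothing$ with $c = 0 = -\delta$, which forces $\mu = (\varnothing,\varnothing)$ and $\delta=0$, and such $\mu$ lies outside $\dot\La_{r+1,s}$ precisely in that parameter range.

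The last subtlety is that when $\mu^L \neq \varnothing$ the first construction yields $\la = (\varnothing,\varnothing)$ exactly when $\mu = ((1),\varnothing)$ and $r=s$; in the exceptional range $\delta=0$, $r=s>0$ this $\la$ is not in $\dot\La_{r,s}$, and one replaces it by $\la = ((1),(1)) \in \dot\La_{r,s}$ with $i=0$, whose pattern $\down\up$ at $(0,1)$ falls under case (6) of Theorem~\ref{ga} and produces $P(\mu) = P(((1),\varnothing))$ (with two grading shifts). The assertions for $\lonestar F_i$ follow from the identical argument after interchanging the roles of $L$ with $R$ and of $\circ$ with $\cross$, using the dual form of Theorem~\ref{ga}. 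The main obstacle is threading the case analysis around the single unlisted pattern (f) of (\ref{break1}), together with the exceptional rerouting around $(\varnothing,\varnothing)$ in the $\delta=0$ setting where it must be excluded from the indexing sets.
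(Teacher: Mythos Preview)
Your argument is correct and follows essentially the same overall plan as the paper (everything is read off Theorem~\ref{ga}), but you organise the converse differently. The paper works purely in weight-diagram language: it scans $\mu$ for one of the patterns $\circ\up$, $\down\cross$, $\down\up$, $\circ\down$, $\up\cross$, $\circ\cross$, invokes the corresponding case of Theorem~\ref{ga}, and then shows by a short reduction that the absence of all such patterns forces $\mu=(\varnothing,\varnothing)$ with $\delta=0$. You instead split on whether $\mu^{\mathrm L}$ is empty and construct $\la$ by an explicit box move on the bipartition, which is more constructive but requires the extra patch for $\mu=((1),\varnothing)$ when $\delta=0,\,r=s>0$. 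Your reroute there via $\la=((1),(1))$ and case~(6) is exactly the move the paper makes through its observation~(2). One small imprecision: when $\mu^{\mathrm L}=\varnothing$, pattern~(g) of~(\ref{break1}) can never actually occur (it would require $i\notin I_\up(\mu)$ and $i+1\in I_\up(\mu)$ with $I_\up(\mu)=\{0,-1,-2,\dots\}$), so only (a), (d), (f) arise; listing~(g) is harmless since it is covered by Theorem~\ref{ga}(6) anyway, but you might drop it.
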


\begin{proof}
We just prove the lemma for $\lonestar E_i$.
First take $\la \in \dot \La_{r,s}$.
The fact that $\lonestar E_i P(\la)$ is a sum of copies of degree-shifted copies 
$P(\mu)$ for some 
$\mu \in \dot\La_{r+1,s}$
follows from Theorem~\ref{ga}.
Conversely, take $\mu \in \dot\La_{r+1,s}$.
We want to show there exists $\la \in \dot\La_{r,s}$ and $i \in \Z$
such that $P(\mu)$ is isomorphic (up to a degree shift) to a 
summand of $\lonestar E_i P(\la)$.
Some observations:
\begin{itemize}
\item[(1)]
If we can find $i \in \Z$ 
such that the $i$th and $(i+1)$th
vertices of the weight diagram of $\mu$ are labelled
$\circ \up$, $\down \cross$ or $\down\up$, then we are done by
Theorem~\ref{ga}(2), (3) or (5), respectively.
\item[(2)]
If we can find $i \in \Z$ such that the $i$th and $(i+1)$th vertices
of $\mu$ are labelled $\circ \down$, $\up\cross$ or $\circ\cross$
then
we are done
by Theorem~\ref{ga}(1), (4) or (6), respectively,
unless
$s = |\mu^{\mathrm R}|$.
\item[(3)]
If we can find $i \in \Z$ such that the $i$th and $(i+1)$th vertices
of $\mu$ are labelled
$\circ\cross$, we are done
by Theorem~\ref{ga}(7) unless $\mu = ((1),\varnothing)$,
$\delta = 0$ and $s > 0$; moreover we are done in the exceptional case by (2).
\end{itemize}
Using (1) and (3) 
we are reduced to the situation that
every $\cross$ or $\up$ in $\mu$
has either $\cross$ or $\up$ immediately to its left.
This implies that $\mu^{\mathrm L} = \varnothing$, hence $s > |\mu^{\mathrm R}|$.
Hence by (2) every $\cross$ must have actually $\cross$ to its left, i.e.
since the number of $\cross$'s is finite 
there are actually no $\cross$'s at all. 
Then (2) once again gives that $\mu = (\varnothing,\varnothing)$ and $\delta = 0$, which contradicts the assumption that $\mu \in \dot\La_{r+1,s}$.
\end{proof}

\begin{Lemma}\label{summands}
The following hold for all $r, s \geq 0$ and $i \in \Z$.
\begin{itemize}
\item[(1)] 
The graded left $K(\delta)$-module $\lonestar\widetilde{E}_i e_{r,s}$ is isomorphic to a finite direct
sum of degree-shifted summands of
$K(\delta) e_{r+1,s}$.
\item[(2)]
The graded left $K(\delta)$-module $\lonestar\widetilde{F}_i e_{r,s}$ 
is isomorphic to a finite direct sum of degree-shifted summands of
$K(\delta) e_{r,s+1}$.
\item[(3)]
The graded right $K(\delta)$-module $e_{r,s}\widetilde{E}_i$ is isomorphic to
a finite direct sum of degree-shifted summands 
of $e_{r+1,s} K(\delta)$.
\item[(4)]
The graded right $K(\delta)$-module $e_{r,s}\widetilde{F}_i$ is isomorphic to
a finite direct sum of degree-shifted summands of $e_{r,s+1}K(\delta)$.
\end{itemize}
\end{Lemma}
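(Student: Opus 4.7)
The plan is to use finiteness of $\dot\La_{r,s}$ to reduce everything to Lemma~\ref{genee} (and its $\lonestar F_i$ analog). Observing from (\ref{ladef}) that $|\la^{\mathrm L}|+|\la^{\mathrm R}| \leq r+s$ for each $\la \in \La_{r,s}$, we see that $\dot\La_{r,s}$ is finite, so $e_{r,s}$ is a finite sum of primitive idempotents $e_\la$.

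For part (1), I would compute
\[
\lonestar\widetilde{E}_i\, e_{r,s} = \bigoplus_{\la \in \dot\La_{r,s}} \lonestar\widetilde{E}_i\, e_\la \cong \bigoplus_{\la \in \dot\La_{r,s}} \lonestar E_i P(\la),
\]
using $P(\la) = K(\delta) e_\la$ together with the standard isomorphism $M \otimes_{K(\delta)} K(\delta) e_\la \cong M e_\la$. By Lemma~\ref{genee}, each summand $\lonestar E_i P(\la)$ is a finite direct sum of degree-shifted copies of $P(\mu) = K(\delta) e_\mu$ for various $\mu \in \dot\La_{r+1,s}$, and each such $P(\mu)$ is visibly a direct summand of $K(\delta) e_{r+1,s} = \bigoplus_{\mu \in \dot\La_{r+1,s}} K(\delta) e_\mu$. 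This establishes (1), and part (2) follows by an identical argument using the $\lonestar F_i$ variant of Lemma~\ref{genee}.

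For parts (3) and (4), I would invoke the cellular anti-involution $\ast : K(\delta) \to K(\delta)$ given on basis vectors by $(\underline{\la}\al\overline{\mu}) \mapsto (\underline{\mu}\al\overline{\la})$. This is a degree-preserving anti-isomorphism fixing each $e_\la$, so it induces a contravariant equivalence between graded locally-unital left and right $K(\delta)$-modules which sends $K(\delta) e_{r+1,s}$ to $e_{r+1,s} K(\delta)$ and preserves direct summands and degree shifts. Under this equivalence, the graded right module $e_{r,s} \widetilde{E}_i$ is identified with the graded left module $\lonestar\widetilde{E}_i\, e_{r,s}$ (up to the degree shift recorded in (\ref{other})), because vertical reflection of diagrams sends each $\Gamma\De$-matching $s$ in the defining sum (\ref{bim}) for $\widetilde{E}_i$ to the adjoint $\De\Gamma$-matching $s^\ast$ in the defining sum (\ref{bim2}) for $\lonestar\widetilde{E}_i$. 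Hence (3) follows from (1), and (4) from (2) via the $\lonestar F_i$ analog. The main obstacle is the diagrammatic bookkeeping in this last identification, in particular tracking the degree shifts in (\ref{bim})--(\ref{bim2}) and (\ref{other}) under $\ast$; one could alternatively bypass the anti-involution entirely by proving right-module analogs of Theorem~\ref{ga} and Lemma~\ref{genee} by formally identical diagrammatic arguments and then repeating the first paragraph of the proof.
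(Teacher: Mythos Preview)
Your proof is correct and follows essentially the same approach as the paper: parts (1) and (2) are immediate consequences of Lemma~\ref{genee} applied to the decomposition $e_{r,s}=\sum_{\la\in\dot\La_{r,s}}e_\la$, and parts (3) and (4) follow from (1) and (2) via the horizontal mirror reflection of diagrams. The paper records this in a single sentence (``on considering the mirror images of the diagrams in a horizontal axis''), whereas you spell out the anti-involution $\ast$ explicitly; your caution about tracking the degree shifts through $\ast$ is unnecessary for the stated conclusion, since the statement only asks for a direct sum of degree-shifted summands, and any graded anti-equivalence preserves that property regardless of how the shifts match up.
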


\begin{proof}
Parts (1) and (2) follow from Lemma~\ref{genee}.
 Parts (3) and (4) follow from (1) and (2), respectively,
on considering the mirror images of the diagrams in a horizontal axis.
\end{proof}

\begin{Lemma}\label{comm}
For $i \in \Z$ and $r, s \geq 0$, there are canonical
isomorphisms of functors
\begin{align*}
\ires^{r+1,s}_{r,s} \circ e_{r+1,s} \cong e_{r,s} \circ E_i&:
\Mod{K(\delta)} \rightarrow \Mod{K_{r,s}(\delta)},\\
\ires^{r,s+1}_{r,s} \circ e_{r,s+1} \cong e_{r,s} \circ F_i
&:
\Mod{K(\delta)} \rightarrow \Mod{K_{r,s}(\delta)}.
\end{align*}
\end{Lemma}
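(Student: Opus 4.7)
The plan is to unwind the definitions of the functors on both sides and then appeal directly to Lemmas~\ref{stupidtwo} and \ref{summands}. Fix $M \in \Mod{K(\delta)}$. By the definition \eqref{never1} of $\ires^{r+1,s}_{r,s}$ and the definition of the truncation functor $e_{r+1,s}$, the left-hand side evaluates to
\begin{equation*}
(\ires^{r+1,s}_{r,s} \circ e_{r+1,s})(M) = e_{r,s}\widetilde{E}_i e_{r+1,s} \otimes_{K_{r+1,s}(\delta)} e_{r+1,s}M,
\end{equation*}
whereas, since left multiplication by $e_{r,s}$ commutes past $\otimes_{K(\delta)}$, the right-hand side is
\begin{equation*}
(e_{r,s} \circ E_i)(M) = (e_{r,s}\widetilde{E}_i) \otimes_{K(\delta)} M.
\end{equation*}
It therefore suffices to produce a canonical isomorphism, natural in $M$, between these two expressions.

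The crucial input is Lemma~\ref{summands}(3), which asserts that $e_{r,s}\widetilde{E}_i$ is isomorphic to a finite direct sum of degree-shifted summands of $e_{r+1,s}K(\delta)$ as a right $K(\delta)$-module. This is precisely the hypothesis of Lemma~\ref{stupidtwo}, applied with $e_{r+1,s}$ in place of the ambient $e_{r,s}$; the one-line proof given there goes through verbatim for any idempotent of this form. Lemma~\ref{stupidtwo} then supplies the canonical isomorphism
\begin{equation*}
(e_{r,s}\widetilde{E}_i) e_{r+1,s} \otimes_{K_{r+1,s}(\delta)} e_{r+1,s} M \;\stackrel{\sim}{\longrightarrow}\; (e_{r,s}\widetilde{E}_i) \otimes_{K(\delta)} M,\qquad x\otimes m\mapsto x\otimes m,
\end{equation*}
which is manifestly natural in $M$. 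The second isomorphism, $\ires^{r,s+1}_{r,s} \circ e_{r,s+1} \cong e_{r,s} \circ F_i$, is proved identically, substituting $\widetilde{F}_i$ for $\widetilde{E}_i$ and $e_{r,s+1}$ for $e_{r+1,s}$ throughout, and invoking Lemma~\ref{summands}(4) in place of Lemma~\ref{summands}(3).

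I do not foresee any serious obstacle: the substantive content, namely that the left truncation of $\widetilde{E}_i$ (resp.\ $\widetilde{F}_i$) is automatically ``right-supported'' on $\dot\La_{r+1,s}$ (resp.\ $\dot\La_{r,s+1}$), has already been packaged into Lemma~\ref{summands}, whose proof in turn rests on the case-by-case analysis of $\lonestar E_i$ and $\lonestar F_i$ on indecomposable projectives in Theorem~\ref{ga} via Lemma~\ref{genee}. Once those inputs are in hand, what remains is pure tensor--hom bookkeeping.
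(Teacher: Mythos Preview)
Your proof is correct and follows essentially the same approach as the paper: both identify the two composites as tensor products with $e_{r,s}\widetilde{E}_i e_{r+1,s}$ over $K_{r+1,s}(\delta)$ and with $e_{r,s}\widetilde{E}_i$ over $K(\delta)$, respectively, and then invoke Lemma~\ref{summands}(3) together with Lemma~\ref{stupidtwo} to identify them. The only cosmetic difference is that the paper phrases it as a single bimodule isomorphism $e_{r,s}\widetilde{E}_i e_{r+1,s} \otimes_{K_{r+1,s}(\delta)} e_{r+1,s}K(\delta) \cong e_{r,s}\widetilde{E}_i$, whereas you work object-by-object and note naturality; these are equivalent.
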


\begin{proof}
We just construct the first isomorphism, since the second is similar.
By Lemma~\ref{summands}(3) and Lemma~\ref{stupidtwo},
the natural multiplication map is a bimodule isomorphism
$e_{r,s} \widetilde{E}_i e_{r+1,s} \otimes_{K_{r+1,s}(\delta)} e_{r+1,s} K(\delta)
\cong e_{r,s} \widetilde{E}_i.$
It remains to observe that $\ires^{r+1,s}_{r,s} \circ e_{r+1,s}$ and $e_{r,s} \circ E_i$ are the functors defined by tensoring with these bimodules.
\end{proof}

\begin{Lemma}\label{adjun}
There are canonical
adjunctions making $(\iind^{r+1,s}_{r,s}, \ires^{r+1,s}_{r,s})$ and $(\iind^{r,s+1}_{r,s}, \ires^{r,s+1}_{r,s})$ 
into adjoint pairs of functors.
In particular, the functors $\ires^{r+1,s}_{r,s}$ and $\ires^{r,s+1}_{r,s}$ are exact.
\end{Lemma}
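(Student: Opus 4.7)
The plan is to exhibit the adjunction via a canonical bimodule isomorphism and then invoke the standard tensor--Hom formalism; exactness of the restriction functors will drop out as a consequence.

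First I would pass to the bimodule level. By Lemma~\ref{adunctions} (applied to each of the summands in (\ref{bim})--(\ref{bim2}) and assembled), there is a canonical isomorphism
\[
\widetilde{E}_i \;\cong\; \hom_{K(\delta)}\bigl(\lonestar\widetilde{E}_i,\, K(\delta)\bigr)
\]
of graded $(K(\delta),K(\delta))$-bimodules, and similarly with $\widetilde{F}_i$ in place of $\widetilde{E}_i$. Note that the grading shifts $\langle -\caps(\cdot)\rangle$ built into (\ref{bim})--(\ref{bim2}) align on both sides, so no extra shift appears.

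Next I would truncate by the relevant idempotents. Multiplying on the left by $e_{r,s}$ and on the right by $e_{r+1,s}$, the standard bimodule description of Hom gives
\[
e_{r,s}\widetilde{E}_i e_{r+1,s} \;\cong\; \hom_{K(\delta)}\bigl(\lonestar\widetilde{E}_i e_{r,s},\, K(\delta)e_{r+1,s}\bigr).
\]
Now Lemma~\ref{summands}(1) says that $\lonestar\widetilde{E}_i e_{r,s}$ is a finite direct sum of degree-shifted summands of $K(\delta)e_{r+1,s}$. Hence Lemma~\ref{stupidone} applies (with the roles of $r,s$ there played by $r+1,s$) to convert the right-hand side into
\[
\hom_{K_{r+1,s}(\delta)}\bigl(e_{r+1,s}\lonestar\widetilde{E}_i e_{r,s},\, K_{r+1,s}(\delta)\bigr).
\]
Multiplying on the left by $e_{r+1,s}$ further shows that $e_{r+1,s}\lonestar\widetilde{E}_i e_{r,s}$ is a finite direct sum of degree-shifted summands of $K_{r+1,s}(\delta)$, so it is a finitely generated graded projective left $K_{r+1,s}(\delta)$-module.

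Finally I would assemble the adjunction. Because $P := e_{r+1,s}\lonestar\widetilde{E}_i e_{r,s}$ is a finitely generated projective left $K_{r+1,s}(\delta)$-module, there is a natural isomorphism $\hom_{K_{r+1,s}(\delta)}(P,\,K_{r+1,s}(\delta))\otimes_{K_{r+1,s}(\delta)} N \cong \hom_{K_{r+1,s}(\delta)}(P,N)$ for all $N$. Combined with the bimodule isomorphism above, this yields
\[
\ires^{r+1,s}_{r,s} N \;\cong\; \hom_{K_{r+1,s}(\delta)}(P, N),
\]
naturally in $N$. Then the ordinary tensor--Hom adjunction
\[
\hom_{K_{r,s}(\delta)}\bigl(M,\hom_{K_{r+1,s}(\delta)}(P,N)\bigr) \;\cong\; \hom_{K_{r+1,s}(\delta)}\bigl(P\otimes_{K_{r,s}(\delta)} M,\,N\bigr)
\]
gives the required canonical adjunction $(\iind^{r+1,s}_{r,s},\ires^{r+1,s}_{r,s})$. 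The argument for $(\iind^{r,s+1}_{r,s},\ires^{r,s+1}_{r,s})$ is identical, using Lemma~\ref{summands}(2) in place of (1). Exactness of $\ires^{r+1,s}_{r,s}$ and $\ires^{r,s+1}_{r,s}$ follows because these functors are right exact (being tensor products) and now also left exact (being right adjoints). The only delicate point is the bookkeeping in Step~2 reducing Hom over $K(\delta)$ to Hom over the truncation $K_{r+1,s}(\delta)$; but this is precisely what Lemmas~\ref{stupidone} and \ref{summands} were set up to handle, so the main ``work'' is already done in those lemmas.
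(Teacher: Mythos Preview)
Your proof is correct and follows essentially the same route as the paper's: use Lemma~\ref{adunctions} to identify $e_{r,s}\widetilde{E}_i e_{r+1,s}$ with a Hom-bimodule, apply Lemmas~\ref{summands} and \ref{stupidone} to pass from $K(\delta)$ to the truncation $K_{r+1,s}(\delta)$, and then invoke tensor--Hom adjunction. The paper works out the $(\iind^{r,s+1}_{r,s}, \ires^{r,s+1}_{r,s})$ case instead and is slightly less explicit about the projectivity step you spell out, but the argument is the same.
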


\begin{proof}
We just construct the adjunction for $(\iind^{r,s+1}_{r,s}, \ires^{r,s+1}_{r,s})$, since the
argument for $(\iind^{r+1,s}_{r,s}, \ires^{r+1,s}_{r,s})$ is similar.
The restriction of the isomorphism from Lemma~\ref{adunctions} gives a 
graded $(K_{r,s}(\delta), K_{r,s+1}(\delta))$-bimodule isomorphism
$$
e_{r,s} K^{t}_{\De\Ga} e_{r,s+1} \langle-\caps(t)\rangle \cong 
e_{r,s} \hom_{K(\delta)}(K^{t^*}_{\Ga\De}\langle-\caps(t)\rangle, K(\delta)) e_{r,s+1}
$$
for each $\De, \Ga$ and $t$ as in (\ref{bim2}).
The bimodule on the right
is identified with
$\hom_{K(\delta)}(K^{t^*}_{\Ga\De}\langle-\caps(t)\rangle
e_{r,s}, K(\delta) e_{r,s+1})$.
Summing over all $\De, \Ga$ and $t$, we get a graded bimodule isomorphism
$$
e_{r,s} \widetilde{F}_i e_{r,s+1}\cong 
\hom_{K(\delta)}(\lonestar\widetilde{F}_i e_{r,s}, K(\delta) e_{r,s+1}).
$$
By Lemma~\ref{summands}(2) and Lemma~\ref{stupidone},
the functor $e_{r,s+1}$ defines an isomorphism
$$
\hom_{K(\delta)}(\lonestar\widetilde{F}_i e_{r,s}, K(\delta) e_{r,s+1})
\stackrel{\sim}{\rightarrow} \hom_{K_{r,s+1}(\delta)} (e_{r,s+1} \lonestar\widetilde{F}_i e_{r,s}, e_{r,s+1} K(\delta) e_{r,s+1}).
$$
We have now constructed a graded 
$(K_{r,s}(\delta), K_{r,s+1}(\delta))$-bimodule
isomorphism
$$
e_{r,s} \widetilde{F}_i e_{r,s+1}
\cong
\hom_{K_{r,s+1}(\delta)} (e_{r,s+1} \lonestar\widetilde{F}_i e_{r,s}, 
K_{r,s+1}(\delta)).
$$
Hence the functor
$\ires^{r,s+1}_{r,s} = e_{r,s} \widetilde{F}_i e_{r,s+1} \otimes_{K_{r,s+1}(\delta)} ?$ 
is isomorphic to the functor
$\hom_{K_{r,s+1}(\delta)}(e_{r,s+1} \lonestar\widetilde{F}_i e_{r,s}, ?)$.
The latter functor is canonically right adjoint to
$\iind^{r,s+1}_{r,s} = e_{r,s+1} \lonestar\widetilde{F}_i e_{r,s} \otimes_{K_{r,s}(\delta)} ?$.
\end{proof}

\begin{Lemma}\label{globalisation}
For $r, s \geq 0$ such that $(r,s,\delta) \neq (0,0,0)$,
the right exact functor
$$
G_{r,s}:=K_{r+1,s+1}(\delta) e_{r,s} \otimes_{K_{r,s}(\delta)} ?:\Mod{K_{r,s}(\delta)} \rightarrow
\Mod{K_{r+1,s+1}(\delta)}
$$
has the following properties:
\begin{itemize}
\item[(1)] $G_{r,s} P_{r,s}(\la) \cong P_{r+1,s+1}(\la)$
for each $\la \in \dot \La_{r,s}$;
\item[(2)] $G_{r,s} V_{r,s}(\la)\cong V_{r+1,s+1}(\la)$ 
for each $\la \in \La_{r,s}$.
\end{itemize}
\end{Lemma}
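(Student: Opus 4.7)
The plan is to prove (1) by a direct idempotent manipulation and then deduce (2) by transporting a projective resolution via a natural transformation.

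First, I would observe the combinatorial inclusion $\dot{\La}_{r,s} \subseteq \dot{\La}_{r+1,s+1}$ whenever $(r,s,\delta) \neq (0,0,0)$: the containment $\La_{r,s} \subseteq \La_{r+1,s+1}$ follows from (\ref{ladef}) by incrementing $t$ by one, and the bipartition $(\varnothing,\varnothing)$ is excluded from $\dot\La_{r+1,s+1}$ exactly when $\delta=0$ and $r=s>0$, which is precisely when it is also excluded from $\dot\La_{r,s}$. Therefore $e_{r,s} e_\la = e_\la = e_{r+1,s+1} e_\la$ for $\la \in \dot{\La}_{r,s}$, and (1) follows from the computation
\begin{equation*}
G_{r,s} P_{r,s}(\la) = K_{r+1,s+1}(\delta) e_{r,s} \otimes_{K_{r,s}(\delta)} K_{r,s}(\delta) e_\la \cong K_{r+1,s+1}(\delta) e_\la = P_{r+1,s+1}(\la).
\end{equation*}

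For (2), I would introduce a natural transformation $\Phi : G_{r,s} \circ e_{r,s} \Rightarrow e_{r+1,s+1}$ between functors $\Mod{K(\delta)} \to \Mod{K_{r+1,s+1}(\delta)}$, with component at $M$ given by the multiplication map $x \otimes v \mapsto xv$; this is well-defined because $K_{r+1,s+1}(\delta) e_{r,s} \subseteq e_{r+1,s+1} K(\delta)$. By (1) applied to $M = P(\mu)$, $\Phi_{P(\mu)}$ is an isomorphism for every $\mu \in \dot\La_{r,s}$. To conclude, I would apply the exact functor $e_{r,s}$ to the linear projective resolution $P_\bullet(\la) \twoheadrightarrow V(\la)$ of Theorem~\ref{basict}(4) to recover the resolution appearing in Theorem~\ref{iscell}(6); then apply the right-exact $G_{r,s}$ and compare via $\Phi$ to the resolution of $V_{r+1,s+1}(\la)$ obtained by applying the exact functor $e_{r+1,s+1}$ to the same $P_\bullet(\la)$. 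The resulting commutative ladder has right-exact rows and $\Phi$-maps as vertical arrows, and a five-lemma argument upgrades the isomorphisms on the projective columns to $\Phi_{V(\la)} : G_{r,s} V_{r,s}(\la) \xrightarrow{\sim} V_{r+1,s+1}(\la)$.

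The main obstacle is verifying that $\Phi_{P(\mu)}$ is an isomorphism when $\mu \in \La_{r,s} \setminus \dot{\La}_{r,s}$, since such $\mu$ can appear as summands of the $P_i(\la)$ in the resolution when $\la$ is correspondingly exceptional. This only arises in the case $\mu = (\varnothing,\varnothing)$ with $\delta = 0$ and $r = s > 0$. Here maximality of $(\varnothing,\varnothing)$ in its block forces the middle weight in every relevant diagrammatic basis vector of $e_{r,s} K(\delta) e_{(\varnothing,\varnothing)}$ or $e_{r+1,s+1} K(\delta) e_{(\varnothing,\varnothing)}$ to equal $(\varnothing,\varnothing)$, and the consistent-orientation condition further constrains the cups of $\underline\nu$ to straddle the central edge of the number line. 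These constraints reduce the verification to a direct combinatorial matching of the diagrammatic bases compatible with $\Phi$, completing the proof.
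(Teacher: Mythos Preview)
Your argument for (1) and your overall strategy for (2) --- set up the multiplication natural transformation $\Phi$, take the first two terms of the resolution from Theorem~\ref{basict}(4), truncate by $e_{r,s}$ and $e_{r+1,s+1}$, and apply the five lemma --- is exactly what the paper does; the paper's bimodule map (\ref{bio}) is your $\Phi$.

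The one place your write-up diverges is the handling of the exceptional term $P(\varnothing,\varnothing)$ when $\delta=0$ and $r=s>0$. Your combinatorial observations are correct (maximality forces $\alpha=(\varnothing,\varnothing)$ in every basis vector $(\underline\nu\,\alpha\,\overline{(\varnothing,\varnothing)})$, and each cup of $\underline\nu$ must join a vertex $\leq 0$ to a vertex $\geq 1$), but they do not by themselves compute the tensor product $K_{r+1,s+1}(\delta)e_{r,s}\otimes_{K_{r,s}(\delta)}e_{r,s}P(\varnothing,\varnothing)$: a ``direct combinatorial matching of bases'' is not an argument here, because tensoring over $K_{r,s}(\delta)$ is not free on bases, and you have given no mechanism for producing the extra basis vectors indexed by $\nu\in\dot\La_{r+1,s+1}\setminus\dot\La_{r,s}$ on the target side. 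The paper avoids this difficulty by bootstrapping: since $P(\varnothing,\varnothing)=V(\varnothing,\varnothing)$ and $e_{r,s}L(\varnothing,\varnothing)=0$ forces $e_{r,s}V(\varnothing,\varnothing)=e_{r,s}\rad V(\varnothing,\varnothing)$, it writes down an explicit presentation
\[
\bigl(P(\nu')\oplus P(\nu'')\bigr)\langle 2\rangle\to P(\mu')\langle 1\rangle\to \rad V(\varnothing,\varnothing)\to 0
\]
with $\mu'=((1),(1))$, $\nu'=((2),(1^2))$, $\nu''=((1^2),(2))$, all lying in $\dot\La_{r,s}$, and runs the five lemma a first time to establish that $\Phi_{V(\varnothing,\varnothing)}$ is an isomorphism before feeding this into the general case. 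If you replace your last paragraph with this step, your proof is complete and essentially identical to the paper's.
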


\begin{proof}
(1) Take $\la \in \dot \La_{r,s}$ and recall from Theorem~\ref{iscell}(3) 
that $P_{r,s}(\la) = K_{r,s}(\delta) e_\la$.
Hence
$G_{r,s} P_{r,s}(\la) =
K_{r+1,s+1}(\delta) e_{r,s} \otimes_{K_{r,s}} P_{r,s}(\la) \cong
K_{r+1,s+1}(\delta) e_\la = P_{r+1,s+1}(\la).$

\noindent
(2) We first show that $G_{r,s} V_{r,s}(\la) \cong V_{r+1,s+1}(\la)$
in the exceptional case $\delta = 0$, $r=s \neq 0$
and $\la = (\varnothing,\varnothing)$.
Observe that $P(\la) = V(\la)$ by Theorem~\ref{basict}(2).
Let $\mu = ((1),(1))$, 
$\nu' = ((2),(1^2))$ and $\nu'' = ((1^2),(2))$.
Right multiplication by $(\underline{\mu} \la \overline{\la})$
defines a graded $K(\delta)$-module homomorphism
$P(\mu) \langle 1 \rangle \twoheadrightarrow \rad V(\la)$.
It is easy to see that the kernel of this is generated by the vectors
$(\underline{\nu}' \mu \overline{\mu})$ and
$(\underline{\nu}'' \mu \overline{\mu})$.
Hence there is an exact sequence
\begin{equation}\label{exact1}
(P(\nu')\oplus
P(\nu'')) \langle 2 \rangle
\rightarrow P(\mu) \langle 1 \rangle
\rightarrow \rad V(\la) \rightarrow 0.
\end{equation}
Multiplication defines a bimodule homorphism
\begin{equation}\label{bio}
e_{r+1,s+1} K(\delta) e_{r,s} \otimes_{K_{r,s}} e_{r,s} K(\delta)
\rightarrow e_{r+1,s+1} K(\delta).
\end{equation}
Tensoring (\ref{bio}) and (\ref{exact1}) together,
noting $e_{r,s}(\rad V(\la)) = e_{r,s} V(\la)$
because $e_{r,s} L(\la) = \{0\}$, we get
the following commutative diagram with exact rows:
$$
\begin{CD}
(G_{r,s} P_{r,s}(\nu')
\oplus
G_{r,s} P_{r,s}(\nu'')) \langle 2 \rangle
\!&@>>> \!G_{r,s} P_{r,s}(\mu) \langle 1 \rangle
\!&@>>> \!G_{r,s} V_{r,s}(\la) &\rightarrow 0\\
@VVV&@VVV&@VVV\\
(P_{r+1,s+1}(\nu')
\oplus
P_{r+1,s+1}(\nu'')) \langle 2 \rangle
\!&@>>> \!P_{r+1,s+1}(\mu) \langle 1 \rangle
\!&@>>> \!V_{r+1,s+1}(\la) &\rightarrow 0.
\end{CD}
$$
The two vertical arrows on the left are isomorphisms by the previous paragraph.
Hence the vertical arrow on the right is an isomorphism too by the five lemma.

Finally for the general case,
we may assume by the previous paragraph
that $\la \in \dot \La_{r,s}$.
By Theorem~\ref{basict}(3)--(4),
there is an exact sequence 
\begin{equation}\label{exact2}
\bigoplus_{\mu \in \La_{r,s}} 
p_{\la,\mu}^{(1)} P(\mu) \langle 1 \rangle \rightarrow P(\la) \rightarrow V(\la) \rightarrow 0.
\end{equation}
Tensoring (\ref{bio}) and (\ref{exact2}) together, we get a commutative diagram with exact rows:
$$
\begin{CD}
\displaystyle\bigoplus_{\mu \in \La_{r,s}} 
p_{\la,\mu}^{(1)} G_{r,s} P_{r,s}(\mu) \langle 1 \rangle
&@>>> G_{r,s} P_{r,s}(\la) 
&@>>> G_{r,s} V_{r,s}(\la) &\rightarrow 0\\
@VVV&@VVV&@VVV\\
\displaystyle\bigoplus_{\mu \in \La_{r,s}} 
p_{\la,\mu}^{(1)} P_{r+1,s+1}(\mu) \langle 1 \rangle
&@>>> P_{r+1,s+1}(\la)
&@>>> V_{r+1,s+1}(\la) &\rightarrow 0.
\end{CD}
$$
The two vertical arrows on the left are isomorphisms either by (1)
or by the previous paragraph in case
$\mu = (\varnothing,\varnothing)$. Hence the arrow on the right is an isomorphism too.
\end{proof}

The following theorem should be compared with Theorem~\ref{ibranch}.

\begin{Theorem}\label{finalbranch}
The following hold for $\la \in \La_{r,s}$ and $i \in \Z$.
\begin{itemize}
\item[(1)]
The module $\ires^{r,s}_{r-1,s} V_{r,s}(\la)$ has a filtration with sections isomorphic 
to 
$\{V_{r-1,s}(\mu) \langle \deg(\mu\stackrel{i}{\rightarrow} \la\rangle\:|\:\text{for all $\mu \in \La_{r-1,s}$ such that }\mu \stackrel{i}{\rightarrow} \la\}$.
\item[(2)]
The module $\ires^{r,s}_{r,s-1} V_{r,s}(\la)$ has a filtration with sections isomorphic to 
$\{V_{r,s-1}(\mu) \langle \deg(\mu \stackrel{i}{\leftarrow} \la)\rangle\:|\:\text{for all }\mu \in \La_{r,s-1}\text{ such that }\mu \stackrel{i}{\leftarrow} \la\}$.
\item[(3)]
The module $\iind^{r+1,s}_{r,s} V_{r,s}(\la)$
has a filtration with sections isomorphic 
to 
$\{V_{r+1,s}(\mu) \langle \deg(\la \stackrel{i}{\rightarrow} \mu)\rangle\:|\:\text{for all }\mu \in \La_{r+1,s}\text{ such that }\la \stackrel{i}{\rightarrow} \mu\}$.
\item[(4)]
The module $\iind^{r,s+1}_{r,s} V_{r,s}(\la)$ has a filtration with sections isomorphic to 
$\{V_{r,s+1}(\mu) \langle \deg(\la \stackrel{i}{\leftarrow} \mu)\rangle\:|\:\text{for all }\mu \in \La_{r,s+1}\text{ such that }\la \stackrel{i}{\leftarrow} \mu\}$.
\end{itemize}
In all cases, the filtration is of length at most two; when it is
of length exactly two the 
module indexed by the biggest weight in the Bruhat ordering $\geq$ appears as a submodule and the other is a quotient.
\end{Theorem}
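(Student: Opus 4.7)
The plan is to reduce Theorem~\ref{finalbranch} to Theorem~\ref{wasaremark} by transferring the filtrations there via the exact idempotent truncation functors.

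For parts (1) and (2), Lemma~\ref{comm} (with the pair $(r,s)$ replaced by $(r-1,s)$ and $(r,s-1)$ respectively) gives canonical isomorphisms
\[
\ires^{r,s}_{r-1,s} V_{r,s}(\la) \cong e_{r-1,s}\bigl(E_i V(\la)\bigr), \qquad \ires^{r,s}_{r,s-1} V_{r,s}(\la) \cong e_{r,s-1}\bigl(F_i V(\la)\bigr).
\]
Since the truncation functors $e_{r-1,s}$ and $e_{r,s-1}$ are exact, applying them to the filtrations from Theorem~\ref{wasaremark}(1), (2) yields filtrations of the left-hand sides with sections of the form $V_{r',s'}(\mu)\langle d\rangle$. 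By Lemma~\ref{zeroic}(2), such a section vanishes precisely when $\mu \notin \La_{r',s'}$, so the surviving terms are exactly those indexed in parts (1), (2) of Theorem~\ref{finalbranch}. The length-at-most-two claim and the submodule/quotient identification descend from the corresponding claim in Theorem~\ref{wasaremark} because we are applying an exact functor to the underlying short exact sequence; if one of the two sections is annihilated by truncation, the filtration collapses to length one.

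For parts (3) and (4), the same strategy works provided we establish the induction counterpart of Lemma~\ref{comm}:
\[
\iind^{r+1,s}_{r,s} \circ e_{r,s} \cong e_{r+1,s} \circ \lonestar E_i, \qquad \iind^{r,s+1}_{r,s} \circ e_{r,s} \cong e_{r,s+1} \circ \lonestar F_i.
\]
Granting this, Theorem~\ref{wasaremark}(3), (4) together with exactness of $e_{r+1,s}$ and $e_{r,s+1}$ again produces the required filtrations, and the length/direction claims follow as in the previous paragraph. The sought-after functor isomorphism follows from Lemma~\ref{stupidtwo} applied to $M := e_{r+1,s}\lonestar\widetilde{E}_i$ (viewed as a right $K(\delta)$-module) and to arbitrary $N \in \Mod{K(\delta)}$, once we know that $M$ is a finite direct sum of summands of $e_{r,s} K(\delta)$ as a right $K(\delta)$-module.

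The main obstacle is establishing this right-handed analog of Lemma~\ref{summands}(1). It can be proved by invoking the anti-involution of $K(\delta)$ given by the horizontal flip $(\underline{\la}\al\overline{\mu})\mapsto(\underline{\mu}\al\overline{\la})$, which interchanges left and right $K(\delta)$-modules and, in view of (\ref{other}), converts $\lonestar\widetilde{E}_i$ into $\widetilde{E}_i$ up to a degree shift, so that the statement reduces to Lemma~\ref{summands}(3); alternatively, one can rerun the diagrammatic argument of Lemma~\ref{genee} for $\lonestar E_i$ with the roles of top and bottom interchanged. Once this is in hand the rest of the argument is routine, and the compatibility between the edges $\mu \stackrel{i}{\rightarrow} \la$ defined in Section~\ref{s2} and those arising from (\ref{break1})--(\ref{break2}) is guaranteed by Remark~\ref{newremark}.
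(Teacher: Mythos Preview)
Your treatment of parts (1) and (2) is exactly the paper's argument: use Lemma~\ref{comm} to identify $\ires V_{r,s}(\la)$ with an idempotent truncation of $E_i V(\la)$ or $F_i V(\la)$, then truncate the filtration from Theorem~\ref{wasaremark}.

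For parts (3) and (4), however, you take a different route from the paper, and your justification has a gap. You want to show that $e_{r+1,s}\lonestar\widetilde{E}_i$ is, as a right $K(\delta)$-module, a direct sum of summands of $e_{r,s}K(\delta)$. Applying the horizontal-flip anti-involution converts this into the statement that $\widetilde{E}_i e_{r+1,s}$, as a \emph{left} $K(\delta)$-module, is a direct sum of summands of $K(\delta)e_{r,s}$; in other words, that $E_i P(\mu)$ for $\mu\in\dot\La_{r+1,s}$ decomposes into $P(\nu)$'s with $\nu\in\dot\La_{r,s}$. This is \emph{not} Lemma~\ref{summands}(3), which concerns $e_{r,s}\widetilde{E}_i$ as a right module (equivalently, under the flip, $\lonestar\widetilde{E}_i e_{r,s}$ as a left module---that is just Lemma~\ref{summands}(1) again). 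The flip relates parts (1) and (3) of Lemma~\ref{summands} to each other, not to the statement you need.

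Your alternative suggestion of ``rerunning Lemma~\ref{genee}'' would require an analogue of Theorem~\ref{ga} for $E_i$ in place of $\lonestar E_i$: the case analysis there depends on the cap diagram $\overline{\la}$, whereas for $E_i$ one needs the corresponding statement involving cup diagrams $\underline{\mu}$, together with a check that the exceptional bipartition $(\varnothing,\varnothing)$ never arises as a summand when $\delta=0$ and $r=s$. None of this is available in the paper.

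The paper sidesteps this issue entirely. For parts (3) and (4) it uses the globalisation functor of Lemma~\ref{globalisation}: after handling the single base case $(r,s,\delta)=(0,0,0)$ by hand, one rewrites
\[
\iind^{r,s+1}_{r,s} V_{r,s}(\la)\;\cong\; e_{r,s+1}\lonestar\widetilde{F}_i e_{r+1,s+1}\otimes_{K_{r+1,s+1}(\delta)} V_{r+1,s+1}(\la),
\]
which, up to a grading shift, is $\ires^{r+1,s+1}_{r,s+1} V_{r+1,s+1}(\la)$. Thus (4) reduces to (2) at level $(r+1,s+1)$, and similarly (3) reduces to (1). This avoids needing any new projectivity statement for $E_i$ or $F_i$ acting on the projectives indexed by $\dot\La_{r+1,s}$.
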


\begin{proof}
We just explain in the first and last cases.
For the first case, we have by Lemma~\ref{comm} that $\ires^{r,s}_{r-1,s} V_{r,s}(\la)
= \ires^{r,s}_{r-1,s} (e_{r,s}V(\la))
\cong e_{r-1,s} (E_i V(\la))$.
Now apply 
Theorem~\ref{wasaremark}(1).
Instead consider $\iind^{r,s+1}_{r,s} V_{r,s}(\la)$.
The case $(r,s,\delta) = (0,0,0)$ can be checked directly
using the observation that
$\iind^{r,s+1}_{r,s} V_{r,s}(\la) = e_{0,1} \lonestar\widetilde{F}_i e_{0,0}$.
Assuming $(r,s,\delta) \neq (0,0,0)$, we can use Lemma~\ref{globalisation}
to get that
\begin{align*}
\iind^{r,s+1}_{r,s} V_{r,s}(\la) &= e_{r,s+1} \lonestar\widetilde{F}_i e_{r,s} \otimes_{K_{r,s}(\delta)} V_{r,s}(\la)\\
&\cong 
e_{r,s+1} \lonestar\widetilde{F}_i e_{r+1,s+1} \otimes_{K_{r+1,s+1}(\delta)} K_{r+1,s+1}(\delta) e_{r,s} \otimes_{K_{r,s}(\delta)} V_{r,s}(\la)\\
&\cong 
e_{r,s+1} \lonestar\widetilde{F}_i e_{r+1,s+1} \otimes_{K_{r+1,s+1}(\delta)} V_{r+1,s+1}(\la).
\end{align*}
Up to a
grading shift, this is the same as
$\ires^{r+1,s+1}_{r,s+1} V_{r+1,s+1}(\la)$,
which has the appropriate filtration by the first case.
\end{proof}

\section{Diagram bases for special endomorphism algebras}\label{sd}

We return once again to the general setting of
$\S$\ref{s4}, so $K$ is the universal arc algebra.
We are going to construct diagram bases for the 
endomorphism algebras of the modules obtained by
applying sequences of special projective functors either
to projective
indecomposable modules or to irreducible modules.

\phantomsubsection{Good homomorphisms}
Suppose
we are given blocks $\De,\Pi$ and $\Ga$.
Let $s$ be a $\De\Pi$-matching and $t$ be a $\De\Ga$-matching.
Also fix $\la \in \Pi$ and $\mu \in \Ga$. 
In this subsection, we construct bases for 
$\hom_{K}(G^s_{\De\Pi} P(\la),
G^t_{\De\Ga} P(\mu))$ and
$\hom_{K}(G^s_{\De\Pi} L(\la),
G^t_{\De\Ga} L(\mu))$.

Given $\alpha \stackrel{s^*}{\longline} \beta \stackrel{t}{\longline}
\gamma$ such that $\la \subset \al$ and $\ga \supset \mu$, 
define a homomorphism
\begin{equation}\label{allhom}
f_{\alpha\beta\gamma}:G^s_{\De\Pi} P(\la)
\rightarrow G^t_{\De\Ga} P(\mu)
\end{equation}
by declaring 
for
$(\underline{\nu}\omega s\kappa \overline{\la}) \in G^s_{\De\Pi} P(\la)$
that 
$f_{\alpha\beta\gamma}(\underline{\nu}\omega s \kappa \overline{\la})$
is the (possibly zero) sum of 
basis vectors of 
$G^t_{\De\Ga}$ obtained
by drawing the diagram
$\underline \la \alpha s^* \beta t \gamma \overline{\mu}$
on top of
$\underline{\nu} \omega s \kappa \overline{\la}$,
then applying the following {\em extended surgery procedure}
to contract the $\omega s \kappa \overline{\la}|\underline{\la} \alpha s^* \beta$-part
of the diagram.
\begin{itemize}
\item[(1)] First consider all the mirror image pairs of closed circles
from the diagrams $\omega s \kappa \overline{\la}$ and $\underline{\la}\al s^* \beta$. If the circles from at least one of these pairs are oriented in the same way 
(both anti-clockwise or both clockwise),
the extended surgery procedure gives $0$.
\item[(2)] Assuming all the mirror image pairs of circles are oppositely oriented,
we replace $s \kappa \overline{\la}$
by $\upperred(s \overline{\la})$
and $\underline{\la} \alpha s^*$ by
$\lowerred(\underline{\la} s^*)$,
join corresponding pairs of rays, then apply the usual surgery procedure
to contract the symmetric $\upperred(s\overline{\la})\lowerred(\underline{\la} s^*)$-section of the resulting diagram.
\end{itemize}

\begin{Theorem}\label{phom}
The homomorphisms
\begin{equation}\label{thehoms}
\big\{f_{\alpha\beta\gamma}\:\big|\:\text{for all 
$\alpha \stackrel{s^*}{\longline} \beta \stackrel{t}{\longline}
\gamma$ such that $\la \subset \al$ and $\ga \supset \mu$}\big\}
\end{equation}
give a basis for $
\hom_{K}(
G^s_{\De\Pi} P(\la),
G^t_{\De\Ga} P(\mu))$. Moreover
$f_{\alpha\beta\gamma}$ is of degree 
$$
\deg(\underline{\la} \alpha s^* \beta t \gamma \overline{\mu})
+\caps(s)-2\cups(s)-\caps(t).
$$
\end{Theorem}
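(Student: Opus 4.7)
The plan is to reduce the computation of $\hom_K(G^s_{\De\Pi} P(\la), G^t_{\De\Ga} P(\mu))$ to an explicit weight space in a triple geometric bimodule by means of a biadjunction argument. First, combining the adjoint pair $(\lonestar G^s_{\De\Pi}, G^s_{\De\Pi})$ from Lemma~\ref{adunctions} with the identification $\lonestar G^s_{\De\Pi} \cong G^{s^*}_{\Pi\De}\langle \cups(s)-\caps(s)\rangle$ from (\ref{other}) produces a right adjoint of $G^s_{\De\Pi}$, namely $G^{s^*}_{\Pi\De}\langle \caps(s)-\cups(s)\rangle$. This yields a graded isomorphism
\[
\hom_K\bigl(G^s_{\De\Pi} P(\la),\, G^t_{\De\Ga} P(\mu)\bigr) \;\cong\; \hom_K\bigl(P(\la),\, G^{s^*}_{\Pi\De} G^t_{\De\Ga} P(\mu)\bigr)\langle \caps(s)-\cups(s)\rangle.
\]
Applying Lemma~\ref{composition} to the matchings $s^*$ and $t$ identifies $K^{s^*}_{\Pi\De}\otimes_K K^t_{\De\Ga}$ with the triple bimodule $K^{s^*t}_{\Pi\De\Ga}$, and since $P(\la) = Ke_\la$ we have $\hom_K(P(\la),N) \cong e_\la N$. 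Combining these, the right-hand side becomes the subspace $e_\la K^{s^*t}_{\Pi\De\Ga} e_\mu$ with an overall shift, whose natural diagram basis is indexed exactly by tuples $\al \stackrel{s^*}{\longline} \be \stackrel{t}{\longline} \ga$ with $\la\subset\al$ and $\ga\supset\mu$, as in (\ref{thehoms}).

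For the grading, collecting the three shift contributions $-\caps(s^*)$, $-\caps(t)$ (from the tensor product formula for $G^{s^*}_{\Pi\De}G^t_{\De\Ga}$) and $\caps(s)-\cups(s)$ (from the biadjunction above), and using the identity $\caps(s^*)=\cups(s)$, gives precisely $\caps(s)-2\cups(s)-\caps(t)$. Added to the intrinsic degree $\deg(\underline{\la}\al s^*\be t\ga\overline\mu)$ of the basis vector this matches the formula asserted in the theorem.

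The main remaining task, and the principal obstacle, is to verify that under this chain of isomorphisms the basis vector $(\underline{\la}\al s^*\be t \ga \overline\mu)$ of $e_\la K^{s^*t}_{\Pi\De\Ga} e_\mu$ corresponds to the concrete homomorphism $f_{\al\be\ga}$ defined by the extended surgery procedure. For this one unwinds the adjunction from Lemma~\ref{adunctions} at the level of diagrams using the explicit hom-to-tensor isomorphism (the one that used to be spelled out in the commented Remark~\ref{seciso}). Tensoring an input basis vector $(\underline\nu\omega s\kappa\overline\la)$ of $G^s_{\De\Pi}P(\la)$ against $(\underline\la\al s^*\be t\ga\overline\mu)$ through the biadjunction causes the symmetric $\overline\la \underline\la$-section of the stacked diagram to be contracted: step~(1) of the extended surgery procedure (the sign/orientation check on the mirror-image circles between $\omega s\kappa\overline{\la}$ and $\underline{\la}\al s^*\be$) is precisely the nonvanishing condition coming from the explicit form of the counit, while step~(2) (replacing $s\kappa\overline\la$ and $\underline\la\al s^*$ by their reductions, joining rays, and running the ordinary surgery procedure on the resulting $\upperred\cdot\lowerred$-section) is exactly the prescription for the subsequent reduction and multiplication. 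Once this diagrammatic dictionary is established, linear independence and spanning of the $f_{\al\be\ga}$ are inherited from the diagram basis of $e_\la K^{s^*t}_{\Pi\De\Ga} e_\mu$, and the degree formula follows from the shift computation above.
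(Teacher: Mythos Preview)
Your argument is correct and follows essentially the same route as the paper's own proof: rewrite $G^s_{\De\Pi}$ as $\lonestar G^{s^*}_{\Pi\De}$ up to a shift via (\ref{other}), apply the canonical adjunction to move $G^{s^*}_{\Pi\De}$ to the right-hand side, identify the composite $G^{s^*}_{\Pi\De}G^t_{\De\Ga}$ with $K^{s^*t}_{\Pi\De\Ga}\otimes_K ?$ via Lemma~\ref{composition}, and finally use $\hom_K(Ke_\la,-)\cong e_\la(-)$. One small imprecision: the adjoint pair you actually need is $(\lonestar G^{s^*}_{\Pi\De}, G^{s^*}_{\Pi\De})$ rather than $(\lonestar G^s_{\De\Pi}, G^s_{\De\Pi})$, since it is $G^s_{\De\Pi}\cong \lonestar G^{s^*}_{\Pi\De}\langle\cups(s)-\caps(s)\rangle$ that must appear on the \emph{left} of the adjunction to produce a right adjoint $G^{s^*}_{\Pi\De}\langle\caps(s)-\cups(s)\rangle$; the pair you cite would only give a left adjoint. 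Apart from this mislabelling your degree bookkeeping and the identification of the resulting basis vectors with the $f_{\alpha\beta\gamma}$ via the explicit counit (the extended surgery procedure) match the paper exactly.
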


\begin{proof}
Using (\ref{other}), Lemma~\ref{composition} and the canonical adjunction, we get graded vector space
isomorphisms
\begin{align*}
\hom_{K}(
&G^s_{\De\Pi} P(\la),
G^t_{\De\Ga} P(\mu))\\
&\cong
\hom_{K}(
\lonestar G^{s^*}_{\Pi\De}P(\la),
G^t_{\De\Ga} P(\mu))\langle\caps(s)-\cups(s)\rangle 
\\
&\cong
\hom_{K}(
P(\la),
G^{s^*}_{\Pi\De} G^t_{\De\Ga} P(\mu))\langle\caps(s)-\cups(s)\rangle\\
&\cong
\hom_{K}(
K e_\la,
K^{s^*t}_{\Pi\De\Ga}e_\mu
)\langle \caps(s) -\cups(s) -\caps(s^*)-\caps(t)\rangle\\
&\cong
e_\la K^{s^* t}_{\Pi\De\Ga} e_\mu\langle \caps(s)-2\cups(s)-\caps(t)\rangle.
\end{align*}
The space
$e_\la K^{s^*t}_{\Pi\De\Ga} e_\mu$ has basis
given by all 
$(\underline{\la} \alpha s^* \beta t \gamma \overline{\mu})$ for 
$\alpha \stackrel{s^*}{\longline} \beta \stackrel{t}{\longline} \gamma$
 with $\la \subset \al$ and $\ga \supset \mu$.
It remains to trace these basis vectors through the explicit definitions of 
above isomorphisms 
to see that they correspond exactly to the
homomorphisms $f_{\alpha\beta\gamma}$.
\end{proof}

\begin{Remark}\label{otherdegs}\rm
One can also work with the adjoint projective functors from (\ref{legg2}).
In those terms, Theorem~\ref{phom}
shows that the homomorphisms (\ref{thehoms})
give a basis for 
$\hom_{K}(\lonestar G^{s^*}_{\Pi\De} P(\la),
\lonestar G^{t^*}_{\Ga\De} P(\mu))$.
Moreover now
$f_{\alpha\beta\gamma}$ is
of degree 
$\deg(\underline{\la} \alpha s^* \beta t \gamma \overline{\mu})
-\caps(s^*) - \cups(t)$; this follows from (\ref{other}).
\end{Remark}

A {\em good homomorphism} is
map
$f:G^s_{\De\Pi} P(\la)
\rightarrow G^t_{\De\Ga} P(\mu)$
that
factors through the canonical quotient maps to induce
$\bar f: G^s_{\De\Pi} L(\la)
\rightarrow G^t_{\De\Ga} L(\mu)$
making the following diagram commute:
$$
\begin{CD}
G^s_{\De\Pi} P(\la)&@>f>>&G^t_{\De\Ga} P(\mu)\\
@VG^s_{\De\Pi}(p_\la)VV&&@VV G^t_{\De\Ga}(p_\mu)V\\
G^s_{\De\Pi} L(\la)&@>\bar f>>&G^t_{\De\Ga} L(\mu).
\end{CD}
$$

\begin{Theorem}\label{goodhoms}
Let $u := \red(s^* t)$. 
The space
$\hom_{K}(G^s_{\De\Pi} L(\la), G^t_{\De\Ga} L(\mu))$ is non-zero
if and only if
$\la \stackrel{u}{\longline} \mu$
and $\deg(\la u \mu) = 0$.
Assuming this condition holds,
let $\la'$
be obtained from $\la$ by 
reversing the orientation of all the caps in $\la u \mu$.
\begin{itemize}
\item[(1)] 
The homomorphisms
$$
\bigg\{
f_{\al\be\ga}\:\bigg|\:
\begin{array}{l}
\text{for all $\alpha\stackrel{s^*}{\longline}\beta \stackrel{t}{\longline}
\gamma$ such that $\la \subset \alpha$, $\gamma \supset \mu$,}\\
\text{and either $\al = \la'$ or $\gamma \neq \mu$}
\end{array}
\bigg\}
$$
give a basis for the subspace of 
$\hom_{K}(G^s_{\De\Pi} P(\la), G^t_{\De\Ga} P(\mu))$
consisting of all good homomorphisms.
\item
[(2)]
The induced homomorphisms
$\bar f_{\alpha\beta\gamma}:G^s_{\De\Pi}L(\la)\rightarrow G^t_{\De\Ga} L(\mu)$
are zero for
all $\alpha \stackrel{s^*}{\longline}\beta \stackrel{t}{\longline} \gamma$
such that $\la\subset \alpha, \gamma \supset \mu$ and $\gamma \neq \mu$.
\item[(3)]
The induced homomorphisms
$\bar f_{\lambda'\be\mu}:G^s_{\De\Pi}L(\la)\rightarrow G^t_{\De\Ga} L(\mu)$
for all $\be$ such that
$\lambda' \stackrel{s^*}{\longline} \be \stackrel{t}{\longline} \mu$
give a basis for $\hom_{K}(G^s_{\De\Pi} L(\la), G^t_{\De\Ga} L(\mu))$.
\end{itemize}
\end{Theorem}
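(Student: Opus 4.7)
My plan is to compute the dimension of $\hom_K(G^s_{\De\Pi} L(\la), G^t_{\De\Ga} L(\mu))$ via a double adjunction, then deduce the three parts of the theorem from this dimension count combined with a direct inspection of the extended surgery procedure. For the dimension, I use the identification $G^s_{\De\Pi} \cong \lonestar G^{s^*}_{\Pi\De}\langle\cups(s)-\caps(s)\rangle$ from (\ref{other}) and the adjunction $(\lonestar G^{s^*}_{\Pi\De}, G^{s^*}_{\Pi\De})$ from Lemma~\ref{adunctions} to obtain a graded isomorphism
$$\hom_K(G^s L(\la), G^t L(\mu)) \cong \hom_K\bigl(L(\la), G^{s^*}G^t L(\mu)\bigr)\langle\cups(s)-\caps(s)\rangle.$$
Then Lemma~\ref{composition} (with fixed weights $\la$ and $\mu$, valid precisely when $\la \stackrel{u}{\longline}\mu$, else the sum is empty) decomposes the right-hand side as $\bigoplus_\be G^u_{\Pi\Ga} L(\mu)\langle \text{shift}_\be\rangle$ summed over $\be$ with $\la \stackrel{s^*}{\longline}\be \stackrel{t}{\longline}\mu$. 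Each summand $G^u L(\mu)$ is non-zero precisely when $\deg(\la u \mu) = 0$ by Theorem~\ref{ipf}, and then is a self-dual indecomposable with socle $L(\la)\langle\caps(u)\rangle$ by Corollary~\ref{socbase}, giving $\hom_K(L(\la), G^u L(\mu))$ one-dimensional in degree $-\caps(u)$. This establishes the non-vanishing criterion of the theorem and computes $\dim \hom_K(G^s L(\la), G^t L(\mu))$ as the number of $\be$'s with $\la \stackrel{s^*}{\longline}\be \stackrel{t}{\longline}\mu$, which equals the number with $\la'\stackrel{s^*}{\longline}\be\stackrel{t}{\longline}\mu$ since Lemma~\ref{composition} produces the same bimodule irrespective of the particular choice of fixed weights satisfying $\la \stackrel{u}{\longline} \mu$.

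Part~(2) follows by inspecting the extended surgery procedure: the map $f_{\al\be\ga}$ sends any input $(\underline{\nu}\omega s\kappa\overline{\la})$ to a linear combination of basis vectors of $G^t P(\mu)$ of the form $(\underline{\nu}\chi t\ga\overline{\mu})$, since surgery only contracts the middle region and cannot alter the top $t\ga\overline{\mu}$ configuration. When $\ga \neq \mu$, all such vectors lie in $\ker G^t(p_\mu)$ by Theorem~\ref{ipf}, so $G^t(p_\mu) \circ f_{\al\be\ga} = 0$ and $\bar f_{\al\be\ga} = 0$. For part~(3), the dimension count above matches the cardinality of the proposed basis, so it suffices to show each $f_{\la'\be\mu}$ is good and the induced $\bar f_{\la'\be\mu}$'s are linearly independent non-zero. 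I would evaluate $\bar f_{\la'\be\mu}$ on the cyclic generator $(\underline{\xi}\xi s \la\overline{\la})$ of $G^s L(\la)$ from Corollary~\ref{socbase} (where $\xi$ is the head weight of $G^s L(\la)$) and verify via the extended surgery that the image is, up to a non-zero scalar, the basis vector of $G^t L(\mu)$ from~(\ref{klot}) indexed by $\be$; distinct $\be$'s yield distinct basis vectors, giving linear independence. A similar analysis of $f_{\la'\be\mu}$ applied to kernel generators $(\underline{\nu}\omega s\kappa\overline{\la})$ of $G^s(p_\la)$ (those with $\kappa\neq\la$ or $\lowerred(\underline{\nu}s)\neq\underline{\la}$) verifies goodness. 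Finally, part~(1) follows by combining parts~(2) and~(3) via the short exact sequence
$$0 \to \hom_K(G^s P(\la), G^t \ker p_\mu) \to \{\text{good homs}\} \to \hom_K(G^s L(\la), G^t L(\mu)) \to 0,$$
whose left-hand term is spanned by $\{f_{\al\be\ga} : \ga \neq \mu\}$ by part~(2) together with a dimension count of both sides, so adding the $f_{\la'\be\mu}$'s provides the claimed basis.

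The main obstacle is the combinatorial analysis of the extended surgery procedure when stacking $\underline{\la}\la' s^* \be t \mu \overline{\mu}$ on top of the cyclic generator of $G^s L(\la)$ and on kernel vectors of $G^s(p_\la)$: one needs to track mirror image circle pairs produced by the surgery and use the cap reversal built into the definition of $\la'$ to verify that the resulting pairs are consistently oppositely oriented (to avoid the zero case in step~(1) of the extended surgery) and that the final output lies in the correct subspace of $G^t P(\mu)$, in particular mapping the kernel $\ker G^s(p_\la)$ into $\ker G^t(p_\mu)$.
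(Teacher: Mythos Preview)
Your opening dimension count and non-vanishing criterion match the paper's first paragraph, and your short exact sequence derivation of (1) from (2) and (3) is sound. However, there is a genuine error in your argument for part (2).

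You assert that the extended surgery ``cannot alter the top $t\gamma\overline{\mu}$ configuration,'' so that every output vector has the form $(\underline{\nu}\chi t\gamma\overline{\mu})$ with the \emph{same} $\gamma$. This is false. A connected component of the composite diagram can pass through a line segment of $t$ from the $\beta$-line up to the $\gamma$-line; when such a component is merged or split during surgery of the symmetric $\upperred(s\overline{\la})\lowerred(\underline{\la}s^*)$-section, its orientation can change, which changes the label on the $\gamma$-line. What \emph{is} true (by the positivity properties of the surgery procedure from \cite{BS1}) is that the top weight can only \emph{increase} in the Bruhat order, i.e.\ every output term has top weight $\geq\gamma$. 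Since $\gamma\supset\mu$ and $\gamma\neq\mu$ force $\gamma>\mu$, this weaker statement still gives output top weight $\neq\mu$, and part (2) follows. But you have not proved this monotonicity, and your stated claim is simply wrong.

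Even after fixing (2), your plan for part (3) requires verifying by hand that each $f_{\la'\be\mu}$ maps $\ker G^s(p_\la)$ into $\ker G^t(p_\mu)$ and that the images of the cyclic generator are the distinct basis vectors indexed by $\be$. You flag this as ``the main obstacle,'' and indeed carrying it out cleanly is nontrivial. The paper bypasses all of this. It sets up a single commutative diagram with $Q=e_\la K^{s^*t}_{\Pi\De\Ga}e_\mu$, its quotient $Q/S$ (where $S$ is spanned by vectors with $\gamma\neq\mu$), and the subspace $(R+S)/S$ (where $R$ is spanned by vectors with $\alpha=\la'$ and $\gamma=\mu$), and identifies these via adjunction with the three Hom spaces in question. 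The point is that under the isomorphism $a\circ\adj$ of Theorem~\ref{phom}, the homomorphism $f_{\al\be\ga}$ corresponds to the diagram $(\underline{\la}\al s^*\be t\ga\overline{\mu})$, so membership in the relevant subspaces becomes a tautology about which $(\al,\be,\ga)$ one started with. No surgery computation is needed for (1)--(3) beyond what is already packaged in Theorems~\ref{phom} and~\ref{ipf}. This is both shorter and avoids the combinatorial pitfalls you encountered.
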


\begin{proof}
We will ignore grading shifts throughout the proof.
Also we'll often use Lemma~\ref{composition} without explicit reference
to identify the composite functor
$G^{s^*}_{\Pi\De} \circ G^t_{\De \Ga}$
with the functor $K^{s^* t}_{\Pi\De\Ga} \otimes_{K} ?$ 
hence with a direct sum of copies of $G^u_{\Pi\Ga}$.
This allows us to apply Theorem~\ref{ipf} and Corollary~\ref{socbase}
to $G^{s^*}_{\Pi\De} G^t_{\De \Ga} L(\mu)$.

For the opening statement, 
$\hom_{K}(G^s_{\De\Pi} L(\la), G^t_{\De\Ga} L(\mu)) \neq \{0\}$
if and only if $L(\la)$
appears in the socle of $G^{s^*}_{\Pi\De} G^t_{\De\Ga} L(\mu)$.
Now write 
$G^{s^*}_{\Pi\De} G^t_{\De\Ga} L(\mu)$ as a 
direct sum of copies
of $G^u_{\Pi\Ga} L(\mu)$
and apply the last part of Theorem~\ref{ipf}.

For the rest of the proof, we assume that 
$\la \stackrel{u}{\longline} \mu$ and $\deg(\la u \mu) = 0$.
Let $\la'$ be defined as in the statement of the theorem.
Note the assumption on $\la$ implies automatically that
$\la \subset \la'$ and $\lowerred(\underline{\la} u) 
= \underline{\mu}$.
Let 
$Q := e_\la K^{s^* t}_{\Pi\De\Ga} e_\mu$
and $R$, $S$ be the subspaces
spanned by the vectors
\begin{align}\label{vec1}
&\hspace{-0.9mm}\big\{
(\underline{\la} \la' s^* \beta t \mu \overline{\mu})\:\big|\:
\text{for all $\beta$ 
such that $\la' \stackrel{s^*}{\longline}
\beta \stackrel{t}{\longline} \mu$}\big\},\\
&\big\{
(\underline{\la} \alpha s^* \beta t \gamma \overline{\mu})\:\big|\:
\text{for all $\alpha \stackrel{s^*}{\longline} \beta \stackrel{t}{\longline} \gamma$ 
such that $\la\subset \alpha$,
$\ga \supset \mu$ and
$\gamma \neq \mu$}\big\},\label{vec2}
\end{align}
respectively.
Let $p: Q \twoheadrightarrow Q / S$ be the quotient 
and $i:(R+S) / S \hookrightarrow Q / S$ be the inclusion.
Adopting the shorthand 
$\HHH := \hom_{K}$,
let $q,r, j$ and $k$ be the other vertical maps indicated in the following diagram:
\begin{equation*}
\begin{CD}
\HHH(G^s_{\De\Pi} P(\la), G^t_{\De\Ga} P(\mu))
&@>\sim>\adj>&\HHH(P(\la), 
G^{s^*}_{\Pi\De} G^t_{\De\Ga} P(\mu))&@>\sim> a> &Q\\
@VV r: f \mapsto G^t_{\De\Ga}(p_\mu) \circ f V&&@VV
q:f \mapsto G^{s^*}_{\Pi\De}G^t_{\De\Ga}(p_\mu) \circ f V&&@VVp V\\
\HHH(G^s_{\De\Pi} P(\la), G^t_{\De\Ga} L(\mu))
&@>\sim>\adj>&\HHH(P(\la), G^{s^*}_{\Pi\De} G^t_{\De\Ga} L(\mu))&@>\sim>b>&Q / S\\
@AA k:f \mapsto f \circ G^s_{\De\Pi}(p_\la)A&&@AA j:f \mapsto f \circ p_\la  A&&@AAi A\\
\HHH(G^s_{\De\Pi} L(\la), G^t_{\De\Ga} L(\mu))
&@>\sim>\adj>&\HHH(L(\la), G^{s^*}_{\Pi\De} G^t_{\De\Ga} L(\mu))&@>\sim>c>&(R+S)/ S.
\end{CD}
\end{equation*}
Horizontal isomorphisms
making the diagram commute are defined as follows.
\begin{itemize}
\item[$\adj$:]
The three maps $\adj$
are the isomorphisms coming from the canonical adjunction. 
The two squares on the left commute by naturality.

\item[$a$:]
We can identify
$G^{s^*}_{\Pi\De} G^t_{\De\Ga} P(\mu)$ with $K^{s^* t}_{\Pi\De\Ga} e_\mu$.
Then evaluation at $e_\la$ gives the isomorphism $a$.
\item[$b$:]
The vertical map $q$ in the diagram
is surjective by projectivity. Its kernel is the
set of all homomorphisms 
$P(\la) \rightarrow
 G^{s^*}_{\Pi\De} G^{t}_{\De\Ga}P(\mu)$ which map $e_\la$
to $\ker G^{s^*}_{\Pi\De} G^t_{\De\Ga}(p_\mu)$.
The first statement of Theorem~\ref{ipf} explains how to identify
$\ker G^{s^*}_{\Pi\De} G^t_{\De\Ga}(p_\mu)$
with an explicit subspace of 
$K^{s^* t}_{\Pi\De\Ga} e_\mu$. It follows 
that $a(\ker q) = S$. Hence the isomorphism $a$ factors 
uniquely to induce the isomorphism $b$ making the top right square commute.
\item[$c$:]
The vertical map $j$ in the diagram is obviously injective.
Its image is the set of all homomorphisms
$P(\la) \rightarrow
 G^{s^*}_{\Pi\De} G^{t}_{\De\Ga}L(\mu)$ which map $e_\la$
to 
$\soc G^{s^*}_{\Pi\De} G^t_{\De\Ga} L(\mu)$.
Using the explicit description of this socle 
from Corollary~\ref{socbase},
it follows that $b(\im\,j) = (R+S) / S$.
Hence the isomorphism 
$b$ restricts to an isomorphism $c$ making the bottom right square commute.
\end{itemize}
Recall also from the proof of Theorem~\ref{phom} that the
composite $a \circ \adj$ of the maps at the top of our commuting diagram
sends $f_{\alpha\beta\gamma}$ to $(\underline{\la} \al s^* \be t \ga \overline{\mu})$.

To complete the proof of the theorem,
we have that
$p^{-1}(\im\, i) = R+S$.
By definition, the space of good homomorphisms
is $r^{-1}(\im\, k)$. 
Hence by the commutativity of the diagram,
the space of good homomorphisms
is $(a \circ \adj)^{-1}(R+S)$. The union of the vectors (\ref{vec1})--(\ref{vec2})
gives a basis for $R+S$, and applying $(a \circ \adj)^{-1}$ 
gives the basis in (1).
To deduce (2), we need to show that $r(f_{\alpha\beta\gamma}) = 0$ 
for all
$\al \stackrel{s^*}{\longline} \beta \stackrel{t}{\longline} \gamma$
such that $\la \subset \al, \ga \supset \mu$ and $\ga \neq \mu$.
This follows because
$$
(b \circ \adj \circ r)(f_{\alpha\beta\gamma}) =
(p \circ a \circ \adj)(f_{\alpha\beta\gamma}) = (\underline{\la} \al s^* \be t \ga \overline{\mu}) + S = 0
$$
by the definition of $S$.
Finally for (3), the image of the vectors (\ref{vec1}) gives a basis for
$(R+S) / S$. Now apply $(c \circ \adj)^{-1}$ and use the commutativity of the diagram once again.
\end{proof}

\begin{Remark}\label{wee}\rm
Suppose we are 
given $\alpha \stackrel{s^*}{\longline} \beta \stackrel{t}{\longline}
\gamma$ such that $\la \subset \alpha$ and $\gamma \supset \mu$.
Then $f_{\alpha\beta\gamma}$ is a good homomorphism inducing a
non-zero $\bar f_{\alpha\beta\gamma}$ as in
Theorem~\ref{goodhoms}(3)
if and only if
$\underline{\la}\alpha s^* \beta t \gamma \overline{\mu}$
satisfies the following properties.
\begin{itemize}
\item[(1)] Each connected component in the diagram
crosses the top number line at most twice.
If it crosses the top number line exactly twice
  then it is an anti-clockwise circle.
\item[(2)] Each connected component crosses the bottom number line at most
  twice. If it crosses the bottom number line exactly twice and 
does not cross the top number line then it is a clockwise circle.
\end{itemize}
This follows because the properties (1) and
(2)
are equivalent to the assertions $\lambda \stackrel{u}{\longline} \mu$,
$\deg(\lambda u \mu) = 0$,
$\gamma = \mu$ and $\alpha = \lambda'$,
where $u$ and $\lambda'$ are as in Theorem~\ref{goodhoms}.
\end{Remark}

Assume finally that we are given another block $\Omega$, 
a $\De\Omega$-matching $u$, and $\nu \in \Omega$.
We want to explain an algorithm to compose 
the homomorphisms
\begin{equation}\label{gd}
f_{\alpha\beta\gamma}:
G^s_{\De\Pi} P(\la) \rightarrow G^t_{\De\Ga} P(\mu)\qquad
f_{\delta\epsilon\phi} :G^t_{\De\Ga} P(\mu) \rightarrow G^u_{\De\Omega} P(\nu)
\end{equation}
for 
$\la \subset \al\stackrel{s^*}{\longline}
\be\stackrel{t}{\longline} \ga\supset \mu \subset \delta\stackrel{t^*}{\longline}
\epsilon\stackrel{u}{\longline} \phi\supset \nu$.
To do this, draw the diagram
`$\underline{\la} \al s^* \be t \ga \overline{\mu}$
underneath the diagram $\underline{\mu} \delta t^* \epsilon u \phi \overline{\nu}$,
then apply the extended surgery procedure as above
to contract the $\be t \ga \overline{\mu}| \underline{\mu} \delta t^*\epsilon$-part of the diagram. This produces a (possibly zero) sum of 
diagrams of the form $\underline{\la} \rho s^* \sigma u \tau \overline{\nu}$.
The composition
$f_{\delta\epsilon\phi} \circ f_{\alpha\beta\gamma}$ is the
sum of the corresponding $f_{\rho \sigma \tau}
\in \hom_{K}(G^s_{\De\Pi} P(\la), G^u_{\De\Omega} P(\nu))$.

If $f_{\alpha\beta\gamma}$ and
$f_{\delta\epsilon\phi}$ are 
both good homomorphisms
with $\bar f_{\al\be\ga} \neq 0 \neq \bar f_{\delta\epsilon\phi}$,
so that the induced maps $\bar f_{\al\be\ga}$ and $\bar 
f_{\delta\epsilon\phi}$ are basis vectors 
like in Theorem~\ref{goodhoms}(3), 
then we can modify the algorithm in the previous paragraph to compute 
$\bar f_{\delta\epsilon\phi} \circ \bar f_{\alpha\beta\gamma}$ too.
We may as well assume for this that
$\hom_{K}(G^s_{\De\Pi} L(\la), G^u_{\De\Omega} L(\nu)) \neq \{0\}$. 
First 
write
$f_{\delta\epsilon\phi}  \circ f_{\al\be\ga}$
as a 
sum of maps
$f_{\rho\sigma\tau} \in 
\hom_{K}(G^s_{\De\Pi} P(\la), G^u_{\De\Omega} P(\nu))$.
Since the composition of two good homomorphisms is automatically good,
Theorem~\ref{goodhoms}(1) implies that 
the resulting sum only involves
$f_{\rho\sigma\tau}$'s that are themselves good homomorphisms.
Then descend to
$\hom_{K}(G^s_{\De\Pi} L(\la), G^u_{\De\Omega} L(\nu))$
by replacing each
$f_{\rho\sigma\tau}$ in the sum 
in which $\tau = \nu$ by 
$\bar f_{\rho\sigma\tau}$, and discarding all other $f_{\rho\sigma\tau}$
since these induce zero by Theorem~\ref{goodhoms}(2). This produces the required linear combination of basis vectors.

\phantomsubsection{Special endomorphism algebras}
Recall from the introduction that $\Seq_{r,s}$ denotes the set of
all sequences
consisting of $r$ $E$'s and $s$ $F$'s
shuffled together in some way.
For $R = R^{(1)} R^{(2)} \cdots R^{(r+s)}  \in \Seq_{r,s}$,
let
$\lonestar R := \lonestar R^{(r+s)} \cdots \lonestar R^{(2)} \lonestar
R^{(1)},$
which is a sequence 
consisting of $r$ $\lonestar E$'s and
$s$ $\lonestar F$'s.
We interpret
$R$ and $\lonestar R$ as
compositions of the functors 
\begin{equation}\label{head}
E := \bigoplus_{i \in \Z} E_i,
\qquad
F := \bigoplus_{i \in \Z} F_i,
\qquad
\lonestar E := \bigoplus_{i \in \Z} \lonestar E_i,
\qquad
\lonestar F := \bigoplus_{i \in \Z} \lonestar F_i.
\end{equation}
So $R$ and $\lonestar R$ are both
endofunctors
of $\Mod{K}$,
and there is a  canonical adjunction making
$(\lonestar R, R)$ into an adjoint pair. 
Moreover we have the natural decompositions
\begin{equation}\label{sdec}
R = \bigoplus_{\bi \in \Z^{r+s}} R_\bi,
\qquad
\lonestar R = \bigoplus_{\bi \in \Z^{r+s}} \lonestar R_\bi
\end{equation}
where $R_\bi := R^{(1)}_{i_1} R^{(2)}_{i_2} \cdots 
R^{(r+s)}_{i_{r+s}}$
and
$\lonestar R_\bi := \lonestar R^{(r+s)}_{i_{r+s}} \cdots \lonestar
R^{(2)}_{i_2} \lonestar R^{(1)}_{i_1}$.

Given $M \in \Mod{K}$ and $R \in \Seq_{r,s}$, we can consider the
endomorphism algebra $\End_{K}(\lonestar{R} \,M)^{\op}$.
However this might not have finite (or even countable) 
dimension in general, 
so instead we work with the
subalgebra
\begin{equation}\label{eb}
\End_K^{\fin}(\lonestar R\, M)^{\op} := 
\bigoplus_{\bi,\bj \in \Z^{r+s}} \hom_{K}(\lonestar
R_{\bi} M, \lonestar R_{\bj} M)
\end{equation}
of $\End_{K}(\lonestar{R} \,M)^{\op}$ consisting of {\em locally
 finite endomorphisms} of $\lonestar{R}\, M$.
In this subalgebra, we have an obvious system
$\{e(\bi)\:|\:\bi \in \Z^{r+s}\}$
of mutually orthogonal idempotents such that
\begin{equation}\label{idemps}
e(\bi)
\End_K^{\fin}(\lonestar R\,M)^{\op} 
e(\bj) = \hom_{K}(\lonestar R_{\bi} M, \lonestar R_{\bj} M).
\end{equation}
In all the applications we will meet later in the article, 
$\End_K^{\fin}(\lonestar R\,M)^{\op}$
will actually be finite dimensional. In that case, we have simply that
$\End_K^{\fin}(\lonestar R\,M)^{\op} = \End_{K}(\lonestar R \, M)^{\op}$, 
all but finitely many of the
$e(\bi)$'s are zero, and they sum to $1$.

We have that
$\lonestar(RE) = \lonestar E \lonestar R$ and
$\lonestar(RF) = \lonestar F \lonestar R$.
The functors $\lonestar E_i$ and $\lonestar F_i$ define
algebra homomorphisms
\begin{align}
\iota_{R;i}^{RE}:&\End_K^{\fin}(\lonestar R\,M)^{\op}
\rightarrow 
\End_K^{\fin}(\lonestar (RE)M)^{\op},\qquad
f \mapsto \lonestar E_i(f),\label{EP}\\
\iota_{R;i}^{RF}:&\End_K^{\fin}(\lonestar R\,M)^{\op}
\rightarrow \End_K^{\fin}(\lonestar (RF)M)^{\op}, \qquad
f \mapsto \lonestar F_i(f).\label{FP}
\end{align}
These maps send the idempotent $e(\bi)$ to
$e(\bi i)$ where $\bi i$ denotes 
$(i_1,\dots,i_{r+s},i)$.
In particular if $\End_K^{\fin}(\lonestar R\,M)$ is finite dimensional, then
$\iota_{R;i}^{RE}$
and $\iota_{R;i}^{RF}$ 
map
$1$ to the idempotents
\begin{align}\label{onese}
1^{RE}_{R;i} &:= 
\sum_{\substack{\bi \in \Z^{r+s+1}\\ i_{r+s+1}=i}} e(\bi)
\in \End_K^{\fin}(\lonestar (RE)M)^{\op},\\
1^{RF}_{R;i} &:= 
\sum_{\substack{\bi \in \Z^{r+s+1}\\ i_{r+s+1}=i}} e(\bi)
\in \End_K^{\fin}(\lonestar (RF)M)^{\op},\label{onesf}
\end{align}
respectively.

\phantomsubsection{Generalised tableaux}
Fix $R = R^{(1)} R^{(2)} \cdots R^{(r+s)} \in \Seq_{r,s}$ 
and a weight diagram $\eta$.
In the rest of the section, we are going to construct bases for the
algebras 
$\End_K^{\fin}(\lonestar R\,P(\eta))^{\op}$ 
and
$\End_K^{\fin}(\lonestar R\,L(\eta))^{\op}$.
We must first develop some combinatorics 
of paths in the labelled directed graph from (\ref{break1})--(\ref{break2}).

An {\em $R$-tableau} of {\em type} $\eta$ 
is a chain
$\tT = (\lambda^{(0)}, \lambda^{(1)}, \dots, \lambda^{(r+s)})$
of weight diagrams such that 
$\eta \subset \lambda^{(0)}$ and
the following hold for each $a=1,\dots,r+s$.
\begin{itemize}
\item[(1)]
If $R^{(a)} = E$ then
$\lambda^{(a-1)} \stackrel{i_a}{\rightarrow} \lambda^{(a)}$
for some $i_a \in \Z$.
\item[(2)]
If $R^{(a)} = F$ then
$\lambda^{(a-1)} \stackrel{i_a}{\leftarrow} \lambda^{(a)}$
for some $i_a \in \Z$.
\end{itemize}
Let $\mathscr T_R(\eta)$ denote the set of all $R$-tableaux
of type $\eta$.
The 
{\em shape}
$\sh(\tT)$ of $\tT$
is the final weight diagram $\la^{(r+s)}$ in the chain,
and the
{\em content} is the tuple
$\bi^\tT = (i_1,\dots,i_{r+s}) \in \Z^{r+s}$
arising from the labels on the edges in (1)--(2).
For example if
\begin{align*}
 \eta
&=
\hspace{-6mm}\begin{picture}(240,20)
\put(50.3,14){$_1$}
\put(70.3,14){$_2$}
\put(90.3,14){$_3$}
\put(110.3,14){$_4$}
\put(130.3,14){$_5$}
\put(150.3,14){$_6$}
\put(170.3,14){$_7$}
\put(190.3,14){$_8$}
\put(28,-.3){$\cdots$}
\put(206,-.3){$\cdots$}
\put(45,2.3){\line(1,0){156}}
\put(50,-2.4){$\up$}
\put(70,2.4){$\down$}
\put(90,2.4){$\down$}
\put(110,-2.4){$\up$}
\put(129.2,.5){$\cross$}
\put(150,2.4){$\down$}
\put(170,-2.4){$\up$}
\put(190,-2.4){$\up$}
\end{picture}\,
\\ 
\la^{(0)}&=
\hspace{-6mm}\begin{picture}(240,0)
\put(28,-.3){$\cdots$}
\put(206,-.3){$\cdots$}
\put(45,2.3){\line(1,0){156}}
\put(50,-2.4){$\up$}
\put(70,2.4){$\down$}
\put(90,-2.4){$\up$}
\put(110,2.4){$\down$}
\put(129.2,.5){$\cross$}
\put(150,2.4){$\down$}
\put(170,-2.4){$\up$}
\put(190,-2.4){$\up$}
\end{picture}\,
\\ 
\la^{(1)}&=
\hspace{-6mm}\begin{picture}(240,0)
\put(28,-.3){$\cdots$}
\put(206,-.3){$\cdots$}
\put(45,2.3){\line(1,0){156}}
\put(50.4,-0.4){$\circ$}
\put(69.2,.5){$\cross$}
\put(90,-2.4){$\up$}
\put(110,2.4){$\down$}
\put(129.2,.5){$\cross$}
\put(150,2.4){$\down$}
\put(170,-2.4){$\up$}
\put(190,-2.4){$\up$}
\end{picture}\,\\
\la^{(2)}&=
\hspace{-6mm}\begin{picture}(240,0)
\put(28,-.3){$\cdots$}
\put(206,-.3){$\cdots$}
\put(45,2.3){\line(1,0){156}}
\put(50.4,-0.4){$\circ$}
\put(69.2,.5){$\cross$}
\put(89.2,.5){$\cross$}
\put(110.4,-0.4){$\circ$}
\put(129.2,.5){$\cross$}
\put(150,2.4){$\down$}
\put(170,-2.4){$\up$}
\put(190,-2.4){$\up$}
\end{picture}\,\\
\la^{(3)}&=
\hspace{-6mm}\begin{picture}(240,0)
\put(28,-.3){$\cdots$}
\put(206,-.3){$\cdots$}
\put(45,2.3){\line(1,0){156}}
\put(50.4,-0.4){$\circ$}
\put(69.2,.5){$\cross$}
\put(89.2,.5){$\cross$}
\put(110.4,-0.4){$\circ$}
\put(130,2.4){$\down$}
\put(149.2,.5){$\cross$}
\put(170,-2.4){$\up$}
\put(190,-2.4){$\up$}
\end{picture}\,\\
\la^{(4)}&=
\hspace{-6mm}\begin{picture}(240,0)
\put(28,-.3){$\cdots$}
\put(206,-.3){$\cdots$}
\put(45,2.3){\line(1,0){156}}
\put(50.4,-0.4){$\circ$}
\put(69.2,.5){$\cross$}
\put(90.2,-2.4){$\up$}
\put(110.2,2.4){$\down$}
\put(130,2.4){$\down$}
\put(149.2,.5){$\cross$}
\put(170,-2.4){$\up$}
\put(190,-2.4){$\up$}
\end{picture}\,
\end{align*}
(and all other vertices are labelled $\circ$)
then $\tT = (\lambda^{(0)}, 
\lambda^{(1)},
\lambda^{(2)},
\lambda^{(3)},
\lambda^{(4)})$
is an $EFE^2$-tableau of type $\eta$ and
content $(1,3,5,3)$.

We need several other combinatorial notions
related to $R$-tableaux, all of which depend implicitly
on $\eta$.
Suppose $\tT = (\la^{(0)},\dots,\la^{(r+s)}) \in \mathscr{T}_R(\eta)$.
First we associate two
more diagrams denoted $\underline{\tT}$ and $\overline{\tT}$.
The former is defined as follows.
\begin{itemize}
\item[(1)]
Draw the weight diagrams of the bipartitions $\la^{(0)},\dots,\la^{(r+s)}$
in order {\em from bottom to top}, leaving some vertical space between them.
\item[(2)] For each $a=1,\dots,r+s$, 
connect the weight diagrams $\la^{(a-1)}$
and $\la^{(a)}$ together by
inserting 
into the space between them 
the unique crossingless matching
from (\ref{here}) that is consistent with their labels.
\item[(3)]
Finally glue $\underline{\eta}$ onto the bottom of the diagram.
\end{itemize}
The definition of $\overline{\tT}$ is similar: draw the bipartitions
$\la^{(0)},\dots,\la^{(r+s)}$ in order {\em from top to bottom}, connect pairs of
weight diagrams using the unique
crossingless matching from (\ref{here}) that fits,
then glue $\overline{\eta}$ onto the top of the diagram.
For the above example, these diagrams are as follows:
$$
\hspace{-44mm}
\begin{picture}(100,110)
\put(25,50){$\underline{\tT} = $}

\put(45,22.3){\line(1,0){156}}
\put(50,17.6){$\up$}
\put(70,22.4){$\down$}
\put(90,17.6){$\up$}
\put(110,22.4){$\down$}
\put(129.2,20.5){$\cross$}
\put(150,22.4){$\down$}
\put(170,17.6){$\up$}
\put(190,17.6){$\up$}

\put(45,42.3){\line(1,0){156}}
\put(50.4,39.6){$\circ$}
\put(69.2,40.5){$\cross$}
\put(90,37.6){$\up$}
\put(110,42.4){$\down$}
\put(129.2,40.5){$\cross$}
\put(150,42.4){$\down$}
\put(170,37.6){$\up$}
\put(190,37.6){$\up$}

\put(45,62.3){\line(1,0){156}}
\put(50.4,59.6){$\circ$}
\put(69.2,60.5){$\cross$}
\put(89.2,60.5){$\cross$}
\put(110.4,59.6){$\circ$}
\put(129.2,60.5){$\cross$}
\put(150,62.4){$\down$}
\put(170,57.6){$\up$}
\put(190,57.6){$\up$}

\put(45,82.3){\line(1,0){156}}
\put(50.4,79.6){$\circ$}
\put(69.2,80.5){$\cross$}
\put(89.2,80.5){$\cross$}
\put(110.4,79.6){$\circ$}
\put(130,82.4){$\down$}
\put(149.2,80.5){$\cross$}
\put(170,77.6){$\up$}
\put(190,77.6){$\up$}

\put(45,102.3){\line(1,0){156}}
\put(50.4,99.6){$\circ$}
\put(69.2,100.5){$\cross$}
\put(90.2,97.6){$\up$}
\put(110.2,102.4){$\down$}
\put(130,102.4){$\down$}
\put(149.2,100.5){$\cross$}
\put(170,97.6){$\up$}
\put(190,97.6){$\up$}

\put(52.8,22.3){\line(0,-1){22}}
\put(102.8,22.3){\oval(20,20)[b]}
\put(162.8,22.3){\oval(20,20)[b]}
\put(132.8,22.3){\oval(120,40)[b]}

\put(62.8,22.3){\oval(20,20)[t]}
\put(102.8,42.3){\oval(20,20)[t]}
\put(102.8,102.3){\oval(20,20)[b]}
\put(192.8,22.3){\line(0,1){80}}
\put(172.8,22.3){\line(0,1){80}}
\put(152.8,22.3){\line(0,1){40}}
\put(132.8,82.3){\line(0,1){20}}
\put(112.8,22.3){\line(0,1){20}}
\put(92.8,22.3){\line(0,1){20}}

\qbezier(132.8,83)(132.8,72)(142.8,73)
\qbezier(152.8,63)(152.8,74)(142.8,73)
\end{picture}
\hspace{32mm}
\begin{picture}(100,110)

\qbezier(132.8,23)(132.8,32)(142.8,33)
\qbezier(152.8,43)(152.8,34)(142.8,33)

\put(192.8,2.3){\line(0,1){80}}
\put(172.8,2.3){\line(0,1){80}}
\put(152.8,42.3){\line(0,1){40}}
\put(132.8,2.3){\line(0,1){20}}
\put(112.8,62.3){\line(0,1){20}}
\put(92.8,62.3){\line(0,1){20}}

\put(62.8,82.3){\oval(20,20)[b]}
\put(102.8,62.3){\oval(20,20)[b]}
\put(102.8,2.3){\oval(20,20)[t]}

\put(52.8,82.3){\line(0,1){22}}
\put(102.8,82.3){\oval(20,20)[t]}
\put(162.8,82.3){\oval(20,20)[t]}
\put(132.8,82.3){\oval(120,40)[t]}

\put(25,50){$\overline{\tT} = $}

\put(45,82.3){\line(1,0){156}}
\put(50,77.6){$\up$}
\put(70,82.4){$\down$}
\put(90,77.6){$\up$}
\put(110,82.4){$\down$}
\put(129.2,80.5){$\cross$}
\put(150,82.4){$\down$}
\put(170,77.6){$\up$}
\put(190,77.6){$\up$}

\put(45,62.3){\line(1,0){156}}
\put(50.4,59.6){$\circ$}
\put(69.2,60.5){$\cross$}
\put(90,57.6){$\up$}
\put(110,62.4){$\down$}
\put(129.2,60.5){$\cross$}
\put(150,62.4){$\down$}
\put(170,57.6){$\up$}
\put(190,57.6){$\up$}

\put(45,42.3){\line(1,0){156}}
\put(50.4,39.6){$\circ$}
\put(69.2,40.5){$\cross$}
\put(89.2,40.5){$\cross$}
\put(110.4,39.6){$\circ$}
\put(129.2,40.5){$\cross$}
\put(150,42.4){$\down$}
\put(170,37.6){$\up$}
\put(190,37.6){$\up$}

\put(45,22.3){\line(1,0){156}}
\put(50.4,19.6){$\circ$}
\put(69.2,20.5){$\cross$}
\put(89.2,20.5){$\cross$}
\put(110.4,19.6){$\circ$}
\put(130,22.4){$\down$}
\put(149.2,20.5){$\cross$}
\put(170,17.6){$\up$}
\put(190,17.6){$\up$}

\put(45,2.3){\line(1,0){156}}
\put(50.4,-.4){$\circ$}
\put(69.2,0.5){$\cross$}
\put(89.9,-2.4){$\up$}
\put(109.9,2.4){$\down$}
\put(130,2.4){$\down$}
\put(149.2,.5){$\cross$}
\put(170,-2.4){$\up$}
\put(190,-2.4){$\up$}
\end{picture}
$$
We
let $\red(\underline{\tT})$ be the labelled cup diagram
obtained from $\underline{\tT}$ by removing all of the number lines
except for the top one, together with all connected components that do
not cross the top number line.
In the running example, we have that
$$
\red(\underline{\tT}) = 
\hspace{-15mm}\begin{picture}(180,20)
\put(45,14.3){\line(1,0){156}}
\put(50.4,11.6){$\circ$}
\put(69.2,12.5){$\cross$}
\put(90.2,9.6){$\up$}
\put(110.2,14.4){$\down$}
\put(130,14.4){$\down$}
\put(149.2,12.5){$\cross$}
\put(170,9.6){$\up$}
\put(190,9.6){$\up$}
\put(102.8,14.3){\oval(20,20)[b]}
\put(152.8,14.3){\oval(40,30)[b]}
\put(192.8,14.3){\line(0,-1){17}}
\end{picture}
$$
Similarly we define $\red(\overline{\tT})$, which is a labelled cap diagram.
Finally the {\em degree}
of $\tT$ is 
\begin{equation}\label{degrees}
\deg(\tT) := 
\deg(\underline{\tT}) - \caps(\underline{\tT})
=
\deg(\overline{\tT}) - \cups(\overline{\tT}),
\end{equation}
where $\caps(\underline{\tT})$ 
and $\cups(\overline{\tT})$ 
are the total numbers of caps and cups 
in the diagrams, respectively.
For the running example,
$\deg(\tT) = 2$.

\begin{Remark}\label{altdeg}\rm
Using (\ref{break1})--(\ref{break2}), one can check
that
\begin{equation*}
\deg(\tT) = 
\deg(\underline{\eta} \lambda^{(0)}) + 
\!\!\sum_{\substack{1 \leq a \leq r+s \\ R^{(a)} = E}} \deg(\la^{(a-1)} \stackrel{i_a}{\rightarrow} \la^{(a)})+
\!\!\sum_{\substack{1 \leq a \leq r+s \\ R^{(a)} = F}} \deg(\la^{(a-1)} \stackrel{i_a}{\leftarrow} \la^{(a)})
\end{equation*}
for $\tT \in \mathscr T_R(\eta)$ with $\bi^\tT = (i_1,\dots,i_{r+s})$.
\end{Remark}

\phantomsubsection{\boldmath Diagram bases for $\End^{\fin}_K(\lonestar R\,
  P(\eta))^{\op}$
and $\End^{\fin}_K(\lonestar R\,
  L(\eta))^{\op}$}
Continue with $R$ and $\eta$ fixed as in the previous subsection.

\begin{Lemma}\label{spbas}
The left $K$-module $\lonestar R\,P(\eta)$ has a distinguished basis
\begin{equation}\label{spb}
\{(\underline{\nu}\overline{\tU})\:|\:\text{for all 
$\tU \in \mathscr T_R(\eta)$ and $\nu \subset \sh(\tU)$}\}.
\end{equation}
The action of
$(\underline{\la} \alpha \overline{\mu}) \in K$
on $(\underline{\nu}\overline{\tU}) \in \lonestar R\,P(\eta)$ is as follows.
It acts as zero unless $\mu = \nu$.
Assuming $\mu = \nu$, draw $\underline{\la} \alpha \overline{\mu}$
underneath the diagram $\underline{\nu} 
\overline{\tU}$, connect corresponding pairs of
rays, then apply the surgery procedure to contract the
symmetric $\overline{\mu}\underline{\nu}$-section of the diagram.
\end{Lemma}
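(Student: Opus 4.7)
The plan is to argue by induction on $r+s$, the length of $R$, using Lemma~\ref{composition} to reduce the iterated tensor product of bimodules from (\ref{bim2}) to a single bimodule of the form $K^{s^*_{r+s}\cdots s^*_1}$. For the base case $r+s=0$ we have $\lonestar R\,P(\eta)=Ke_\eta$ with standard basis (\ref{pbj}); an empty-sequence tableau is a singleton $\tU=(\al)$ with $\eta\subset\al$, for which $\overline{\tU}$ is the number line $\al$ with $\overline{\eta}$ glued on top, so the proposed basis (\ref{spb}) agrees with (\ref{pbj}) and the asserted action is left multiplication in $K$.

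For the inductive step, write $R=R'X$ with $X\in\{E,F\}$. Assuming the result for $\lonestar R'\,P(\eta)$, we unwind
\[
\lonestar R\,P(\eta)=\lonestar\widetilde X\otimes_K\lonestar R'\,P(\eta)
\]
via (\ref{bim2}) and iterate Lemma~\ref{composition} to identify each summand with a module of the form $K^{s^*_{r+s}\cdots s^*_1}e_\eta$. Taking direct sums over all allowed matching sequences, we obtain a diagram basis
\[
\bigl\{(\underline{\nu}\,\al^{(r+s)}\,s^*_{r+s}\,\al^{(r+s-1)}\cdots s^*_1\,\al^{(0)}\,\overline{\eta})\bigr\}
\]
indexed by consistently oriented chains $\nu\subset\al^{(r+s)}\stackrel{s^*_{r+s}}{\longline}\cdots\stackrel{s^*_1}{\longline}\al^{(0)}\supset\eta$. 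The action of $(\underline{\la}\al\overline{\mu})\in K$ is the standard left-bimodule multiplication on $K^{s^*_{r+s}\cdots s^*_1}e_\eta$: zero unless $\mu$ matches the bottom weight of the vector, otherwise given by stacking and the surgery procedure, precisely as stated in the lemma.

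It then remains to match this presentation with the $R$-tableau description. The bijection sends such a chain to $(\tU,\nu)$ with $\tU=(\al^{(0)},\ldots,\al^{(r+s)})$, and a direct case-by-case comparison of the four $E$- or $F$-configurations in (\ref{here}) with the eight edge types in (\ref{break1})--(\ref{break2}) shows that the matching $s_a$ between $\al^{(a-1)}$ and $\al^{(a)}$ is uniquely determined by (and determines) the edge $\al^{(a-1)}\stackrel{i_a}{\rightarrow}\al^{(a)}$ or $\al^{(a-1)}\stackrel{i_a}{\leftarrow}\al^{(a)}$, with $\up$ and $\down$ labels fixed by the consistent orientation requirement. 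Under this bijection, the bottom-to-top concatenation $\al^{(r+s)}s^*_{r+s}\cdots s^*_1\al^{(0)}\overline{\eta}$ coincides with the diagram $\overline{\tU}$ of $\S$\ref{sd}, whose weights were drawn top to bottom. The main obstacle is exactly this combinatorial verification: one must track the correspondence between the $\circ,\cross$-data determining $\Ga,\De,s$ in (\ref{here}) and the edge data in (\ref{break1})--(\ref{break2}), and confirm that the two drawing conventions --- built bottom-up from iterated tensor products versus drawn top-down in the definition of $\overline{\tU}$ --- produce the same diagram. The verification is routine but carries the geometric content of the lemma.
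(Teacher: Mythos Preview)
Your proposal is correct and follows essentially the same approach as the paper: the paper's proof simply says ``This is constructed in the same way as the basis (\ref{pfp}), using the iterated version of the first isomorphism in Lemma~\ref{composition} (see \cite[Theorem 3.5(iii)]{BS2}),'' and your induction on $r+s$ is precisely an unwinding of that iterated isomorphism, together with the combinatorial identification of the resulting chain data with $R$-tableaux and of the stacked matchings $s^*_{r+s}\cdots s^*_1$ with $\overline{\tU}$. The paper leaves all the details you spell out (the base case, the edge-by-edge matching of (\ref{here}) with (\ref{break1})--(\ref{break2}), and the orientation conventions) implicit.
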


\begin{proof}
This is constructed in the same way as the basis (\ref{pfp}),
using
the iterated version of the first isomorphism in Lemma~\ref{composition} 
(see \cite[Theorem 3.5(iii)]{BS2}).
\end{proof}

Given $\tS,\tT \in \mathscr T_R(\eta)$ with 
$\sh(\tS)= \sh(\tT)$,
we can concatenate 
to obtain the composite diagram $\underline{\tS} \overline{\tT}$.
We associate the homomorphism
\begin{equation}\label{howtodoit}
f_{\tS\tT}:\lonestar R\,P(\eta) \rightarrow \lonestar R\,P(\eta)
\end{equation}
defined
on a basis vector 
$(\underline{\nu} \overline{\tU})$ from (\ref{spb}) as follows.
If $\bi^\tU \neq \bi^\tS$ then we set $f_{\tS \tT}(\underline{\nu}\overline{\tU}) := 0$.
If $\bi^\tU = \bi^\tS$,
the diagrams $\overline{\tU}$ and $\underline{\tS}$ are 
mirror images of each other 
(but the labels on their number lines can be different).
We compute
$f_{\tS\tT}(\underline{\nu} \overline{\tU})$ by
drawing $\underline{\tS} \overline{\tT}$ on top of
$\underline{\nu} \overline{\tU}$,
then applying the following {\em extended surgery procedure} 
to contract the $\overline{\tU}|\underline{\tS}$-part of the diagram.
\begin{itemize}
\item[(1)] First consider all the mirror image pairs of closed circles
from the diagrams $\overline{\tU}$ and $\underline{\tS}$. If the circles from at least one of these pairs are oriented in the same way 
(both anti-clockwise or both clockwise),
the extended surgery procedure gives $0$.
\item[(2)] Assuming all the mirror image pairs of circles are oppositely oriented,
we replace $\overline{\tU}$
by $\upperred(\overline{\tU})$
and $\underline{\tS}$ by 
$\lowerred(\underline{\tS})$,
join corresponding pairs of rays, then apply the usual surgery procedure
to contract the symmetric 
section of the resulting diagram.
\end{itemize}
This should be compared with the definition of $f_{\al\be\ga}$ from (\ref{allhom}).

Note that the vector $(\underline{\nu} \overline{\tU})$ from (\ref{spb}) lies
in the summand $\lonestar R_{\bi^\tU}\, P(\eta)$.
So $f_{\tS\tT}$ belongs to
$\hom_{K}(\lonestar R_{\bi^\tS}\, P(\eta),
\lonestar R_{\bi^\tT}\, P(\eta))$.
We deduce that
\begin{align}\label{ids}
e(\bi) f_{\tS\tT}&=
\begin{cases}
f_{\tS\tT}&\text{if $\bi=\bi^\tS$,}\\
0&\text{otherwise,}
\end{cases}
&f_{\tS\tT} e(\bi)
&=
\begin{cases}
f_{\tS\tT}&\text{if $\bi = \bi^\tT$,}\\
0&\text{otherwise.}
\end{cases}
\end{align}

\begin{Theorem}\label{myas}
The homomorphisms 
\begin{equation}\label{newallhom}
\{f_{\tS\tT}\:|\:\text{for all 
$\tS, \tT \in \mathscr{T}_R(\eta)$ with $\sh(\tS) = \sh(\tT)$}\}
\end{equation}
give a basis for $\End_K^{\fin}(\lonestar R\,P(\eta))^{\op}$.
Moreover $f_{\tS\tT}$ is of degree $\deg(\tS)+\deg(\tT)$.
\end{Theorem}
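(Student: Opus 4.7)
The plan is to iterate the chain of natural isomorphisms used in the proof of Theorem~\ref{phom}. First decompose
$$\End_K^{\fin}(\lonestar R\,P(\eta))^{\op} = \bigoplus_{\bi,\bj\in\Z^{r+s}} \hom_K(\lonestar R_\bi\, P(\eta),\lonestar R_\bj\, P(\eta))$$
using (\ref{eb}) and (\ref{sdec}), and expand each $\lonestar R_\bi$ and $\lonestar R_\bj$ via (\ref{projf}) as a finite direct sum of indecomposable adjoint projective functors $\lonestar G^{s}_{\De\Ga}$ attached to the elementary matchings displayed in (\ref{here}). Then, repeating the chain of isomorphisms from Theorem~\ref{phom} at each of the $r+s$ factors — namely, rewrite each $\lonestar G^s$ as a degree-shifted $G^{s^*}$ via (\ref{other}); apply the canonical $(\lonestar R_\bi, R_\bi)$-adjunction to move everything onto the right; collapse the resulting composite of projective functors into tensoring with a single geometric bimodule using Lemma~\ref{composition} iteratively; and identify $\hom_K(Ke_\eta, ?\,e_\eta)$ with $e_\eta(?)e_\eta$ — each summand of $\hom_K(\lonestar R_\bi\, P(\eta),\lonestar R_\bj\, P(\eta))$ is identified (up to an explicit grading shift) with $e_\eta K^{\bs^*\bt}_{\cdots}e_\eta$. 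Here $\bs=(s_1,\dots,s_{r+s})$ and $\bt=(t_1,\dots,t_{r+s})$ are sequences of elementary matchings whose lower/upper boundary labels are compatible with $\bi$ and $\bj$, respectively.

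By the iterated form of Lemma~\ref{composition}, a basis for $e_\eta K^{\bs^*\bt}_{\cdots}e_\eta$ is given by the consistent orientations of the resulting composite diagram, equivalently by choices of intermediate weights between each pair of adjacent elementary matchings. Reading these weights off using the dictionary (\ref{break1})--(\ref{break2}), each such orientation is precisely the data of a pair of $R$-tableaux $\tS,\tT \in \mathscr T_R(\eta)$ with $\bi^\tS=\bi$, $\bi^\tT=\bj$ and $\sh(\tS)=\sh(\tT)$, together with the closures $\underline{\tS}$ and $\overline{\tT}$. Summing over $\bi,\bj$ then recovers exactly the indexing set in (\ref{newallhom}). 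The grading shifts picked up during the chain of isomorphisms at each factor combine with the intrinsic degree of the diagram $(\underline{\eta}\alpha\bs^*\beta\bt\gamma\overline{\eta})$ and, via (\ref{degrees}) together with Remark~\ref{altdeg}, reduce to $\deg(\tS)+\deg(\tT)$; this is a local strip-by-strip computation in which most $\caps$ and $\cups$ correction terms cancel against the degrees of the pieces of the diagram, just as in Theorem~\ref{phom}.

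The main obstacle is the tracing step: one must verify that under the composite of isomorphisms above, the diagrammatic basis vector indexed by $(\tS,\tT)$ maps to the explicit homomorphism $f_{\tS\tT}$ of (\ref{howtodoit}). This amounts to an iterated diagrammatic check that the units and counits of the adjunctions $(\lonestar E_i, E_i)$ and $(\lonestar F_i, F_i)$ assemble, factor by factor, into the extended surgery procedure of (1)--(2) in (\ref{howtodoit}), including the anti-orientation condition on mirror-image pairs of closed circles produced at each stage. This is the iterated version of the tracing left implicit at the end of Theorem~\ref{phom}, and I would carry it out by induction on $r+s$, peeling off one adjoint projective functor at a time so that the extended surgery procedure for $R$ is exhibited as a single application of the extended surgery procedure for the last letter composed with that for the length-$(r+s-1)$ prefix.
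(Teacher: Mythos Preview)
Your proposal is correct and follows essentially the same route as the paper: decompose $\lonestar R$ into indecomposable projective functors via the iterated version of Lemma~\ref{composition}, then invoke Theorem~\ref{phom} on each summand. For the degree statement the paper simply appeals to Remark~\ref{otherdegs}, which gives $\deg(\underline{\tS}\overline{\tT}) - \caps(\underline{\tS}) - \cups(\overline{\tT}) = \deg(\tS) + \deg(\tT)$ directly, a slightly quicker route than your strip-by-strip cancellation; but the substance is the same.
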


\begin{proof}
Using the iterated version of 
Lemma~\ref{composition} (see \cite[Theorem 3.6]{BS2} for the precise formulation),
we decompose the functor $\lonestar R$ into a direct sum of indecomposable
projective functors, then apply
Theorem~\ref{phom}. Actually Remark~\ref{otherdegs} 
is slightly more convenient since it gives directly that
$f_{\tS\tT}$ is of degree $\deg(\underline{\tS} \overline{\tT}) - 
\caps(\underline{\tS}) - \cups(\overline{\tT}) =
\deg(\tS) + \deg(\tT)$.
\end{proof}

Now we pass from 
$\End_K^{\fin}(\lonestar R\,P(\eta))^{\op}$
to $\End_K^{\fin}(\lonestar R\,L(\eta))^{\op}$.
A {\em good pair} $(\tS,\tT)$ of $R$-tableaux of type $\eta$
means a pair of tableaux
$\tS, \tT \in \mathscr T_R(\eta)$ of the same shape
such that the following hold.
\begin{itemize}
\item[(1)] Each connected component in the diagram $\underline{\tS}
  \overline{\tT}$ crosses the top number line at most twice. If it
  crosses the top number line exactly twice then it is an
  anti-clockwise circle.
\item[(2)] Each connected component in $\underline{\tS}
  \overline{\tT}$
crosses the bottom number line at most twice. If it crosses exactly
twice and it does not cross the top number line then it is a clockwise circle.
\end{itemize}
These are the same conditions as in Remark~\ref{wee}.

\begin{Theorem}\label{byas}
If $(\tS,\tT)$ is a good pair of $R$-tableaux of type $\eta$, then
the homomorphism $f_{\tS\tT}$ from (\ref{howtodoit})
factors through the quotients to induce a
non-zero
map $\bar f_{\tS\tT}:\lonestar R\, L(\eta) \rightarrow \lonestar R
\,L(\eta)$.
The homomorphisms
\begin{equation}\label{newirrhom}
\{\bar f_{\tS\tT}\:|\:\text{for all 
good pairs $(\tS,\tT)$ of $R$-tableaux of type $\eta$}\}
\end{equation}
give a basis for $\End_K^{\fin}(\lonestar R\,L(\eta))^{\op}$.
Moreover $\bar f_{\tS\tT}$ is of degree $\deg(\tS)+\deg(\tT)$.
\end{Theorem}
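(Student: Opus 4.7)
The plan is to parallel the proof of Theorem~\ref{myas}, using Theorem~\ref{goodhoms} in place of Theorem~\ref{phom} at the key step. Iterating the first isomorphism of Lemma~\ref{composition} (as cited from \cite[Theorem 3.5(iii)]{BS2}), the functor $\lonestar R$ decomposes as a direct sum indexed by the choices of intermediate blocks and elementary crossingless matchings that may be read off from a chain $(\la^{(0)},\dots,\la^{(r+s)})$ of weight diagrams appearing in the definition of an $R$-tableau of type $\eta$. Under this decomposition, each summand of $\lonestar R\,P(\eta)$ corresponds to fixing a tableau $\tU \in \mathscr{T}_R(\eta)$, and is isomorphic (up to degree shift) to a module of the form $G^{s^*}_{\Pi\De} P(\eta)$, where $s^*$ is the matching obtained by stacking the elementary matchings of $\underline{\tU}$.

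For a pair $\tS,\tT \in \mathscr{T}_R(\eta)$ with $\sh(\tS) = \sh(\tT) =: \kappa$, the homomorphism $f_{\tS\tT}$ of (\ref{howtodoit}) then lies in a single summand of the decomposition of $\End_K^{\fin}(\lonestar R\,P(\eta))^{\op}$, namely the one attached to the matchings $s^*$ and $t$ obtained from $\underline{\tS}$ and $\overline{\tT}$. Inspecting the extended surgery procedure, $f_{\tS\tT}$ is precisely a homomorphism of the form $f_{\al\be\ga}$ of (\ref{allhom}), with $(\al,\be,\ga)$ read off from the labels on the number lines of the composite diagram $\underline{\tS}\overline{\tT}$ (the bottom and top labels are both $\eta$). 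This identification already underlies the proof of Theorem~\ref{myas}; the work here is to track the extra data needed to apply Theorem~\ref{goodhoms}.

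The main step is then to apply Theorem~\ref{goodhoms} together with the diagrammatic criterion of Remark~\ref{wee} to each summand. The conditions (1) and (2) in the definition of a good pair of $R$-tableaux are, by design, identical to the conditions on $\underline{\la}\alpha s^*\beta t\gamma\overline{\mu}$ in Remark~\ref{wee} applied to $\underline{\tS}\overline{\tT}$. Hence $f_{\tS\tT}$ is a good homomorphism inducing a non-zero $\bar f_{\tS\tT}$ exactly when $(\tS,\tT)$ is a good pair. Summing over all summands, the induced maps $\bar f_{\tS\tT}$ for good pairs form a basis of $\End_K^{\fin}(\lonestar R\,L(\eta))^{\op}$, by Theorem~\ref{goodhoms}(3).

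For the degree formula, the calculation goes exactly as in Theorem~\ref{myas}: Remark~\ref{otherdegs} provides a degree formula for each $f_{\al\be\ga}$, and combining it with the alternative expression in Remark~\ref{altdeg} yields $\deg(\bar f_{\tS\tT}) = \deg(\tS) + \deg(\tT)$. I expect the main obstacle to be purely notational, namely verifying that the extended surgery procedure applied to $\underline{\tS}\overline{\tT}$ stacked on $\underline{\nu}\overline{\tU}$ agrees, after the iterated application of Lemma~\ref{composition}, with the surgery description of $f_{\al\be\ga}$ between the appropriate summands; this bookkeeping is the same as in Theorem~\ref{myas}, now refined by Theorem~\ref{goodhoms} to keep track of which basis vectors descend nontrivially from $P(\eta)$ to $L(\eta)$.
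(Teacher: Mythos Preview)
Your proposal is correct and follows essentially the same approach as the paper: decompose $\lonestar R$ via the iterated Lemma~\ref{composition}, then apply Theorem~\ref{goodhoms} (together with the criterion of Remark~\ref{wee}) summand by summand, exactly parallel to how Theorem~\ref{myas} was deduced from Theorem~\ref{phom}. The only minor slip is that the degree computation needs only (\ref{degrees}) and Remark~\ref{otherdegs}, not Remark~\ref{altdeg}; otherwise your elaboration of the one-line proof in the paper is accurate.
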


\begin{proof}
This is deduced from Theorem~\ref{goodhoms} (and Remark~\ref{wee}) 
in the same way that Theorem~\ref{myas} was deduced from Theorem~\ref{phom} above.
\end{proof}

\phantomsubsection{Algorithms to compute with the diagram bases}
We conclude the section by explaining various algorithms to compute
with the bases (\ref{newallhom}) and (\ref{newirrhom}).
We will give some concrete examples in a special case in the next section.

\vspace{2mm}

\noindent
{\em Products.}
We first explain how to compute products. For this, we adapt 
the algorithms for composing homomorphisms explained 
after Remark~\ref{wee}.
To compute products in 
$\End_K^{\fin}(\lonestar R\,P(\eta))^{\op}$, take two basis vectors 
$f_{\tU \tV}$ and $f_{\tS\tT}$ from (\ref{newallhom}).
We have that $f_{\tS\tT} f_{\tU\tV}= 0$
unless $\bi^\tT = \bi^\tU$. 
Assuming $\bi^\tT = \bi^\tU$,
we draw $\underline{\tS}\overline{\tT}$ underneath
$\underline{\tU}\overline{\tV}$, then apply the extended surgery procedure 
from (\ref{howtodoit}) to contract the $\overline{\tT}|\underline{\tU}$-part of the diagram.
This produces a (possibly zero) sum of diagrams of the form 
$(\underline{\tX} \overline{\tY})$
such that $\bi^\tX = \bi^\tS$ and $\bi^\tY = \bi^\tV$; the product
$f_{\tS\tT} f_{\tU\tV}$
is the corresponding sum of homomorphisms $f_{\tX\tY} \in  \End_K^{\fin}(\lonestar R\,P(\eta))^{\op}$.

Now suppose that $(\tU, \tV)$ and $(\tS, \tT)$ are both good pairs, so
that
$\bar f_{\tU\tV}$ and $\bar f_{\tS \tT}$ are basis vectors from
(\ref{newirrhom}).
To compute the product 
$\bar f_{\tU\tV} \bar f_{\tS\tT}$
in $\End_K^{\fin}(\lonestar R\,L(\eta))^{\op}$,
we expand $f_{\tU\tV} f_{\tS\tT}$ as a sum of $f_{\tX\tY}$ using
the algorithm in the previous paragraph.
Then we replace all $f_{\tX\tY}$ in which $(\tX,\tY)$ is not a good
pair by zero, and all remaining $f_{\tX\tY}$ by
$\bar f_{\tX\tY}$.
This gives
$\bar f_{\tU\tV} \bar f_{\tS\tT}$.

\vspace{2mm}

\noindent
{\em Homomorphisms.}
Next we explain how to compute the 
maps (\ref{EP})--(\ref{FP})
in terms of our diagram bases.
Take $\tS,\tT \in \mathscr T_R(\eta)$ of the same shape.
To compute $\iota_{R;i}^{RE}(f_{\tS\tT})$ 
(resp.\ $\iota_{R;i}^{RF}(f_{\tS\tT})$),
the rough idea is to 
insert two new levels at 
the middle number line of the diagram $\underline{\tS}
\overline{\tT}$
containing the unique composite matching
displayed in (\ref{CKLR2}) (resp.\ (\ref{CKLR3})) 
that is consistent with the labels on
the number line;
if none of the configurations match we set
$\iota_{R;i}^{RE} (f_{\tS\tT}) := 0$
(resp.\ $\iota_{R;i}^{RF} (f_{\tS\tT}) := 0$).
\begin{equation}
\begin{picture}(60,39)
\put(-130,3){$\iota_{R;i}^{RE}:$}
\put(-24,25){\line(1,0){33}}
\put(-24,5){\line(1,0){33}}
\put(-24,-15){\line(1,0){33}}
\put(-7.5,5){\oval(23,21)[b]}
\put(-7.5,5){\oval(23,21)[t]}
\put(-22.3,-16.9){$\cross$}
\put(-21.9,4.8){$\down$}
\put(1.2,.4){$\up$}
\put(1.2,-17.6){$\circ$}
\put(-22.3,23.1){$\cross$}
\put(1.2,22.4){$\circ$}

\put(-84,25){\line(1,0){33}}
\put(-67.5,25){\oval(23,23)[b]}
\put(-84,5){\line(1,0){33}}
\put(-84,-15){\line(1,0){33}}
\put(-67.5,-15){\oval(23,23)[t]}
\put(-81.8,2.4){$\circ$}
\put(-59.2,3.1){$\cross$}

\put(96,5){\line(1,0){33}}
\put(96,-15){\line(1,0){33}}
\put(120.7,3.1){$\cross$}
\put(97.7,-16.9){$\cross$}
\put(96,25){\line(1,0){33}}
\put(97.7,23.1){$\cross$}

\put(36,25){\line(1,0){33}}
\put(61.2,22.4){$\circ$}
\put(36,5){\line(1,0){33}}
\put(36,-15){\line(1,0){33}}
\put(61.2,-17.6){$\circ$}
\put(38.2,2.4){$\circ$}

\qbezier(41,-15)(41,-5)(52.5,-5)
\qbezier(64,5)(64,-5)(52.5,-5)

\qbezier(101,5)(101,15)(112.5,15)
\qbezier(124,25)(124,15)(112.5,15)

\qbezier(101,5)(101,-5)(112.5,-5)
\qbezier(124,-15)(124,-5)(112.5,-5)

\qbezier(41,25)(41,15)(52.5,15)
\qbezier(64,5)(64,15)(52.5,15)

\put(-81,-33){$\text{$_i$\quad\:\,$_{i+1}$}$}
\put(-21,-33){$\text{$_i$\quad\:\,$_{i+1}$}$}
\put(36,-33){$\text{$_i$\quad\:\,$_{i+1}$}$}
\put(99.5,-33){$\text{$_i$\quad\:\,$_{i+1}$}$}
\end{picture}
\vspace{8mm}\label{CKLR2}
\end{equation}
\begin{equation}
\begin{picture}(60,44)
\put(-130,3){$\iota_{R;i}^{RF}:$}
\put(-24,25){\line(1,0){33}}
\put(-24,5){\line(1,0){33}}
\put(-24,-15){\line(1,0){33}}
\put(-7.5,5){\oval(23,21)[b]}
\put(-7.5,5){\oval(23,21)[t]}
\put(.8,-16.9){$\cross$}
\put(-21.9,4.8){$\down$}
\put(1.2,.4){$\up$}
\put(-21.4,-17.6){$\circ$}
\put(.8,23.1){$\cross$}
\put(-21.4,22.4){$\circ$}

\put(-84,25){\line(1,0){33}}
\put(-67.5,25){\oval(23,23)[b]}
\put(-84,5){\line(1,0){33}}
\put(-84,-15){\line(1,0){33}}
\put(-67.5,-15){\oval(23,23)[t]}
\put(-58.8,2.4){$\circ$}
\put(-82.2,3.1){$\cross$}

\put(96,5){\line(1,0){33}}
\put(96,-15){\line(1,0){33}}
\put(121.2,2.4){$\circ$}
\put(98.2,-17.6){$\circ$}
\put(96,25){\line(1,0){33}}
\put(98.2,22.4){$\circ$}

\put(36,25){\line(1,0){33}}
\put(60.8,23){$\cross$}
\put(36,5){\line(1,0){33}}
\put(36,-15){\line(1,0){33}}
\put(60.8,-17){$\cross$}
\put(37.8,3){$\cross$}

\qbezier(41,-15)(41,-7)(52.5,-7)
\qbezier(64,5)(64,-7)(52.5,-7)

\qbezier(101,5)(101,17)(112.5,17)
\qbezier(124,25)(124,17)(112.5,17)

\qbezier(101,5)(101,-5)(112.5,-5)
\qbezier(124,-15)(124,-5)(112.5,-5)

\qbezier(41,25)(41,15)(52.5,15)
\qbezier(64,5)(64,15)(52.5,15)

\end{picture}
\vspace{8mm}\label{CKLR3}
\end{equation}
Here, we display only the strip between the $i$th
and $(i+1)$th vertices, and there are only vertical line segments joining the vertices
labelled $\up$ or $\down$ outside of this strip.
For the second, third and fourth of these configurations, it is quite
clear how to do this: insert the new levels into
$\underline{\tS}\overline{\tT}$, complete the labels on the new middle
number line in the only possible way to get a consistently oriented
diagram,
and this produces the desired basis vector 
in $\End_{K}^{\fin}(\lonestar (RE) P(\eta))^{\op}$
(resp.\ $\End_{K}^{\fin}(\lonestar (RF) P(\eta))^{\op}$).
The first configuration is a little more subtle and involves
applying one iteration of the surgery procedure. We refer to \cite[(6.34)]{BS3} where this is
explained in more detail in an analogous situation.
The proof that this is correct 
follows the same argument as 
\cite[Theorem 6.11]{BS3}.

Finally we assume in addition that $(\tS,\tT)$ is a good pair and modify
the algorithm in the previous paragraph to compute
$\iota_{R;i}^{RE}(\bar f_{\tS\tT})$ (resp.\ $\iota_{R;i}^{RF}(\bar
f_{\tS\tT})$).
First one computes $\iota_{R;i}^{RE}(f_{\tS\tT})$ (resp.\ 
$\iota_{R;i}^{RF}(f_{\tS\tT})$) exactly as in the previous paragraph to obtain a sum of basis
vectors
for the form $f_{\tX\tY}$. 
The result is automatically a good
homomorphism, so it remains to replace all $f_{\tX\tY}$ in which
$(\tX,\tY)$
is not a good pair by zero, and the remaining ones by $\bar
f_{\tX\tY}$.

\section{The graded walled Brauer algebra}\label{s5}

In this section $\delta \in \Z$ 
 and $R = R^{(1)} \cdots
R^{(r+s)} \in \Seq_{r,s}$ are fixed. 
We are ready to define the graded version $B_R(\delta)$ of the walled
Brauer algebra $B_{r,s}(\delta)$ and
to show that it is graded Morita
equivalent to the basic algebra $K_{r,s}(\delta)$ from $\S$\ref{s4b}.
The proof that $B_R(\delta)$ is isomorphic to $B_{r,s}(\delta)$ is 
deferred to $\S$\ref{smain}.

\phantomsubsection{\boldmath Definition of the algebra $B_R(\delta)$}
According to the weight dictionary (\ref{Iu})--(\ref{dict}),
the empty bipartition $(\varnothing,\varnothing)$ is identified with
the weight diagram
\begin{equation}\label{othergroundstate}
\eta := \Bigg\{
\begin{array}{ll}
\hspace{80mm}
&\text{if $\delta \geq 0$}\\\\
&\text{if $\delta \leq 0$}\\
\end{array}
\begin{picture}(0,40)
\put(-276,12.5){$\cdots$}
\put(-69,12.5){$\cdots$}
\put(-219,14.2){$\overbrace{\phantom{hellow orl}}^{\delta}$}
\put(-259,15.2){\line(1,0){187}}
\put(-258.7,10.6){$\scriptstyle\up$}
\put(-238.7,10.6){$\scriptstyle\up$}
\put(-219.2,13.3){$\scriptstyle\times$}
\put(-199.2,13.3){$\scriptstyle\times$}
\put(-179.2,13.3){$\scriptstyle\times$}
\put(-158.7,15.3){$\scriptstyle\down$}
\put(-138.7,15.3){$\scriptstyle\down$}
\put(-118.7,15.3){$\scriptstyle\down$}
\put(-98.7,15.3){$\scriptstyle\down$}
\put(-78.7,15.3){$\scriptstyle\down$}
\end{picture}
\begin{picture}(0,40)
\put(-276,-12.6){$\cdots$}
\put(-69,-12.6){$\cdots$}
\put(-177.5,2){$_0$}
\put(-159,-17){$\underbrace{\phantom{hellow orl}}_{-\delta}$}
\put(-259,-10){\line(1,0){187}}
\put(-258.7,-14.6){$\scriptstyle\up$}
\put(-238.7,-14.6){$\scriptstyle\up$}
\put(-218.7,-14.6){$\scriptstyle\up$}
\put(-198.7,-14.6){$\scriptstyle\up$}
\put(-178.7,-14.6){$\scriptstyle\up$}
\put(-158,-12.6){$\circ$}
\put(-138,-12.6){$\circ$}
\put(-118,-12.6){$\circ$}
\put(-98.7,-9.9){$\scriptstyle\down$}
\put(-78.7,-9.9){$\scriptstyle\down$}
\end{picture}
\end{equation}
\vspace{4mm}

\noindent
(where there are infinitely many vertices labelled $\up$ to the left
and $\down$ to the right).
In this section we refer to the generalised $R$-tableaux of type
$\eta$ 
as defined in the previous section
simply as {\em
  $R$-tableaux}, and 
denote the set of all such $R$-tableaux by $\mathscr T_R(\delta)$.
As $\eta$ is maximal in the Bruhat order,
any $\tT = (\lambda^{(0)},\dots,\lambda^{(r+s)}) \in \mathscr T_R(\delta)$
necessarily has $\lambda^{(0)} = \eta$.
For $\tS,\tT \in \mathscr T_R(\delta)$ with $\sh(\tS) = \sh(\tT)$,
we often talk about {\em boundary cups} in
the diagrams $\underline{\tS}$, $\overline{\tT}$ or
$\underline{\tS}\overline{\tT}$, 
meaning connected components
that intersect 
the top number line twice.
Similarly we define {\em boundary caps} to be components that
intersect the bottom number line twice.

It is often convenient to represent elements of
$\mathscr T_R(\delta)$ as chains of bipartitions in the graph 
defined just before Theorem~\ref{ibranch}
rather than as chains of
weight diagrams; this 
is justified by Remark~\ref{newremark}.
Let 
\begin{align*}
r_a &:= \#\{b=1,\dots,a\:|\:R^{(b)} = E\},
&s_a &:= \#\{b=1,\dots,a\:|\:R^{(b)} = F\}.
\end{align*}
Then elements of $\mathscr T_R(\delta)$
are paths $\tT = (\la^{(0)}, \la^{(1)},\dots,\la^{(r+s)})$
in the Bratelli diagram that describes the branching of
cell modules of $B_{r,s}(\delta)$
with respect to the chain of subalgebras 
$B_{0,0}(\delta)
< B_{r_1,s_1}(\delta) < B_{r_2,s_2}(\delta) < \cdots <
B_{r,s}(\delta)$;
in particular $\la^{(0)} = (\varnothing,\varnothing)$
and $\la^{(r+s)} \in \La_{r,s}$.
Note also for $\tT \in \mathscr T_R(\delta)$ 
that the alternative formula for $\deg(\tT)$ from Remark~\ref{altdeg}
can be written 
as
\begin{equation}
\deg(\tT) = 
\!\!\sum_{\substack{1 \leq a \leq r+s \\ R^{(a)} = E}} \deg(\la^{(a-1)} \stackrel{i_a}{\rightarrow} \la^{(a)})+
\!\!\sum_{\substack{1 \leq a \leq r+s \\ R^{(a)} = F}} \deg(\la^{(a-1)} \stackrel{i_a}{\leftarrow} \la^{(a)})
\end{equation}
for $\bi^\tT = (i_1,\dots,i_{r+s})$.
Combined with (\ref{edgedeg1})--(\ref{edgedeg2}), this formula allows
the degree of an $R$-tableau 
to be computed directly from the underlying chain of bipartitions (and
the value of $\delta$).
We will give some examples in the next subsection.

Since the set $\mathscr T_R(\delta)$ is finite for any $R \in \Seq_{r,s}$,
Theorem~\ref{myas} implies that the graded algebra
\begin{equation}
B_R(\delta) := \End^{\fin}_{K(\delta)}(\lonestar R\,P(\eta))^{\op}
\end{equation}
is finite dimensional.
This means we can drop the ``$\fin$'' in our notation here.
We call $B_R(\delta)$ the {\em graded walled Brauer algebra} of type
$R$. 
This name will be justified later in the article, when we show 
(on forgetting the grading) that $B_R(\delta)$ 
is isomorphic to the walled Brauer algebra $B_{r,s}(\delta)$.
By Theorem~\ref{myas}, $B_R(\delta)$ has a homogeneous basis
\begin{equation}\label{bbas}
\{f_{\tS\tT}\:|\:\text{for all }\tS,\tT \in \mathscr T_R(\delta)\text{
  with }
\sh(\tS) = \sh(\tT)\}
\end{equation}
such that $f_{\tS\tT}$ is of degree $\deg(\tS)+\deg(\tT)$.
Products can be computed in this basis using the algorithm
explained at the end of $\S$\ref{sd}.
Recall also that there is system $\{e(\bi)\:|\:\bi \in \Z^{r+s}\}$ of mutually
orthogonal idempotents in $B_R(\delta)$ characterised by
(\ref{ids}).
All but finitely many of the $e(\bi)$'s are zero and their sum is $1$.
In fact $e(\bi) \neq 0$ if and
only if there exists an $R$-tableau $\tT$ with $\bi^\tT =
\bi$.
Again we refer the reader to the next subsection for some examples.

The non-unital homomorphisms
\begin{equation}\label{newiota}
\iota_{R;i}^{RE}:B_R(\delta) \rightarrow B_{RE}(\delta),
\qquad
\iota_{R;i}^{RF}:B_R(\delta) \rightarrow B_{RF}(\delta)
\end{equation}
from (\ref{EP})--(\ref{FP}) map $1$ to $1_{R;i}^{RE} \in B_{RE}(\delta)$
and $1_{R;i}^{RF} \in B_{RF}(\delta)$, respectively.
The effect of these maps on the basis elements (\ref{bbas})
can be computed explicitly by the algorithm explained at the end of $\S$\ref{sd}.
Associated to these maps,
we have adjoint pairs
$(\iind_{R}^{RE}, \ires_{R}^{RE})$ and
$(\iind_R^{RF}, \ires_R^{RF})$ of induction and restriction functors
defined exactly like in (\ref{fred1})--(\ref{fred4}).
Thus
\begin{align}\label{bever1}
\ires^{RE}_{R} 
&:\Mod{B_{RE}(\delta)} \rightarrow \Mod{B_R(\delta)},\\
\ires^{RF}_{R} 
&:\Mod{B_{RF}(\delta)} \rightarrow \Mod{B_R(\delta)}\\\intertext{are the exact functors 
defined by multiplication by the idempotents $1^{RE}_{R;i}$
and $1^{RF}_{R;i}$, respectively,
and}
\iind^{RE}_{R} 
&:\Mod{B_R(\delta)} \rightarrow \Mod{B_{RE}(\delta)},\\
\iind^{RF}_{R} 
&:\Mod{B_R(\delta)} \rightarrow \Mod{B_{RF}(\delta)}\label{bever4}
\end{align}
are the right exact functors
$B_{RE}(\delta)1^{RE}_{R;i} \otimes_{B_R(\delta)} ?$,
and $B_{RF}(\delta)1^{RF}_{R;i} \otimes_{B_R(\delta)} ?$,
respectively.
Later in the article we 
will also relate these {\em graded $i$-induction} and {\em $i$-restriction
functors}
to the ungraded ones defined earlier.

\phantomsubsection{Examples}
In this subsection we give some examples to illustrate the 
definition of $B_R(\delta)$, taking $\delta := 0$ throughout.

We first describe $B_{E^2 F}(0)$.
Here are all the $E^2F$-tableaux:
\begin{align*}
\tS&=((\varnothing,\varnothing) \stackrel{0}{\rightarrow}
((1),\varnothing) \stackrel{-1}{\rightarrow}
((1^2),\varnothing) \stackrel{-1}{\leftarrow} ((1),\varnothing)),\\
\tT&=((\varnothing,\varnothing) \stackrel{0}{\rightarrow}
((1),\varnothing) \stackrel{1}{\rightarrow}
((2),\varnothing) \stackrel{1}{\leftarrow} ((1),\varnothing)),\\
\tU&=((\varnothing,\varnothing) \stackrel{0}{\rightarrow}
((1),\varnothing) \stackrel{-1}{\rightarrow}
((1^2),\varnothing) \stackrel{0}{\leftarrow} ((1^2),(1))),\\
\tV&=((\varnothing,\varnothing) \stackrel{0}{\rightarrow}
((1),\varnothing) \stackrel{1}{\rightarrow}
((2),\varnothing) \stackrel{0}{\leftarrow} ((2),(1))).
\end{align*}
The algebra $B_{E^2 F}(0)$ has basis
$\{f_{\tS\tS}, f_{\tT\tT}, f_{\tS\tT}, f_{\tT\tS}\} \cup
\{f_{\tU\tU}\} \cup \{f_{\tV\tV}\}$. These elements are represented by
the diagrams
$$
\begin{array}{llllll}
\begin{picture}(45,105)
\put(0,50){\line(1,0){45}}
\put(-2.5,90.4){$\up$}
\put(12.5,90.4){$\up$}
\put(27.5,95.1){$\down$}
\put(42.5,95.1){$\down$}
\put(-2.5,0.4){$\up$}
\put(12.5,0.4){$\up$}
\put(27.5,5.1){$\down$}
\put(42.5,5.1){$\down$}


\put(12.5,60.4){$\up$}
\put(42.5,65.1){$\down$}

\put(-2.5,75.4){$\up$}
\put(42.5,80.1){$\down$}

\put(-2.5,45.4){$\up$}
\put(12.5,47.2){$\circ$}
\put(27,48.2){$\cross$}
\put(42.5,50.1){$\down$}

\put(-2.5,62.2){$\circ$}
\put(27,63.2){$\cross$}
\put(12.5,77.2){$\circ$}
\put(27,78.2){$\cross$}
\put(45.2,50){\line(0,1){53}}
\put(15.2,95){\line(0,1){8}}
\put(30.2,95){\line(0,1){8}}
\put(.2,80){\line(0,1){23}}
\put(22.7,95){\oval(15,15)[b]}
\qbezier(.2,80)\qbezier(.2,72.5)\qbezier(7.7,72.5)
\qbezier(15.2,65)\qbezier(15.2,72.5)\qbezier(7.7,72.5)
\qbezier(.2,50)\qbezier(.2,57.5)\qbezier(7.7,57.5)
\qbezier(15.2,65)\qbezier(15.2,57.5)\qbezier(7.7,57.5)


\put(12.5,30.4){$\up$}
\put(42.5,35.1){$\down$}
\put(-2.5,32.2){$\circ$}
\put(27,33.2){$\cross$}

\put(-2.5,15.4){$\up$}
\put(42.5,20.1){$\down$}
\put(12.5,17.2){$\circ$}
\put(27,18.2){$\cross$}

\put(45.2,50){\line(0,-1){53}}
\put(15.2,5){\line(0,-1){8}}
\put(30.2,5){\line(0,-1){8}}
\put(.2,20){\line(0,-1){23}}
\put(22.7,5){\oval(15,15)[t]}
\qbezier(0.2,20)\qbezier(0.2,27.5)\qbezier(7.7,27.5)
\qbezier(15.2,35)\qbezier(15.2,27.5)\qbezier(7.7,27.5)
\qbezier(0.2,50)\qbezier(0.2,42.5)\qbezier(7.7,42.5)
\qbezier(15.2,35)\qbezier(15.2,42.5)\qbezier(7.7,42.5)
\end{picture}\:\:\:&
\begin{picture}(45,100)
\put(0,50){\line(1,0){45}}
\put(-2.5,90.4){$\up$}
\put(12.5,90.4){$\up$}
\put(27.5,95.1){$\down$}
\put(42.5,95.1){$\down$}
\put(-2.5,0.4){$\up$}
\put(12.5,0.4){$\up$}
\put(27.5,5.1){$\down$}
\put(42.5,5.1){$\down$}


\put(-2.5,45.4){$\up$}
\put(-2.5,60.4){$\up$}
\put(27.5,65.1){$\down$}
\put(-2.5,75.4){$\up$}
\put(42.5,80.1){$\down$}
\put(42.5,50.1){$\down$}

\put(12.5,47.2){$\circ$}
\put(27,48.2){$\cross$}
\put(12.5,62.2){$\circ$}
\put(42,63.2){$\cross$}
\put(12.5,77.2){$\circ$}
\put(27,78.2){$\cross$}
\put(.2,50){\line(0,1){53}}
\put(15.2,95){\line(0,1){8}}
\put(30.2,95){\line(0,1){8}}
\put(45.2,80){\line(0,1){23}}
\put(22.7,95){\oval(15,15)[b]}
\qbezier(45.2,80)\qbezier(45.2,72.5)\qbezier(37.7,72.5)
\qbezier(30.2,65)\qbezier(30.2,72.5)\qbezier(37.7,72.5)
\qbezier(45.2,50)\qbezier(45.2,57.5)\qbezier(37.7,57.5)
\qbezier(30.2,65)\qbezier(30.2,57.5)\qbezier(37.7,57.5)


\put(-2.5,30.4){$\up$}
\put(27.5,35.1){$\down$}
\put(-2.5,15.4){$\up$}
\put(42.5,20.1){$\down$}
\put(12.5,32.2){$\circ$}
\put(42,33.2){$\cross$}
\put(12.5,17.2){$\circ$}
\put(27,18.2){$\cross$}
\put(0.2,50){\line(0,-1){53}}
\put(15.2,5){\line(0,-1){8}}
\put(30.2,5){\line(0,-1){8}}
\put(45.2,20){\line(0,-1){23}}
\put(22.7,5){\oval(15,15)[t]}
\qbezier(45.2,20)\qbezier(45.2,27.5)\qbezier(37.7,27.5)
\qbezier(30.2,35)\qbezier(30.2,27.5)\qbezier(37.7,27.5)
\qbezier(45.2,50)\qbezier(45.2,42.5)\qbezier(37.7,42.5)
\qbezier(30.2,35)\qbezier(30.2,42.5)\qbezier(37.7,42.5)
\end{picture}\:\:\:&
\begin{picture}(45,100)
\put(0,50){\line(1,0){45}}
\put(-2.5,90.4){$\up$}
\put(12.5,90.4){$\up$}
\put(27.5,95.1){$\down$}
\put(42.5,95.1){$\down$}
\put(-2.5,0.4){$\up$}
\put(12.5,0.4){$\up$}
\put(27.5,5.1){$\down$}
\put(42.5,5.1){$\down$}


\put(-2.5,45.4){$\up$}
\put(-2.5,60.4){$\up$}
\put(27.5,65.1){$\down$}
\put(-2.5,75.4){$\up$}
\put(42.5,80.1){$\down$}
\put(42.5,50.1){$\down$}

\put(12.5,47.2){$\circ$}
\put(27,48.2){$\cross$}
\put(12.5,62.2){$\circ$}
\put(42,63.2){$\cross$}
\put(12.5,77.2){$\circ$}
\put(27,78.2){$\cross$}
\put(.2,50){\line(0,1){53}}
\put(15.2,95){\line(0,1){8}}
\put(30.2,95){\line(0,1){8}}
\put(45.2,80){\line(0,1){23}}
\put(22.7,95){\oval(15,15)[b]}
\qbezier(45.2,80)\qbezier(45.2,72.5)\qbezier(37.7,72.5)
\qbezier(30.2,65)\qbezier(30.2,72.5)\qbezier(37.7,72.5)
\qbezier(45.2,50)\qbezier(45.2,57.5)\qbezier(37.7,57.5)
\qbezier(30.2,65)\qbezier(30.2,57.5)\qbezier(37.7,57.5)

\put(12.5,30.4){$\up$}
\put(42.5,35.1){$\down$}
\put(-2.5,32.2){$\circ$}
\put(27,33.2){$\cross$}

\put(-2.5,15.4){$\up$}
\put(42.5,20.1){$\down$}
\put(12.5,17.2){$\circ$}
\put(27,18.2){$\cross$}

\put(45.2,50){\line(0,-1){53}}
\put(15.2,5){\line(0,-1){8}}
\put(30.2,5){\line(0,-1){8}}
\put(.2,20){\line(0,-1){23}}
\put(22.7,5){\oval(15,15)[t]}
\qbezier(0.2,20)\qbezier(0.2,27.5)\qbezier(7.7,27.5)
\qbezier(15.2,35)\qbezier(15.2,27.5)\qbezier(7.7,27.5)
\qbezier(0.2,50)\qbezier(0.2,42.5)\qbezier(7.7,42.5)
\qbezier(15.2,35)\qbezier(15.2,42.5)\qbezier(7.7,42.5)

\end{picture}\:\:\:&
\begin{picture}(45,100)
\put(0,50){\line(1,0){45}}
\put(-2.5,90.4){$\up$}
\put(12.5,90.4){$\up$}
\put(27.5,95.1){$\down$}
\put(42.5,95.1){$\down$}
\put(-2.5,0.4){$\up$}
\put(12.5,0.4){$\up$}
\put(27.5,5.1){$\down$}
\put(42.5,5.1){$\down$}


\put(-2.5,30.4){$\up$}
\put(27.5,35.1){$\down$}
\put(-2.5,15.4){$\up$}
\put(42.5,20.1){$\down$}
\put(12.5,32.2){$\circ$}
\put(42,33.2){$\cross$}
\put(12.5,17.2){$\circ$}
\put(27,18.2){$\cross$}
\put(0.2,50){\line(0,-1){53}}
\put(15.2,5){\line(0,-1){8}}
\put(30.2,5){\line(0,-1){8}}
\put(45.2,20){\line(0,-1){23}}
\put(22.7,5){\oval(15,15)[t]}
\qbezier(45.2,20)\qbezier(45.2,27.5)\qbezier(37.7,27.5)
\qbezier(30.2,35)\qbezier(30.2,27.5)\qbezier(37.7,27.5)
\qbezier(45.2,50)\qbezier(45.2,42.5)\qbezier(37.7,42.5)
\qbezier(30.2,35)\qbezier(30.2,42.5)\qbezier(37.7,42.5)


\put(12.5,60.4){$\up$}
\put(42.5,65.1){$\down$}

\put(-2.5,75.4){$\up$}
\put(42.5,80.1){$\down$}

\put(-2.5,45.4){$\up$}
\put(12.5,47.2){$\circ$}
\put(27,48.2){$\cross$}
\put(42.5,50.1){$\down$}

\put(-2.5,62.2){$\circ$}
\put(27,63.2){$\cross$}
\put(12.5,77.2){$\circ$}
\put(27,78.2){$\cross$}
\put(45.2,50){\line(0,1){53}}
\put(15.2,95){\line(0,1){8}}
\put(30.2,95){\line(0,1){8}}
\put(.2,80){\line(0,1){23}}
\put(22.7,95){\oval(15,15)[b]}
\qbezier(.2,80)\qbezier(.2,72.5)\qbezier(7.7,72.5)
\qbezier(15.2,65)\qbezier(15.2,72.5)\qbezier(7.7,72.5)
\qbezier(.2,50)\qbezier(.2,57.5)\qbezier(7.7,57.5)
\qbezier(15.2,65)\qbezier(15.2,57.5)\qbezier(7.7,57.5)

\end{picture}\:\:\:&
\begin{picture}(45,100)
\put(0,50){\line(1,0){45}}
\put(-2.5,90.4){$\up$}
\put(12.5,90.4){$\up$}
\put(27.5,95.1){$\down$}
\put(42.5,95.1){$\down$}
\put(-2.5,0.4){$\up$}
\put(12.5,0.4){$\up$}
\put(27.5,5.1){$\down$}
\put(42.5,5.1){$\down$}


\put(12.5,60.4){$\up$}
\put(42.5,65.1){$\down$}
\put(-2.5,75.4){$\up$}
\put(42.5,80.1){$\down$}

\put(27.5,45.4){$\up$}
\put(-2.5,47.2){$\circ$}
\put(12,48.2){$\cross$}
\put(42.5,50.1){$\down$}

\put(-2.5,62.2){$\circ$}
\put(27,63.2){$\cross$}
\put(12.5,77.2){$\circ$}
\put(27,78.2){$\cross$}
\put(45.2,50){\line(0,1){53}}
\put(15.2,95){\line(0,1){8}}
\put(30.2,95){\line(0,1){8}}
\put(.2,80){\line(0,1){23}}
\put(22.7,95){\oval(15,15)[b]}
\qbezier(.2,80)\qbezier(.2,72.5)\qbezier(7.7,72.5)
\qbezier(15.2,65)\qbezier(15.2,72.5)\qbezier(7.7,72.5)
\qbezier(15.2,65)\qbezier(15.2,57.5)\qbezier(22.7,57.5)
\qbezier(30.2,50)\qbezier(30.2,57.5)\qbezier(22.7,57.5)


\put(12.5,30.4){$\up$}
\put(42.5,35.1){$\down$}
\put(-2.5,32.2){$\circ$}
\put(27,33.2){$\cross$}

\put(-2.5,15.4){$\up$}
\put(42.5,20.1){$\down$}
\put(12.5,17.2){$\circ$}
\put(27,18.2){$\cross$}

\put(45.2,50){\line(0,-1){53}}
\put(15.2,5){\line(0,-1){8}}
\put(30.2,5){\line(0,-1){8}}
\put(0.2,20){\line(0,-1){23}}
\put(22.7,5){\oval(15,15)[t]}
\qbezier(0.2,20)\qbezier(0.2,27.5)\qbezier(7.7,27.5)
\qbezier(15.2,35)\qbezier(15.2,27.5)\qbezier(7.7,27.5)
\qbezier(15.2,35)\qbezier(15.2,42.5)\qbezier(22.7,42.5)
\qbezier(30.2,50)\qbezier(30.2,42.5)\qbezier(22.7,42.5)
\end{picture}\:\:\:&
\begin{picture}(45,100)


\put(0,50){\line(1,0){45}}
\put(-2.5,90.4){$\up$}
\put(12.5,90.4){$\up$}
\put(27.5,95.1){$\down$}
\put(42.5,95.1){$\down$}
\put(-2.5,0.4){$\up$}
\put(12.5,0.4){$\up$}
\put(27.5,5.1){$\down$}
\put(42.5,5.1){$\down$}


\put(-2.5,75.4){$\up$}
\put(42.5,80.1){$\down$}
\put(12.5,77.2){$\circ$}
\put(27,78.2){$\cross$}

\put(-2.5,45.4){$\up$}
\put(12.5,50.1){$\down$}
\put(27.5,47.2){$\circ$}
\put(42,48.2){$\cross$}

\put(-2.5,60.4){$\up$}
\put(27.5,65.1){$\down$}
\put(12.5,62.2){$\circ$}
\put(42,63.2){$\cross$}

\put(.2,50){\line(0,1){53}}
\put(15.2,95){\line(0,1){8}}
\put(30.2,95){\line(0,1){8}}
\put(45.2,80){\line(0,1){23}}
\put(22.7,95){\oval(15,15)[b]}
\qbezier(45.2,80)\qbezier(45.2,72.5)\qbezier(37.7,72.5)
\qbezier(30.2,65)\qbezier(30.2,72.5)\qbezier(37.7,72.5)
\qbezier(30.2,65)\qbezier(30.2,57.5)\qbezier(22.7,57.5)
\qbezier(15.2,50)\qbezier(15.2,57.5)\qbezier(22.7,57.5)


\put(-2.5,30.4){$\up$}
\put(27.5,35.1){$\down$}
\put(12.5,32.2){$\circ$}
\put(42,33.2){$\cross$}

\put(-2.5,15.4){$\up$}
\put(42.5,20.1){$\down$}
\put(12.5,17.2){$\circ$}
\put(27,18.2){$\cross$}

\put(0.2,50){\line(0,-1){53}}
\put(15.2,5){\line(0,-1){8}}
\put(30.2,5){\line(0,-1){8}}
\put(45.2,20){\line(0,-1){23}}
\put(22.7,5){\oval(15,15)[t]}
\qbezier(45.2,20)\qbezier(45.2,27.5)\qbezier(37.7,27.5)
\qbezier(30.2,35)\qbezier(30.2,27.5)\qbezier(37.7,27.5)
\qbezier(30.2,35)\qbezier(30.2,42.5)\qbezier(22.7,42.5)
\qbezier(15.2,50)\qbezier(15.2,42.5)\qbezier(22.7,42.5)
\end{picture}
\end{array}
$$
Using the explicit
multiplication rule, it follows easily 
that $$
B_{E^2 F}(0) \cong M_2(\C) \oplus \C \oplus
\C,$$ 
and the diagram basis consists of matrix units; 
in particular the idempotents $f_{\tS\tS}$ and $f_{\tT\tT}$ give the diagonal matrix
units in $M_2(\C)$.
Moreover, recalling that the degree of $f_{\tS\tT}$
is the total number of clockwise cups and caps in $\underline{\tS}
\overline{\tT}$ minus the number of caps in $\underline{\tS}$ minus
the number of cups in $\overline{\tT}$, 
the grading on $B_{E^2 F}(0)$ is trivial (concentrated in
degree zero).

Consider instead the algebra $B_{F E^2}(0)$.
There are also four $F E^2$-tableaux:
\begin{align*}
\tS'&=
((\varnothing,\varnothing) \stackrel{0}{\leftarrow}
(\varnothing,(1)) \stackrel{0}{\rightarrow}
((1),(1)) \stackrel{0}{\rightarrow} ((1),\varnothing)),\\
\tT'&=((\varnothing,\varnothing) \stackrel{0}{\leftarrow}
(\varnothing,(1)) \stackrel{0}{\rightarrow}
(\varnothing,\varnothing) \stackrel{0}{\rightarrow} ((1),\varnothing)),\\
\tU'&=
((\varnothing,\varnothing) \stackrel{0}{\leftarrow}
(\varnothing,(1)) \stackrel{0}{\rightarrow}
((1),(1)) \stackrel{-1}{\rightarrow} ((1^2),(1))),\\
\tV'&=((\varnothing,\varnothing) \stackrel{0}{\leftarrow}
(\varnothing,(1)) \stackrel{0}{\rightarrow}
((1),(1)) \stackrel{1}{\rightarrow} ((2),(1))).
\end{align*}
The algebra $B_{F E^2}(0)$ has basis
$\{f_{\tS'\tT'}, f_{\tT'\tS'}, f_{\tS'\tS'}, f_{\tT'\tT'}\} \cup
\{f_{\tU'\tU'}\} \cup \{f_{\tV'\tV'}\}$:
$$
\begin{array}{llllll}
\begin{picture}(45,105)
\put(0,50){\line(1,0){45}}
\put(-2.5,90.4){$\up$}
\put(12.5,90.4){$\up$}
\put(27.5,95.1){$\down$}
\put(42.5,95.1){$\down$}
\put(-2.5,0.4){$\up$}
\put(12.5,0.4){$\up$}
\put(27.5,5.1){$\down$}
\put(42.5,5.1){$\down$}
\put(30.2,95){\line(0,1){8}}
\put(15.2,95){\line(0,1){8}}
\put(30.2,5){\line(0,-1){8}}
\put(15.2,5){\line(0,-1){8}}


\put(-2.5,60.4){$\up$}
\put(27.5,65.1){$\down$}
\put(12.5,60.4){$\up$}
\put(42.5,65.1){$\down$}

\put(-2.5,75.4){$\up$}
\put(42.5,80.1){$\down$}
\put(27.5,77.2){$\circ$}
\put(12,78.2){$\cross$}

\put(-2.5,45.4){$\up$}
\put(12.5,47.2){$\circ$}
\put(27,48.2){$\cross$}
\put(42.5,50.1){$\down$}

\put(45.2,50){\line(0,1){53}}
\put(0.2,50){\line(0,1){53}}
\put(22.7,95){\oval(15,15)[b]}
\put(22.7,65){\oval(15,15)[t]}
\put(22.7,65){\oval(15,15)[b]}


\put(-2.5,30.4){$\up$}
\put(12.5,35.1){$\down$}
\put(27.5,30.4){$\up$}
\put(42.5,35.1){$\down$}

\put(-2.5,15.4){$\up$}
\put(42.5,20.1){$\down$}
\put(27.5,17.2){$\circ$}
\put(12,18.2){$\cross$}

\put(45.2,50){\line(0,-1){53}}
\put(0.2,50){\line(0,-1){53}}
\put(22.7,5){\oval(15,15)[t]}
\put(22.7,35){\oval(15,15)[b]}
\put(22.7,35){\oval(15,15)[t]}

\end{picture}\:\:\:&
\begin{picture}(45,100)
\put(0,50){\line(1,0){45}}
\put(-2.5,90.4){$\up$}
\put(12.5,90.4){$\up$}
\put(27.5,95.1){$\down$}
\put(42.5,95.1){$\down$}
\put(-2.5,0.4){$\up$}
\put(12.5,0.4){$\up$}
\put(27.5,5.1){$\down$}
\put(42.5,5.1){$\down$}
\put(30.2,95){\line(0,1){8}}
\put(15.2,95){\line(0,1){8}}
\put(30.2,5){\line(0,-1){8}}
\put(15.2,5){\line(0,-1){8}}


\put(-2.5,60.4){$\up$}
\put(12.5,65.1){$\down$}
\put(27.5,60.4){$\up$}
\put(42.5,65.1){$\down$}

\put(-2.5,75.4){$\up$}
\put(42.5,80.1){$\down$}
\put(27.5,77.2){$\circ$}
\put(12,78.2){$\cross$}

\put(-2.5,45.4){$\up$}
\put(12.5,47.2){$\circ$}
\put(27,48.2){$\cross$}
\put(42.5,50.1){$\down$}

\put(45.2,50){\line(0,1){53}}
\put(0.2,50){\line(0,1){53}}
\put(22.7,95){\oval(15,15)[b]}
\put(22.7,65){\oval(15,15)[t]}
\put(22.7,65){\oval(15,15)[b]}


\put(-2.5,30.4){$\up$}
\put(27.5,35.1){$\down$}
\put(12.5,30.4){$\up$}
\put(42.5,35.1){$\down$}

\put(-2.5,15.4){$\up$}
\put(42.5,20.1){$\down$}
\put(27.5,17.2){$\circ$}
\put(12,18.2){$\cross$}

\put(45.2,50){\line(0,-1){53}}
\put(0.2,50){\line(0,-1){53}}
\put(22.7,5){\oval(15,15)[t]}
\put(22.7,35){\oval(15,15)[b]}
\put(22.7,35){\oval(15,15)[t]}


\end{picture}\:\:\:&
\begin{picture}(45,100)
\put(0,50){\line(1,0){45}}
\put(-2.5,90.4){$\up$}
\put(12.5,90.4){$\up$}
\put(27.5,95.1){$\down$}
\put(42.5,95.1){$\down$}
\put(-2.5,0.4){$\up$}
\put(12.5,0.4){$\up$}
\put(27.5,5.1){$\down$}
\put(42.5,5.1){$\down$}



\put(-2.5,60.4){$\up$}
\put(12.5,65.1){$\down$}
\put(27.5,60.4){$\up$}
\put(42.5,65.1){$\down$}

\put(-2.5,75.4){$\up$}
\put(42.5,80.1){$\down$}
\put(27.5,77.2){$\circ$}
\put(12,78.2){$\cross$}

\put(-2.5,45.4){$\up$}
\put(12.5,47.2){$\circ$}
\put(27,48.2){$\cross$}
\put(42.5,50.1){$\down$}

\put(45.2,50){\line(0,1){53}}
\put(0.2,50){\line(0,1){53}}
\put(22.7,95){\oval(15,15)[b]}
\put(22.7,65){\oval(15,15)[t]}
\put(22.7,65){\oval(15,15)[b]}


\put(-2.5,30.4){$\up$}
\put(12.5,35.1){$\down$}
\put(27.5,30.4){$\up$}
\put(42.5,35.1){$\down$}

\put(-2.5,15.4){$\up$}
\put(42.5,20.1){$\down$}
\put(27.5,17.2){$\circ$}
\put(12,18.2){$\cross$}

\put(45.2,50){\line(0,-1){53}}
\put(30.2,5){\line(0,-1){8}}
\put(15.2,5){\line(0,-1){8}}
\put(15.2,95){\line(0,1){8}}
\put(30.2,95){\line(0,1){8}}
\put(0.2,50){\line(0,-1){53}}
\put(22.7,5){\oval(15,15)[t]}
\put(22.7,35){\oval(15,15)[b]}
\put(22.7,35){\oval(15,15)[t]}

\end{picture}\:\:\:&
\begin{picture}(45,100)
\put(0,50){\line(1,0){45}}
\put(-2.5,90.4){$\up$}
\put(12.5,90.4){$\up$}
\put(27.5,95.1){$\down$}
\put(42.5,95.1){$\down$}
\put(-2.5,0.4){$\up$}
\put(12.5,0.4){$\up$}
\put(27.5,5.1){$\down$}
\put(42.5,5.1){$\down$}
\put(30.2,95){\line(0,1){8}}
\put(15.2,95){\line(0,1){8}}
\put(30.2,5){\line(0,-1){8}}
\put(15.2,5){\line(0,-1){8}}



\put(-2.5,60.4){$\up$}
\put(27.5,65.1){$\down$}
\put(12.5,60.4){$\up$}
\put(42.5,65.1){$\down$}

\put(-2.5,75.4){$\up$}
\put(42.5,80.1){$\down$}
\put(27.5,77.2){$\circ$}
\put(12,78.2){$\cross$}

\put(-2.5,45.4){$\up$}
\put(12.5,47.2){$\circ$}
\put(27,48.2){$\cross$}
\put(42.5,50.1){$\down$}

\put(45.2,50){\line(0,1){53}}
\put(0.2,50){\line(0,1){53}}
\put(22.7,95){\oval(15,15)[b]}
\put(22.7,65){\oval(15,15)[t]}
\put(22.7,65){\oval(15,15)[b]}


\put(-2.5,30.4){$\up$}
\put(27.5,35.1){$\down$}
\put(12.5,30.4){$\up$}
\put(42.5,35.1){$\down$}

\put(-2.5,15.4){$\up$}
\put(42.5,20.1){$\down$}
\put(27.5,17.2){$\circ$}
\put(12,18.2){$\cross$}

\put(45.2,50){\line(0,-1){53}}
\put(0.2,50){\line(0,-1){53}}
\put(22.7,5){\oval(15,15)[t]}
\put(22.7,35){\oval(15,15)[b]}
\put(22.7,35){\oval(15,15)[t]}

\end{picture}\:\:\:&
\begin{picture}(45,100)
\put(30.2,95){\line(0,1){8}}
\put(15.2,95){\line(0,1){8}}
\put(30.2,5){\line(0,-1){8}}
\put(15.2,5){\line(0,-1){8}}
\put(0,50){\line(1,0){45}}
\put(-2.5,90.4){$\up$}
\put(12.5,90.4){$\up$}
\put(27.5,95.1){$\down$}
\put(42.5,95.1){$\down$}
\put(-2.5,0.4){$\up$}
\put(12.5,0.4){$\up$}
\put(27.5,5.1){$\down$}
\put(42.5,5.1){$\down$}

\put(-2.5,60.4){$\up$}
\put(12.5,65.1){$\down$}
\put(27.5,60.4){$\up$}
\put(42.5,65.1){$\down$}

\put(-2.5,75.4){$\up$}
\put(42.5,80.1){$\down$}
\put(27.5,77.2){$\circ$}
\put(12,78.2){$\cross$}

\put(27.5,45.4){$\up$}
\put(42.5,50.1){$\down$}
\put(-2.5,47.2){$\circ$}
\put(12,48.2){$\cross$}

\put(45.2,50){\line(0,1){53}}
\put(30.2,50){\line(0,1){15}}
\put(0.2,65){\line(0,1){38}}
\put(22.7,95){\oval(15,15)[b]}
\put(22.7,65){\oval(15,15)[t]}
\put(7.7,65){\oval(15,15)[b]}


\put(-2.5,30.4){$\up$}
\put(12.5,35.1){$\down$}
\put(27.5,30.4){$\up$}
\put(42.5,35.1){$\down$}

\put(-2.5,15.4){$\up$}
\put(42.5,20.1){$\down$}
\put(27.5,17.2){$\circ$}
\put(12,18.2){$\cross$}

\put(27.5,17.2){$\circ$}
\put(12,18.2){$\cross$}

\put(45.2,50){\line(0,-1){53}}
\put(30.2,50){\line(0,-1){15}}
\put(0.2,35){\line(0,-1){38}}
\put(22.7,5){\oval(15,15)[t]}
\put(22.7,35){\oval(15,15)[b]}
\put(7.7,35){\oval(15,15)[t]}

\end{picture}\:\:\:&
\begin{picture}(45,100)


\put(30.2,95){\line(0,1){8}}
\put(15.2,95){\line(0,1){8}}
\put(30.2,5){\line(0,-1){8}}
\put(15.2,5){\line(0,-1){8}}
\put(0,50){\line(1,0){45}}
\put(-2.5,90.4){$\up$}
\put(12.5,90.4){$\up$}
\put(27.5,95.1){$\down$}
\put(42.5,95.1){$\down$}
\put(-2.5,0.4){$\up$}
\put(12.5,0.4){$\up$}
\put(27.5,5.1){$\down$}
\put(42.5,5.1){$\down$}

\put(-2.5,60.4){$\up$}
\put(12.5,65.1){$\down$}
\put(27.5,60.4){$\up$}
\put(42.5,65.1){$\down$}

\put(-2.5,75.4){$\up$}
\put(42.5,80.1){$\down$}
\put(27.5,77.2){$\circ$}
\put(12,78.2){$\cross$}

\put(-2.5,45.4){$\up$}
\put(12.5,50.1){$\down$}
\put(27.5,47.2){$\circ$}
\put(42,48.2){$\cross$}

\put(0.2,50){\line(0,1){53}}
\put(15.2,50){\line(0,1){15}}
\put(45.2,65){\line(0,1){38}}
\put(22.7,95){\oval(15,15)[b]}
\put(22.7,65){\oval(15,15)[t]}
\put(37.7,65){\oval(15,15)[b]}


\put(-2.5,30.4){$\up$}
\put(12.5,35.1){$\down$}
\put(27.5,30.4){$\up$}
\put(42.5,35.1){$\down$}

\put(-2.5,15.4){$\up$}
\put(42.5,20.1){$\down$}
\put(27.5,17.2){$\circ$}
\put(12,18.2){$\cross$}

\put(27.5,17.2){$\circ$}
\put(12,18.2){$\cross$}

\put(0.2,50){\line(0,-1){53}}
\put(15.2,50){\line(0,-1){15}}
\put(45.2,35){\line(0,-1){38}}
\put(22.7,5){\oval(15,15)[t]}
\put(22.7,35){\oval(15,15)[b]}
\put(37.7,35){\oval(15,15)[t]}
\end{picture}
\end{array}
$$
This is also a semisimple algebra isomorphic to
$M_2(\C) \oplus \C \oplus \C$, with the idempotents
$f_{\tS'\tT'}$ and $f_{\tT'\tS'}$ giving the diagonal matrix
units in $M_2(\C)$.
But the grading is no longer trivial as the off-diagonal matrix units
$f_{\tS'\tS'}$ and $f_{\tT'\tT'}$ 
in the copy of $M_2(\C)$ are
of degrees $-2$ and $2$, respectively.
Comparing with the preceding example, we see 
that $B_{E^2 F}(0)$ and $B_{F E^2}(0)$ are isomorphic 
as ungraded algebras but not as graded algebras.

Finally we give a less trivial example which is not semisimple.
Consider $R = E^2 F^2$. There are ten $R$-tableaux, namely,
the
paths in the following Bratelli diagram.
$$
\begin{picture}(320,165)
\put(-5,75){$(\varnothing,\varnothing)$}
\put(55,75){$((1),\varnothing)$}
\put(120,105){$((2),\varnothing)$}
\put(120,45){$((1^2),\varnothing)$}
\put(190,135){$((2),(1))$}
\put(190,75){$((1),\varnothing)$}
\put(190,15){$((1^2),(1))$}
\put(260,150){$((2),(2))$}
\put(260,120){$((2),(1^2))$}
\put(260,90){$((1),(1))$}
\put(260,60){$(\varnothing,\varnothing)$}
\put(260,30){$((1^2),(2))$}
\put(260,0){$((1^2),(1^2))$}

\put(28,78){\line(1,0){24}}
\put(39,78){$^0$}

\put(91,84){\line(5,4){26}}
\put(98,92){$^1$}
\put(91,73){\line(5,-4){26}}
\put(92,52){$^{-1}$}

\put(161,114){\line(5,4){26}}
\put(168,122){$^0$}
\put(161,103){\line(5,-4){26}}
\put(167,82){$^{1}$}
\put(161,54){\line(5,4){26}}
\put(160,62){$^{-1}$}
\put(161,43){\line(5,-4){26}}
\put(167,22){$^{0}$}

\put(234,142){\line(5,2){24}}
\put(234,146){$^{-1}$}
\put(234,133){\line(5,-2){24}}
\put(242,118){$^{1}$}

\put(230,82){\line(5,2){27}}
\put(236,85){$^{0}$}
\put(230,73){\line(5,-2){27}}
\put(237,57){$^{0}$}

\put(236,26){\line(2,5){23}}
\put(246,43){$^{-1}$}
\put(238,20){\line(5,2){20}}
\put(239,23){$^{-1}$}
\put(238,13){\line(5,-2){20}}
\put(243,-2){$^{1}$}

\put(230,127){\line(1,-1){29}}
\put(241,100){$^{1}$}

\end{picture}
$$
The algebra $B_{E^2 F^2}(0)$ is of dimension 24, and the diagrams
representing its basis are displayed in Figure~\ref{fig1}.
Using the multiplication rule, one can check directly in this example
that the Jacobson radical is of dimension 16 and is spanned by all the basis vectors $\{f_{\tS\tT}\}$ such that either
$\underline{\tS}$ has a clockwise boundary cup or $\overline{\tT}$
has a clockwise boundary cap; cf. Corollary~\ref{jcor} below.
Moreover the quotient of $B_{E^2 F^2}(0)$ by its Jacobson radical is
isomorphic to $M_2(\C) \oplus \C \oplus \C \oplus \C \oplus \C$;
cf. Theorem~\ref{irreddimensions} below.
\begin{figure}
$$
\begin{array}{llllll}
\begin{picture}(45,130)

\put(0,65){\line(1,0){45}}
\put(-2.5,120.4){$\up$}
\put(12.5,120.4){$\up$}
\put(27.5,125.1){$\down$}
\put(42.5,125.1){$\down$}
\put(-2.5,0.4){$\up$}
\put(12.5,0.4){$\up$}
\put(27.5,5.1){$\down$}
\put(42.5,5.1){$\down$}


\put(-2.5,75.4){$\up$}
\put(42.5,80.1){$\down$}
\put(-2.5,90.4){$\up$}
\put(27.5,95.1){$\down$}
\put(-2.5,105.4){$\up$}
\put(42.5,110.1){$\down$}

\put(-2.5,60.4){$\up$}
\put(27.5,60.4){$\up$}
\put(12.5,65.1){$\down$}
\put(42.5,65.1){$\down$}
\put(12.5,77.2){$\circ$}
\put(27,78.2){$\cross$}
\put(12.5,92.2){$\circ$}
\put(42,93.2){$\cross$}
\put(12.5,107.2){$\circ$}
\put(27,108.2){$\cross$}
\put(0.2,65){\line(0,1){68}}
\put(15.2,125){\line(0,1){8}}
\put(30.2,125){\line(0,1){8}}
\put(45.2,110){\line(0,1){23}}
\put(45.2,65){\line(0,1){15}}
\put(22.7,125){\oval(15,15)[b]}
\put(22.7,65){\oval(15,15)[t]}
\qbezier(45.2,110)\qbezier(45.2,102.5)\qbezier(37.7,102.5)
\qbezier(30.2,95)\qbezier(30.2,102.5)\qbezier(37.7,102.5)
\qbezier(45.2,80)\qbezier(45.2,87.5)\qbezier(37.7,87.5)
\qbezier(30.2,95)\qbezier(30.2,87.5)\qbezier(37.7,87.5)


\put(-2.5,45.4){$\up$}
\put(42.5,50.1){$\down$}
\put(-2.5,30.4){$\up$}
\put(27.5,35.1){$\down$}
\put(-2.5,15.4){$\up$}
\put(42.5,20.1){$\down$}

\put(12.5,47.2){$\circ$}
\put(27,48.2){$\cross$}
\put(12.5,32.2){$\circ$}
\put(42,33.2){$\cross$}
\put(12.5,17.2){$\circ$}
\put(27,18.2){$\cross$}
\put(0.2,65){\line(0,-1){68}}
\put(15.2,5){\line(0,-1){8}}
\put(30.2,5){\line(0,-1){8}}
\put(45.2,20){\line(0,-1){23}}
\put(45.2,65){\line(0,-1){15}}
\put(22.7,5){\oval(15,15)[t]}
\put(22.7,65){\oval(15,15)[b]}
\qbezier(45.2,20)\qbezier(45.2,27.5)\qbezier(37.7,27.5)
\qbezier(30.2,35)\qbezier(30.2,27.5)\qbezier(37.7,27.5)
\qbezier(45.2,50)\qbezier(45.2,42.5)\qbezier(37.7,42.5)
\qbezier(30.2,35)\qbezier(30.2,42.5)\qbezier(37.7,42.5)
\end{picture}\:\:\:
&
\begin{picture}(45,130)

\put(0,65){\line(1,0){45}}
\put(-2.5,120.4){$\up$}
\put(12.5,120.4){$\up$}
\put(27.5,125.1){$\down$}
\put(42.5,125.1){$\down$}
\put(-2.5,0.4){$\up$}
\put(12.5,0.4){$\up$}
\put(27.5,5.1){$\down$}
\put(42.5,5.1){$\down$}


\put(-2.5,75.4){$\up$}
\put(42.5,80.1){$\down$}
\put(-2.5,90.4){$\up$}
\put(27.5,95.1){$\down$}
\put(-2.5,105.4){$\up$}
\put(42.5,110.1){$\down$}

\put(-2.5,60.4){$\up$}
\put(12.5,60.4){$\up$}
\put(27.5,65.1){$\down$}
\put(42.5,65.1){$\down$}
\put(12.5,77.2){$\circ$}
\put(27,78.2){$\cross$}
\put(12.5,92.2){$\circ$}
\put(42,93.2){$\cross$}
\put(12.5,107.2){$\circ$}
\put(27,108.2){$\cross$}
\put(0.2,65){\line(0,1){68}}
\put(15.2,125){\line(0,1){8}}
\put(30.2,125){\line(0,1){8}}
\put(45.2,110){\line(0,1){23}}
\put(45.2,65){\line(0,1){15}}
\put(22.7,125){\oval(15,15)[b]}
\put(22.7,65){\oval(15,15)[t]}
\qbezier(45.2,110)\qbezier(45.2,102.5)\qbezier(37.7,102.5)
\qbezier(30.2,95)\qbezier(30.2,102.5)\qbezier(37.7,102.5)
\qbezier(45.2,80)\qbezier(45.2,87.5)\qbezier(37.7,87.5)
\qbezier(30.2,95)\qbezier(30.2,87.5)\qbezier(37.7,87.5)

\put(-2.5,45.4){$\up$}
\put(42.5,50.1){$\down$}
\put(-2.5,30.4){$\up$}
\put(27.5,35.1){$\down$}
\put(-2.5,15.4){$\up$}
\put(42.5,20.1){$\down$}

\put(12.5,47.2){$\circ$}
\put(27,48.2){$\cross$}
\put(12.5,32.2){$\circ$}
\put(42,33.2){$\cross$}
\put(12.5,17.2){$\circ$}
\put(27,18.2){$\cross$}
\put(0.2,65){\line(0,-1){68}}
\put(15.2,5){\line(0,-1){8}}
\put(30.2,5){\line(0,-1){8}}
\put(45.2,20){\line(0,-1){23}}
\put(45.2,65){\line(0,-1){15}}
\put(22.7,5){\oval(15,15)[t]}
\put(22.7,65){\oval(15,15)[b]}
\qbezier(45.2,20)\qbezier(45.2,27.5)\qbezier(37.7,27.5)
\qbezier(30.2,35)\qbezier(30.2,27.5)\qbezier(37.7,27.5)
\qbezier(45.2,50)\qbezier(45.2,42.5)\qbezier(37.7,42.5)
\qbezier(30.2,35)\qbezier(30.2,42.5)\qbezier(37.7,42.5)
\end{picture}\:\:\:
&
\begin{picture}(45,130)

\put(0,65){\line(1,0){45}}
\put(-2.5,120.4){$\up$}
\put(12.5,120.4){$\up$}
\put(27.5,125.1){$\down$}
\put(42.5,125.1){$\down$}
\put(-2.5,0.4){$\up$}
\put(12.5,0.4){$\up$}
\put(27.5,5.1){$\down$}
\put(42.5,5.1){$\down$}


\put(-2.5,75.4){$\up$}
\put(42.5,80.1){$\down$}
\put(12.5,90.4){$\up$}
\put(42.5,95.1){$\down$}
\put(-2.5,105.4){$\up$}
\put(42.5,110.1){$\down$}

\put(-2.5,60.4){$\up$}
\put(27.5,60.4){$\up$}
\put(12.5,65.1){$\down$}
\put(42.5,65.1){$\down$}
\put(12.5,77.2){$\circ$}
\put(27,78.2){$\cross$}
\put(-2.5,92.2){$\circ$}
\put(27,93.2){$\cross$}
\put(12.5,107.2){$\circ$}
\put(27,108.2){$\cross$}
\put(45.2,65){\line(0,1){68}}
\put(15.2,125){\line(0,1){8}}
\put(30.2,125){\line(0,1){8}}
\put(0.2,110){\line(0,1){23}}
\put(0.2,65){\line(0,1){15}}
\put(22.7,125){\oval(15,15)[b]}
\put(22.7,65){\oval(15,15)[t]}
\qbezier(0.2,110)\qbezier(0.2,102.5)\qbezier(7.7,102.5)
\qbezier(15.2,95)\qbezier(15.2,102.5)\qbezier(7.7,102.5)
\qbezier(0.2,80)\qbezier(0.2,87.5)\qbezier(7.7,87.5)
\qbezier(15.2,95)\qbezier(15.2,87.5)\qbezier(7.7,87.5)


\put(-2.5,45.4){$\up$}
\put(42.5,50.1){$\down$}
\put(12.5,30.4){$\up$}
\put(42.5,35.1){$\down$}
\put(-2.5,15.4){$\up$}
\put(42.5,20.1){$\down$}

\put(12.5,47.2){$\circ$}
\put(27,48.2){$\cross$}
\put(-2.5,32.2){$\circ$}
\put(27,33.2){$\cross$}
\put(12.5,17.2){$\circ$}
\put(27,18.2){$\cross$}
\put(45.2,65){\line(0,-1){68}}
\put(15.2,5){\line(0,-1){8}}
\put(30.2,5){\line(0,-1){8}}
\put(0.2,20){\line(0,-1){23}}
\put(0.2,65){\line(0,-1){15}}
\put(22.7,5){\oval(15,15)[t]}
\put(22.7,65){\oval(15,15)[b]}
\qbezier(0.2,20)\qbezier(0.2,27.5)\qbezier(7.7,27.5)
\qbezier(15.2,35)\qbezier(15.2,27.5)\qbezier(7.7,27.5)
\qbezier(0.2,50)\qbezier(0.2,42.5)\qbezier(7.7,42.5)
\qbezier(15.2,35)\qbezier(15.2,42.5)\qbezier(7.7,42.5)

\end{picture}\:\:\:
&
\begin{picture}(45,130)

\put(0,65){\line(1,0){45}}
\put(-2.5,120.4){$\up$}
\put(12.5,120.4){$\up$}
\put(27.5,125.1){$\down$}
\put(42.5,125.1){$\down$}
\put(-2.5,0.4){$\up$}
\put(12.5,0.4){$\up$}
\put(27.5,5.1){$\down$}
\put(42.5,5.1){$\down$}

\put(-2.5,75.4){$\up$}
\put(42.5,80.1){$\down$}
\put(12.5,90.4){$\up$}
\put(42.5,95.1){$\down$}
\put(-2.5,105.4){$\up$}
\put(42.5,110.1){$\down$}

\put(-2.5,60.4){$\up$}
\put(12.5,60.4){$\up$}
\put(27.5,65.1){$\down$}
\put(42.5,65.1){$\down$}
\put(12.5,77.2){$\circ$}
\put(27,78.2){$\cross$}
\put(-2.5,92.2){$\circ$}
\put(27,93.2){$\cross$}
\put(12.5,107.2){$\circ$}
\put(27,108.2){$\cross$}
\put(45.2,65){\line(0,1){68}}
\put(15.2,125){\line(0,1){8}}
\put(30.2,125){\line(0,1){8}}
\put(0.2,110){\line(0,1){23}}
\put(0.2,65){\line(0,1){15}}
\put(22.7,125){\oval(15,15)[b]}
\put(22.7,65){\oval(15,15)[t]}
\qbezier(0.2,110)\qbezier(0.2,102.5)\qbezier(7.7,102.5)
\qbezier(15.2,95)\qbezier(15.2,102.5)\qbezier(7.7,102.5)
\qbezier(0.2,80)\qbezier(0.2,87.5)\qbezier(7.7,87.5)
\qbezier(15.2,95)\qbezier(15.2,87.5)\qbezier(7.7,87.5)


\put(-2.5,45.4){$\up$}
\put(42.5,50.1){$\down$}
\put(12.5,30.4){$\up$}
\put(42.5,35.1){$\down$}
\put(-2.5,15.4){$\up$}
\put(42.5,20.1){$\down$}

\put(12.5,47.2){$\circ$}
\put(27,48.2){$\cross$}
\put(-2.5,32.2){$\circ$}
\put(27,33.2){$\cross$}
\put(12.5,17.2){$\circ$}
\put(27,18.2){$\cross$}
\put(45.2,65){\line(0,-1){68}}
\put(15.2,5){\line(0,-1){8}}
\put(30.2,5){\line(0,-1){8}}
\put(0.2,20){\line(0,-1){23}}
\put(0.2,65){\line(0,-1){15}}
\put(22.7,5){\oval(15,15)[t]}
\put(22.7,65){\oval(15,15)[b]}
\qbezier(0.2,20)\qbezier(0.2,27.5)\qbezier(7.7,27.5)
\qbezier(15.2,35)\qbezier(15.2,27.5)\qbezier(7.7,27.5)
\qbezier(0.2,50)\qbezier(0.2,42.5)\qbezier(7.7,42.5)
\qbezier(15.2,35)\qbezier(15.2,42.5)\qbezier(7.7,42.5)
\end{picture}\:\:\:
&
\begin{picture}(45,130)

\put(0,65){\line(1,0){45}}
\put(-2.5,120.4){$\up$}
\put(12.5,120.4){$\up$}
\put(27.5,125.1){$\down$}
\put(42.5,125.1){$\down$}
\put(-2.5,0.4){$\up$}
\put(12.5,0.4){$\up$}
\put(27.5,5.1){$\down$}
\put(42.5,5.1){$\down$}


\put(-2.5,75.4){$\up$}
\put(12.5,80.1){$\down$}
\put(-2.5,90.4){$\up$}
\put(27.5,95.1){$\down$}
\put(-2.5,105.4){$\up$}
\put(42.5,110.1){$\down$}

\put(-2.5,60.4){$\up$}
\put(12.5,65.1){$\down$}
\put(27.5,65.1){$\down$}
\put(42.5,60.4){$\up$}
\put(27.5,77.2){$\circ$}
\put(42,78.2){$\cross$}
\put(12.5,92.2){$\circ$}
\put(42,93.2){$\cross$}
\put(12.5,107.2){$\circ$}
\put(27,108.2){$\cross$}
\put(0.2,65){\line(0,1){68}}
\put(15.2,125){\line(0,1){8}}
\put(30.2,125){\line(0,1){8}}
\put(45.2,110){\line(0,1){23}}
\put(15.2,65){\line(0,1){15}}
\put(22.7,125){\oval(15,15)[b]}
\put(37.7,65){\oval(15,15)[t]}
\qbezier(45.2,110)\qbezier(45.2,102.5)\qbezier(37.7,102.5)
\qbezier(30.2,95)\qbezier(30.2,102.5)\qbezier(37.7,102.5)
\qbezier(15.2,80)\qbezier(15.2,87.5)\qbezier(22.7,87.5)
\qbezier(30.2,95)\qbezier(30.2,87.5)\qbezier(22.7,87.5)


\put(-2.5,45.4){$\up$}
\put(12.5,50.1){$\down$}
\put(-2.5,30.4){$\up$}
\put(27.5,35.1){$\down$}
\put(-2.5,15.4){$\up$}
\put(42.5,20.1){$\down$}

\put(27.5,47.2){$\circ$}
\put(42,48.2){$\cross$}
\put(12.5,32.2){$\circ$}
\put(42,33.2){$\cross$}
\put(12.5,17.2){$\circ$}
\put(27,18.2){$\cross$}
\put(0.2,65){\line(0,-1){68}}
\put(15.2,5){\line(0,-1){8}}
\put(30.2,5){\line(0,-1){8}}
\put(45.2,20){\line(0,-1){23}}
\put(15.2,65){\line(0,-1){15}}
\put(22.7,5){\oval(15,15)[t]}
\put(37.7,65){\oval(15,15)[b]}
\qbezier(45.2,20)\qbezier(45.2,27.5)\qbezier(37.7,27.5)
\qbezier(30.2,35)\qbezier(30.2,27.5)\qbezier(37.7,27.5)
\qbezier(15.2,50)\qbezier(15.2,42.5)\qbezier(22.7,42.5)
\qbezier(30.2,35)\qbezier(30.2,42.5)\qbezier(22.7,42.5)
\end{picture}\:\:\:
&
\begin{picture}(45,130)
\put(0,65){\line(1,0){45}}
\put(-2.5,120.4){$\up$}
\put(12.5,120.4){$\up$}
\put(27.5,125.1){$\down$}
\put(42.5,125.1){$\down$}
\put(-2.5,0.4){$\up$}
\put(12.5,0.4){$\up$}
\put(27.5,5.1){$\down$}
\put(42.5,5.1){$\down$}

\put(-2.5,75.4){$\up$}
\put(12.5,80.1){$\down$}
\put(-2.5,90.4){$\up$}
\put(27.5,95.1){$\down$}
\put(-2.5,105.4){$\up$}
\put(42.5,110.1){$\down$}

\put(-2.5,60.4){$\up$}
\put(12.5,65.1){$\down$}
\put(42.5,65.1){$\down$}
\put(27.5,60.4){$\up$}
\put(27.5,77.2){$\circ$}
\put(42,78.2){$\cross$}
\put(12.5,92.2){$\circ$}
\put(42,93.2){$\cross$}
\put(12.5,107.2){$\circ$}
\put(27,108.2){$\cross$}
\put(0.2,65){\line(0,1){68}}
\put(15.2,125){\line(0,1){8}}
\put(30.2,125){\line(0,1){8}}
\put(45.2,110){\line(0,1){23}}
\put(15.2,65){\line(0,1){15}}
\put(22.7,125){\oval(15,15)[b]}
\put(37.7,65){\oval(15,15)[t]}
\qbezier(45.2,110)\qbezier(45.2,102.5)\qbezier(37.7,102.5)
\qbezier(30.2,95)\qbezier(30.2,102.5)\qbezier(37.7,102.5)
\qbezier(15.2,80)\qbezier(15.2,87.5)\qbezier(22.7,87.5)
\qbezier(30.2,95)\qbezier(30.2,87.5)\qbezier(22.7,87.5)


\put(-2.5,45.4){$\up$}
\put(12.5,50.1){$\down$}
\put(-2.5,30.4){$\up$}
\put(27.5,35.1){$\down$}
\put(-2.5,15.4){$\up$}
\put(42.5,20.1){$\down$}

\put(27.5,47.2){$\circ$}
\put(42,48.2){$\cross$}
\put(12.5,32.2){$\circ$}
\put(42,33.2){$\cross$}
\put(12.5,17.2){$\circ$}
\put(27,18.2){$\cross$}
\put(0.2,65){\line(0,-1){68}}
\put(15.2,5){\line(0,-1){8}}
\put(30.2,5){\line(0,-1){8}}
\put(45.2,20){\line(0,-1){23}}
\put(15.2,65){\line(0,-1){15}}
\put(22.7,5){\oval(15,15)[t]}
\put(37.7,65){\oval(15,15)[b]}
\qbezier(45.2,20)\qbezier(45.2,27.5)\qbezier(37.7,27.5)
\qbezier(30.2,35)\qbezier(30.2,27.5)\qbezier(37.7,27.5)
\qbezier(15.2,50)\qbezier(15.2,42.5)\qbezier(22.7,42.5)
\qbezier(30.2,35)\qbezier(30.2,42.5)\qbezier(22.7,42.5)

\end{picture}
\\\\
\begin{picture}(45,130)
\put(0,65){\line(1,0){45}}
\put(-2.5,120.4){$\up$}
\put(12.5,120.4){$\up$}
\put(27.5,125.1){$\down$}
\put(42.5,125.1){$\down$}
\put(-2.5,0.4){$\up$}
\put(12.5,0.4){$\up$}
\put(27.5,5.1){$\down$}
\put(42.5,5.1){$\down$}


\put(27.5,75.4){$\up$}
\put(42.5,80.1){$\down$}
\put(12.5,90.4){$\up$}
\put(42.5,95.1){$\down$}
\put(-2.5,105.4){$\up$}
\put(42.5,110.1){$\down$}

\put(-2.5,65.1){$\down$}
\put(12.5,60.4){$\up$}
\put(27.5,60.4){$\up$}
\put(42.5,65.1){$\down$}

\put(-2.5,77.2){$\circ$}
\put(12,78.2){$\cross$}

\put(-2.5,92.2){$\circ$}
\put(27.5,93.2){$\cross$}

\put(12.5,107.2){$\circ$}
\put(27,108.2){$\cross$}
\put(45.2,65){\line(0,1){68}}
\put(30.2,65){\line(0,1){15}}
\put(15.2,125){\line(0,1){8}}
\put(30.2,125){\line(0,1){8}}
\put(0.2,110){\line(0,1){23}}
\put(22.7,125){\oval(15,15)[b]}
\put(7.7,65){\oval(15,15)[t]}
\qbezier(0.2,110)\qbezier(0.2,102.5)\qbezier(7.7,102.5)
\qbezier(15.2,95)\qbezier(15.2,102.5)\qbezier(7.7,102.5)
\qbezier(30.2,80)\qbezier(30.2,87.5)\qbezier(22.7,87.5)
\qbezier(15.2,95)\qbezier(15.2,87.5)\qbezier(22.7,87.5)


\put(27.5,45.4){$\up$}
\put(42.5,50.1){$\down$}
\put(12.5,30.4){$\up$}
\put(42.5,35.1){$\down$}
\put(-2.5,15.4){$\up$}
\put(42.5,20.1){$\down$}

\put(-2.5,47.2){$\circ$}
\put(12,48.2){$\cross$}

\put(-2.5,32.2){$\circ$}
\put(27.5,33.2){$\cross$}

\put(12.5,17.2){$\circ$}
\put(27,18.2){$\cross$}
\put(45.2,65){\line(0,-1){68}}
\put(30.2,65){\line(0,-1){15}}
\put(15.2,5){\line(0,-1){8}}
\put(30.2,5){\line(0,-1){8}}
\put(0.2,20){\line(0,-1){23}}
\put(22.7,5){\oval(15,15)[t]}
\put(7.7,65){\oval(15,15)[b]}
\qbezier(0.2,20)\qbezier(0.2,27.5)\qbezier(7.7,27.5)
\qbezier(15.2,35)\qbezier(15.2,27.5)\qbezier(7.7,27.5)
\qbezier(30.2,50)\qbezier(30.2,42.5)\qbezier(22.7,42.5)
\qbezier(15.2,35)\qbezier(15.2,42.5)\qbezier(22.7,42.5)

\end{picture}
&
\begin{picture}(45,130)
\put(0,65){\line(1,0){45}}
\put(-2.5,120.4){$\up$}
\put(12.5,120.4){$\up$}
\put(27.5,125.1){$\down$}
\put(42.5,125.1){$\down$}
\put(-2.5,0.4){$\up$}
\put(12.5,0.4){$\up$}
\put(27.5,5.1){$\down$}
\put(42.5,5.1){$\down$}


\put(27.5,75.4){$\up$}
\put(42.5,80.1){$\down$}
\put(12.5,90.4){$\up$}
\put(42.5,95.1){$\down$}
\put(-2.5,105.4){$\up$}
\put(42.5,110.1){$\down$}

\put(12.5,65.1){$\down$}
\put(-2.5,60.4){$\up$}
\put(27.5,60.4){$\up$}
\put(42.5,65.1){$\down$}

\put(-2.5,77.2){$\circ$}
\put(12,78.2){$\cross$}

\put(-2.5,92.2){$\circ$}
\put(27.5,93.2){$\cross$}

\put(12.5,107.2){$\circ$}
\put(27,108.2){$\cross$}
\put(45.2,65){\line(0,1){68}}
\put(30.2,65){\line(0,1){15}}
\put(15.2,125){\line(0,1){8}}
\put(30.2,125){\line(0,1){8}}
\put(0.2,110){\line(0,1){23}}
\put(22.7,125){\oval(15,15)[b]}
\put(7.7,65){\oval(15,15)[t]}
\qbezier(0.2,110)\qbezier(0.2,102.5)\qbezier(7.7,102.5)
\qbezier(15.2,95)\qbezier(15.2,102.5)\qbezier(7.7,102.5)
\qbezier(30.2,80)\qbezier(30.2,87.5)\qbezier(22.7,87.5)
\qbezier(15.2,95)\qbezier(15.2,87.5)\qbezier(22.7,87.5)


\put(27.5,45.4){$\up$}
\put(42.5,50.1){$\down$}
\put(12.5,30.4){$\up$}
\put(42.5,35.1){$\down$}
\put(-2.5,15.4){$\up$}
\put(42.5,20.1){$\down$}

\put(-2.5,47.2){$\circ$}
\put(12,48.2){$\cross$}

\put(-2.5,32.2){$\circ$}
\put(27.5,33.2){$\cross$}

\put(12.5,17.2){$\circ$}
\put(27,18.2){$\cross$}
\put(45.2,65){\line(0,-1){68}}
\put(30.2,65){\line(0,-1){15}}
\put(15.2,5){\line(0,-1){8}}
\put(30.2,5){\line(0,-1){8}}
\put(0.2,20){\line(0,-1){23}}
\put(22.7,5){\oval(15,15)[t]}
\put(7.7,65){\oval(15,15)[b]}
\qbezier(0.2,20)\qbezier(0.2,27.5)\qbezier(7.7,27.5)
\qbezier(15.2,35)\qbezier(15.2,27.5)\qbezier(7.7,27.5)
\qbezier(30.2,50)\qbezier(30.2,42.5)\qbezier(22.7,42.5)
\qbezier(15.2,35)\qbezier(15.2,42.5)\qbezier(22.7,42.5)
\end{picture}
&
\begin{picture}(45,130)

\put(0,65){\line(1,0){45}}
\put(-2.5,120.4){$\up$}
\put(12.5,120.4){$\up$}
\put(27.5,125.1){$\down$}
\put(42.5,125.1){$\down$}
\put(-2.5,0.4){$\up$}
\put(12.5,0.4){$\up$}
\put(27.5,5.1){$\down$}
\put(42.5,5.1){$\down$}


\put(-2.5,75.4){$\up$}
\put(12.5,80.1){$\down$}
\put(-2.5,90.4){$\up$}
\put(27.5,95.1){$\down$}
\put(-2.5,105.4){$\up$}
\put(42.5,110.1){$\down$}

\put(-3,63.2){$\cross$}
\put(12.5,62.2){$\circ$}
\put(27.5,62.2){$\circ$}
\put(42,63.2){$\cross$}

\put(27.5,77.2){$\circ$}
\put(42,78.2){$\cross$}
\put(12.5,92.2){$\circ$}
\put(42,93.2){$\cross$}
\put(12.5,107.2){$\circ$}
\put(27,108.2){$\cross$}
\put(0.2,80){\line(0,1){53}}
\put(15.2,125){\line(0,1){8}}
\put(30.2,125){\line(0,1){8}}
\put(45.2,110){\line(0,1){23}}
\put(22.7,125){\oval(15,15)[b]}
\put(7.7,80){\oval(15,15)[b]}
\qbezier(45.2,110)\qbezier(45.2,102.5)\qbezier(37.7,102.5)
\qbezier(30.2,95)\qbezier(30.2,102.5)\qbezier(37.7,102.5)
\qbezier(15.2,80)\qbezier(15.2,87.5)\qbezier(22.7,87.5)
\qbezier(30.2,95)\qbezier(30.2,87.5)\qbezier(22.7,87.5)


\put(-2.5,45.4){$\up$}
\put(12.5,50.1){$\down$}
\put(-2.5,30.4){$\up$}
\put(27.5,35.1){$\down$}
\put(-2.5,15.4){$\up$}
\put(42.5,20.1){$\down$}

\put(27.5,47.2){$\circ$}
\put(42,48.2){$\cross$}
\put(12.5,32.2){$\circ$}
\put(42,33.2){$\cross$}
\put(12.5,17.2){$\circ$}
\put(27,18.2){$\cross$}
\put(0.2,50){\line(0,-1){53}}
\put(15.2,5){\line(0,-1){8}}
\put(30.2,5){\line(0,-1){8}}
\put(45.2,20){\line(0,-1){23}}
\put(22.7,5){\oval(15,15)[t]}
\put(7.7,50){\oval(15,15)[t]}
\qbezier(45.2,20)\qbezier(45.2,27.5)\qbezier(37.7,27.5)
\qbezier(30.2,35)\qbezier(30.2,27.5)\qbezier(37.7,27.5)
\qbezier(15.2,50)\qbezier(15.2,42.5)\qbezier(22.7,42.5)
\qbezier(30.2,35)\qbezier(30.2,42.5)\qbezier(22.7,42.5)
\end{picture}
&
\begin{picture}(45,130)

\put(0,65){\line(1,0){45}}
\put(-2.5,120.4){$\up$}
\put(12.5,120.4){$\up$}
\put(27.5,125.1){$\down$}
\put(42.5,125.1){$\down$}
\put(-2.5,0.4){$\up$}
\put(12.5,0.4){$\up$}
\put(27.5,5.1){$\down$}
\put(42.5,5.1){$\down$}


\put(27.5,75.4){$\up$}
\put(42.5,80.1){$\down$}
\put(12.5,90.4){$\up$}
\put(42.5,95.1){$\down$}
\put(-2.5,105.4){$\up$}
\put(42.5,110.1){$\down$}

\put(-2.5,62.2){$\circ$}
\put(42.5,62.2){$\circ$}
\put(12,63.2){$\cross$}
\put(27,63.2){$\cross$}

\put(-2.5,77.2){$\circ$}
\put(12,78.2){$\cross$}

\put(-2.5,92.2){$\circ$}
\put(27.5,93.2){$\cross$}

\put(12.5,107.2){$\circ$}
\put(27,108.2){$\cross$}
\put(45.2,80){\line(0,1){53}}
\put(15.2,125){\line(0,1){8}}
\put(30.2,125){\line(0,1){8}}
\put(0.2,110){\line(0,1){23}}
\put(22.7,125){\oval(15,15)[b]}
\put(37.7,80){\oval(15,15)[b]}
\qbezier(0.2,110)\qbezier(0.2,102.5)\qbezier(7.7,102.5)
\qbezier(15.2,95)\qbezier(15.2,102.5)\qbezier(7.7,102.5)
\qbezier(30.2,80)\qbezier(30.2,87.5)\qbezier(22.7,87.5)
\qbezier(15.2,95)\qbezier(15.2,87.5)\qbezier(22.7,87.5)


\put(27.5,45.4){$\up$}
\put(42.5,50.1){$\down$}
\put(12.5,30.4){$\up$}
\put(42.5,35.1){$\down$}
\put(-2.5,15.4){$\up$}
\put(42.5,20.1){$\down$}

\put(-2.5,47.2){$\circ$}
\put(12,48.2){$\cross$}

\put(-2.5,32.2){$\circ$}
\put(27.5,33.2){$\cross$}

\put(12.5,17.2){$\circ$}
\put(27,18.2){$\cross$}
\put(45.2,50){\line(0,-1){53}}
\put(15.2,5){\line(0,-1){8}}
\put(30.2,5){\line(0,-1){8}}
\put(0.2,20){\line(0,-1){23}}
\put(22.7,5){\oval(15,15)[t]}
\put(37.7,50){\oval(15,15)[t]}
\qbezier(0.2,20)\qbezier(0.2,27.5)\qbezier(7.7,27.5)
\qbezier(15.2,35)\qbezier(15.2,27.5)\qbezier(7.7,27.5)
\qbezier(30.2,50)\qbezier(30.2,42.5)\qbezier(22.7,42.5)
\qbezier(15.2,35)\qbezier(15.2,42.5)\qbezier(22.7,42.5)

\end{picture}
&
\begin{picture}(45,130)
\put(0,65){\line(1,0){45}}
\put(-2.5,120.4){$\up$}
\put(12.5,120.4){$\up$}
\put(27.5,125.1){$\down$}
\put(42.5,125.1){$\down$}
\put(-2.5,0.4){$\up$}
\put(12.5,0.4){$\up$}
\put(27.5,5.1){$\down$}
\put(42.5,5.1){$\down$}


\put(27.5,75.4){$\up$}
\put(42.5,80.1){$\down$}
\put(12.5,90.4){$\up$}
\put(42.5,95.1){$\down$}
\put(-2.5,105.4){$\up$}
\put(42.5,110.1){$\down$}

\put(12.5,65.1){$\down$}
\put(-2.5,60.4){$\up$}
\put(27.5,60.4){$\up$}
\put(42.5,65.1){$\down$}

\put(-2.5,77.2){$\circ$}
\put(12,78.2){$\cross$}

\put(-2.5,92.2){$\circ$}
\put(27.5,93.2){$\cross$}

\put(12.5,107.2){$\circ$}
\put(27,108.2){$\cross$}
\put(45.2,65){\line(0,1){68}}
\put(30.2,65){\line(0,1){15}}
\put(15.2,125){\line(0,1){8}}
\put(30.2,125){\line(0,1){8}}
\put(0.2,110){\line(0,1){23}}
\put(22.7,125){\oval(15,15)[b]}
\put(7.7,65){\oval(15,15)[t]}
\qbezier(0.2,110)\qbezier(0.2,102.5)\qbezier(7.7,102.5)
\qbezier(15.2,95)\qbezier(15.2,102.5)\qbezier(7.7,102.5)
\qbezier(30.2,80)\qbezier(30.2,87.5)\qbezier(22.7,87.5)
\qbezier(15.2,95)\qbezier(15.2,87.5)\qbezier(22.7,87.5)


\put(-2.5,45.4){$\up$}
\put(12.5,50.1){$\down$}
\put(-2.5,30.4){$\up$}
\put(27.5,35.1){$\down$}
\put(-2.5,15.4){$\up$}
\put(42.5,20.1){$\down$}

\put(27.5,47.2){$\circ$}
\put(42,48.2){$\cross$}
\put(12.5,32.2){$\circ$}
\put(42,33.2){$\cross$}
\put(12.5,17.2){$\circ$}
\put(27,18.2){$\cross$}
\put(0.2,65){\line(0,-1){68}}
\put(15.2,5){\line(0,-1){8}}
\put(30.2,5){\line(0,-1){8}}
\put(45.2,20){\line(0,-1){23}}
\put(15.2,65){\line(0,-1){15}}
\put(22.7,5){\oval(15,15)[t]}
\put(37.7,65){\oval(15,15)[b]}
\qbezier(45.2,20)\qbezier(45.2,27.5)\qbezier(37.7,27.5)
\qbezier(30.2,35)\qbezier(30.2,27.5)\qbezier(37.7,27.5)
\qbezier(15.2,50)\qbezier(15.2,42.5)\qbezier(22.7,42.5)
\qbezier(30.2,35)\qbezier(30.2,42.5)\qbezier(22.7,42.5)
\end{picture}
&
\begin{picture}(45,130)
\put(0,65){\line(1,0){45}}
\put(-2.5,120.4){$\up$}
\put(12.5,120.4){$\up$}
\put(27.5,125.1){$\down$}
\put(42.5,125.1){$\down$}
\put(-2.5,0.4){$\up$}
\put(12.5,0.4){$\up$}
\put(27.5,5.1){$\down$}
\put(42.5,5.1){$\down$}

\put(-2.5,75.4){$\up$}
\put(12.5,80.1){$\down$}
\put(-2.5,90.4){$\up$}
\put(27.5,95.1){$\down$}
\put(-2.5,105.4){$\up$}
\put(42.5,110.1){$\down$}

\put(-2.5,60.4){$\up$}
\put(12.5,65.1){$\down$}
\put(42.5,65.1){$\down$}
\put(27.5,60.4){$\up$}
\put(27.5,77.2){$\circ$}
\put(42,78.2){$\cross$}
\put(12.5,92.2){$\circ$}
\put(42,93.2){$\cross$}
\put(12.5,107.2){$\circ$}
\put(27,108.2){$\cross$}
\put(0.2,65){\line(0,1){68}}
\put(15.2,125){\line(0,1){8}}
\put(30.2,125){\line(0,1){8}}
\put(45.2,110){\line(0,1){23}}
\put(15.2,65){\line(0,1){15}}
\put(22.7,125){\oval(15,15)[b]}
\put(37.7,65){\oval(15,15)[t]}
\qbezier(45.2,110)\qbezier(45.2,102.5)\qbezier(37.7,102.5)
\qbezier(30.2,95)\qbezier(30.2,102.5)\qbezier(37.7,102.5)
\qbezier(15.2,80)\qbezier(15.2,87.5)\qbezier(22.7,87.5)
\qbezier(30.2,95)\qbezier(30.2,87.5)\qbezier(22.7,87.5)


\put(27.5,45.4){$\up$}
\put(42.5,50.1){$\down$}
\put(12.5,30.4){$\up$}
\put(42.5,35.1){$\down$}
\put(-2.5,15.4){$\up$}
\put(42.5,20.1){$\down$}

\put(-2.5,47.2){$\circ$}
\put(12,48.2){$\cross$}

\put(-2.5,32.2){$\circ$}
\put(27.5,33.2){$\cross$}

\put(12.5,17.2){$\circ$}
\put(27,18.2){$\cross$}
\put(45.2,65){\line(0,-1){68}}
\put(30.2,65){\line(0,-1){15}}
\put(15.2,5){\line(0,-1){8}}
\put(30.2,5){\line(0,-1){8}}
\put(0.2,20){\line(0,-1){23}}
\put(22.7,5){\oval(15,15)[t]}
\put(7.7,65){\oval(15,15)[b]}
\qbezier(0.2,20)\qbezier(0.2,27.5)\qbezier(7.7,27.5)
\qbezier(15.2,35)\qbezier(15.2,27.5)\qbezier(7.7,27.5)
\qbezier(30.2,50)\qbezier(30.2,42.5)\qbezier(22.7,42.5)
\qbezier(15.2,35)\qbezier(15.2,42.5)\qbezier(22.7,42.5)
\end{picture}\\\\
\begin{picture}(45,130)
\put(0,65){\line(1,0){45}}
\put(-2.5,120.4){$\up$}
\put(12.5,120.4){$\up$}
\put(27.5,125.1){$\down$}
\put(42.5,125.1){$\down$}
\put(-2.5,0.4){$\up$}
\put(12.5,0.4){$\up$}
\put(27.5,5.1){$\down$}
\put(42.5,5.1){$\down$}

\put(-2.5,75.4){$\up$}
\put(42.5,80.1){$\down$}
\put(-2.5,90.4){$\up$}
\put(27.5,95.1){$\down$}
\put(-2.5,105.4){$\up$}
\put(42.5,110.1){$\down$}

\put(-2.5,60.4){$\up$}
\put(27.5,60.4){$\up$}
\put(12.5,65.1){$\down$}
\put(42.5,65.1){$\down$}
\put(12.5,77.2){$\circ$}
\put(27,78.2){$\cross$}
\put(12.5,92.2){$\circ$}
\put(42,93.2){$\cross$}
\put(12.5,107.2){$\circ$}
\put(27,108.2){$\cross$}
\put(0.2,65){\line(0,1){68}}
\put(15.2,125){\line(0,1){8}}
\put(30.2,125){\line(0,1){8}}
\put(45.2,110){\line(0,1){23}}
\put(45.2,65){\line(0,1){15}}
\put(22.7,125){\oval(15,15)[b]}
\put(22.7,65){\oval(15,15)[t]}
\qbezier(45.2,110)\qbezier(45.2,102.5)\qbezier(37.7,102.5)
\qbezier(30.2,95)\qbezier(30.2,102.5)\qbezier(37.7,102.5)
\qbezier(45.2,80)\qbezier(45.2,87.5)\qbezier(37.7,87.5)
\qbezier(30.2,95)\qbezier(30.2,87.5)\qbezier(37.7,87.5)


\put(-2.5,45.4){$\up$}
\put(42.5,50.1){$\down$}
\put(12.5,30.4){$\up$}
\put(42.5,35.1){$\down$}
\put(-2.5,15.4){$\up$}
\put(42.5,20.1){$\down$}

\put(12.5,47.2){$\circ$}
\put(27,48.2){$\cross$}
\put(-2.5,32.2){$\circ$}
\put(27,33.2){$\cross$}
\put(12.5,17.2){$\circ$}
\put(27,18.2){$\cross$}
\put(45.2,65){\line(0,-1){68}}
\put(15.2,5){\line(0,-1){8}}
\put(30.2,5){\line(0,-1){8}}
\put(0.2,20){\line(0,-1){23}}
\put(0.2,65){\line(0,-1){15}}
\put(22.7,5){\oval(15,15)[t]}
\put(22.7,65){\oval(15,15)[b]}
\qbezier(0.2,20)\qbezier(0.2,27.5)\qbezier(7.7,27.5)
\qbezier(15.2,35)\qbezier(15.2,27.5)\qbezier(7.7,27.5)
\qbezier(0.2,50)\qbezier(0.2,42.5)\qbezier(7.7,42.5)
\qbezier(15.2,35)\qbezier(15.2,42.5)\qbezier(7.7,42.5)
\end{picture}
&
\begin{picture}(45,130)
\put(0,65){\line(1,0){45}}
\put(-2.5,120.4){$\up$}
\put(12.5,120.4){$\up$}
\put(27.5,125.1){$\down$}
\put(42.5,125.1){$\down$}
\put(-2.5,0.4){$\up$}
\put(12.5,0.4){$\up$}
\put(27.5,5.1){$\down$}
\put(42.5,5.1){$\down$}

\put(-2.5,75.4){$\up$}
\put(42.5,80.1){$\down$}
\put(-2.5,90.4){$\up$}
\put(27.5,95.1){$\down$}
\put(-2.5,105.4){$\up$}
\put(42.5,110.1){$\down$}

\put(-2.5,60.4){$\up$}
\put(12.5,60.4){$\up$}
\put(27.5,65.1){$\down$}
\put(42.5,65.1){$\down$}
\put(12.5,77.2){$\circ$}
\put(27,78.2){$\cross$}
\put(12.5,92.2){$\circ$}
\put(42,93.2){$\cross$}
\put(12.5,107.2){$\circ$}
\put(27,108.2){$\cross$}
\put(0.2,65){\line(0,1){68}}
\put(15.2,125){\line(0,1){8}}
\put(30.2,125){\line(0,1){8}}
\put(45.2,110){\line(0,1){23}}
\put(45.2,65){\line(0,1){15}}
\put(22.7,125){\oval(15,15)[b]}
\put(22.7,65){\oval(15,15)[t]}
\qbezier(45.2,110)\qbezier(45.2,102.5)\qbezier(37.7,102.5)
\qbezier(30.2,95)\qbezier(30.2,102.5)\qbezier(37.7,102.5)
\qbezier(45.2,80)\qbezier(45.2,87.5)\qbezier(37.7,87.5)
\qbezier(30.2,95)\qbezier(30.2,87.5)\qbezier(37.7,87.5)

\put(-2.5,45.4){$\up$}
\put(42.5,50.1){$\down$}
\put(12.5,30.4){$\up$}
\put(42.5,35.1){$\down$}
\put(-2.5,15.4){$\up$}
\put(42.5,20.1){$\down$}

\put(12.5,47.2){$\circ$}
\put(27,48.2){$\cross$}
\put(-2.5,32.2){$\circ$}
\put(27,33.2){$\cross$}
\put(12.5,17.2){$\circ$}
\put(27,18.2){$\cross$}
\put(45.2,65){\line(0,-1){68}}
\put(15.2,5){\line(0,-1){8}}
\put(30.2,5){\line(0,-1){8}}
\put(0.2,20){\line(0,-1){23}}
\put(0.2,65){\line(0,-1){15}}
\put(22.7,5){\oval(15,15)[t]}
\put(22.7,65){\oval(15,15)[b]}
\qbezier(0.2,20)\qbezier(0.2,27.5)\qbezier(7.7,27.5)
\qbezier(15.2,35)\qbezier(15.2,27.5)\qbezier(7.7,27.5)
\qbezier(0.2,50)\qbezier(0.2,42.5)\qbezier(7.7,42.5)
\qbezier(15.2,35)\qbezier(15.2,42.5)\qbezier(7.7,42.5)
\end{picture}
&
\begin{picture}(45,130)
\put(0,65){\line(1,0){45}}
\put(-2.5,120.4){$\up$}
\put(12.5,120.4){$\up$}
\put(27.5,125.1){$\down$}
\put(42.5,125.1){$\down$}
\put(-2.5,0.4){$\up$}
\put(12.5,0.4){$\up$}
\put(27.5,5.1){$\down$}
\put(42.5,5.1){$\down$}

\put(-2.5,75.4){$\up$}
\put(42.5,80.1){$\down$}
\put(12.5,90.4){$\up$}
\put(42.5,95.1){$\down$}
\put(-2.5,105.4){$\up$}
\put(42.5,110.1){$\down$}

\put(-2.5,60.4){$\up$}
\put(27.5,60.4){$\up$}
\put(12.5,65.1){$\down$}
\put(42.5,65.1){$\down$}
\put(12.5,77.2){$\circ$}
\put(27,78.2){$\cross$}
\put(-2.5,92.2){$\circ$}
\put(27,93.2){$\cross$}
\put(12.5,107.2){$\circ$}
\put(27,108.2){$\cross$}
\put(45.2,65){\line(0,1){68}}
\put(15.2,125){\line(0,1){8}}
\put(30.2,125){\line(0,1){8}}
\put(0.2,110){\line(0,1){23}}
\put(0.2,65){\line(0,1){15}}
\put(22.7,125){\oval(15,15)[b]}
\put(22.7,65){\oval(15,15)[t]}
\qbezier(0.2,110)\qbezier(0.2,102.5)\qbezier(7.7,102.5)
\qbezier(15.2,95)\qbezier(15.2,102.5)\qbezier(7.7,102.5)
\qbezier(0.2,80)\qbezier(0.2,87.5)\qbezier(7.7,87.5)
\qbezier(15.2,95)\qbezier(15.2,87.5)\qbezier(7.7,87.5)


\put(-2.5,45.4){$\up$}
\put(12.5,50.1){$\down$}
\put(-2.5,30.4){$\up$}
\put(27.5,35.1){$\down$}
\put(-2.5,15.4){$\up$}
\put(42.5,20.1){$\down$}

\put(27.5,47.2){$\circ$}
\put(42,48.2){$\cross$}
\put(12.5,32.2){$\circ$}
\put(42,33.2){$\cross$}
\put(12.5,17.2){$\circ$}
\put(27,18.2){$\cross$}
\put(0.2,65){\line(0,-1){68}}
\put(15.2,5){\line(0,-1){8}}
\put(30.2,5){\line(0,-1){8}}
\put(45.2,20){\line(0,-1){23}}
\put(15.2,65){\line(0,-1){15}}
\put(22.7,5){\oval(15,15)[t]}
\put(37.7,65){\oval(15,15)[b]}
\qbezier(45.2,20)\qbezier(45.2,27.5)\qbezier(37.7,27.5)
\qbezier(30.2,35)\qbezier(30.2,27.5)\qbezier(37.7,27.5)
\qbezier(15.2,50)\qbezier(15.2,42.5)\qbezier(22.7,42.5)
\qbezier(30.2,35)\qbezier(30.2,42.5)\qbezier(22.7,42.5)
\end{picture}
&
\begin{picture}(45,130)
\put(0,65){\line(1,0){45}}
\put(-2.5,120.4){$\up$}
\put(12.5,120.4){$\up$}
\put(27.5,125.1){$\down$}
\put(42.5,125.1){$\down$}
\put(-2.5,0.4){$\up$}
\put(12.5,0.4){$\up$}
\put(27.5,5.1){$\down$}
\put(42.5,5.1){$\down$}


\put(27.5,45.4){$\up$}
\put(42.5,50.1){$\down$}
\put(12.5,30.4){$\up$}
\put(42.5,35.1){$\down$}
\put(-2.5,15.4){$\up$}
\put(42.5,20.1){$\down$}

\put(-2.5,47.2){$\circ$}
\put(12,48.2){$\cross$}

\put(-2.5,32.2){$\circ$}
\put(27.5,33.2){$\cross$}

\put(12.5,17.2){$\circ$}
\put(27,18.2){$\cross$}
\put(45.2,65){\line(0,-1){68}}
\put(30.2,65){\line(0,-1){15}}
\put(15.2,5){\line(0,-1){8}}
\put(30.2,5){\line(0,-1){8}}
\put(0.2,20){\line(0,-1){23}}
\put(22.7,5){\oval(15,15)[t]}
\put(7.7,65){\oval(15,15)[b]}
\qbezier(0.2,20)\qbezier(0.2,27.5)\qbezier(7.7,27.5)
\qbezier(15.2,35)\qbezier(15.2,27.5)\qbezier(7.7,27.5)
\qbezier(30.2,50)\qbezier(30.2,42.5)\qbezier(22.7,42.5)
\qbezier(15.2,35)\qbezier(15.2,42.5)\qbezier(22.7,42.5)

\put(-2.5,75.4){$\up$}
\put(42.5,80.1){$\down$}
\put(12.5,90.4){$\up$}
\put(42.5,95.1){$\down$}
\put(-2.5,105.4){$\up$}
\put(42.5,110.1){$\down$}

\put(-2.5,60.4){$\up$}
\put(27.5,60.4){$\up$}
\put(12.5,65.1){$\down$}
\put(42.5,65.1){$\down$}
\put(12.5,77.2){$\circ$}
\put(27,78.2){$\cross$}
\put(-2.5,92.2){$\circ$}
\put(27,93.2){$\cross$}
\put(12.5,107.2){$\circ$}
\put(27,108.2){$\cross$}
\put(45.2,65){\line(0,1){68}}
\put(15.2,125){\line(0,1){8}}
\put(30.2,125){\line(0,1){8}}
\put(0.2,110){\line(0,1){23}}
\put(0.2,65){\line(0,1){15}}
\put(22.7,125){\oval(15,15)[b]}
\put(22.7,65){\oval(15,15)[t]}
\qbezier(0.2,110)\qbezier(0.2,102.5)\qbezier(7.7,102.5)
\qbezier(15.2,95)\qbezier(15.2,102.5)\qbezier(7.7,102.5)
\qbezier(0.2,80)\qbezier(0.2,87.5)\qbezier(7.7,87.5)
\qbezier(15.2,95)\qbezier(15.2,87.5)\qbezier(7.7,87.5)

\end{picture}
&
\begin{picture}(45,130)
\put(0,65){\line(1,0){45}}
\put(-2.5,120.4){$\up$}
\put(12.5,120.4){$\up$}
\put(27.5,125.1){$\down$}
\put(42.5,125.1){$\down$}
\put(-2.5,0.4){$\up$}
\put(12.5,0.4){$\up$}
\put(27.5,5.1){$\down$}
\put(42.5,5.1){$\down$}

\put(-2.5,75.4){$\up$}
\put(42.5,80.1){$\down$}
\put(-2.5,90.4){$\up$}
\put(27.5,95.1){$\down$}
\put(-2.5,105.4){$\up$}
\put(42.5,110.1){$\down$}

\put(-2.5,60.4){$\up$}
\put(27.5,60.4){$\up$}
\put(12.5,65.1){$\down$}
\put(42.5,65.1){$\down$}
\put(12.5,77.2){$\circ$}
\put(27,78.2){$\cross$}
\put(12.5,92.2){$\circ$}
\put(42,93.2){$\cross$}
\put(12.5,107.2){$\circ$}
\put(27,108.2){$\cross$}
\put(0.2,65){\line(0,1){68}}
\put(15.2,125){\line(0,1){8}}
\put(30.2,125){\line(0,1){8}}
\put(45.2,110){\line(0,1){23}}
\put(45.2,65){\line(0,1){15}}
\put(22.7,125){\oval(15,15)[b]}
\put(22.7,65){\oval(15,15)[t]}
\qbezier(45.2,110)\qbezier(45.2,102.5)\qbezier(37.7,102.5)
\qbezier(30.2,95)\qbezier(30.2,102.5)\qbezier(37.7,102.5)
\qbezier(45.2,80)\qbezier(45.2,87.5)\qbezier(37.7,87.5)
\qbezier(30.2,95)\qbezier(30.2,87.5)\qbezier(37.7,87.5)


\put(-2.5,45.4){$\up$}
\put(12.5,50.1){$\down$}
\put(-2.5,30.4){$\up$}
\put(27.5,35.1){$\down$}
\put(-2.5,15.4){$\up$}
\put(42.5,20.1){$\down$}

\put(27.5,47.2){$\circ$}
\put(42,48.2){$\cross$}
\put(12.5,32.2){$\circ$}
\put(42,33.2){$\cross$}
\put(12.5,17.2){$\circ$}
\put(27,18.2){$\cross$}
\put(0.2,65){\line(0,-1){68}}
\put(15.2,5){\line(0,-1){8}}
\put(30.2,5){\line(0,-1){8}}
\put(45.2,20){\line(0,-1){23}}
\put(15.2,65){\line(0,-1){15}}
\put(22.7,5){\oval(15,15)[t]}
\put(37.7,65){\oval(15,15)[b]}
\qbezier(45.2,20)\qbezier(45.2,27.5)\qbezier(37.7,27.5)
\qbezier(30.2,35)\qbezier(30.2,27.5)\qbezier(37.7,27.5)
\qbezier(15.2,50)\qbezier(15.2,42.5)\qbezier(22.7,42.5)
\qbezier(30.2,35)\qbezier(30.2,42.5)\qbezier(22.7,42.5)
\end{picture}
&
\begin{picture}(45,130)
\put(0,65){\line(1,0){45}}
\put(-2.5,120.4){$\up$}
\put(12.5,120.4){$\up$}
\put(27.5,125.1){$\down$}
\put(42.5,125.1){$\down$}
\put(-2.5,0.4){$\up$}
\put(12.5,0.4){$\up$}
\put(27.5,5.1){$\down$}
\put(42.5,5.1){$\down$}


\put(27.5,45.4){$\up$}
\put(42.5,50.1){$\down$}
\put(12.5,30.4){$\up$}
\put(42.5,35.1){$\down$}
\put(-2.5,15.4){$\up$}
\put(42.5,20.1){$\down$}

\put(-2.5,47.2){$\circ$}
\put(12,48.2){$\cross$}

\put(-2.5,32.2){$\circ$}
\put(27.5,33.2){$\cross$}

\put(12.5,17.2){$\circ$}
\put(27,18.2){$\cross$}
\put(45.2,65){\line(0,-1){68}}
\put(30.2,65){\line(0,-1){15}}
\put(15.2,5){\line(0,-1){8}}
\put(30.2,5){\line(0,-1){8}}
\put(0.2,20){\line(0,-1){23}}
\put(22.7,5){\oval(15,15)[t]}
\put(7.7,65){\oval(15,15)[b]}
\qbezier(0.2,20)\qbezier(0.2,27.5)\qbezier(7.7,27.5)
\qbezier(15.2,35)\qbezier(15.2,27.5)\qbezier(7.7,27.5)
\qbezier(30.2,50)\qbezier(30.2,42.5)\qbezier(22.7,42.5)
\qbezier(15.2,35)\qbezier(15.2,42.5)\qbezier(22.7,42.5)


\put(-2.5,75.4){$\up$}
\put(42.5,80.1){$\down$}
\put(-2.5,90.4){$\up$}
\put(27.5,95.1){$\down$}
\put(-2.5,105.4){$\up$}
\put(42.5,110.1){$\down$}

\put(-2.5,60.4){$\up$}
\put(27.5,60.4){$\up$}
\put(12.5,65.1){$\down$}
\put(42.5,65.1){$\down$}
\put(12.5,77.2){$\circ$}
\put(27,78.2){$\cross$}
\put(12.5,92.2){$\circ$}
\put(42,93.2){$\cross$}
\put(12.5,107.2){$\circ$}
\put(27,108.2){$\cross$}
\put(0.2,65){\line(0,1){68}}
\put(15.2,125){\line(0,1){8}}
\put(30.2,125){\line(0,1){8}}
\put(45.2,110){\line(0,1){23}}
\put(45.2,65){\line(0,1){15}}
\put(22.7,125){\oval(15,15)[b]}
\put(22.7,65){\oval(15,15)[t]}
\qbezier(45.2,110)\qbezier(45.2,102.5)\qbezier(37.7,102.5)
\qbezier(30.2,95)\qbezier(30.2,102.5)\qbezier(37.7,102.5)
\qbezier(45.2,80)\qbezier(45.2,87.5)\qbezier(37.7,87.5)
\qbezier(30.2,95)\qbezier(30.2,87.5)\qbezier(37.7,87.5)
\end{picture}\\\\
\begin{picture}(45,130)
\put(0,65){\line(1,0){45}}
\put(-2.5,120.4){$\up$}
\put(12.5,120.4){$\up$}
\put(27.5,125.1){$\down$}
\put(42.5,125.1){$\down$}
\put(-2.5,0.4){$\up$}
\put(12.5,0.4){$\up$}
\put(27.5,5.1){$\down$}
\put(42.5,5.1){$\down$}

\put(-2.5,75.4){$\up$}
\put(42.5,80.1){$\down$}
\put(12.5,90.4){$\up$}
\put(42.5,95.1){$\down$}
\put(-2.5,105.4){$\up$}
\put(42.5,110.1){$\down$}

\put(-2.5,60.4){$\up$}
\put(27.5,60.4){$\up$}
\put(12.5,65.1){$\down$}
\put(42.5,65.1){$\down$}
\put(12.5,77.2){$\circ$}
\put(27,78.2){$\cross$}
\put(-2.5,92.2){$\circ$}
\put(27,93.2){$\cross$}
\put(12.5,107.2){$\circ$}
\put(27,108.2){$\cross$}
\put(45.2,65){\line(0,1){68}}
\put(15.2,125){\line(0,1){8}}
\put(30.2,125){\line(0,1){8}}
\put(0.2,110){\line(0,1){23}}
\put(0.2,65){\line(0,1){15}}
\put(22.7,125){\oval(15,15)[b]}
\put(22.7,65){\oval(15,15)[t]}
\qbezier(0.2,110)\qbezier(0.2,102.5)\qbezier(7.7,102.5)
\qbezier(15.2,95)\qbezier(15.2,102.5)\qbezier(7.7,102.5)
\qbezier(0.2,80)\qbezier(0.2,87.5)\qbezier(7.7,87.5)
\qbezier(15.2,95)\qbezier(15.2,87.5)\qbezier(7.7,87.5)


\put(-2.5,45.4){$\up$}
\put(42.5,50.1){$\down$}
\put(-2.5,30.4){$\up$}
\put(27.5,35.1){$\down$}
\put(-2.5,15.4){$\up$}
\put(42.5,20.1){$\down$}

\put(12.5,47.2){$\circ$}
\put(27,48.2){$\cross$}
\put(12.5,32.2){$\circ$}
\put(42,33.2){$\cross$}
\put(12.5,17.2){$\circ$}
\put(27,18.2){$\cross$}
\put(0.2,65){\line(0,-1){68}}
\put(15.2,5){\line(0,-1){8}}
\put(30.2,5){\line(0,-1){8}}
\put(45.2,20){\line(0,-1){23}}
\put(45.2,65){\line(0,-1){15}}
\put(22.7,5){\oval(15,15)[t]}
\put(22.7,65){\oval(15,15)[b]}
\qbezier(45.2,20)\qbezier(45.2,27.5)\qbezier(37.7,27.5)
\qbezier(30.2,35)\qbezier(30.2,27.5)\qbezier(37.7,27.5)
\qbezier(45.2,50)\qbezier(45.2,42.5)\qbezier(37.7,42.5)
\qbezier(30.2,35)\qbezier(30.2,42.5)\qbezier(37.7,42.5)
\end{picture}
&
\begin{picture}(45,130)
\put(0,65){\line(1,0){45}}
\put(-2.5,120.4){$\up$}
\put(12.5,120.4){$\up$}
\put(27.5,125.1){$\down$}
\put(42.5,125.1){$\down$}
\put(-2.5,0.4){$\up$}
\put(12.5,0.4){$\up$}
\put(27.5,5.1){$\down$}
\put(42.5,5.1){$\down$}

\put(-2.5,75.4){$\up$}
\put(42.5,80.1){$\down$}
\put(12.5,90.4){$\up$}
\put(42.5,95.1){$\down$}
\put(-2.5,105.4){$\up$}
\put(42.5,110.1){$\down$}

\put(-2.5,60.4){$\up$}
\put(12.5,60.4){$\up$}
\put(27.5,65.1){$\down$}
\put(42.5,65.1){$\down$}
\put(12.5,77.2){$\circ$}
\put(27,78.2){$\cross$}
\put(-2.5,92.2){$\circ$}
\put(27,93.2){$\cross$}
\put(12.5,107.2){$\circ$}
\put(27,108.2){$\cross$}
\put(45.2,65){\line(0,1){68}}
\put(15.2,125){\line(0,1){8}}
\put(30.2,125){\line(0,1){8}}
\put(0.2,110){\line(0,1){23}}
\put(0.2,65){\line(0,1){15}}
\put(22.7,125){\oval(15,15)[b]}
\put(22.7,65){\oval(15,15)[t]}
\qbezier(0.2,110)\qbezier(0.2,102.5)\qbezier(7.7,102.5)
\qbezier(15.2,95)\qbezier(15.2,102.5)\qbezier(7.7,102.5)
\qbezier(0.2,80)\qbezier(0.2,87.5)\qbezier(7.7,87.5)
\qbezier(15.2,95)\qbezier(15.2,87.5)\qbezier(7.7,87.5)


\put(-2.5,45.4){$\up$}
\put(42.5,50.1){$\down$}
\put(-2.5,30.4){$\up$}
\put(27.5,35.1){$\down$}
\put(-2.5,15.4){$\up$}
\put(42.5,20.1){$\down$}

\put(12.5,47.2){$\circ$}
\put(27,48.2){$\cross$}
\put(12.5,32.2){$\circ$}
\put(42,33.2){$\cross$}
\put(12.5,17.2){$\circ$}
\put(27,18.2){$\cross$}
\put(0.2,65){\line(0,-1){68}}
\put(15.2,5){\line(0,-1){8}}
\put(30.2,5){\line(0,-1){8}}
\put(45.2,20){\line(0,-1){23}}
\put(45.2,65){\line(0,-1){15}}
\put(22.7,5){\oval(15,15)[t]}
\put(22.7,65){\oval(15,15)[b]}
\qbezier(45.2,20)\qbezier(45.2,27.5)\qbezier(37.7,27.5)
\qbezier(30.2,35)\qbezier(30.2,27.5)\qbezier(37.7,27.5)
\qbezier(45.2,50)\qbezier(45.2,42.5)\qbezier(37.7,42.5)
\qbezier(30.2,35)\qbezier(30.2,42.5)\qbezier(37.7,42.5)
\end{picture}
&
\begin{picture}(45,130)
\put(0,65){\line(1,0){45}}
\put(-2.5,120.4){$\up$}
\put(12.5,120.4){$\up$}
\put(27.5,125.1){$\down$}
\put(42.5,125.1){$\down$}
\put(-2.5,0.4){$\up$}
\put(12.5,0.4){$\up$}
\put(27.5,5.1){$\down$}
\put(42.5,5.1){$\down$}


\put(-2.5,75.4){$\up$}
\put(12.5,80.1){$\down$}
\put(-2.5,90.4){$\up$}
\put(27.5,95.1){$\down$}
\put(-2.5,105.4){$\up$}
\put(42.5,110.1){$\down$}

\put(-2.5,60.4){$\up$}
\put(12.5,65.1){$\down$}
\put(42.5,65.1){$\down$}
\put(27.5,60.4){$\up$}
\put(27.5,77.2){$\circ$}
\put(42,78.2){$\cross$}
\put(12.5,92.2){$\circ$}
\put(42,93.2){$\cross$}
\put(12.5,107.2){$\circ$}
\put(27,108.2){$\cross$}
\put(0.2,65){\line(0,1){68}}
\put(15.2,125){\line(0,1){8}}
\put(30.2,125){\line(0,1){8}}
\put(45.2,110){\line(0,1){23}}
\put(15.2,65){\line(0,1){15}}
\put(22.7,125){\oval(15,15)[b]}
\put(37.7,65){\oval(15,15)[t]}
\qbezier(45.2,110)\qbezier(45.2,102.5)\qbezier(37.7,102.5)
\qbezier(30.2,95)\qbezier(30.2,102.5)\qbezier(37.7,102.5)
\qbezier(15.2,80)\qbezier(15.2,87.5)\qbezier(22.7,87.5)
\qbezier(30.2,95)\qbezier(30.2,87.5)\qbezier(22.7,87.5)

\put(-2.5,45.4){$\up$}
\put(42.5,50.1){$\down$}
\put(12.5,30.4){$\up$}
\put(42.5,35.1){$\down$}
\put(-2.5,15.4){$\up$}
\put(42.5,20.1){$\down$}

\put(12.5,47.2){$\circ$}
\put(27,48.2){$\cross$}
\put(-2.5,32.2){$\circ$}
\put(27,33.2){$\cross$}
\put(12.5,17.2){$\circ$}
\put(27,18.2){$\cross$}
\put(45.2,65){\line(0,-1){68}}
\put(15.2,5){\line(0,-1){8}}
\put(30.2,5){\line(0,-1){8}}
\put(0.2,20){\line(0,-1){23}}
\put(0.2,65){\line(0,-1){15}}
\put(22.7,5){\oval(15,15)[t]}
\put(22.7,65){\oval(15,15)[b]}
\qbezier(0.2,20)\qbezier(0.2,27.5)\qbezier(7.7,27.5)
\qbezier(15.2,35)\qbezier(15.2,27.5)\qbezier(7.7,27.5)
\qbezier(0.2,50)\qbezier(0.2,42.5)\qbezier(7.7,42.5)
\qbezier(15.2,35)\qbezier(15.2,42.5)\qbezier(7.7,42.5)

\end{picture}
&
\begin{picture}(45,130)
\put(0,65){\line(1,0){45}}
\put(-2.5,120.4){$\up$}
\put(12.5,120.4){$\up$}
\put(27.5,125.1){$\down$}
\put(42.5,125.1){$\down$}
\put(-2.5,0.4){$\up$}
\put(12.5,0.4){$\up$}
\put(27.5,5.1){$\down$}
\put(42.5,5.1){$\down$}

\put(27.5,75.4){$\up$}
\put(42.5,80.1){$\down$}
\put(12.5,90.4){$\up$}
\put(42.5,95.1){$\down$}
\put(-2.5,105.4){$\up$}
\put(42.5,110.1){$\down$}

\put(12.5,65.1){$\down$}
\put(-2.5,60.4){$\up$}
\put(27.5,60.4){$\up$}
\put(42.5,65.1){$\down$}

\put(-2.5,77.2){$\circ$}
\put(12,78.2){$\cross$}

\put(-2.5,92.2){$\circ$}
\put(27.5,93.2){$\cross$}

\put(12.5,107.2){$\circ$}
\put(27,108.2){$\cross$}
\put(45.2,65){\line(0,1){68}}
\put(30.2,65){\line(0,1){15}}
\put(15.2,125){\line(0,1){8}}
\put(30.2,125){\line(0,1){8}}
\put(0.2,110){\line(0,1){23}}
\put(22.7,125){\oval(15,15)[b]}
\put(7.7,65){\oval(15,15)[t]}
\qbezier(0.2,110)\qbezier(0.2,102.5)\qbezier(7.7,102.5)
\qbezier(15.2,95)\qbezier(15.2,102.5)\qbezier(7.7,102.5)
\qbezier(30.2,80)\qbezier(30.2,87.5)\qbezier(22.7,87.5)
\qbezier(15.2,95)\qbezier(15.2,87.5)\qbezier(22.7,87.5)


\put(-2.5,45.4){$\up$}
\put(42.5,50.1){$\down$}
\put(12.5,30.4){$\up$}
\put(42.5,35.1){$\down$}
\put(-2.5,15.4){$\up$}
\put(42.5,20.1){$\down$}

\put(12.5,47.2){$\circ$}
\put(27,48.2){$\cross$}
\put(-2.5,32.2){$\circ$}
\put(27,33.2){$\cross$}
\put(12.5,17.2){$\circ$}
\put(27,18.2){$\cross$}
\put(45.2,65){\line(0,-1){68}}
\put(15.2,5){\line(0,-1){8}}
\put(30.2,5){\line(0,-1){8}}
\put(0.2,20){\line(0,-1){23}}
\put(0.2,65){\line(0,-1){15}}
\put(22.7,5){\oval(15,15)[t]}
\put(22.7,65){\oval(15,15)[b]}
\qbezier(0.2,20)\qbezier(0.2,27.5)\qbezier(7.7,27.5)
\qbezier(15.2,35)\qbezier(15.2,27.5)\qbezier(7.7,27.5)
\qbezier(0.2,50)\qbezier(0.2,42.5)\qbezier(7.7,42.5)
\qbezier(15.2,35)\qbezier(15.2,42.5)\qbezier(7.7,42.5)

\end{picture}
&
\begin{picture}(45,130)
\put(0,65){\line(1,0){45}}
\put(-2.5,120.4){$\up$}
\put(12.5,120.4){$\up$}
\put(27.5,125.1){$\down$}
\put(42.5,125.1){$\down$}
\put(-2.5,0.4){$\up$}
\put(12.5,0.4){$\up$}
\put(27.5,5.1){$\down$}
\put(42.5,5.1){$\down$}

\put(-2.5,75.4){$\up$}
\put(12.5,80.1){$\down$}
\put(-2.5,90.4){$\up$}
\put(27.5,95.1){$\down$}
\put(-2.5,105.4){$\up$}
\put(42.5,110.1){$\down$}

\put(-2.5,60.4){$\up$}
\put(12.5,65.1){$\down$}
\put(42.5,65.1){$\down$}
\put(27.5,60.4){$\up$}
\put(27.5,77.2){$\circ$}
\put(42,78.2){$\cross$}
\put(12.5,92.2){$\circ$}
\put(42,93.2){$\cross$}
\put(12.5,107.2){$\circ$}
\put(27,108.2){$\cross$}
\put(0.2,65){\line(0,1){68}}
\put(15.2,125){\line(0,1){8}}
\put(30.2,125){\line(0,1){8}}
\put(45.2,110){\line(0,1){23}}
\put(15.2,65){\line(0,1){15}}
\put(22.7,125){\oval(15,15)[b]}
\put(37.7,65){\oval(15,15)[t]}
\qbezier(45.2,110)\qbezier(45.2,102.5)\qbezier(37.7,102.5)
\qbezier(30.2,95)\qbezier(30.2,102.5)\qbezier(37.7,102.5)
\qbezier(15.2,80)\qbezier(15.2,87.5)\qbezier(22.7,87.5)
\qbezier(30.2,95)\qbezier(30.2,87.5)\qbezier(22.7,87.5)

\put(-2.5,45.4){$\up$}
\put(42.5,50.1){$\down$}
\put(-2.5,30.4){$\up$}
\put(27.5,35.1){$\down$}
\put(-2.5,15.4){$\up$}
\put(42.5,20.1){$\down$}

\put(12.5,47.2){$\circ$}
\put(27,48.2){$\cross$}
\put(12.5,32.2){$\circ$}
\put(42,33.2){$\cross$}
\put(12.5,17.2){$\circ$}
\put(27,18.2){$\cross$}
\put(0.2,65){\line(0,-1){68}}
\put(15.2,5){\line(0,-1){8}}
\put(30.2,5){\line(0,-1){8}}
\put(45.2,20){\line(0,-1){23}}
\put(45.2,65){\line(0,-1){15}}
\put(22.7,5){\oval(15,15)[t]}
\put(22.7,65){\oval(15,15)[b]}
\qbezier(45.2,20)\qbezier(45.2,27.5)\qbezier(37.7,27.5)
\qbezier(30.2,35)\qbezier(30.2,27.5)\qbezier(37.7,27.5)
\qbezier(45.2,50)\qbezier(45.2,42.5)\qbezier(37.7,42.5)
\qbezier(30.2,35)\qbezier(30.2,42.5)\qbezier(37.7,42.5)

\end{picture}
&
\begin{picture}(45,130)
\put(0,65){\line(1,0){45}}
\put(-2.5,120.4){$\up$}
\put(12.5,120.4){$\up$}
\put(27.5,125.1){$\down$}
\put(42.5,125.1){$\down$}
\put(-2.5,0.4){$\up$}
\put(12.5,0.4){$\up$}
\put(27.5,5.1){$\down$}
\put(42.5,5.1){$\down$}

\put(-2.5,45.4){$\up$}
\put(42.5,50.1){$\down$}
\put(-2.5,30.4){$\up$}
\put(27.5,35.1){$\down$}
\put(-2.5,15.4){$\up$}
\put(42.5,20.1){$\down$}

\put(12.5,47.2){$\circ$}
\put(27,48.2){$\cross$}
\put(12.5,32.2){$\circ$}
\put(42,33.2){$\cross$}
\put(12.5,17.2){$\circ$}
\put(27,18.2){$\cross$}
\put(0.2,65){\line(0,-1){68}}
\put(15.2,5){\line(0,-1){8}}
\put(30.2,5){\line(0,-1){8}}
\put(45.2,20){\line(0,-1){23}}
\put(45.2,65){\line(0,-1){15}}
\put(22.7,5){\oval(15,15)[t]}
\put(22.7,65){\oval(15,15)[b]}
\qbezier(45.2,20)\qbezier(45.2,27.5)\qbezier(37.7,27.5)
\qbezier(30.2,35)\qbezier(30.2,27.5)\qbezier(37.7,27.5)
\qbezier(45.2,50)\qbezier(45.2,42.5)\qbezier(37.7,42.5)
\qbezier(30.2,35)\qbezier(30.2,42.5)\qbezier(37.7,42.5)


\put(27.5,75.4){$\up$}
\put(42.5,80.1){$\down$}
\put(12.5,90.4){$\up$}
\put(42.5,95.1){$\down$}
\put(-2.5,105.4){$\up$}
\put(42.5,110.1){$\down$}

\put(12.5,65.1){$\down$}
\put(-2.5,60.4){$\up$}
\put(27.5,60.4){$\up$}
\put(42.5,65.1){$\down$}

\put(-2.5,77.2){$\circ$}
\put(12,78.2){$\cross$}

\put(-2.5,92.2){$\circ$}
\put(27.5,93.2){$\cross$}

\put(12.5,107.2){$\circ$}
\put(27,108.2){$\cross$}
\put(45.2,65){\line(0,1){68}}
\put(30.2,65){\line(0,1){15}}
\put(15.2,125){\line(0,1){8}}
\put(30.2,125){\line(0,1){8}}
\put(0.2,110){\line(0,1){23}}
\put(22.7,125){\oval(15,15)[b]}
\put(7.7,65){\oval(15,15)[t]}
\qbezier(0.2,110)\qbezier(0.2,102.5)\qbezier(7.7,102.5)
\qbezier(15.2,95)\qbezier(15.2,102.5)\qbezier(7.7,102.5)
\qbezier(30.2,80)\qbezier(30.2,87.5)\qbezier(22.7,87.5)
\qbezier(15.2,95)\qbezier(15.2,87.5)\qbezier(22.7,87.5)

\end{picture}\end{array}
$$
\caption{The diagram basis for $B_{E^2 F^2}(0)$}\label{fig1}
\end{figure}
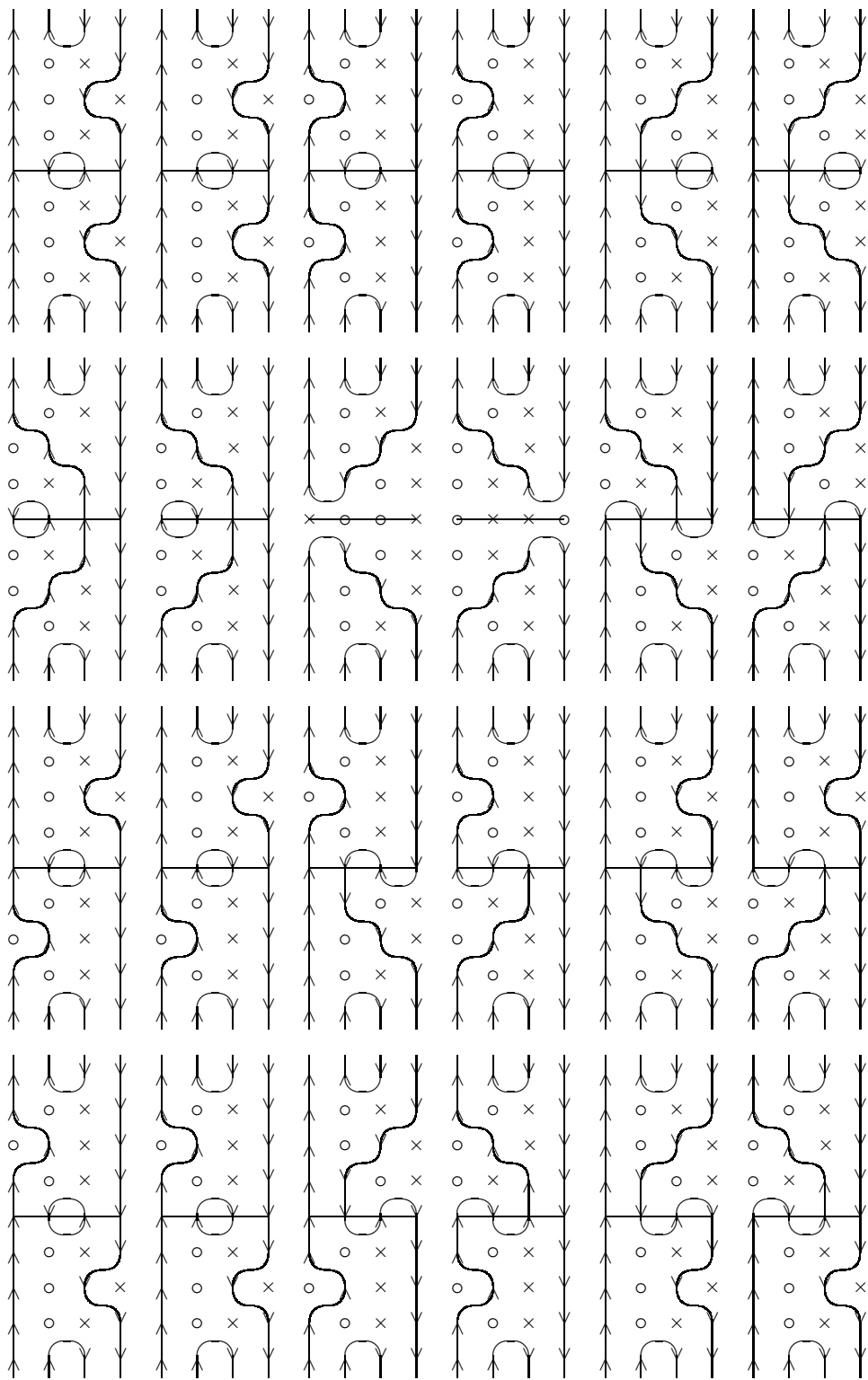

\phantomsubsection{Graded Morita equivalence}
In this subsection we prove that 
$B_R(\delta)$ is graded Morita equivalent to
$K_{r,s}(\delta)$.
Let $\eta$ be as in (\ref{othergroundstate})
and recall that $B_R(\delta)$ is the endomorphism algebra of the
$K(\delta)$-module $\lonestar R\,P(\eta)$,
while $K_{r,s}(\delta) = e_{r,s} K(\delta) e_{r,s}$
according to (\ref{krs}).

\begin{Lemma}\label{pims}
Every indecomposable summand of $\lonestar
R\,P(\eta)$ is isomorphic (up to grading shift) to one of 
the projective indecomposable modules
$\{P(\la)\:|\:\la \in \dot\La_{r,s}\}$.
Moreover every such $P(\la)$ appears
as a summand.
\end{Lemma}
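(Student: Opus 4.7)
The plan is to argue by induction on $r+s$. The base case $r+s=0$ is immediate: here $R$ is the empty sequence, so $\lonestar R\,P(\eta) = P(\eta) = P((\varnothing,\varnothing))$, and by (\ref{ladef}) and (\ref{hing}) we have $\dot\La_{0,0} = \La_{0,0} = \{(\varnothing,\varnothing)\}$, so the statement holds trivially.

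For the inductive step, write $R = R' X$ where $X \in \{E,F\}$ is the final letter of $R$, so that $R' \in \Seq_{r',s'}$ with $(r',s')$ equal to $(r-1,s)$ if $X=E$ and to $(r,s-1)$ if $X=F$. Since $\lonestar R = \lonestar X \circ \lonestar R'$ by the definition of $\lonestar R$, and since $\lonestar X = \bigoplus_{i\in\Z}\lonestar X_i$ by (\ref{head}), we have
$$
\lonestar R\,P(\eta) \;=\; \bigoplus_{i\in\Z} \lonestar X_i\bigl(\lonestar R'\,P(\eta)\bigr).
$$
The induction hypothesis says that $\lonestar R'\,P(\eta)$ is a finite direct sum of degree-shifted copies of modules $P(\la)$ with $\la\in\dot\La_{r',s'}$, and that every such $P(\la)$ actually occurs as a summand. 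Applying Lemma~\ref{genee} termwise to each summand and each $i$, the module $\lonestar X_i P(\la)$ is itself a finite direct sum of degree-shifted copies of $P(\mu)$'s with $\mu\in\dot\La_{r,s}$; hence so is $\lonestar R\,P(\eta)$, proving the first assertion.

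For the converse assertion, fix $\mu\in\dot\La_{r,s}$. The second half of Lemma~\ref{genee} provides $\la\in\dot\La_{r',s'}$ and $i\in\Z$ such that $P(\mu)$ occurs (up to grading shift) as a summand of $\lonestar X_i P(\la)$. By the induction hypothesis $P(\la)$ occurs as a summand of $\lonestar R'\,P(\eta)$, hence $\lonestar X_i P(\la)$ occurs as a summand of $\lonestar X_i \bigl(\lonestar R'\,P(\eta)\bigr) \subseteq \lonestar R\,P(\eta)$, and therefore so does $P(\mu)$. Because the main substantive input, namely the matching between $\dot\La_{r',s'}$ and $\dot\La_{r,s}$ under $\lonestar E_i$ and $\lonestar F_i$, is already packaged in Lemma~\ref{genee}, there is no real obstacle here; the only mild subtlety is keeping straight that $\lonestar R$ composes the atomic functors in the reverse order from $R$, so that stripping the final letter of $R$ corresponds to applying the outermost functor.
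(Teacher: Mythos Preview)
Your proof is correct and follows essentially the same approach as the paper: induction on $r+s$ with the inductive step reduced to Lemma~\ref{genee}. The only cosmetic difference is that you strip the last letter from $R$ whereas the paper appends a letter, but the logic is identical.
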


\begin{proof}
Proceed by induction on $r+s$, the case $r+s=0$ being trivial.
For the induction step, suppose we have shown already
that the indecomposable summands
of $\lonestar R\,P(\eta)$ 
are the modules
$\{P(\la)\:|\:\la\in\dot\La_{r,s}\}$. We need to show that the
indecomposable summands of
$\lonestar E \lonestar R\,P(\eta)$ 
and $\lonestar F \lonestar R\,P(\eta)$ are the modules
$\{P(\la)\:|\:\la \in \dot \La_{r+!,s}\}$
and
$\{P(\la)\:|\:\la \in \dot \La_{r,s+1}\}$, respectively.
This follows from Lemma~\ref{genee}.
\end{proof}

Now we can construct the Morita equivalence. Set
\begin{equation}\label{aragorn}
P_R(\delta) := e_{r,s} \lonestar R\,P(\eta),
\end{equation}
which is a graded $(K_{r,s}(\delta), B_R(\delta))$-bimodule 
spanned by the vectors
$(\underline{\nu} \overline{\tU})$
from (\ref{spb}) with $\tU \in \mathscr T_R(\delta)$ and
$\nu \in \dot\La_{r,s}$ such that
$\nu \subset \sh(\tU)$.
Recall also the functors (\ref{never1})--(\ref{never4})
and (\ref{bever1})--(\ref{bever4}).

\begin{Theorem}\label{morita}
The functor
\begin{equation}\label{life}
\F_R := \hom_{K_{r,s}(\delta)}(P_{R}(\delta), ?):
\Mod{K_{r,s}(\delta)} \rightarrow \Mod{B_R(\delta)}
\end{equation}
is an equivalence of categories commuting with $i$-restriction and
$i$-induction, i.e. there are isomorphisms of functors 
\begin{align*}
\F_{R} \circ \ires^{r+1,s}_{r,s} 
\cong \ires^{RE}_R \circ \F_{RE}&:\Mod{K_{r+1,s}(\delta)} \rightarrow \Mod{B_R(\delta)},\\
\F_{R} \circ \ires^{r,s+1}_{r,s} 
\cong \ires^{RF}_R \circ \F_{RF}&:\Mod{K_{r,s+1}(\delta)} \rightarrow \Mod{B_R(\delta)},\\
\F_{RE} \circ \iind^{r+1,s}_{r,s}
\cong \iind^{RE}_{R} \circ \F_R&:\Mod{K_{r,s}(\delta)} \rightarrow \Mod{B_{RE}(\delta)},\\
\F_{RF} \circ \iind^{r,s+1}_{r,s} 
\cong \iind^{RF}_{R} \circ \F_R&:\Mod{K_{r,s}(\delta)} \rightarrow \Mod{B_{RF}(\delta)}.
\end{align*}
\end{Theorem}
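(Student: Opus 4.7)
The plan is to first establish the Morita equivalence abstractly and then deduce each of the four compatibility isomorphisms from natural bimodule isomorphisms, with the induction statements following formally from the restriction statements by uniqueness of adjoints.

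First I would check that $P_R(\delta)$ is a progenerator for $\Mod{K_{r,s}(\delta)}$ whose opposite endomorphism ring is canonically identified with $B_R(\delta)$. By Lemma~\ref{pims}, $\lonestar R\,P(\eta)$ is a finite direct sum of grading-shifted copies of the projectives $\{P(\lambda)\:|\:\la\in\dot\La_{r,s}\}$, each appearing at least once. Applying $e_{r,s}$ gives that $P_R(\delta) = e_{r,s}\lonestar R\,P(\eta)$ is a finite direct sum of the $P_{r,s}(\la) = e_{r,s} P(\la)$ in which every $P_{r,s}(\la)$ is present, so it is a projective generator by Theorem~\ref{iscell}(3). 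Since $\lonestar R\,P(\eta)$ is a direct sum of summands of $K(\delta)e_{r,s}$, Lemma~\ref{stupidone} gives an algebra isomorphism $\End_{K(\delta)}(\lonestar R\,P(\eta)) \cong \End_{K_{r,s}(\delta)}(P_R(\delta))$ under which $B_R(\delta)$ is identified with $\End_{K_{r,s}(\delta)}(P_R(\delta))^{\op}$. Standard Morita theory then shows that $\F_R$ is an equivalence of graded categories.

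Next I would verify the compatibility $\F_R\circ\ires^{r+1,s}_{r,s} \cong \ires^{RE}_R\circ \F_{RE}$. By Lemma~\ref{adjun} the functor $\ires^{r+1,s}_{r,s}$ is represented by the bimodule $e_{r+1,s}\lonestar\widetilde{E}_i e_{r,s}$, so the hom--tensor adjunction gives
\[\F_R\circ\ires^{r+1,s}_{r,s}(M) \cong \hom_{K_{r+1,s}(\delta)}\!\big(e_{r+1,s}\lonestar\widetilde{E}_i e_{r,s}\otimes_{K_{r,s}(\delta)} P_R(\delta),\ M\big).\]
On the other side, the decomposition $P_{RE}(\delta) = \bigoplus_j e_{r+1,s}\lonestar\widetilde{E}_j\lonestar R\,P(\eta)$ shows that $1^{RE}_{R;i}$ acts as projection onto the $j=i$ summand, so
\[\ires^{RE}_R\circ\F_{RE}(M) = 1^{RE}_{R;i}\F_{RE}(M) \cong \hom_{K_{r+1,s}(\delta)}\!\big(e_{r+1,s}\lonestar\widetilde{E}_i\lonestar R\,P(\eta),\ M\big).\]
Matching the two sides thus reduces to showing that the natural multiplication map
\[e_{r+1,s}\lonestar\widetilde{E}_i e_{r,s}\otimes_{K_{r,s}(\delta)} e_{r,s}\lonestar R\,P(\eta) \longrightarrow e_{r+1,s}\lonestar\widetilde{E}_i\lonestar R\,P(\eta)\]
is a $(K_{r+1,s}(\delta),B_R(\delta))$-bimodule isomorphism, which is Lemma~\ref{stupidtwo} applied with $N=\lonestar R\,P(\eta)$ via Lemma~\ref{pims}. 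The corresponding $F$-restriction identity is proved by the identical argument with $\lonestar\widetilde{E}_i$ and $e_{r+1,s}$ replaced by $\lonestar\widetilde{F}_i$ and $e_{r,s+1}$.

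Finally, the two $i$-induction statements come for free once the $i$-restriction statements are in hand: $(\iind^{r+1,s}_{r,s},\ires^{r+1,s}_{r,s})$ and $(\iind^{RE}_R,\ires^{RE}_R)$ are both adjoint pairs, and conjugating the former pair by the equivalences $\F_R$ and $\F_{RE}$ produces a new adjoint pair whose right adjoint is isomorphic to $\ires^{RE}_R$ by what has already been shown; uniqueness of left adjoints then yields $\F_{RE}\circ\iind^{r+1,s}_{r,s} \cong \iind^{RE}_R\circ\F_R$, and likewise for $F$. The only step that requires honest work is the naturality of the hom--tensor identifications in the second paragraph, but this is immediate since every identification is assembled from canonical multiplication maps of bimodules; the main obstacle is really just keeping the bookkeeping of the various left and right bimodule structures straight.
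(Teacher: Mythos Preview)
Your argument is correct and follows essentially the same route as the paper: Lemma~\ref{pims} and Theorem~\ref{iscell}(3) for the progenerator, Lemma~\ref{stupidone} to identify $\End_{K_{r,s}(\delta)}(P_R(\delta))^{\op}$ with $B_R(\delta)$, and uniqueness of adjoints to reduce the induction isomorphisms to the restriction ones. The only cosmetic difference is in how the restriction compatibility is checked: the paper evaluates both functors on the bimodule $K_{r+1,s}(\delta)$ and passes through the adjunction $(\lonestar E_i,E_i)$ together with Lemma~\ref{stupidone}, whereas you argue functorially for general $M$ via Lemma~\ref{adjun}, the tensor--hom adjunction, and Lemma~\ref{stupidtwo}; these are equivalent packagings of the same canonical bimodule identifications.
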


\begin{proof}
By Lemma~\ref{pims} and Theorem~\ref{iscell}(3),
$P_R(\delta) = e_{r,s} \lonestar R\,P(\eta)$ is a graded projective generator
for the category $\Mod{K_{r,s}(\delta)}$.
Moreover the endomorphism algebra
$\End_{K_{r,s}(\delta)}(P_R(\delta))^{\op}$
is identified with $B_R(\delta) = \End_{K(\delta)}(\lonestar R\,P(\eta))^{\op}$ 
by Lemma~\ref{stupidone} (and
Lemma~\ref{pims} again).
Hence $\F_{R}$ is an equivalence of categories by general principles.

For the four isomorphisms of functors, it is enough to
prove the ones involving restriction, since the ones for induction
then follow by unicity of adjoints.
We just explain the argument for the first isomorphism of functors,
since the second is similar.
For this, we use the following chain
of graded $(B_R(\delta), K_{r+1,s}(\delta))$-bimodule isomorphisms.
\begin{align*}
\F_R (\ires^{r+1,s}_{r,s} K_{r+1,s}(\delta))
&\cong\hom_{K_{r,s}(\delta)}(e_{r,s} \lonestar R\,P(\eta),
e_{r,s} \widetilde{E}_i e_{r+1,s})\\
&\cong
\hom_{K(\delta)}(\lonestar R\,P(\eta), \widetilde{E}_i e_{r+1,s})\\
&\cong
\hom_{K(\delta)}(\lonestar R\,P(\eta), E_i K(\delta) e_{r+1,s})\\
&\cong
\hom_{K(\delta)}(\lonestar E_i\lonestar R\,P(\eta), K(\delta) e_{r+1,s})\\
&\cong
\ires^{RE}_R
(\hom_{K(\delta)}(\lonestar E\lonestar R\,P(\eta), K(\delta) e_{r+1,s}))\\
&\cong
\ires^{RE}_R
(\hom_{K_{r+1,s}(\delta)}(e_{r+1,s}\lonestar (RE)\,P(\eta),
K_{r+1,s}(\delta)))\\
&= \ires^{RE}_R (\F_{RE} K_{r+1,s}(\delta)).
\end{align*}
The second and the penultimate isomorphisms here follow from
Lemmas~\ref{stupidone}
and \ref{pims}.
\end{proof}

In the rest of the subsection, we give an
alternative 
description of the Morita equivalence $\F_{R}$
from Theorem~\ref{morita} which is sometimes easier for explicit
computations.
As well as the left $K(\delta)$-module $\lonestar{R}\,P(\eta) =
\lonestar{R}\, K(\delta) e_\eta$,
we can consider the right $K(\delta)$-module $e_\eta R\, K(\delta)$.
Just like in Lemma~\ref{spbas},
$e_\eta R\, K(\delta)$ has a distinguished basis
\begin{equation}\label{friend}
\{(\underline{\tU} \overline{\nu})\:|\:
\text{for all $\tU \in \mathscr T_R(\delta)$ and $\nu \subset \sh(\tU)$}\}.
\end{equation}
The right action of $K(\delta)$ on elements of this basis can be computed similarly to the
left action in Lemma~\ref{spbas}.
We make $e_\eta R\, K(\delta)$ into a left $B_R(\delta)$-module
by defining $f_{\tS\tT} (\underline{\tU} \overline{\nu})$
to be zero if $\bi^{\tT} \neq \bi^{\tU}$, while
if $\bi^{\tT} = \bi^{\tU}$
it is computed
by drawing $\underline{\tS} \overline{\tT}$ underneath
$\underline{\tU}\overline{\nu}$ and applying the extended surgery
procedure as usual.
Set
\begin{equation}
Q_{R}(\delta) := 
e_\eta R\, K(\delta) e_{r,s},
\end{equation}
which is a
graded $(B_R(\delta), K_{r,s}(\delta))$-bimodule
spanned by the vectors 
$(\underline{\tU} \overline{\nu})$
from (\ref{friend}) 
in which $\nu \in \dot\La_{r,s}$.
Recalling also the bimodule $P_R(\delta)$ from (\ref{aragorn}),
there are bimodule homomorphisms
\begin{equation}\label{bimaps}
P_{R}(\delta) \otimes_{B_R(\delta)} Q_{R}(\delta)
\rightarrow K_{r,s}(\delta),\quad
 Q_{R}(\delta) \otimes_{K_{r,s}(\delta)} P_{R}(\delta)
\rightarrow B_R(\delta),
\end{equation}
defined by the (by now) obvious multiplication maps; the first
involves the extended surgery procedure once again.

\begin{Lemma}\label{favbim}
There is a canonical isomorphism of
graded $(B_R(\delta), K_{r,s}(\delta))$-bimodules
$Q_R(\delta)
\cong
\hom_{K_{r,s}(\delta)}(P_R(\delta), K_{r,s}(\delta))$.
Hence the functor $\F_R$ from (\ref{life}) is isomorphic to the
functor $Q_{R}(\delta) \otimes_{K_{r,s}(\delta)} ?$.
Moreover the bimodule homomorphisms in (\ref{bimaps}) are both isomorphisms.
\end{Lemma}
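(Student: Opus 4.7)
The plan is to build the bimodule isomorphism $Q_R(\delta) \cong \hom_{K_{r,s}(\delta)}(P_R(\delta), K_{r,s}(\delta))$ from the canonical adjunction $(\lonestar R, R)$, and then to deduce the remaining statements using standard Morita theory together with the equivalence established in Theorem~\ref{morita}.

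First I would iterate the canonical adjunction arising from Lemma~\ref{adunctions}, applied along the decomposition of $\lonestar R$ into indecomposable projective functors (composed according to Lemma~\ref{composition}), with the grading shifts by $\caps(\cdot)$ and $\cups(\cdot)$ absorbed into the normalisation built into (\ref{proje})--(\ref{projf}). This produces a graded $(K(\delta), K(\delta))$-bimodule isomorphism
\[
e_\eta R\, K(\delta) \cong \hom_{K(\delta)}(\lonestar R\, K(\delta) e_\eta,\, K(\delta)).
\]
Right-multiplying by $e_{r,s}$ and invoking Lemma~\ref{stupidone} in combination with Lemma~\ref{pims} (the latter ensures that $\lonestar R\, P(\eta)$ is a finite direct sum of summands of $K(\delta)e_{r,s}$) would then yield the asserted isomorphism
\[
Q_R(\delta) \cong \hom_{K_{r,s}(\delta)}(P_R(\delta),\, K_{r,s}(\delta)).
\]

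For the second statement, since $P_R(\delta)$ is a finitely generated projective $K_{r,s}(\delta)$-module by Lemma~\ref{pims} and Theorem~\ref{iscell}(3), the standard tensor-hom formula gives a natural isomorphism $\F_R = \hom_{K_{r,s}(\delta)}(P_R(\delta), ?) \cong Q_R(\delta)\otimes_{K_{r,s}(\delta)} ?$ via the previous step.

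For the third statement, once $Q_R(\delta)$ has been identified with the dual of $P_R(\delta)$, the abstract evaluation and coevaluation maps between $P_R(\delta)$ and this dual are automatically isomorphisms by standard Morita theory, using that $P_R(\delta)$ is a projective generator of $\Mod{K_{r,s}(\delta)}$ (compare Theorem~\ref{morita}). The main obstacle is verifying that the diagrammatic multiplication maps in (\ref{bimaps}) — defined via the extended surgery procedure — coincide with these abstract evaluation maps under our identification. I expect this to reduce to a computation on basis vectors: one traces the explicit form of the adjunction isomorphism through Lemma~\ref{adunctions}, applied iteratively, and matches its output against the output of the extended surgery procedure of $\S\ref{sd}$. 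The compatibility is essentially forced because the surgery procedure respects the composition of geometric bimodules (Lemma~\ref{composition}); once the explicit form of the adjunction is written down, the verification is a routine but somewhat tedious diagrammatic check on basis elements of the form $(\underline{\tU}\overline{\nu})\otimes(\underline{\mu}\overline{\tV})$ and $(\underline{\nu}\overline{\tU})\otimes(\underline{\tV}\overline{\mu})$.
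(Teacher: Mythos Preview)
Your proposal is correct and follows essentially the same route as the paper: iterate Lemma~\ref{adunctions} via Lemma~\ref{composition} to get $e_\eta R\,K(\delta)\cong\hom_{K(\delta)}(\lonestar R\,P(\eta),K(\delta))$, truncate by $e_{r,s}$ using Lemmas~\ref{stupidone} and~\ref{pims}, then invoke standard Morita theory together with Theorem~\ref{morita}; both you and the paper flag that matching the explicit surgery-based maps in (\ref{bimaps}) with the abstract evaluation/coevaluation requires unwinding the explicit form of the adjunction in Lemma~\ref{adunctions}. One small slip: the initial isomorphism is a $(B_R(\delta),K(\delta))$-bimodule isomorphism (the left action on $e_\eta R\,K(\delta)$ is by $B_R(\delta)$ via the extended surgery procedure, not by $K(\delta)$), but this does not affect your argument.
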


\begin{proof}
Lemmas~\ref{composition} and
\ref{adunctions} give us a canonical
graded $(B_R(\delta), K(\delta))$-bimodule
isomorphism
$\hom_{K(\delta)}(\lonestar{R}\,P(\eta), K(\delta))
\cong
e_\eta R\, K(\delta)$.
It restricts to an isomorphism
of $(B_R(\delta), K_{r,s}(\delta))$-bimodules
$$
\hom_{K(\delta)}(\lonestar{R}\,P(\eta), K(\delta)e_{r,s})
\cong
e_\eta R\, K(\delta)e_{r,s} = Q_R(\delta).
$$
To establish the first statement, it remains to observe using Lemma~\ref{stupidone}
that
$$
\hom_{K(\delta)}(\lonestar{R}\,P(\eta), K(\delta)e_{r,s})
\cong \hom_{K_{r,s}(\delta)}(P_R(\delta), K_{r,s}(\delta)).
$$
The rest of the lemma now follows from Theorem~\ref{morita} and standard
Morita theory. One needs to use the explicit form of the isomorphism
in Lemma~\ref{adunctions} to check that the first map in (\ref{bimaps})
corresponds to the 
natural map $P_R(\delta) \otimes \hom_{K_{r,s}(\delta)}(P_R(\delta),
K_{r,s}(\delta))
\rightarrow K_{r,s}(\delta)$.
\end{proof}

\phantomsubsection{Graded cell modules}
Using Theorem~\ref{morita}, one can deduce a great deal of information about the
graded representation theory of the algebras $B_R(\delta)$
from Theorems~\ref{iscell} and \ref{finalbranch}.
Still it is desirable to give intrinsic descriptions of the $B_R(\delta)$-modules
$\mathbf{f}_R(V_{r,s}(\la))$
and
$\mathbf{f}_R(D_{r,s}(\la))$.
We do this in the next two subsections using the theory of graded
cellular algebras once again.

\begin{Theorem}\label{maincell}
The algebra $B_R(\delta)$ is a graded cellular algebra 
with cell datum $(X, I, C,\deg)$ defined by
\begin{align*}
X &:= \La_{r,s},
&I(\la) &:= \{\tT \in \mathscr T_R(\delta)\:|\:\sh(\tT) = \la\},\\
C^\la_{\tS, \tT} &:= f_{\tS\tT},&
\deg^\la_{\tT} &:= \deg(\tT)
\end{align*}
for $\la \in \La_{r,s}$ and $\tS, \tT \in I(\la)$.
\end{Theorem}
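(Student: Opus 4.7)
The plan is to verify the defining axioms of a graded cellular algebra directly. Three of them are essentially immediate from Theorem~\ref{myas}: $X = \La_{r,s}$ is finite under the Bruhat order, the index sets $I(\la) = \{\tT \in \mathscr T_R(\delta) : \sh(\tT) = \la\}$ are finite, and the map $(\tS,\tT) \mapsto f_{\tS\tT}$ is injective with image a homogeneous basis, satisfying $\deg(f_{\tS\tT}) = \deg(\tS)+\deg(\tT) = \deg^\la_\tS + \deg^\la_\tT$. The two substantive conditions to check are the existence of an anti-automorphism and the cellular multiplication relation.

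For the anti-automorphism I would define $\sigma: B_R(\delta) \to B_R(\delta)$ on the basis by $\sigma(f_{\tS\tT}) := f_{\tT\tS}$, which geometrically is horizontal reflection of the diagram $\underline{\tS}\overline{\tT}$. Degree preservation and involutivity are immediate. To verify $\sigma(xy) = \sigma(y)\sigma(x)$ on basis elements, I would compare the multiplication algorithms: the four-layer stacked diagram $\underline{\tU}\overline{\tV}|\underline{\tS}\overline{\tT}$ computing $f_{\tU\tV}\cdot f_{\tS\tT}$ reflects horizontally to the diagram $\underline{\tT}\overline{\tS}|\underline{\tV}\overline{\tU}$ computing $f_{\tT\tS}\cdot f_{\tV\tU}$, and the extended surgery procedure (orientation check on mirror-pair circles, reduction, iterated KS surgery on the reduced seam) is manifestly invariant under horizontal reflection, giving a term-by-term match under $\sigma$.

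For the cellular multiplication axiom, fix $\la \in \La_{r,s}$, $\tS,\tT \in I(\la)$, and, by linearity, take $x = f_{\tU\tV}$ with $\sh(\tU)=\sh(\tV)=\mu$. According to the algorithm at the end of Section~\ref{sd}, the product $f_{\tU\tV}\cdot f_{\tS\tT}$ is computed by performing extended surgery entirely within the seam $\overline{\tV}|\underline{\tS}$, never touching $\overline{\tT}$. Consequently, every coefficient appearing in the basis expansion is a function only of $\tU,\tV,\tS$ and the resulting pair $(\tX,\tY)$, independent of $\tT$. It then remains to show: any term $f_{\tX\tY}$ of the expansion with $\sh(\tX) = \sh(\tY) = \la$ necessarily has $\tY = \tT$, so contributes as $f_{\tS'\tT}$ with $\tS' := \tX$; writing $r(\tS',\tS)$ for the resulting coefficient gives the required congruence modulo $B_R(\delta)_{>\la}$.

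The principal obstacle is this last shape analysis. The strategy uses the standard dichotomy for KS surgery from \cite[\S6]{BS1}: each elementary surgery step either merges two circles of opposite orientation (preserving the shape read off at every level) or splits a single circle (strictly raising the shape at precisely the level crossed by the split). Since $\sh(\tX)=\sh(\tY)=\la$ corresponds to recovering shape $\la$ at the split level between the upper and lower halves of the resulting diagram, any occurrence of a splitting step at or above this level would push the output into $B_R(\delta)_{>\la}$. In the remaining minimal-shape case, all splits lie strictly below the seam, so the entire top portion $\overline{\tT}$ is preserved intact throughout the surgery, forcing $\tY = \tT$. Making this iterated-surgery bookkeeping precise—and checking that the coefficients $r(\tS',\tS)$ so obtained satisfy the required independence of $\tT$—is the technical heart of the verification.
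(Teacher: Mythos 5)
Your proposal is correct and follows the same route the paper intends: the paper's proof is a one-line appeal to the multiplication algorithm and the analogous argument in~\cite[Corollary~3.3]{BS1}, which is exactly what you are fleshing out (the easy axioms from Theorem~\ref{myas}, the anti-automorphism $f_{\tS\tT}\mapsto f_{\tT\tS}$ by horizontal reflection, and the cellular multiplication axiom via the shape filtration induced by the surgery procedure). One small caution: the phrase ``never touching $\overline{\tT}$'' is literally false in the sense that a surgery step on the seam can flip orientations of components that extend through the top boundary line into $\overline{\tT}$, so the intermediate claim that every coefficient is \emph{a priori} independent of $\tT$ does not follow from that alone; rather, the independence is a consequence of the very shape analysis you describe afterwards (when the middle weight stays $\la$, the boundary orientations at that level are unchanged, which forces both $\tY=\tT$ and the surgery coefficients to be determined by $\tU,\tV,\tS$).
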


\begin{proof}
This follows from the algorithm for multiplication
by arguments similar to the proof of \cite[Corollary 3.3]{BS1}.
\end{proof}

We denote the graded cell modules attached to this graded cellular
structure
by $\{C_R(\la)\:|\:\la \in \La_{r,s}\}$.
The general construction gives easily that
$C_R(\la)$ has a homogeneous basis
\begin{equation}\label{cbas}
\left\{v_\tT\:\big|\:
\text{for all $\tT \in 
\mathscr T_R(\delta)$ with $\sh(\tT) = \la$}\right\}
\end{equation}
with $v_{\tT}$ of degree $\deg(\tT)$.
We have that $f_{\tU\tV} v_{\tT} = 0$
if $\bi^\tV \neq \bi^\tT$.
Assuming $\bi^\tV = \bi^\tT$, 
$f_{\tU\tV} v_{\tT}$
is computed as follows.
Draw
$\underline{\tU} \overline{\tV}$ underneath $\underline{\tT}
\overline{\la}$.
Then use the extended surgery procedure to contract the
$\overline{\tV}|\underline{\tT}$-part of the diagram. 
This produces a (possibly zero) sum of 
diagrams $(\underline{\tS} \overline{\la})$
for various $\tS$. Then we discard all the
ones
in which $\tS$ is not of shape $\la$, 
and replace the remaining $(\underline{\tS} \overline{\la})$'s
with the corresponding vectors $v_{\tS} \in C_R(\la)$.

\begin{Lemma}\label{sun}
For any $\la \in \La_{r,s}$
we have that
$\F_R(V_{r,s}(\la)) \cong C_R(\la)$
as graded $B_R(\delta)$-modules.
\end{Lemma}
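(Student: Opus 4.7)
The plan is to apply Lemma~\ref{favbim} to rewrite $\F_R(V_{r,s}(\la))$ as $Q_R(\delta) \otimes_{K_{r,s}(\delta)} V_{r,s}(\la)$, and then to construct an explicit graded isomorphism from $C_R(\la)$ onto this tensor product. For $\la \in \dot\La_{r,s}$ I would define the map $\Phi\colon C_R(\la) \to Q_R(\delta) \otimes_{K_{r,s}(\delta)} V_{r,s}(\la)$ on the cellular basis (\ref{cbas}) by
$$
\Phi(v_\tT) := (\underline{\tT}\,\overline{\la}) \otimes \bar v_\la,
$$
where $\bar v_\la \in V_{r,s}(\la)$ is the canonical generator given by the image of $(\underline{\la}\la\overline{\la})$. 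The degrees agree on both sides, both being $\deg \tT$.

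The key verification is that $\Phi$ is a $B_R(\delta)$-module homomorphism. For this I would compare the two diagrammatic algorithms for the action of $f_{\tU\tV}$: on $C_R(\la)$ it is given by the extended surgery procedure followed by truncation to shape $\la$, whereas on the tensor product side it is given by the action of $B_R(\delta)$ on $Q_R(\delta)$ (also via extended surgery) followed by passage through the tensor product relations. These two procedures match once one establishes that $(\underline{\tS}\,\overline{\la}) \otimes \bar v_\la = 0$ in the tensor product whenever $\sh(\tS) > \la$: one rewrites $(\underline{\tS}\,\overline{\la})$, via the right $K_{r,s}(\delta)$-action on $Q_R(\delta)$, as a leading term plus lower-order terms (in the Bruhat order) coming from the product $(\underline{\tS}\,\overline{\sh \tS}) \cdot (\underline{\sh \tS}\, \sh\tS\, \overline{\la})$, and the second factor annihilates $\bar v_\la$ because it has middle weight $\sh \tS \neq \la$ and so lies in the defining submodule of the cellular quotient $V_{r,s}(\la) = P_{r,s}(\la) / N'$. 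Induction on the Bruhat order then handles the lower-order terms. Injectivity of $\Phi$ is immediate from the linear independence of the vectors $(\underline{\tT}\,\overline{\la})$ in $Q_R(\delta) e_\la$ for distinct $\tT$ of shape $\la$.

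For surjectivity I would match graded dimensions. Iterated application of Theorem~\ref{wasaremark} gives a filtration of $R\,V(\la)$ by grading-shifted cell modules $V(\mu^{(0)})$ indexed by reverse paths ending at $\la$. Using Lemma~\ref{stupidtwo} together with the mirror image of Lemma~\ref{pims} to identify $\F_R(V_{r,s}(\la))$ with $e_\eta R\,V(\la)$, and noting that $e_\eta V(\nu)$ equals $L(\eta) = \C$ concentrated in degree $0$ when $\nu = \eta$ and vanishes otherwise (since $\eta$ is Bruhat-maximal), the graded dimension of $\F_R(V_{r,s}(\la))$ equals $\sum_{\tT \in \mathscr T_R(\delta),\ \sh \tT = \la} q^{\deg \tT}$, which coincides with $\dim_q C_R(\la)$.

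The exceptional case $\la = (\varnothing,\varnothing)$ with $\delta = 0$ and $r = s > 0$ (where $\la \notin \dot\La_{r,s}$) requires a workaround since $\bar v_\la$ is unavailable; here I would use Lemma~\ref{globalisation}(2) to lift the identification to the $(r+1,s+1)$ setting, where $\la \in \dot\La_{r+1,s+1}$, apply the argument there, and descend via naturality of the globalisation functor. The hard part will be the vanishing $(\underline{\tS}\,\overline{\la}) \otimes \bar v_\la = 0$ for $\sh \tS > \la$, which requires careful bookkeeping of the extended surgery procedure in the right $K_{r,s}(\delta)$-action on $Q_R(\delta)$ and its interaction with the defining relations of the cell module $V_{r,s}(\la)$.
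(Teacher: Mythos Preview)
Your approach is correct in spirit and rests on the same two ingredients as the paper---the realisation of $\F_R$ as $Q_R(\delta)\otimes_{K_{r,s}(\delta)}?$ from Lemma~\ref{favbim}, and the fact that basis vectors $(\underline{\tS}\,\overline{\la})$ with $\sh(\tS)\neq\la$ die in the tensor product with $V_{r,s}(\la)$---but you execute it more laboriously. The paper avoids both your dimension count and your separate injectivity claim by first observing that the kernel $S_1$ of $K_{r,s}(\delta)e_\la\twoheadrightarrow V_{r,s}(\la)$ coincides with the submodule $S_2$ generated by right multiplication by the vectors $(\underline{\beta}\beta\overline{\la})$ for $\beta\supset\la$, $\beta\neq\la$; this is proved using \cite[Theorem 4.4]{BS1}. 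Once that is established, tensoring with $Q_R(\delta)$ gives directly $\F_R(V_{r,s}(\la))\cong Q_R(\delta)e_\la/S$ with $S=\sum_{\beta}Q_R(\delta)(\underline{\beta}\beta\overline{\la})$, and the same argument identifies $S$ with the span of the $(\underline{\tT}\,\overline{\la})$ having $\sh(\tT)\neq\la$. The map $(\underline{\tT}\,\overline{\la})+S\mapsto v_\tT$ is then visibly an isomorphism, with no filtration argument needed.

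Two minor remarks on your write-up. First, your injectivity claim (``immediate from linear independence of the $(\underline{\tT}\,\overline{\la})$'') is not quite immediate: linear independence in $Q_R(\delta)e_\la$ does not a priori survive in the tensor product quotient, so you really do need the dimension match (or the paper's explicit description of $S$) to close the argument. Second, for the exceptional case $\la=(\varnothing,\varnothing)$, $\delta=0$, $r=s>0$, your globalisation idea is reasonable, but making it precise requires relating $\F_R$ to an $\F_{R'}$ with $R'\in\Seq_{r+1,s+1}$ and checking compatibility with $G_{r,s}$; this is more bookkeeping than you indicate. The paper's proof, as written, also implicitly assumes $\la\in\dot\La_{r,s}$ in its first sentence (since $K_{r,s}(\delta)e_\la=0$ otherwise), so this exceptional case is a subtle point in both approaches.
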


\begin{proof}
By definition, $V_{r,s}(\la)$ is the quotient of
$K_{r,s}(\delta) e_\la$ by the submodule $S_1$ spanned by all vectors
$(\underline{\kappa} \alpha \overline{\la})$
for $\dot\La_{r,s} \ni \kappa \subset \alpha \supset \la$ with
$\alpha \neq \la$.
We claim that $S_1$ is equal to the submodule $S_2$ 
generated by the images of the
homomorphisms $K_{r,s}(\delta) \rightarrow K_{r,s}(\delta)e_\la$ 
defined by right multiplication by the
elements $\{(\underline \beta \beta \overline \la)\:|\:\beta \supset
\la, \beta \neq \la\}$. 
To see that $S_2 \subseteq S_1$, it suffices to observe that
$(\underline{\kappa} \alpha \overline{\beta})(\underline{\beta}
\beta \overline{\la})$ is a linear combination of vectors
$(\underline{\kappa} \gamma \overline{\la})$ for $\gamma \geq \beta$
by \cite[Theorem 4.4(i)]{BS1}.
Conversely to see that $S_1 \subseteq S_2$,
we take $(\underline{\kappa} \alpha \overline{\la}) \in S_1$
and compare with the product
$(\underline{\kappa} \alpha \overline{\alpha})(\underline{\alpha}
\alpha \overline{\la}) \in S_2$. 
By \cite[Theorem 4.4(iii)]{BS1}, this product is equal to
$(\underline{\kappa} \alpha \overline{\la})$ plus
higher terms of the form $(\underline{\kappa} \beta \overline{\la})$
for $\beta > \alpha$. We may assume that 
these higher terms belong to $S_2$ by induction.
Hence $(\underline{\kappa} \alpha \overline{\la}) \in S_2$, as required.

Using the description of the equivalence $\F_R$ as the functor $Q_R(\delta)
\otimes_{K_{r,s}(\delta)} ?$ given by Lemma~\ref{favbim},
we deduce that $\F_R(V_{r,s}(\la))$ is 
isomorphic to the quotient of $Q_R(\delta) \otimes_{K_{r,s}(\delta)}
 K_{r,s}(\delta) e_\la = Q_R(\delta) e_\la$
by the submodule $$
S := \sum_{\beta \supset \la, \beta \neq \la}
Q_R(\delta) (\underline{\beta} \beta \overline{\la}).
$$
In terms of the basis (\ref{friend}),
$Q_R(\delta) e_\la$ has basis
$$
\{(\underline{\tT} \overline{\la})\:|\:
\text{for all $\tT \in \mathscr T_R(\delta)$
with $\sh(\tT) \supset \la$}\}.
$$
Moreover by similar arguments to the previous paragraph,
the submodule $S$ is the span of all the vectors in this
basis in which $\sh(\tT) \neq \la$.
Now it is clear the map $Q_R(\delta) e_\la / S \rightarrow C_R(\la),
(\underline{\tT} \overline{\la}) + S \mapsto v_{\tT}$
is a graded $B_R(\delta)$-module isomorphism.
\end{proof}

\phantomsubsection{Characters of graded irreducible modules}
We say that $\tT \in \mathscr T_R(\delta)$ is {\em restricted}
if all the boundary cups in the diagram $\underline{\tT}$ are
anti-clockwise.
Recall by the general theory of graded cellular algebras that the graded cell
module
$C_R(\la)$ is equipped with a homogeneous symmetric bilinear form
$(.,.)$. 

\begin{Lemma}\label{easythm}
Let $\la \in \La_{r,s}$.
For $\tS, \tT \in \mathscr T_R(\delta)$
with $\sh(\tS)=\sh(\tT) = \la$,
the inner product
$(v_{\tS}, v_{\tT})$
is equal to $1$ if all of the following hold:
\begin{itemize}
\item[(1)]
$\tS$ and $\tT$ are both restricted;
\item[(2)]
$\bi^\tS = \bi^\tT$, i.e. the connected components in the diagrams
$\underline{\tS}$ and $\underline{\tT}$ are in exactly the same positions;
\item[(3)]
all matching pairs of circles in $\underline{\tS}$ and
$\underline{\tT}$ are oriented in opposite ways.
\end{itemize}
For all other $\tS$ and $\tT$ we have that $(v_{\tS}, v_{\tT}) = 0$.
\end{Lemma}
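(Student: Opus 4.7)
The plan is to exploit the cellular algebra axiom, which for $\tS,\tT \in \mathscr{T}_R(\delta)$ of shape $\la$ yields
$$f_{\tT\tS}\cdot f_{\tT\tS} \equiv (v_\tS, v_\tT)\,f_{\tT\tS} \pmod{B_R(\delta)_{>\la}},$$
identifying $(v_\tS,v_\tT)$ as the coefficient of $f_{\tT\tS}$ in the square $f_{\tT\tS}^2$. I would compute this square by the multiplication algorithm from the end of $\S$\ref{sd}: draw $\underline{\tT}\overline{\tS}$ underneath itself and run the extended surgery procedure on the $\overline{\tS}|\underline{\tT}$ middle strip.

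Conditions (2) and (3) emerge from the first stage of the surgery. If $\bi^\tS\ne\bi^\tT$, the connected components of $\overline{\tS}$ and $\underline{\tT}$ at the shared $\eta$-level do not match, so no valid contraction of the middle strip exists and the product vanishes; this forces condition (2). Granted condition (2), step (1) of the extended surgery requires each mirror-image pair of closed internal circles in $\overline{\tS}$ and $\underline{\tT}$ to carry opposite orientations. Since vertical reflection reverses anti-clockwise and clockwise, this is equivalent to condition (3) as phrased in terms of circles in $\underline{\tS}$ and $\underline{\tT}$.

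Assuming (2) and (3), step (2) of the surgery replaces $\overline{\tS}$ by $\upperred(\overline{\tS})$ and $\underline{\tT}$ by $\lowerred(\underline{\tT})$, then performs the usual surgery from \cite{BS1} on the middle. Because $\sh(\tS)=\sh(\tT)=\la$, both reductions are cap/cup diagrams attached to $\la$; by the uniqueness of the diagrams $\overline{\la},\underline{\la}$, one has $\upperred(\overline{\tS})=\overline{\la}$ and $\lowerred(\underline{\tT})=\underline{\la}$ precisely when $\tS$ and $\tT$ are both restricted (the reflection reversing the orientation convention at the boundary of $\overline{\tS}$). In that case the middle becomes the standard $\overline{\la}\,\underline{\la}$-surgery, which is the idempotent action $e_\la\cdot e_\la=e_\la$; the resulting diagram is precisely $\underline{\tT}\overline{\tS}=f_{\tT\tS}$ with coefficient $1$, yielding $(v_\tS,v_\tT)=1$.

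It remains to show that when condition (1) fails the coefficient of $f_{\tT\tS}$ vanishes modulo $B_R(\delta)_{>\la}$. The main technical step here is to verify that a clockwise boundary cup in $\underline{\tT}$ (or a clockwise boundary cap in $\overline{\tS}$) forces the surgery on $\upperred(\overline{\tS})|\lowerred(\underline{\tT})$, by \cite[Theorem 4.4]{BS1}, to yield only diagrams whose weight on the $\la$-line is strictly greater than $\la$ in the Bruhat order, hence contributing only to $B_R(\delta)_{>\la}$. This local orientation analysis, tracking how each non-standard boundary arc propagates a Bruhat increase through the usual surgery rules, will be the main obstacle.
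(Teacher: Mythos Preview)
Your approach---computing $f_{\tT\tS}^{\,2}$ via the extended surgery procedure and reading off the coefficient of $f_{\tT\tS}$ modulo $B_R(\delta)_{>\la}$---is exactly the one the paper intends; the proof there simply points to \cite[Lemma 9.7]{BS3} for the analogous calculation. Two small corrections are in order, however.

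First, vertical reflection \emph{preserves} (rather than reverses) the orientation of a closed circle: orientation is determined by the vertex labels alone (e.g.\ the rightmost vertex being $\up$ for anti-clockwise), and reflection in a horizontal axis changes neither labels nor left--right positions. Your conclusion that the surgery condition on mirror pairs in $\overline{\tS}$ and $\underline{\tT}$ is equivalent to condition~(3) is still correct, but for the reason that a circle in $\overline{\tS}$ has the \emph{same} orientation as the corresponding circle in $\underline{\tS}$, and the mirror-image correspondence with $\underline{\tT}$ lands precisely on the matching position in $\underline{\tT}$.

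Second, your ``main obstacle'' dissolves on closer inspection, and the mechanism is not a Bruhat increase but an outright vanishing. Under condition~(2) one has $\lowerred(\underline{\tT}) = \underline{\mu}$ and $\upperred(\overline{\tS}) = \overline{\mu}$ for the \emph{same} $\mu\subset\la$. Hence the boundary cup of the lower $\underline{\tT}$ at positions $(i,j)$ sits directly below the cap of $\overline{\mu}$ at $(i,j)$, so together they form a closed circle; likewise the cup of $\underline{\mu}$ at $(i,j)$ and the boundary cap of the upper $\overline{\tS}$ form a closed circle. Each surgery step on the cap-cup pair at $(i,j)$ is therefore a \emph{merge} of these two circles, and both have orientation determined by $(\la_i,\la_j)$---hence the same orientation. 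If condition~(1) fails there is a position with $(\la_i,\la_j)=(\up,\down)$, the merge is clockwise--clockwise, and the product is zero. No terms in $B_R(\delta)_{>\la}$ arise at all.
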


\begin{proof}
This follows from the explicit definition of the bilinear form on
$C_R(\la)$.
See \cite[Lemma 9.7]{BS3} where this is explained in an analogous situation.
\end{proof}

Let $D_R(\la)$ denote the quotient $C_R(\la) / \rad C_R(\la)$ 
by the radical of the bilinear form $(.,.)$.
The non-zero $D_R(\la)$'s give a complete set of pairwise
non-isomorphic irreducible $B_R(\delta)$-modules (up to degree shift).
The following theorem gives complete information about the
$D_R(\la)$'s
from a combinatorial viewpoint.

\begin{Theorem}\label{irreddimensions}
For $\la \in \La_{r,s}$,
let $\bar v_{\tT} \in D_R(\la)$ 
be the image of $v_{\tT} \in C_R(\la)$.
The vectors
$\left\{\bar v_{\tT}\:\big|\:
\text{for all restricted $\tT \in \mathscr T_R(\delta)$ with
$\sh(\tT) = \la$}
\right\}$
give a basis for $D_R(\la)$.
Moreover $D_R(\la)$ is non-zero if and only if $\la \in \dot
\La_{r,s}$,
in which case we have that
$D_R(\la) \cong \F_R(L_{r,s}(\la))$.
\end{Theorem}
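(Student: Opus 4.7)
The plan is to combine Lemma~\ref{easythm}, cellular algebra theory, and the graded Morita equivalence of Theorem~\ref{morita}. First I would establish the upper bound: Lemma~\ref{easythm} immediately implies that if $\tT$ is not restricted then $(v_\tS, v_\tT) = 0$ for every $\tS$, because condition~(1) of the lemma fails. Hence $v_\tT \in \rad C_R(\la)$ for every non-restricted $\tT$, so the images $\bar v_\tT$ for restricted $\tT$ of shape $\la$ already span $D_R(\la)$.

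Next I would identify $D_R(\la)$ via the Morita equivalence. By Lemma~\ref{sun} one has $C_R(\la) \cong \F_R(V_{r,s}(\la))$, and since $\F_R$ is an equivalence of categories it sends the unique maximal graded submodule of $V_{r,s}(\la)$ to the unique maximal graded submodule of $C_R(\la)$. In cellular algebra theory the quotient $D_R(\la) = C_R(\la)/\rad(\text{form})$ is either zero or the unique irreducible head of $C_R(\la)$ labelled by $\la$. Combined with Theorem~\ref{iscell}(2) this yields $D_R(\la) \cong \F_R(L_{r,s}(\la))$ when $\la \in \dot\La_{r,s}$; otherwise, in the exceptional case $\la = (\varnothing,\varnothing)$, $\delta = 0$, $r = s > 0$, Theorem~\ref{iscell}(4) forces $L_{r,s}(\la)$ not to be the head of any cell module and hence $D_R(\la) = 0$. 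This proves the second and third sentences.

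For linear independence in the case $\la \in \dot\La_{r,s}$, I would analyse the Gram matrix of the form on $\{v_\tT : \tT \text{ restricted of shape }\la\}$ using Lemma~\ref{easythm}. By conditions~(1) and~(2) this matrix is block diagonal, with blocks indexed by the shared component structure of the diagrams $\underline{\tT}$. Within each block, restricted tableaux differ only by orientations of the internal closed circles in $\underline{\tT}$, and by condition~(3) the entry is $1$ iff every matching pair of circles is oppositely oriented. Such a block therefore factorises as a Kronecker product of copies of $\bigl(\begin{smallmatrix}0 & 1\\ 1 & 0\end{smallmatrix}\bigr)$, one factor per closed circle, which is non-degenerate. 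Together with the first step this shows that $\{\bar v_\tT\}$ for restricted $\tT$ of shape $\la$ is a basis of $D_R(\la)$.

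The main obstacle is making the Gram-matrix argument consistent with the vanishing $D_R((\varnothing,\varnothing)) = 0$ in the exceptional case: the tensor-product factorisation predicts a non-zero quotient whenever a restricted tableau of shape $\la$ exists, so one must verify combinatorially that either no restricted tableaux of shape $(\varnothing,\varnothing)$ occur in this case, or that the constraints coming from the step-by-step choices of intermediate shapes $\la^{(a)}$ reduce the block to something genuinely smaller than $2^k$ and force it to vanish. Pinning down this combinatorial check via a direct analysis of the diagrams $\underline{\tT}$ is the delicate point.
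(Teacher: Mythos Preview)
Your approach coincides with the paper's: use Lemma~\ref{easythm} for the basis statement (your Kronecker-product description of the Gram matrix is exactly the point), and Lemma~\ref{sun} together with Theorem~\ref{iscell}(2) for the identification $D_R(\la)\cong\F_R(L_{r,s}(\la))$ when $\la\in\dot\La_{r,s}$.

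The obstacle you flag is not delicate. In the exceptional case there are simply \emph{no} restricted $R$-tableaux of shape $\eta=(\varnothing,\varnothing)$, so $D_R(\eta)=\{0\}$ follows at once from your spanning step. Two observations settle this. First, the weight diagram $\eta$ (for $\delta=0$) has every $\up$ to the left of every $\down$, so any boundary cup in $\underline{\tT}$ with $\sh(\tT)=\eta$ is automatically clockwise (an anti-clockwise cup needs $\down$ on its left endpoint and $\up$ on its right). Second, every such $\underline{\tT}$ contains at least one boundary cup: the first step of $\tT$ must change positions $0,1$ of $\la^{(0)}=\eta$ from $\up\down$ to $\circ\cross$ or $\cross\circ$, so the matching between levels $0$ and $1$ is an arc at level $0$ which, together with the two rays of $\underline{\eta}$, forms a boundary cap; then Lemma~\ref{daft}(1) with $\rk(\eta)=0$ forces a boundary cup as well. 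This is what the paper means by ``clear from this''.

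By contrast, your alternative route to $D_R(\eta)=0$ via Theorem~\ref{iscell}(4) does not work as written: Lemma~\ref{sun} transports cell modules across $\F_R$, but nothing in your argument transports the cellular \emph{forms}, so you cannot conclude that the radical of the form on $C_R(\eta)$ is all of $C_R(\eta)$ merely because this holds for $V_{r,s}(\eta)$. The combinatorial check above is the correct fix, and with it the rest of your argument is complete.
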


\begin{proof}
The opening statement is a consequence of Lemma~\ref{easythm}
and the definition of $D_R(\la)$.
In particular it is clear from this that $D_R(\la) = \{0\}$
if $\la \notin \dot\La_{r,s}$, i.e. when $r=s> 0$,
$\delta = 0$ and $\la = (\varnothing,\varnothing)$.
If $\la \in \dot\La_{r,s}$ then
$D_R(\la)\cong\F_R(L_{r,s}(\la))$ by Lemma~\ref{sun} and Theorem~\ref{iscell}(2),
in particular, $D_R(\la)$ is non-zero in view of Theorem~\ref{morita}.
\end{proof}

\begin{Corollary}\label{jcor}
The Jacobson radical of $B_R(\delta)$ is spanned by all the basis
vectors
$f_{\tS\tT}$ from (\ref{bbas})
such that either $\tS$ or $\tT$ is {\em not} restricted.
\end{Corollary}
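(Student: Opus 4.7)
The plan is to identify the span $J := \operatorname{span}\{f_{\tS\tT} : \tS \text{ or } \tT \text{ is not restricted}\}$ with $\rad B_R(\delta)$ by combining a dimension count with a direct verification that $J$ annihilates every simple module.

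First, I would establish $\dim J = \dim \rad B_R(\delta)$. By Theorem~\ref{irreddimensions}, $\{D_R(\la) : \la \in \dot\La_{r,s}\}$ is a complete set of pairwise non-isomorphic irreducibles with $\dim D_R(\la) = N_\la$, where $N_\la$ denotes the number of restricted $R$-tableaux of shape $\la$. For the exceptional $\la = (\varnothing,\varnothing) \in \La_{r,s} \setminus \dot\La_{r,s}$ occurring when $\delta = 0$, $r = s > 0$, Theorem~\ref{irreddimensions} forces $D_R(\la) = 0$, whence $N_\la = 0$, i.e. no tableau of that shape is restricted. Writing $M_\la$ for the total number of $R$-tableaux of shape $\la$, Theorem~\ref{maincell} yields $\dim B_R(\delta) = \sum_{\la \in \La_{r,s}} M_\la^2$, while the Wedderburn decomposition of the semisimple quotient gives $\dim(B_R(\delta)/\rad B_R(\delta)) = \sum_\la N_\la^2$. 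Hence $\dim \rad B_R(\delta) = \sum_\la (M_\la^2 - N_\la^2) = \dim J$.

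Second, I would show $J \subseteq \rad B_R(\delta)$ by verifying that every $f_{\tS\tT} \in J$ annihilates each $D_R(\mu)$. The cellular anti-involution $f_{\tS\tT} \mapsto f_{\tT\tS}$ preserves both $J$ and $\rad B_R(\delta)$, so it suffices to treat the case $\tT$ is not restricted. By Lemma~\ref{easythm} together with the dimension count following from Theorem~\ref{irreddimensions}, the radical of the form on the cell module is exactly $\rad C_R(\mu) = \operatorname{span}\{v_\tX : \tX \text{ not restricted, } \sh(\tX) = \mu\}$, so the task reduces to showing that, for every restricted $\tU$ of shape $\mu$, the expansion of $f_{\tS\tT} v_\tU$ in the basis (\ref{cbas}) involves only non-restricted $\tX$. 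Using the surgery algorithm from $\S$\ref{s5}, one draws $\underline{\tS}\overline{\tT}$ below $\underline{\tU}\overline{\mu}$ and contracts the $\overline{\tT}|\underline{\tU}$-part via extended surgery. A clockwise boundary cup of $\underline{\tT}$ translates into a clockwise boundary cap at the bottom of $\overline{\tT}$, which is pinned to the $\sh(\tT)$-line of $\underline{\tS}\overline{\tT}$, i.e., strictly below the $\overline{\tT}|\underline{\tU}$-region acted on by the surgery. Tracking this component through the reductions and the final usual surgery, it persists in every non-zero outcome $\underline{\tX}\overline{\mu}$ as a clockwise boundary cup of $\underline{\tX}$, forcing $\tX$ to be not restricted.

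The main obstacle is this geometric tracking: one must confirm that the clockwise orientation attached to the pinned boundary component of $\overline{\tT}$ is faithfully inherited by every surviving $\underline{\tX}$, and is not flipped or erased in the passage $\overline{\tT} \rightsquigarrow \upperred(\overline{\tT})$, $\underline{\tU} \rightsquigarrow \lowerred(\underline{\tU})$, or the subsequent arc-algebra surgery. A cleaner alternative that avoids this case analysis is to verify directly from the product algorithm that $J$ is a two-sided ideal---the clockwise boundary component sitting at the boundary of one factor survives (or annihilates the term) in every product, since the surgery can neither remove nor reorient a component pinned to an outermost number line---and then to extract nilpotency from a filtration of $J$ by the number of clockwise boundary cups and caps. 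Combined with the dimension equality from the first step, either route yields $J = \rad B_R(\delta)$.
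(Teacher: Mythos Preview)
Your approach is essentially the paper's: a dimension count via Theorem~\ref{irreddimensions}, plus the inclusion $J\subseteq\rad B_R(\delta)$ obtained by showing the relevant $f_{\tS\tT}$ annihilate all irreducibles, plus the cellular anti-involution for symmetry. The only substantive difference is which of the two cases you argue directly. The paper handles ``$\tS$ not restricted'': in the computation of $f_{\tS\tT}v_\tU$ the diagram $\underline{\tS}$ sits at the very bottom of the composite picture, so its clockwise boundary cup lies beneath the $\overline{\tT}|\underline{\tU}$ surgery and survives as a clockwise boundary cup of every non-zero $\underline{\tX}$ in the output; this is why the paper can dismiss the step as ``clear from Theorem~\ref{irreddimensions} and the multiplication rule.'' You instead chose ``$\tT$ not restricted,'' which places the offending component in $\overline{\tT}$---precisely the piece consumed by the extended surgery---making the tracking genuinely harder. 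Your description of this tracking contains a couple of slips (e.g.\ $\overline{\tT}$ lies \emph{inside} the $\overline{\tT}|\underline{\tU}$ surgery region, not strictly below it, and the orientation correspondence between boundary cups of $\underline{\tT}$ and boundary caps of $\overline{\tT}$ needs more care), but these are issues of execution rather than strategy; the argument goes through cleanly once one switches to the $\tS$ case as the paper does, and the dimension equality then closes it.
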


\begin{proof}
Let $I$ be the span of all the basis vectors $f_{\tS\tT}$
in which either $\tS$ or $\tT$ is not restricted.
If $\tS$ is not restricted, i.e. 
$\underline{\tS}$ has a clockwise boundary cup, it is clear from Theorem~\ref{irreddimensions}
and 
the multiplication rule that $f_{\tS\tT}$ annihilates all irreducible
left $B_R(\delta)$-modules, hence it lies in the Jacobson radical $J(B_R(\delta))$.
By symmetry this implies 
also that
$f_{\tS\tT} \in J(B_R(\delta))$ if $\tT$ is not restricted. Hence 
$I\subseteq J(B_R(\delta))$.
Moreover Theorem~\ref{irreddimensions} implies that 
$\dim B_R(\delta) / J(B_R(\delta))$ 
is equal to the number of
pairs $(\tS,\tT)$ of restricted $R$-tableaux of the same shape,
which is the same as $\dim B_R(\delta) / I$.
\end{proof}

\phantomsubsection{Primitive idempotents}
For a restricted $R$-tableau $\tT \in \mathscr T_R(\delta)$, we let
\begin{equation}\label{primid}
e_{\tT} := f_{\tT\tT^*} \in B_R(\delta)
\end{equation}
where $\tT^*$ is
 the unique $R$-tableau such that $\underline{\tT}^*$
is obtained from $\underline{\tT}$ by reversing the orientation of all
its circles.

\begin{Lemma}\label{primitivespre}
Suppose $\tT \in \mathscr T_R(\delta)$ is restricted and set
$\la := \sh(\tT)$.
Then $e_{\tT}$ is a homogeneous primitive idempotent.
Moreover the projective indecomposable module
$B_R(\delta) e_{\tT}$ 
has irreducible head isomorphic
to $D_R(\la) \langle -\deg(\tT)\rangle$.
\end{Lemma}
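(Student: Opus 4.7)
The plan is to establish the lemma in three stages: first homogeneity of $e_\tT$, then idempotency, and finally identification of the irreducible head of $B_R(\delta) e_\tT$.

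To begin, I would verify that $e_\tT$ is homogeneous of degree~$0$. By Theorem~\ref{myas}, $f_{\tT\tT^*}$ has degree $\deg(\tT) + \deg(\tT^*)$, so the task reduces to showing $\deg(\tT^*) = -\deg(\tT)$. Using the formula $\deg(\tT) = \deg(\underline{\tT}) - \caps(\underline{\tT})$ from (\ref{degrees}) and summing with the analogue for $\tT^*$, each closed circle in $\underline{\tT}$ contributes to $\deg(\underline{\tT}) + \deg(\underline{\tT^*})$ precisely the total number of cups and caps within it, since reversing orientation swaps clockwise with anti-clockwise. This amounts to twice the number of caps lying inside closed circles. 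Restrictedness of $\tT$ (all boundary cups anti-clockwise) is exactly what controls the contribution of open components so that the remaining terms also telescope to twice the number of caps outside circles, giving $\deg(\underline{\tT}) + \deg(\underline{\tT^*}) = 2\caps(\underline{\tT})$ and hence $\deg(\tT) + \deg(\tT^*) = 0$.

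Idempotency $e_\tT^2 = e_\tT$ would be verified by applying the multiplication algorithm from the end of Section~\ref{sd}: the product $f_{\tT\tT^*}^2$ is obtained by stacking $\underline{\tT}\overline{\tT^*}$ under itself and applying the extended surgery procedure to contract the middle $\overline{\tT^*}\underline{\tT}$-section. The key point is that by construction of $\tT^*$, every mirror image pair of closed circles in $\overline{\tT^*}$ and $\underline{\tT}$ is oppositely oriented, so step~(1) of the extended surgery does not annihilate. Passing to $\upperred(\overline{\tT^*})$ and $\lowerred(\underline{\tT})$ and carrying out the ordinary surgery, every cup of $\lowerred(\underline{\tT})$ is paired with its mirror cap in $\upperred(\overline{\tT^*})$, so the iterated surgery produces exactly one surviving term, namely $f_{\tT\tT^*}$ with coefficient~$1$. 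This is the main technical obstacle and should mirror the combinatorics underlying good pairs in Remark~\ref{wee} and Theorem~\ref{byas}.

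Finally, to identify the head of $B_R(\delta)e_\tT$, I would compute the action of $e_\tT$ on the explicit basis $\{\bar v_\tU\}$ of $D_R(\mu)$ from Theorem~\ref{irreddimensions}. Using the cell-module action rule described after (\ref{cbas}), $e_\tT \bar v_\tU$ vanishes unless $\bi^\tU = \bi^{\tT^*}$ and every mirror pair of circles in $\overline{\tT^*}$ and $\underline{\tU}$ is oppositely oriented; combined with Lemma~\ref{easythm} and the restrictedness of $\tU$, this forces $\mu = \la$ and $\tU = \tT$, and in that case a direct diagrammatic calculation yields $e_\tT \bar v_\tT = \bar v_\tT$. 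Therefore $e_\tT D_R(\mu) = 0$ for $\mu \neq \la$ while $e_\tT D_R(\la)$ is one-dimensional, spanned by $\bar v_\tT$. This shows that $B_R(\delta) e_\tT$ has simple head isomorphic to $D_R(\la)$; the degree shift must be $\langle -\deg(\tT)\rangle$ because $e_\tT$ is of degree $0$ while $\bar v_\tT$ sits in internal degree $\deg(\tT)$ inside $D_R(\la)$. Primitivity of $e_\tT$ is then immediate from the simplicity of the head.
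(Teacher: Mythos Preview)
Your proposal is correct and follows essentially the same approach as the paper: both arguments establish idempotency from the multiplication rule and then compute $e_{\tT} D_R(\mu)$ using the basis of Theorem~\ref{irreddimensions}, showing it is zero for $\mu\neq\la$ and one-dimensional spanned by $\bar v_{\tT}$ for $\mu=\la$. The paper is simply terser, declaring idempotency ``clear from the multiplication rule'' and omitting the intermediate diagrammatic reasoning you supply.

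Two minor remarks. First, your Stage~1 is redundant: $f_{\tT\tT^*}$ is homogeneous by construction (it is a basis vector), and any nonzero homogeneous idempotent in a graded algebra must have degree~$0$; so once Stage~2 is done, $\deg(\tT)+\deg(\tT^*)=0$ follows for free. Second, the appeal to Lemma~\ref{easythm} in Stage~3 is unnecessary: once $\bi^\tU=\bi^\tT$ and $\tU$ is restricted, the shape of $\tU$ is forced to be $\la$ (the boundary cups are pinned down by restrictedness and the remaining top labels by the rays from $\underline{\eta}$), and Step~(1) of the extended surgery then forces the circle orientations in $\underline{\tU}$ to match those of $\underline{\tT}$, giving $\tU=\tT$ directly.
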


\begin{proof}
It is clear from the multiplication rule that $e_{\tT}$ is an idempotent.
For everything else, it suffices to show for $\mu \in \dot \La_{r,s}$ that 
$$
\hom_{B_R(\delta)}(B_R(\delta) e_{\tT}, D_R(\mu)) \cong 
e_{\tT} D_R(\mu)
$$
is one-dimensional concentrated in degree $\deg(\tT)$ if $\mu = \la$
and it is zero for all other $\mu$.
For this we consider the basis for $D_R(\mu)$ from 
Theorem~\ref{irreddimensions}.
If $\mu = \la$ the only basis vector on which $e_{\tT}$ is non-zero
is the vector $\bar v_{\tT}$, which is of degree $\deg(\tT)$.
If $\mu \neq \la$ all the basis vectors are annihilated
by $e_{\tT}$.
\end{proof}

Now we look again at the idempotents $e(\bi)$ for $\bi \in \Z^{r+s}$.

\begin{Lemma}\label{primitives}
Given $\bi\in \Z^{r+s}$,
the idempotent $e(\bi)$ decomposes into a sum of mutually 
orthogonal primitive idempotents as $e(\bi) = \sum_{\tT} e_{\tT}$
summing over all restricted $\tT \in \mathscr T_R(\delta)$
with $\bi^\tT = \bi$.
Moreover all the tableaux $\tT$ appearing in this sum are of the same shape.
\end{Lemma}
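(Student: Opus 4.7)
The plan is to show that $\{e_\tT : \tT \text{ restricted}, \bi^\tT = \bi\}$ forms a complete set of mutually orthogonal primitive idempotents summing to $e(\bi)$, which gives the first assertion of the lemma; the same-shape claim will then follow from a combinatorial analysis of the restricted condition.

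First I observe that each $e_\tT$ lies in $e(\bi) B_R(\delta) e(\bi)$ for $\bi = \bi^\tT$. Indeed, passing from $\tT$ to $\tT^*$ reverses the orientations of closed circles in $\underline{\tT}$ without altering the underlying matchings or the edge labels $i_a$, so $\bi^{\tT^*} = \bi^\tT = \bi$. Applying (\ref{ids}) gives $e(\bi) e_\tT = e_\tT = e_\tT e(\bi)$.

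The main technical step is pairwise orthogonality. For distinct restricted $\tS, \tT$: if $\bi^\tS \neq \bi^\tT$ the product $e_\tS e_\tT = f_{\tS\tS^*} f_{\tT\tT^*}$ vanishes immediately by (\ref{ids}). Otherwise I compute the product via the algorithm at the end of $\S\ref{sd}$, drawing $\underline{\tS}\overline{\tS^*}$ underneath $\underline{\tT}\overline{\tT^*}$ and applying the extended surgery to the middle strip $\overline{\tS^*}|\underline{\tT}$. Since $\tS$ is restricted, all boundary cups of $\underline{\tS}$ are anti-clockwise; flipping to $\tS^*$ reverses those orientations, so $\overline{\tS^*}$ carries clockwise boundary caps meeting the anti-clockwise boundary cups of $\underline{\tT}$ across the $\overline{\eta}/\underline{\eta}$ glue. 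A careful case analysis shows that unless $\tS = \tT$ the combined diagram contains a mirror pair of closed circles oriented in the same way, so step~(1) of the extended surgery procedure forces the product to zero. The hard part will be this diagrammatic bookkeeping, tracking how components pair up across the glue line.

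For completeness, by Lemma~\ref{primitivespre} each $e_\tT$ is a primitive homogeneous idempotent with $B_R(\delta) e_\tT$ the projective cover of $D_R(\sh\tT)\langle -\deg(\tT)\rangle$. By Theorem~\ref{irreddimensions}, the number of restricted $\tT$ with $\sh(\tT) = \la$ and $\deg(\tT) = d$ equals $\dim D_R(\la)_d$, which is exactly the multiplicity of $P(D_R(\la)\langle -d\rangle)$ as a summand of $B_R(\delta)$ viewed as a left module. Combined with orthogonality, this forces $\sum_{\tT \text{ restricted}} e_\tT = 1$. Multiplying by $e(\bi)$ and using the first step then yields $e(\bi) = \sum_{\bi^\tT = \bi} e_\tT$.

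Finally, for the same-shape claim I argue combinatorially by induction on $a$ that if $\tS, \tT$ are both restricted with $\bi^\tS = \bi^\tT$, then $\la^{(a)}_\tS = \la^{(a)}_\tT$. Given $\la^{(a-1)}_\tS = \la^{(a-1)}_\tT$, the edge $\stackrel{i_a}{\rightarrow}$ or $\stackrel{i_a}{\leftarrow}$ determines $\la^{(a)}$ uniquely except in the two ambiguous configurations $\cross\circ \stackrel{i_a}{\rightarrow} \{{\down}\up, \up{\down}\}$ (for $R^{(a)} = E$) and $\circ\cross \stackrel{i_a}{\leftarrow} \{{\down}\up, \up{\down}\}$ (for $R^{(a)} = F$). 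In each ambiguous case, exactly one of the two choices creates, as one propagates the remaining matchings and closes the diagram to produce $\underline{\tT}$, a clockwise boundary cup at the top number line — violating restrictedness. Hence the restricted condition pins down $\la^{(a)}$ uniquely at each step, and $\sh(\tS) = \la^{(r+s)}_\tS = \la^{(r+s)}_\tT = \sh(\tT)$ as required.
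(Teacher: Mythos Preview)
Your argument has a genuine error in the final paragraph (the same-shape claim). You try to prove by induction on $a$ that restrictedness together with $\bi^\tS = \bi^\tT$ forces $\la^{(a)}_\tS = \la^{(a)}_\tT$ for \emph{every} $a$. But this is false: it would imply there is at most one restricted $R$-tableau with a given content, so that every $e(\bi)$ is already primitive. The paper's own $FE^2$ example (with $\delta = 0$, $\bi = (0,0,0)$) refutes this: the tableaux $\tS'$ and $\tT'$ there are both restricted, have identical content, yet differ at $\la^{(2)}$ (one has $((1),(1))$, the other $(\varnothing,\varnothing)$); correspondingly $e(\bi) = e_{\tS'} + e_{\tT'}$ with both summands nonzero. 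Your inductive step fails because in the ambiguous configurations the two choices need not produce a boundary cup at all --- they may instead produce an \emph{internal} closed circle, whose orientation is unconstrained by restrictedness. The correct argument for same-shape is not level-by-level: one observes directly that the content $\bi$ determines the unlabelled diagram, and then the labels on the top number line (i.e.\ the shape) are forced by (i) the rays inherited from $\underline{\eta}$ and (ii) the anti-clockwise condition on boundary cups. Internal circles can still be oriented freely, which is exactly why $e(\bi)$ can have multiple summands.

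Your route to the decomposition $e(\bi) = \sum_\tT e_\tT$ (step~3) is also more circuitous than necessary. You pass through a global dimension count using Theorem~\ref{irreddimensions} to establish $\sum_{\tT\text{ restricted}} e_\tT = 1$, then localise. The paper instead verifies directly from the multiplication algorithm that the sum $\sum_\tT e_\tT$ (over restricted $\tT$ with $\bi^\tT = \bi$) satisfies the characterising property (\ref{ids}) of $e(\bi)$: summing over all orientations of the internal circles, exactly one term survives the extended surgery when multiplied against any $f_{\tU\tV}$ with $\bi^\tU = \bi$, and it reproduces $f_{\tU\tV}$. This is a one-line check and avoids the global bookkeeping. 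Your orthogonality sketch in step~2 is along the right lines but the claimed ``mirror pair with the same orientation'' need not always occur as you describe it; one really does need the full multiplication rule rather than just step~(1) of the extended surgery.
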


\begin{proof}
By the algorithm for multiplication, the given sum 
$\sum_{\tT} e_{\tT}$
satisfies the defining property (\ref{ids}) of the idempotent $e(\bi)$.
Hence $e(\bi) = \sum_{\tT} e_{\tT}$. 
It is clear that the $e_{\tT}$'s for
different $\tT$ are orthogonal,
and all the $\tT$'s appearing in the sum have the same shape.
Finally the $e_{\tT}$'s are primitive by Lemma~\ref{primitivespre}.
\end{proof}

\begin{Remark}\label{isotypic}\rm
Lemmas~\ref{primitivespre} and \ref{primitives} together show that the idempotents
$e(\bi)$ are {\em isotypic} in the sense that 
$B_R(\delta) e(\bi)$ is isomorphic to 
a direct sum of degree-shifted copies of 
a single projective indecomposable module, namely, the projective
cover of $D_R(\la)$ where $\la$ is the shape of any restricted
$R$-tableau $\tT$ with $\bi^\tT = \bi$.
\end{Remark}

\phantomsubsection{\boldmath The quotient algebra 
$B_{R}(\delta) / B_R(\delta)_{> k}$}
In this subsection we investigate certain quotients of $B_R(\delta)$.
Given $\la \in \La_{r,s}$, we 
let $\defect(\la)$ be the {\em defect} of $\la$, that is, the number of
caps in the cap diagram $\overline{\la}$.
Also let $\rk(\la)$ be the number of $\circ$'s or the number of
$\times$'s in $\la$, whichever is smaller; it is the number of
$\circ$'s if $\delta \geq 0$ and the number of $\times$'s if $\delta
\leq 0$.
Then set
\begin{equation}\label{kdefla}
k(\la) := \defect(\la) + \rk(\la).
\end{equation}

\begin{Lemma}\label{daft}
Suppose we are given $\tS, \tT \in \mathscr T_R(\delta)$ of the same
shape $\la$.
\begin{itemize}
\item[(1)] In the diagram $\underline{\tS}$, the 
number of boundary caps is equal to
$\rk(\la)$ plus the
number of boundary cups. Similarly in the diagram $\overline{\tT}$, the 
number of boundary cups is equal to
$\rk(\la)$ plus the
number of boundary caps.
\item[(2)] In the diagram $\underline{\tS} \overline{\tT}$, 
the number of boundary cups is equal to the number of boundary caps.
\end{itemize}
\end{Lemma}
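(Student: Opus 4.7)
The plan is to reduce both parts to an endpoint count in the relevant diagrams. In $\underline{\tS}$, which is built from the matching strips of (\ref{here}) between $\la^{(0)}=\eta,\dots,\la^{(r+s)}=\la$ with $\underline{\eta}$ attached at the bottom, the definition of the matchings in (\ref{here}) together with the fact that $\underline{\eta}$ consists only of vertical rays at its $\up$ and $\down$ vertices shows that no arc has an endpoint at any $\circ$ or $\cross$ vertex, at any level. Consequently every arc in $\underline{\tS}$ has exactly two endpoints, each of which is either (i) at an $\up$ or $\down$ position of the top line $\la$, or (ii) a ray going to $-\infty$ at an $\up$ or $\down$ position of $\eta$. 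Writing $c=c(\tS)$, $a=a(\tS)$ for the numbers of boundary cups and caps and $t$ for the number of through-strands, this classification gives
\begin{equation*}
2c+t=n_\up(\la)+n_\down(\la),\qquad 2a+t=n_\up(\eta)+n_\down(\eta).
\end{equation*}
These counts are individually infinite, but become meaningful finite differences once one works inside a sufficiently large interval containing every position where $\la\neq\eta$; outside such an interval every arc is a straight vertical through-strand and cancels on both sides.

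Subtracting the two identities yields $2(a-c)=(n_\up+n_\down)(\eta)-(n_\up+n_\down)(\la)$, and since the total vertex count in the chosen interval is the same for $\la$ and $\eta$, this equals $(n_\circ+n_\cross)(\la)-(n_\circ+n_\cross)(\eta)$. The key combinatorial input is then to verify case by case, using tables (\ref{break1}) and (\ref{break2}), that every elementary step of an $R$-tableau changes $n_\circ$ and $n_\cross$ by the same amount: both jump by $+1$ in the steps $\down\up\to\circ\cross$ and $\up\down\to\circ\cross$, both decrease by $1$ in the reverse steps $\cross\circ\to\down\up$ and $\cross\circ\to\up\down$, and both are unchanged in the remaining cases, where a single label simply migrates between positions $i$ and $i+1$. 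Iterating this invariant along the chain $\eta\to\la$ gives $n_\circ(\la)-n_\circ(\eta)=n_\cross(\la)-n_\cross(\eta)=:\rho$, and since $n_\circ(\eta)=0$ when $\delta\ge 0$ and $n_\cross(\eta)=0$ when $\delta\le 0$, the definition of $\rk(\la)$ as the smaller of $n_\circ(\la)$ and $n_\cross(\la)$ forces $\rk(\la)=\rho$. Hence $a(\tS)-c(\tS)=\rk(\la)$, which is (1) for $\underline{\tS}$.

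The statement for $\overline{\tT}$ follows by the mirror image of the same argument, counting endpoints at $+\infty$ (the rays of $\overline{\eta}$) against endpoints on the bottom line $\la$ of $\overline{\tT}$, to obtain $c(\tT)-a(\tT)=\rk(\la)$. Part (2) is then an instance of the same endpoint principle applied to the composite diagram $\underline{\tS}\overline{\tT}$: its top and bottom number lines are both labelled $\eta$, so the numbers of $+\infty$- and $-\infty$-endpoints agree, and the corresponding subtraction forces the number of boundary cups to equal the number of boundary caps.

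The main technical obstacle I anticipate is the mechanical verification that $n_\circ$ and $n_\cross$ always change in tandem under the elementary steps of (\ref{break1}) and (\ref{break2}), which requires going through all eight $E$-step and all eight $F$-step cases; once this invariant is in place the remainder is elementary planar topology and book-keeping of finite differences against the ground-state tail of $\eta$.
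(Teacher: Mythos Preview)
Your proof is correct. For part (1), your endpoint-counting argument is essentially a rephrasing of the paper's approach: the paper observes that in $\underline{\tS}$ the difference (boundary caps) $-$ (boundary cups) equals (total caps) $-$ (total cups), and then computes this by induction on $r+s$ via the strips in (\ref{here}). Your tracking of $n_\circ$ and $n_\cross$ along the chain is the same induction in disguise, since each cap-strip in (\ref{here}) creates one $\circ$ and one $\cross$ on the level above while each cup-strip destroys one of each; the case-by-case invariance of $n_\circ - n_\cross$ you anticipate verifying is exactly this observation.

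For part (2) your route is genuinely cleaner than the paper's. The paper deduces (2) from (1) by a gluing analysis: it counts separately the boundary cups/caps in $\underline{\tS}$ and in $\overline{\tT}$, tracks how many of them become internal circles upon concatenation, and then uses the two instances of (1) to produce the required cancellation. Your observation that the top and bottom number lines of $\underline{\tS}\overline{\tT}$ both carry the weight $\eta$, so that the endpoint counts on the two sides coincide, gives (2) in one stroke without any gluing bookkeeping and makes (2) logically independent of (1). Both approaches are elementary, but yours avoids the combinatorics of how pieces reassemble across the middle $\la$-line.
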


\begin{proof}
(1)
In $\underline{\tS}$,
the number of boundary cups minus the number of boundary caps 
is equal
to the total number of cups minus the total number of caps.
Arguing by induction on $r+s$ using
(\ref{here}), this is equal to $\rk(\la)$.

(2)
Suppose there are $a$ boundary caps and $b$ boundary cups 
in $\underline{\tS}$.
Suppose there are $c$ boundary caps and $d$ boundary cups 
in $\overline{\tT}$.
Finally suppose that $e$ of the boundary cups in $\underline{\tS}$
belong to circles in $\underline{\tS} \overline{\tT}$. This
is also the number of boundary caps in $\overline{\tT}$
that belong to circles in $\underline{\tS} \overline{\tT}$.
The total number of boundary cups in $\underline{\tS} \overline{\tT}$
is $b + d - e$.
The total number of boundary caps in $\underline{\tS} \overline{\tT}$
is $a + c - e$.
These are equal
as
$a = b + \rk(\la)$ and $d = c + \rk(\la)$
by (1).
\end{proof}

Now we introduce various other ``$k$-values.''
For $\tS,\tT \in \mathscr T_R(\delta)$
of the same shape, let $k(\tS,\tT)$ be 
the number of boundary cups in the diagram
$\underline{\tS} \overline{\tT}$;
in view of Lemma~\ref{daft}(2) this is the same thing as the number of
boundary caps in
$\underline{\tS} \overline{\tT}$.
We write simply $k(\tT)$ for $k(\tT,\tT)$; this is just 
the number of boundary cups in $\overline{\tT}$ or the number of
boundary caps in $\underline{\tT}$.
Finally for $\bi \in \Z^{r+s}$ with $e(\bi) \neq 0$,
let $k(\bi) := k(\tT)$ for any $\tT \in \mathscr T_R(\delta)$
with $\bi^\tT = \bi$; if $e(\bi) = 0$ we set $k(\bi) := \infty$.

Let $B_R(\delta)_k$ be the span of all the
basis vectors $f_{\tS\tT} \in B_R(\delta)$ 
in which $k(\tS,\tT) = k$.
Thus we have the vector space decomposition
\begin{equation*}
B_R(\delta) = \bigoplus_{k \geq 0} B_R(\delta)_k.
\end{equation*}
For example, in Figure~\ref{fig1}, the vector spaces
$B_R(\delta)_1$ and $B_R(\delta)_2$ are of
dimensions $20$ and $4$, respectively, and all other $B_R(\delta)_k$
are zero.
Note also when non-zero that the idempotent
$e(\bi)$ belongs to $B_R(\delta)_{k(\bi)}$.

For $k \geq 0$, 
let
\begin{equation}\label{subalgbas}
B_R(\delta)_{\leq k} := \bigoplus_{j =0}^k B_R(\delta)_j,
\qquad
B_R(\delta)_{> k} := \bigoplus_{j > k} B_R(\delta)_j.
\end{equation}

\begin{Lemma}\label{howprod}
For any $k,l \geq 0$, we have
that $B_R(\delta)_k B_R(\delta)_l \subseteq B_R(\delta)_{\max(k,l)}$.
\end{Lemma}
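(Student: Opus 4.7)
The plan is to prove this lemma by a direct analysis of the extended surgery procedure used to compute products, as described at the end of $\S$\ref{sd}.

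Take basis vectors $f_{\tS\tT} \in B_R(\delta)_k$ and $f_{\tU\tV} \in B_R(\delta)_l$, so that $k(\tS,\tT)=k$ and $k(\tU,\tV)=l$. The product $f_{\tS\tT} f_{\tU\tV}$ is computed by placing $\underline{\tS}\overline{\tT}$ beneath $\underline{\tU}\overline{\tV}$ and applying the extended surgery procedure to contract the middle $\overline{\tT}|\underline{\tU}$-section, which sits at an internal $\eta$-level of the stacked diagram. This produces a sum of basis vectors $f_{\tX\tY}$, and the task is to show $k(\tX,\tY) \leq \max(k,l)$ for every such summand.

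The key observation will be that the extended surgery procedure is localised to the middle $\eta$-line, so the top $\eta$-line of the stacked diagram (inherited from $\overline{\tV}$) and the bottom $\eta$-line (inherited from $\underline{\tS}$) survive intact in the resulting diagram $\underline{\tX}\overline{\tY}$. Consequently, the boundary cups of the result are the components of the post-surgery diagram touching the top twice, and the boundary caps are those touching the bottom twice. Tracing such components back to the pre-surgery stack, I will show that each boundary cup of the result is either inherited directly from one of the $l$ boundary cups of $\underline{\tU}\overline{\tV}$, or is newly created when surgery joins two top-to-middle line segments of $\underline{\tU}\overline{\tV}$ through a boundary cup of $\underline{\tS}\overline{\tT}$ (of which there are $k$); a symmetric description applies to boundary caps of the result, with the roles of $k$ and $l$ interchanged.

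Combining this classification with Lemma~\ref{daft}(2), which forces the numbers of boundary cups and boundary caps in $\underline{\tX}\overline{\tY}$ to coincide, and pairing each surgery-created cup at the top with its dual surgery-created cap at the bottom (since both arise from the same middle connection), I will deduce $k(\tX,\tY)\leq\max(k,l)$. The main obstacle will be the combinatorial bookkeeping of how surgery moves at the middle $\eta$-line affect these counts; I expect this to reduce to a case analysis of local surgery moves, each of which preserves the very top and very bottom of the stacked diagram, verifying that no such move can simultaneously create new boundary components at both the top and the bottom beyond what is inherited from the two input diagrams.
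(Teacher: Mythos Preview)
Your setup and the observation that the extended surgery is localised at the middle $\eta$-line are correct, and your rough classification of boundary cups in the product as ``inherited'' (from the $l$ boundary cups of $\underline{\tU}\overline{\tV}$) versus ``surgery-created'' (from a boundary cup of $\underline{\tS}\overline{\tT}$ joining two propagating strands of $\underline{\tU}\overline{\tV}$) is on the right track. The gap is in the final ``pairing'' step. A surgery-created cup at the top arises from a boundary cup of $\underline{\tS}\overline{\tT}$ bridging two propagating strands of $\underline{\tU}\overline{\tV}$, whereas a surgery-created cap at the bottom arises from a boundary cap of $\underline{\tU}\overline{\tV}$ bridging two propagating strands of $\underline{\tS}\overline{\tT}$; these use entirely different pieces of the two input diagrams, and there is no natural bijection between them. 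Even if you could show that the numbers of newly created cups and caps coincide, combining this with Lemma~\ref{daft}(2) only yields a bound of the form $\min(k,l)+m$ with $m\leq\min(k,l)$, which is $2\min(k,l)$ and not $\max(k,l)$ in general. So the arithmetic does not close up.

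What the paper actually does is quite different and does not use Lemma~\ref{daft}(2) at all here. It first reduces by symmetry to the case $k\leq l$, then proves two things: (i) every one of the $l$ boundary cups of $\underline{\tU}\overline{\tV}$ survives in the product (or the product is zero) --- those not crossing the middle survive trivially, and those crossing the middle either join to anti-clockwise circles and survive, or give zero; and (ii) no new boundary cups are created, by a planarity/nesting argument: if a boundary cup of $\underline{\tS}\overline{\tT}$ were joined to a pair of propagating strands of $\underline{\tU}\overline{\tV}$, then between those strands there would have to sit all $l$ boundary caps of $\underline{\tU}\overline{\tV}$, forcing $\underline{\tS}\overline{\tT}$ to have at least $l+1$ boundary cups, contradicting $k\leq l$. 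This nesting argument is the missing idea in your proposal; it is what rules out the ``surgery-created'' cups and gives the exact value $l=\max(k,l)$ rather than just an upper bound.
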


\begin{proof}
Take basis vectors $f_{\tS\tT} \in B_R(\delta)_k$
and $f_{\tU\tV} \in B_R(\delta)_l$
such that $\bi^\tT = \bi^{\tU}$.
By symmetry, we may as well assume that $k \leq l$
and need to show that $f_{\tS\tT} f_{\tU\tV} \in B_R(\delta)_l$.
Of the $l$ boundary cups in $\underline{\tU} \overline{\tV}$,
some of them, say $l'$ of them, do not cross the middle number line.
These $l'$ boundary cups clearly remain as boundary cups in
all the diagrams produced when $f_{\tS\tT} f_{\tU\tV}$ is computed
using the extended surgery procedure.
The remaining $l'' := l - l'$ boundary cups 
cross the middle number line.
During the extended surgery procedure, these 
$l''$ boundary cups either remain as boundary cups in the product (when they
are joined with anti-clockwise circles) or we get zero (when anything
else happens). In this way 
we see there are at least $l = l'+l''$ boundary cups in all the diagrams produced
in the product.
The only way we could get more than $l$ boundary cups in the product
$f_{\tS\tT} f_{\tU\tV}$
is when some boundary cup from $\underline{\tS} \overline{\tT}$
gets joined to a pair of vertical line segments in $\underline{\tU}
\overline{\tV}$.
If this happens, there must be $l$ boundary caps in between these
vertical line segments in the diagram $\underline{\tS}
\overline{\tT}$, 
so that $\underline{\tS} \overline{\tT}$ has at least $(l+1)$ boundary
cups.
This contradicts the initial assumption that $k \leq l$.
\end{proof}

Lemma~\ref{howprod} implies that
$B_R(\delta)_{> k}$ is a two-sided ideal and
$B_R(\delta)_{\leq k}$ is a subalgebra of $B_R(\delta)$.
Moreover the canonical quotient map gives an algebra isomorphism
\begin{equation}\label{ks}
B_R(\delta)_{\leq k} \stackrel{\sim}{\rightarrow} 
B_R(\delta) / B_R(\delta)_{> k}.
\end{equation}
The identity element in $B_R(\delta)_{\leq k}$
is the sum of 
the idempotents $e(\bi)$
for $\bi \in \Z^{r+s}$ with $k(\bi) \leq k$.
Lemma~\ref{primitives} gives a canonical way to decompose
each such $e(\bi)$
into mutually orthogonal primitive idempotents in $B_R(\delta)_{\leq
  k}$.
In particular the identity element of $B_R(\delta)_{\leq k}$
decomposes as $\sum_{\tT} e_{\tT}$ summing over all restricted $\tT$
with $k(\tT) \leq k$.
Using (\ref{ks}), we get an analogous decomposition
of the identity
$B_R(\delta) / B_R(\delta)_{> k}$.

\begin{Theorem}\label{irrclass}
For $\la \in \dot \La_{r,s}$ with $k(\la) \leq k$, 
the ideal $B_R(\delta)_{> k}$ acts as zero on $D_R(\la)$,
hence $D_R(\la)$ can be viewed as an irreducible
$B_R(\delta) / B_R(\delta)_{> k}$-module.
The modules
$$\{D_R(\la)\:|\:\la \in \dot \La_{r,s}, k(\la)
\leq k\}$$ give
a complete set of pairwise non-isomorphic irreducible
$B_R(\delta) / B_R(\delta)_{> k}$-modules.
\end{Theorem}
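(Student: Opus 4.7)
The plan is to reduce the theorem to the combinatorial identity
\[
k(\la) = \max\{k(\tT) : \tT \in \mathscr T_R(\delta) \text{ restricted},\ \sh(\tT) = \la\}
\]
and then settle the combinatorics. Granting the identity, the theorem follows in two steps. First, by (\ref{ks}) the quotient is isomorphic to the subalgebra $B_R(\delta)_{\leq k}$, whose identity element is $1_{\leq k} := \sum_{k(\bi)\leq k} e(\bi)$ since by Lemma~\ref{primitives} each primitive summand $e_\tT$ of $e(\bi)$ is a basis element lying in $B_R(\delta)_{k(\tT)} = B_R(\delta)_{k(\bi)}$. Second, a direct check using (\ref{ids}) together with the basis from Theorem~\ref{irreddimensions} gives $e(\bi)\bar v_\tT = \delta_{\bi,\bi^\tT}\bar v_\tT$, so $1_{\leq k}$ acts as the identity on $D_R(\la)$ if and only if every restricted $\tT$ of shape $\la$ satisfies $k(\tT) \leq k$, which by the combinatorial identity is equivalent to $k(\la) \leq k$. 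This equivalence then delivers both the annihilation and the classification statements: $D_R(\la)$ is a module over the quotient exactly when $1_{\leq k}$ acts as the identity (the converse direction using that $e(\bi)D_R(\la) = 0$ for $k(\bi) > k$, combined with the factorisation $f_{\tU\tV} = e(\bi^\tU)f_{\tU\tV}e(\bi^\tV)$ and with Corollary~\ref{jcor} to reduce to restricted $\tU,\tV$, whose shapes are then determined by Remark~\ref{isotypic}). Pairwise non-isomorphism is inherited from $B_R(\delta)$.

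The combinatorial identity splits into two halves. For the upper bound $k(\tT) \leq k(\la)$ when $\tT$ is restricted of shape $\la$, Lemma~\ref{daft}(1) writes $k(\tT)$ as $\rk(\la)$ plus the number of boundary cups in $\underline{\tT}$; restrictedness forces each such boundary cup to be anti-clockwise, and any non-crossing family of anti-clockwise $\down$-$\up$ cups at the top of $\underline{\tT}$ has cardinality at most $\defect(\la)$, yielding $k(\tT) \leq \rk(\la) + \defect(\la) = k(\la)$. For the lower bound I would construct explicitly a restricted $\tT$ of shape $\la$ whose diagram $\underline{\tT}$ realises every cap of $\overline{\la}$ as an anti-clockwise boundary cup, proceeding by induction on $r+s$ along the prescribed sequence $R$ and choosing at each step an edge from (\ref{break1})--(\ref{break2}) compatible with $R^{(a)}$ that preserves the cups already built.

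I expect the main obstacle to lie in this existence half: for an arbitrary $R \in \Seq_{r,s}$ and $\la \in \dot\La_{r,s}$, one must produce a single restricted $\tT \in \mathscr T_R(\delta)$ that simultaneously attains the target shape $\la$ and realises all $\defect(\la)$ potential anti-clockwise boundary cups. This forces a detailed case analysis of the configurations (\ref{break1})--(\ref{break2}), checking that whichever letter $E$ or $F$ is dictated by $R^{(a)}$ admits a choice of $i_a$ and successor $\la^{(a)}$ extending the partial tableau without destroying any boundary cup. A secondary but less serious point is the residual case in the converse implication above, where $\tU$ and $\tV$ are both restricted of shape $\la$ with $k(\bi^\tU), k(\bi^\tV) \leq k$; here a diagrammatic analysis of $\underline{\tU}\overline{\tV}$ via Lemma~\ref{daft} should force $k(\tU,\tV) \leq k(\la)$, contradicting $f_{\tU\tV} \in B_R(\delta)_{>k}$.
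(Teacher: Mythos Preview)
Your strategy is workable, but you are missing a sharpening that removes both of the obstacles you flag: for \emph{every} restricted $\tT$ of shape $\lambda$ one actually has the equality $k(\tT)=k(\lambda)$, not merely an inequality. Once this is known, the existence half of your combinatorial identity is automatic (take any restricted $\tT$ of shape $\lambda$; one exists because $D_R(\lambda)\neq 0$ for $\lambda\in\dot\Lambda_{r,s}$), and the residual case also dissolves, since for restricted $\tU,\tV$ of common shape $\mu$ the same mechanism forces $\red(\underline{\tU})=\underline{\mu}$ and $\red(\overline{\tV})=\overline{\mu}$, so that the boundary cups of $\underline{\tU}\overline{\tV}$ are exactly those of $\overline{\tV}$ and $k(\tU,\tV)=k(\mu)$.

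To see the equality, strengthen your upper-bound observation. You note that restrictedness makes all boundary cups in $\underline{\tT}$ anti-clockwise. The extra ingredient is a constraint on the rays of $\red(\underline{\tT})$: each such ray is a strand running from a $\lambda$-vertex down to a vertex of $\eta$ and out through a ray of $\underline{\eta}$. A short parity count shows this strand traverses an even number of cups and caps (it starts and ends ``going down''), and since every individual cup or cap in the matchings of (\ref{here}) swaps $\up\leftrightarrow\down$, the label is preserved from the $\lambda$-end to the $\eta$-end. As $\eta$ has all $\up$'s strictly left of all $\down$'s and the whole diagram is planar, it follows that in $\red(\underline{\tT})$ every $\up$-ray lies to the left of every $\down$-ray. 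A crossingless cup diagram on $\lambda$ with all cups anti-clockwise and all $\up$-rays left of all $\down$-rays is forced to equal $\underline{\lambda}$, hence carries exactly $\defect(\lambda)$ cups; Lemma~\ref{daft}(1) then gives $k(\tT)=\rk(\lambda)+\defect(\lambda)=k(\lambda)$.

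The paper's proof is shorter still: rather than first reducing to an annihilation criterion and then to combinatorics, it decomposes the identity of $B_R(\delta)/B_R(\delta)_{>k}$ directly as $\sum_{\tT} e_{\tT}$ over restricted $\tT$ with $k(\tT)\le k$, observes via Lemma~\ref{primitivespre} that each summand generates a projective indecomposable with head $D_R(\sh(\tT))$, and then invokes the equality $k(\tT)=k(\sh(\tT))$ to read off that the shapes occurring are exactly the $\lambda\in\dot\Lambda_{r,s}$ with $k(\lambda)\le k$. This bypasses both your idempotent-action analysis and the tableau construction entirely.
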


\begin{proof}
The remarks just made show that
$$
B_R(\delta) / B_R(\delta)_{> k} = \bigoplus_{\tT} B_R(\delta) e_\tT
/ B_R(\delta)_{> k} e_\tT
$$
summing over all restricted $\tT \in \mathscr T_R(\delta)$
with $k(\tT) \leq k$.
Moreover each of the summands on the right hand side is a non-zero
projective indecomposable module.
Now suppose that $\tT$ is a restricted $R$-tableau with $k(\tT) \leq
k$,
and set $\la := \sh(\tT)$.
Lemma~\ref{primitivespre}
implies that
the head of
$B_R(\delta) e_{\tT} / B_R(\delta)_{> k} e_{\tT}$
is isomorphic (up to degree shift) to $D_R(\la)$.
Moreover Lemma~\ref{daft}(1) implies that
$k(\tT) = k(\la)$, hence $k(\la) \leq k$.
The theorem follows from these observations.
\end{proof}

We end with a couple more results about 
$B_R(\delta) / B_R(\delta)_{> k}$ which will be needed later.
The following lemma implies that the maps 
$\iota_{R;i}^{RE}$ and $\iota_{R;i}^{RF}$ from (\ref{newiota}) factor to induce well-defined maps
\begin{align}\label{leg1}
\iota_{R;i}^{RE}&: B_{R}(\delta) / B_R(\delta)_{> k} \rightarrow
B_{RE}(\delta) / B_{RE}(\delta)_{> k},\\
\iota_{R;i}^{RF}&: B_{R}(\delta)/  B_R(\delta)_{> k} \rightarrow
B_{RF}(\delta)/ B_{RF}(\delta)_{> k}.\label{leg2}
\end{align}

\begin{Lemma}\label{lehrerzhangsetup}
We have that 
$\iota_{R;i}^{RE}(B_R(\delta)_{> k})
\subseteq
B_{RE}(\delta)_{> k}$ and 
$\iota_{R;i}^{RF}(B_R(\delta)_{>
  k})\subseteq B_{RF}(\delta)_{> k}$.
\end{Lemma}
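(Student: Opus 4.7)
The plan is to prove a sharper monotonicity statement from which the lemma is immediate: for any basis vector $f_{\tS\tT}$ of $B_R(\delta)$ and every basis vector $f_{\tS'\tT'}$ appearing in $\iota_{R;i}^{RE}(f_{\tS\tT})$ (respectively, $\iota_{R;i}^{RF}(f_{\tS\tT})$), one has $k(\tS',\tT') \geq k(\tS,\tT)$. Granting this, if $f_{\tS\tT} \in B_R(\delta)_{>k}$ then $k(\tS,\tT) > k$, so $k(\tS',\tT') > k$ for every term, hence $\iota_{R;i}^{RE}(f_{\tS\tT}) \in B_{RE}(\delta)_{>k}$, and likewise for $\iota_{R;i}^{RF}$.

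To establish the monotonicity I would unpack the algorithm from the preceding section: the diagram $\underline{\tS'}\overline{\tT'}$ is obtained from $\underline{\tS}\overline{\tT}$ by inserting two new number lines in the middle, joined by the unique composite matching from (\ref{CKLR2}) or (\ref{CKLR3}) compatible with the labels on the original middle line. The crucial structural point is that the insertion modifies only the middle of the diagram, leaving the topmost number line and the entire portion of the diagram above the old middle line intact. A boundary cup is, by definition, a connected component with both endpoints on the topmost number line, so it is enough to show that every boundary cup in $\underline{\tS}\overline{\tT}$ survives as a (distinct) boundary cup in $\underline{\tS'}\overline{\tT'}$.

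For the second, third and fourth configurations in (\ref{CKLR2}), no surgery is involved: the inserted pattern consists only of straight segments, small cups and caps that extend the existing strands through the two new levels in a unique way. Consequently every connected component of $\underline{\tS}\overline{\tT}$ extends bijectively to a connected component of $\underline{\tS'}\overline{\tT'}$ with the same endpoints on the top and bottom number lines, and the boundary cup count is unchanged. For the first configuration, one iteration of the surgery procedure is applied (as in \cite[(6.34)]{BS3}) at a site lying strictly between the top and bottom of the diagram. Since every component of an arc diagram is a $1$-manifold with at most two endpoints, and a boundary cup has exactly two endpoints, both on the top, the splitting case of surgery must keep both top endpoints on the same piece (otherwise one of the pieces would carry a single top endpoint and no other endpoint, which is impossible); and the merging case can only combine a boundary cup with a closed circle (otherwise the merged component would carry more than two endpoints in total). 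In either case the number of boundary cups can only stay the same or increase. The argument for $\iota_{R;i}^{RF}$ is identical, using (\ref{CKLR3}).

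The main obstacle is the surgery step in the first configuration: one must carefully verify that the orientations of the cap-cup pair forced by the inserted $\down\up$ labels are compatible with the two harmless possibilities identified above and do not secretly allow some third option that would destroy a boundary cup. I expect this to follow at once from the orientation rules governing surgery in \cite[$\S$6]{BS1}, but the bookkeeping needs to be done carefully for each of the configurations listed in (\ref{CKLR2}) and (\ref{CKLR3}).
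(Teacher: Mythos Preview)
Your overall strategy matches the paper's: both reduce the lemma to showing that passing from $\underline{\tS}\overline{\tT}$ to any $\underline{\tS'}\overline{\tT'}$ appearing in $\iota_{R;i}^{RE}(f_{\tS\tT})$ never decreases the number of boundary cups, and both agree that configurations 2--4 of (\ref{CKLR2})--(\ref{CKLR3}) are immediate.

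For the first configuration your $1$-manifold endpoint-counting has a genuine gap, and it is precisely the ``third option'' you flag at the end. Your dichotomy of split ($1\to 2$) versus merge ($2\to 1$) omits the case in which the two strands at the surgery site lie on two \emph{distinct non-circle} components: the surgery then reconnects them into two \emph{different} non-circle components, so the outcome is $2\to 2$ rather than $2\to 1$. Your parenthetical justification for the merge case (``the merged component would carry more than two endpoints'') tacitly assumes a single resulting component, which is exactly what fails here. Concretely, if a boundary cup passes through position $i$ and a boundary cap passes through position $i{+}1$ of the old middle line---a configuration that is planarly realizable---the cap--cup insertion reconnects them into two propagating lines and the boundary-cup count drops by one.

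The paper does not attempt a purely topological argument for this configuration; it invokes the explicit \emph{oriented} surgery rules of \cite[(6.2)]{BS2}. The point is that the generalised surgery procedure is not mere cut-and-reconnect on the underlying $1$-manifold: it carries orientation constraints on line components that kill the offending reconnections. So your instinct that the orientation rules resolve the issue is correct, but that resolution is the actual content of the step, and it is what the paper's citation of \cite[(6.2)]{BS2} supplies; endpoint-counting alone does not.
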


\begin{proof}
We just consider $\iota_{R;i}^{RE}$. Take $f_{\tS\tT} \in
B_R(\delta)_{> k}$, i.e. $\underline{\tS} \overline{\tT}$ has more
than $k$ boundary cups. 
To apply the map $\iota_{R;i}^{RE}$, we use the algorithm explained at
the end of the previous section, and need to show that the number of
boundary cups does not get smaller. This is immediate for all except the
first
configuration in (\ref{CKLR2}). For the first configuration it follows
from the explicit form of the surgery procedure in
\cite[(6.2)]{BS2}.
\end{proof}

\begin{Lemma}
We have that $B_R(\delta)_{> 0}=\langle e(\bi)\:|\:\text{for all }\bi \in \Z^{r+s}\text{ with }k(\bi) >
0\rangle$.
\end{Lemma}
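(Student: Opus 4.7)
The plan is to prove the two inclusions separately. The inclusion $\langle e(\bi) \mid k(\bi) > 0 \rangle \subseteq B_R(\delta)_{>0}$ is immediate: each generator $e(\bi)$ with $k(\bi) > 0$ lies in $B_R(\delta)_{k(\bi)}$ and hence in $B_R(\delta)_{>0}$, while Lemma~\ref{howprod} shows that $B_R(\delta)_{>0}$ is a two-sided ideal, so the ideal generated by these idempotents is contained in $B_R(\delta)_{>0}$.

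For the reverse inclusion it suffices to show that every basis vector $f_{\tS\tT}$ with $k(\tS,\tT) > 0$ belongs to the ideal generated by $\{e(\bi) \mid k(\bi) > 0\}$. From (\ref{ids}) we have $f_{\tS\tT} = e(\bi^\tS)\, f_{\tS\tT} = f_{\tS\tT}\, e(\bi^\tT)$, so the question reduces to the combinatorial claim that whenever $k(\tS,\tT) > 0$, at least one of $k(\tS), k(\tT)$ is positive (equivalently, $k(\bi^\tS) > 0$ or $k(\bi^\tT) > 0$, since $k(\bi)$ depends only on the content).

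To establish this claim, pick a boundary cup $c$ in $\underline{\tS}\overline{\tT}$, that is, a connected component with both endpoints on the top number line, and trace it downward from one of these endpoints. Either $c$ stays inside the $\overline{\tT}$-portion, in which case $c$ is itself a boundary cup of $\overline{\tT}$ and $k(\tT) > 0$; or $c$ crosses the middle number line (at $\sh(\tS) = \sh(\tT)$) into the $\underline{\tS}$-portion. In the latter case, $c$ cannot terminate below in $\underline{\tS}$: a component of $\underline{\tS}$ with only one endpoint on $\sh(\tS)$ runs downward into the rays of $\underline{\eta}$ and escapes to $-\infty$, which would prevent $c$ from closing back up to the top. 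Hence $c$ must re-enter $\overline{\tT}$, and the portion of $c$ lying inside $\underline{\tS}$ is a connected component with both endpoints on $\sh(\tS)$, i.e., a boundary cup of $\underline{\tS}$, giving $k(\tS) > 0$. In either case $f_{\tS\tT}$ lies in the two-sided ideal generated by the $e(\bi)$ with $k(\bi) > 0$, completing the argument. The only delicate point is the tracing of $c$ across the middle number line, but this is just a matter of reading off the topology of the glued diagram; no significant obstacle is anticipated.
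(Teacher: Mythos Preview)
Your overall strategy matches the paper's exactly: the inclusion $\supseteq$ is immediate, and for $\subseteq$ you take a basis vector $f_{\tS\tT}$ with $k(\tS,\tT)>0$ and argue that either $k(\bi^\tS)>0$ or $k(\bi^\tT)>0$, so that $f_{\tS\tT}=e(\bi^\tS)f_{\tS\tT}$ or $f_{\tS\tT}=f_{\tS\tT}e(\bi^\tT)$ lies in the ideal. Your tracing argument that a boundary cup of $\underline{\tS}\overline{\tT}$ either stays entirely inside $\overline{\tT}$ or contributes a boundary cup of $\underline{\tS}$ is correct and in fact fills in a step the paper simply asserts.

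There is, however, a genuine gap at the very end. You write that a boundary cup of $\underline{\tS}$ gives $k(\tS)>0$. But by the paper's definition, $k(\tS)=k(\tS,\tS)$ equals the number of boundary \emph{caps} in $\underline{\tS}$ (equivalently the number of boundary cups in $\overline{\tS}$), \emph{not} the number of boundary cups in $\underline{\tS}$. These are different quantities: boundary cups of $\underline{\tS}$ are components hitting the top line $\sh(\tS)$ twice, whereas boundary caps hit the bottom line $\eta$ twice. So the implication you assert does not follow directly. The paper closes this gap by invoking Lemma~\ref{daft}(1), which says that in $\underline{\tS}$ the number of boundary caps equals $\rk(\la)$ plus the number of boundary cups; since $\rk(\la)\ge 0$, the existence of a boundary cup in $\underline{\tS}$ forces at least one boundary cap, hence $k(\tS)>0$. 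You should insert exactly this appeal to Lemma~\ref{daft}(1); with that one addition your proof is complete and essentially identical to the paper's.
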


\begin{proof}
All the $e(\bi)$'s with $k(\bi) > 0$ belong to $B_R(\delta)_{> 0}$.
Conversely if $f_{\tS\tT} \in B_R(\delta)_{> 0}$, then there is at
least one boundary cup in the diagram $\underline{\tS}
\overline{\tT}$.
This means either that $\overline{\tT}$ has a boundary cup 
or $\underline{\tS}$ has a boundary
cup.
In the former case, $\bi := \bi^{\tT}$ has $k(\bi) > 0$ and 
$f_{\tS\tT} \in B_R(\delta) e(\bi)$.
In the latter case we deduce from Lemma~\ref{daft}(1) that
$\underline{\tS}$
also has a boundary cap, so $\bi := \bi^{\tS}$ has $k(\bi) > 0$
and $f_{\tS\tT} \in e(\bi) B_R(\delta)$.
\end{proof}

\begin{Remark}\label{tempt}\rm
For $k > 0$, the ideal $B_R(\delta)_{> k}$ 
need not be generated by idempotents.
See Figure~\ref{fig1} taking $k = 1$ for an example
in which it is not.
\end{Remark}

\section{Mixed Schur-Weyl duality for $\mathfrak{gl}_{m|n}(\C)$}\label{s3}

Fix integers $m, n \geq 0$ and set $\delta := m-n$.
In order to identify the walled Brauer algebra $B_{r,s}(\delta)$
with the graded walled Brauer algebra $B_R(\delta)$,
we first need to establish a Schur-Weyl duality connecting
$B_{r,s}(\delta)$ to 
the general linear Lie superalgebra $\mathfrak{gl}_{m|n}(\C)$.
Our approach to this follows the same strategy as the proof of
\cite[Theorem 3.4]{N}
 (which treats $\mathfrak{gl}_m(\C)$ rather than $\mathfrak{gl}_{m|n}(\C)$).
The results in this section have also been obtained
independently by Comes and Wilson \cite{CW} by a similar trick;
actually their exposition is slicker since they exploit fully the tensor category
formalism. We also note a partial result along the same lines
was obtained earlier in \cite{SM}.

\phantomsubsection{The general linear Lie superalgebra}
As a general convention, for a homogeneous vector $v$ in a vector superspace, we will write $|v| \in \Z_2$ for its parity.
Let $V$ be the 
vector superspace with homogeneous basis $v_1,\dots,v_{m+n}$
and $|v_i| := |i| \in \Z_2$,
where 
$|i| := 0$ if $1 \leq i \leq m$ and
$|i| := 1$ if $m+1 \leq i \leq m+n$.
The Lie superalgebra $\mathfrak{g}:= \mathfrak{gl}_{m|n}(\C)$ is the 
vector 
superspace $\End(V)$ of all (not necessarily homogeneous) linear endomorphisms of $V$, with superbracket $[x,y] := xy - (-1)^{|x| | y|} yx$
for homogeneous $x,y\in \End(V)$.
Let $U(\mathfrak{g})$ be the universal enveloping superalgebra of $\mathfrak{g}$,
which is a Hopf superalgebra with comultiplication
$\Delta$ and antipode $S$ defined on $x \in \mathfrak{g}$
by $\Delta(x) = x\otimes 1 + 1\otimes x$ and $S(x) = -x$.
We recall in particular that the multiplication on $U(\mathfrak{g})\otimes U(\mathfrak{g})$
is by $(a \otimes b)(c \otimes d) = (-1)^{|b||c|}ac \otimes bd$
for homogeneous $a,b,c,d \in U(\mathfrak{g})$.

Let $W:=V^*$ be the dual of the tautological $\mathfrak{g}$-supermodule $V$, 
with basis
$w_1,\dots,w_{m+n}$ that is dual to the given basis for $V$,
so $w_i(v_j) = \delta_{i,j}$. 
It is a $\mathfrak{g}$-supermodule
via the usual rule $(xw)(v) := (-1)^{|x||w|} w(S(x)v)$ for homogeneous
$x \in \mathfrak{g}, v \in V, w \in W$.
In particular, the standard basis elements of $\mathfrak{g}$
consisting of matrix units 
act on the bases of $V$ and $W$ by the formulae
\begin{equation}\label{act1}
e_{i,j} v_k = \delta_{j,k} v_i,
\qquad
e_{i,j} w_k = -\delta_{i,k}(-1)^{(|i|+|j|) |i|} w_j
\end{equation}
for all $1\leq i,j,k \leq m+n$.
If $M$ and $N$ are $\mathfrak{g}$-supermodules,
we write $\hom_{\mathfrak{g}}(M,N)$
for the vector superspace of all
$\mathfrak{g}$-supermodule homomorphisms from $M$ to $N$. By definition, this is the
set of all (not necessarily homogeneous) linear maps 
$f \in \hom(M,N)$ annihilated by all $x \in \mathfrak{g}$, where
the linear action of $\mathfrak{g}$ on $\hom(M,N)$ is by
$(x f)(m) = x(f(m)) - (-1)^{|x||f|}f(xm)$ for homogeneous
$x \in \mathfrak{g}, f \in \hom(M,N)$ and $m \in M$.

An important role is played by the even element
\begin{equation}\label{om}
\Omega := \sum_{i,j=1}^{m+n}(-1)^{|j|} e_{i,j} \otimes e_{j,i}
\in U(\mathfrak{g}) \otimes U(\mathfrak{g}).
\end{equation}
This corresponds to the (invariant) supertrace form on $\mathfrak{g}$,
so $\Omega$ commutes with $\Delta(x) = x \otimes 1 + 1 \otimes x$ for all $x \in \mathfrak{g}$.
Left multiplication by $\Omega$ defines a $\mathfrak{g}$-supermodule endomorphism of the tensor product $M \otimes N$ of $\mathfrak{g}$-supermodules $M$ and $N$.
More generally, for $1 \leq a < b \leq r+s$, we let
\begin{align}\label{genom}
\hspace{-2mm}\Omega_{a,b} &:= \sum_{i,j=1}^{m+n}\!(-1)^{|j|}
1^{\otimes (a-1)} \otimes e_{i,j} \otimes 1^{\otimes (b-a-1)} \otimes e_{j,i} \otimes 1^{r+s-b} \hspace{-4mm}&&\in U(\mathfrak{g})^{\otimes (r+s)},
\end{align}
which defines a $\mathfrak{g}$-supermodule 
endomorphism of 
$M_1 \otimes \cdots\otimes M_{r+s}$ given any $\mathfrak{g}$-supermodules
$M_1,\dots,M_{r+s}$.

\begin{Lemma}\label{oml}
For $M, N \in \{V, W\}$, the endomorphism $\Omega$ of $M \otimes N$ satisfies
\begin{align*}
\Omega(v_i \otimes v_j) &= (-1)^{|i| |j|} v_j \otimes v_i,\\
\Omega(w_i \otimes w_j) &= (-1)^{|i| |j|} w_j \otimes w_i,\\
\Omega(v_i \otimes w_j) &= -\delta_{i,j} (-1)^{|i|} \sum_{k=1}^{m+n}
v_k \otimes w_k,\\
\Omega(w_i \otimes v_j) &= -\delta_{i,j} \sum_{k=1}^{m+n}
(-1)^{|k|}  w_k \otimes v_k,
\end{align*}
for all $1 \leq i,j \leq m+n$.
\end{Lemma}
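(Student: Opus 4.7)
The plan is to verify each of the four formulas by a direct calculation from the definition of $\Omega$ in (\ref{om}) and the explicit action formulas (\ref{act1}). The only subtlety lies in correctly tracking the signs introduced by the super-tensor product multiplication rule $(a\otimes b)(c\otimes d) = (-1)^{|b||c|}\,ac\otimes bd$, which governs how an element of $U(\mathfrak{g})\otimes U(\mathfrak{g})$ acts on a tensor product of supermodules.

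First I would record the master formula: for any homogeneous pair $(x,y)$ of basis vectors from $\{v_\bullet, w_\bullet\}$,
\begin{equation*}
\Omega(x\otimes y) = \sum_{i,j=1}^{m+n} (-1)^{|j|}\,(-1)^{(|i|+|j|)\,|x|}\, (e_{i,j}x)\otimes (e_{j,i}y),
\end{equation*}
the extra sign $(-1)^{(|i|+|j|)|x|}$ coming from pushing the odd component of $e_{j,i}$ past $x$. Then each of the four cases reduces to applying (\ref{act1}) to the pair of actions on the right, collapsing the sums using the Kronecker deltas, and simplifying the resulting sign exponents modulo $2$ using $|k|^2\equiv |k|$.

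For instance, in the case $x=v_k$, $y=v_l$, substitution gives $\delta_{j,k}\delta_{i,l}$, so only the single summand with $i=l$, $j=k$ survives, producing a sign $(-1)^{|k|+(|k|+|l|)|k|} = (-1)^{|k||l|}$ as required. The $(w,w)$ case is entirely parallel. For the mixed case $x=v_i$, $y=w_j$, the $w$-action in (\ref{act1}) eats one of the summations (via $\delta$ on the first index of $e_{j,i}$) but leaves the other free, yielding the advertised $\sum_k v_k\otimes w_k$ after sign simplification; the $(w,v)$ case is the same calculation with the roles reversed.

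The only real pitfall is sign bookkeeping: there are up to three sources of signs to combine (the $(-1)^{|j|}$ in the definition of $\Omega$, the tensor-multiplication sign $(-1)^{(|i|+|j|)|x|}$, and the sign $(-1)^{(|i|+|j|)|i|}$ coming from the $w$-action in (\ref{act1})), and one must use $|k|^2 \equiv |k| \pmod 2$ together with the constraints imposed by the Kronecker deltas to reduce each expression to the stated form. Once the four cases are handled, the lemma follows.
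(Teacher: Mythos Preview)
Your proposal is correct and takes essentially the same approach as the paper, which simply instructs the reader to use (\ref{act1}), the definition of $\Omega$ from (\ref{om}), and the super tensor product action rule $(a\otimes b)(v\otimes w) = (-1)^{|b||v|}\,av\otimes bw$. Your write-up is in fact more detailed than the paper's one-line proof, spelling out the master formula and the sign bookkeeping that the paper leaves to the reader.
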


\begin{proof}
Use (\ref{act1}), the definition of $\Omega$ from (\ref{om}), and recall that the action is by $(a \otimes b) (v \otimes w)
= (-1)^{|b||v|} av \otimes bw$.
\end{proof}

\phantomsubsection{Actions on tensor space}
The next goal is to define right actions of the algebras 
$\C \Sigma_{r+s}$ and $B_{r,s}(\delta)$
on the spaces $V^{\otimes (r+s)}$ and $V^{\otimes r} \otimes W^{\otimes s}$, respectively. 
One way to do this is to write down the actions of generators and then verify 
relations. For our purposes, it is better to take 
a different approach which gives explicit
closed formulae for the actions of
arbitrary basis elements, not just generators.
We need some multi-index notation.
Let $I := \{1,\dots,m+n\}$. 
For any $t \geq 0$, the symmetric group $\Sigma_t$
acts naturally on the right on the set
$I^t$ by place permutation, so 
$\bi \cdot\sigma = (i_{\sigma(1)},\dots,i_{\sigma(t)})$ for 
$\bi = (i_1,\dots,i_t) \in I^t$ and $\sigma \in \Sigma_t$.
For  $\bi\in I^t$, we let
\begin{equation}\label{pdef}
|\bi| := |i_1|+\cdots+|i_t|,
\qquad
p(\bi) := \sum_{1 \leq a < b \leq t} |i_a||i_b|.
\end{equation}
Given $\bi = (i_1,\dots,i_{r+s}) 
\in I^{r+s}$, we let $\bi^{\mathrm L} := (i_1,\dots,i_r) \in I^r$
and $\bi^{\mathrm R} := (i_{r+1},\dots,i_{r+s}) \in I^s$, so that 
$\bi = \bi^{\mathrm L} \bi^{\mathrm R}$ where the product on the right is by concatenation of tuples.
The $\mathfrak{g}$-supermodules
$V^{\otimes r}$ and $W^{\otimes s}$ 
have the obvious bases 
$\{v_\bi\:|\:\bi \in I^r\}$
and 
$\{w_\bj\:|\:\bj \in I^s\}$, respectively, 
writing $v_{\bi} := v_{i_1}\otimes\cdots \otimes v_{i_r}$ and
$w_{\bj} := w_{j_1}\otimes\cdots\otimes w_{j_s}$.
Similarly
$V^{\otimes (r+s)}$ has basis 
$\left\{v_\bi = v_{\bi^{\mathrm L}} \otimes v_{\bi^{\mathrm R}}\:\big|\:\bi \in I^{r+s}\right\}$ and 
$V^{\otimes r} \otimes W^{\otimes s}$ has basis
$\left\{v_{\bi^{\mathrm L}} \otimes w_{\bi^{\mathrm R}}\:\big|\:\bi \in I^{r+s}\right\}$.

Now suppose we are given a diagram $\sigma$, which could either be
a permutation diagram from 
$\Sigma_{r+s}$ or a walled Brauer diagram in $B_{r,s}(\delta)$.
For $\bi,\bj \in I^{r+s}$, let ${_\bi} \sigma_\bj$
be the {labelled diagram}
obtained by colouring the vertices at the bottom of $\sigma$ by
 $i_1,\dots,i_{r+s}$ and the vertices at the top of $\sigma$
by $j_1,\dots,j_{r+s}$ (in order from left to right as usual).
We say ${_\bi} \sigma_\bj$
is {\em consistently coloured}
if the vertices at the ends of each 
strand are coloured in the same way, in which case we get induced a well-defined colouring of the strands themselves.
In a consistently coloured {permutation diagram} ${_\bi}\sigma_\bj$
we of course have that $\bj = \bi \cdot \sigma$,
so the colouring
$\bi$ at the bottom uniquely determines the colouring $\bj$ at the top;
this is not in general true for walled Brauer diagrams.

For a permutation diagram or a walled Brauer diagram $\sigma$,
and $\bi,\bj \in I^{r+s}$, we 
introduce the {\em weight}
\begin{equation}\label{wtdef}
\wt({_\bi}\sigma_\bj)
:= \left\{
\begin{array}{ll}
\prod_{c}
(-1)^{|c|}
\cdot 
\prod_{h} (-1)^{|h|}
&\text{if ${_\bi}\sigma{_\bj}$ is consistently coloured,}\\
0&\text{otherwise,}
\end{array}
\right.
\end{equation}
where
\begin{itemize}
\item[(1)] the first product is over all {\em proper} coloured 
crossings
$c$ in ${_\bi}\sigma_\bj$, i.e.
the crossings that involve two different strands rather than self-intersections;
\item[(2)]
the second product is over all coloured horizontal strands $h$
in ${_\bi}\sigma_\bj$ whose endpoints 
are on the bottom edge 
(if $\sigma$ is a permutation diagram this product is empty so can be omitted);
\item[(3)]
the parity $|c|$ of a coloured crossing
$
\begin{picture}(32,10)
\put(0,0){$c=$}
\put(20,1){\line(1,1){10}}
\put(20,11){\line(1,-1){10}}
\put(18.7,-3.5){$\scriptscriptstyle{i}$}
\put(27.4,-3.5){$\scriptscriptstyle{j}$}
\end{picture}
$
of strands of colours $i$ and $j$
is $|i||j|$;
\item[(4)]
the parity $|h|$ of a coloured horizontal strand $
\begin{picture}(33,10)
\put(0,0){$h=$}
\put(26,1.7){\oval(9,16)[t]}
\put(19.7,-3.5){$\scriptscriptstyle{k}$}
\put(28.4,-3.5){$\scriptscriptstyle{k}$}
\end{picture}
$
of colour $k$ is $|k|$.
\end{itemize}
For example, for $\bi = (2,2,1,2)$ and $\bj = (3,2,3,1)$,
the following consistently coloured diagram has $\wt({_\bi}\sigma_\bj)= 1$
either if $m=1,n=2$ or if $m=2,n=1$:
$$
\begin{picture}(135,76)
\put(-32,34){${_\bi}\sigma_{\bj}=$}
\put(8,74){$_3$}
\put(28,74){$_2$}
\put(48,74){$_3$}
\put(68,74){$_1$}
\put(8,-2.5){$_2$}
\put(28,-2.5){$_2$}
\put(48,-2.5){$_1$}
\put(68,-2.5){$_2$}
\put(8,4.2){$\scriptstyle\bullet$}
\put(28,4.2){$\scriptstyle\bullet$}
\put(48,4.2){$\scriptstyle\bullet$}
\put(68,4.2){$\scriptstyle\bullet$}
\put(8,64.2){$\scriptstyle\bullet$}
\put(28,64.2){$\scriptstyle\bullet$}
\put(48,64.2){$\scriptstyle\bullet$}
\put(68,64.2){$\scriptstyle\bullet$}
\put(0,6){\line(1,0){80}}
\put(0,6){\line(0,1){60}}
\put(0,66){\line(1,0){80}}
\put(80,66){\line(0,-1){60}}
\dashline{3}(20,6)(20,66)
\put(50,6){\line(1,3){20}}
\put(70,6){\line(-2,3){40}}
\put(30.3,66){\oval(40,30)[b]}
\put(20.3,6){\oval(20,20)[t]}
\end{picture}
\begin{picture}(90,76)
\put(-32,34){$\sim$}
\put(8,74){$_3$}
\put(28,74){$_2$}
\put(48,74){$_3$}
\put(68,74){$_1$}
\put(8,-2.5){$_2$}
\put(28,-2.5){$_2$}
\put(48,-2.5){$_1$}
\put(68,-2.5){$_2$}
\put(8,4.2){$\scriptstyle\bullet$}
\put(28,4.2){$\scriptstyle\bullet$}
\put(48,4.2){$\scriptstyle\bullet$}
\put(68,4.2){$\scriptstyle\bullet$}
\put(8,64.2){$\scriptstyle\bullet$}
\put(28,64.2){$\scriptstyle\bullet$}
\put(48,64.2){$\scriptstyle\bullet$}
\put(68,64.2){$\scriptstyle\bullet$}
\put(0,6){\line(1,0){80}}
\put(0,6){\line(0,1){60}}
\put(0,66){\line(1,0){80}}
\put(80,66){\line(0,-1){60}}
\dashline{3}(20,6)(20,66)
\qbezier(70,6)(50,40)(59,50)
\qbezier(59,50)(66,55)(68,45)
\qbezier(68,45)(68,35)(60,40)
\qbezier(60,40)(52,45)(30,66)

\qbezier(50,6)(35,10)(35,33)
\qbezier(35,33)(35,55)(70,66)
\put(30.3,66){\oval(40,90)[b]}
\put(20.3,6){\oval(20,50)[t]}
\end{picture}
$$

\begin{Lemma}\label{reide}
The weight $\wt({_\bi}\sigma_\bj)$ depends only on the isotopy class of $\sigma$.
\end{Lemma}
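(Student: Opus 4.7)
The plan is to rewrite $\wt({_\bi}\sigma_\bj)$ in a form that makes isotopy invariance manifest. First I would observe that the combinatorial data intrinsic to the isotopy class of $\sigma$ consists only of the pairing of endpoints into strands, together with the resulting classification of each strand as vertical, horizontal at the top, or horizontal at the bottom. This combinatorial data is enough to decide whether ${_\bi}\sigma_\bj$ is consistently coloured and, when it is, determines the colour of each strand. Consequently, the product over coloured bottom horizontal strands in the definition~(\ref{wtdef}) is manifestly an isotopy invariant.

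It remains to analyse the product over proper crossings. The idea is to regroup this product by pairs of strands. Since every proper crossing involves two distinct strands of fixed colours, we can rewrite
\begin{equation*}
\prod_{c}(-1)^{|c|} = \prod_{\{s,t\}}(-1)^{|i_s||i_t|\,c_{s,t}},
\end{equation*}
where $\{s,t\}$ runs over unordered pairs of distinct strands, $i_s$ denotes the colour of strand $s$, and $c_{s,t}$ is the number of intersection points between the strands $s$ and $t$ in the chosen representative. The problem is thus reduced to showing that $c_{s,t} \pmod 2$ depends only on the endpoints of $s$ and $t$, and in particular is independent of the isotopy representative.

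This last fact is a standard piece of planar topology. Any isotopy between two generic (transversal) representatives of $\sigma$ in the rectangle, rel.\ boundary, can be realised by a finite sequence of local moves of Reidemeister types R1, R2 and R3. An R1 move creates or destroys a self-intersection of a single strand, which is excluded from the count of proper crossings and so does not affect any $c_{s,t}$. An R2 move applied to two strands $s$ and $t$ changes $c_{s,t}$ by $\pm 2$ and leaves all other crossing counts unchanged; hence the parity $c_{s,t} \pmod 2$ is preserved. An R3 move permutes the positions of three proper crossings among three pairs of strands without altering any of the crossing counts. The hardest point to get right will be to handle the case of walled Brauer diagrams carefully, where horizontal strands occur and strands may begin and end on the same edge of the rectangle; but the same mod-2 intersection argument for two arcs in a disk with fixed boundary applies without change. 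Combining these observations yields the lemma.
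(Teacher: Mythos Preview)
Your proof is correct and follows essentially the same approach as the paper, which simply says to check invariance under the three Reidemeister moves. Your regrouping of the crossing product by pairs of strands and reduction to the mod-$2$ intersection number is a clean way to organise that check, but the underlying idea is the same.
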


\begin{proof}
Check that $\wt({_\bi}\sigma_\bj)$ does not change when any of the three Reidemeister moves are applied to $\sigma$.
\end{proof}

In the following two lemmas, we construct the required
actions of $\C \Sigma_{r+s}$ and $B_{r,s}(\delta)$
on $V^{\otimes(r+s)}$ and $V^{\otimes r} \otimes W^{\otimes s}$, respectively.
The first of these
is well known; see e.g. \cite[Corollary 1.6]{BR}.
We include the proof 
only to prepare the reader for the 
more complicated walled Brauer case in the second lemma.

\begin{Lemma}\label{easy}
There is a well-defined right $\C \Sigma_{r+s}$-module structure on 
$V^{\otimes (r+s)}$ such that
\begin{equation}\label{easyact}
v_\bi \cdot \sigma = \sum_{\bj \in I^{r+s}} \wt({_\bi}\sigma_\bj)\, v_\bj
\end{equation}
for all $\bi \in I^{r+s}$ and 
$\sigma \in \Sigma_{r+s}$.
Moreover for $1 \leq a < b \leq r+s$ the transposition
$(a\:b)$ acts in the same way as the operator
$\Omega_{a,b}$ from (\ref{genom}).
Hence the action of $\C \Sigma_{r+s}$ commutes with the action of 
$\mathfrak{g}$.
\end{Lemma}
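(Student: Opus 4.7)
The proof splits into three tasks: showing (\ref{easyact}) defines a right action, identifying $(a\,b)$ with $\Omega_{a,b}$, and deducing commutation with $\mathfrak{g}$.

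\textbf{Well-definedness and the action axioms.} The formula (\ref{easyact}) depends only on the isotopy class of the chosen representative of $\sigma$ by Lemma~\ref{reide}, so it is unambiguous. The identity is represented by the trivial diagram (vertical strands, no crossings); ${_\bi}\mathrm{id}_\bj$ is consistently coloured iff $\bj=\bi$, in which case the weight is $1$, so $v_\bi\cdot 1 = v_\bi$. For multiplicativity, represent the product $\sigma\tau$ by drawing $\sigma$ underneath $\tau$. Consistently coloured diagrams ${_\bi}(\sigma\tau)_\bj$ are in bijection with pairs of consistently coloured diagrams ${_\bi}\sigma_\bk$ and ${_\bk}\tau_\bj$; since $\sigma$ is a permutation diagram the middle coloring is forced to be $\bk=\bi\cdot\sigma$, and only one term survives. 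The proper crossings of the stacked diagram are exactly those of $\sigma$ together with those of $\tau$, so $\wt({_\bi}(\sigma\tau)_\bj)=\wt({_\bi}\sigma_\bk)\cdot\wt({_\bk}\tau_\bj)$, giving $(v_\bi\cdot\sigma)\cdot\tau = v_\bi\cdot(\sigma\tau)$.

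\textbf{Action of $(a\,b)$.} Expand $\Omega_{a,b} v_\bi$ using the super tensor-product rule $(a_1\otimes\cdots\otimes a_k)(b_1\otimes\cdots\otimes b_k) = (-1)^{\sum_{p<q}|a_q||b_p|}\,(a_1 b_1)\otimes\cdots\otimes(a_k b_k)$. Only the summand with $j=i_a$, $i=i_b$ in the definition (\ref{genom}) of $\Omega_{a,b}$ contributes; the result is $v_\bj$ where $\bj=\bi\cdot(a\,b)$. Collecting the sign from the super rule together with the explicit $(-1)^{|j|}=(-1)^{|i_a|}$ and using $|i_a|^2\equiv |i_a|\pmod 2$, the total exponent reduces to $|i_a||i_b|+(|i_a|+|i_b|)(|i_{a+1}|+\cdots+|i_{b-1}|) \pmod 2$. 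On the diagram side, a convenient representative of $(a\,b)$ has two strands of colours $i_a,i_b$ that cross once and each cross every intermediate vertical strand of colour $i_c$, $a<c<b$, once; the product of the crossing signs is $(-1)^{|i_a||i_b|}\prod_{a<c<b}(-1)^{(|i_a|+|i_b|)|i_c|}$, which agrees. Hence $v_\bi\cdot(a\,b)=\Omega_{a,b}\,v_\bi$.

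\textbf{Commutativity with $\mathfrak{g}$.} Invariance of the supertrace form yields $[\Omega,\Delta(x)]=0$ in $U(\mathfrak{g})\otimes U(\mathfrak{g})$ for all $x\in\mathfrak{g}$, so $\Omega_{a,b}$ commutes with the diagonal action of $\mathfrak{g}$ on $V^{\otimes(r+s)}$. Because the transpositions $(a\,b)$ generate $\Sigma_{r+s}$, the whole right action of $\C\Sigma_{r+s}$ commutes with the $\mathfrak{g}$-action.

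\textbf{Main obstacle.} The only non-routine step is the sign bookkeeping in the second task: the super tensor-product rule and the diagrammatic weight each produce a cascade of signs, and it takes care to show they match on the nose. The key point is that Lemma~\ref{reide} frees us to use \emph{any} isotopic representative of $(a\,b)$ -- in particular, the canonical representative with $2(b-a)-1$ proper crossings -- so that the diagrammatic sign is computed directly from the intermediate strand colours $i_{a+1},\ldots,i_{b-1}$, in perfect alignment with the super tensor-product sign arising from moving $e_{j,i}$ past the middle tensor factors.
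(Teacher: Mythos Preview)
Your proof is correct and follows essentially the same strategy as the paper: first verify associativity by stacking diagrams and using that the intermediate colouring is forced for permutation diagrams, then identify the action of transpositions with $\Omega_{a,b}$, and finally deduce $\mathfrak{g}$-equivariance from the fact that transpositions generate. The paper treats the second step as an ``easy consequence using also Lemma~\ref{oml},'' whereas you spell out the super tensor-product sign bookkeeping for a general $(a\,b)$ explicitly; this is a welcome level of detail, but not a genuinely different route.
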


\begin{proof}
Once the first statement is proved, the second statement is an easy consequence
using also Lemma~\ref{oml}. Then the final statement follows because
$\Omega_{a,b}$ acts as a $\mathfrak{g}$-supermodule endomorphism and the
transpositions generate $\Sigma_{r+s}$.
To prove the first statement, take $\sigma,\tau \in \Sigma_{r+s}$ and set $\pi := \sigma \tau$.
We need to check that (\ref{easyact}) is consistent with the associativity equation
$(v_\bi \cdot \sigma) \cdot \tau = v_\bi \cdot \pi$ for all $\bi \in I^{r+s}$, i.e.
$$
\sum_{\bj,\bk \in I^{r+s}} 
\wt({_\bi} \sigma_\bj)
\wt({_\bj}\tau_\bk)
\,v_\bk=
\sum_{\bk \in I^{r+s}} \wt({_\bi}\pi_\bk)\, v_\bk.
$$
Equating coefficients, we therefore need to see that
$$
\sum_{\bj \in I^{r+s}} 
\wt({_\bi}\sigma_\bj) \wt({_\bj}\sigma_\bk)
=\wt({_\bi}\pi_\bk)$$
for all $\bi,\bk \in I^{r+s}$.
Both sides are zero unless ${_\bi}\pi_\bk$ is consistently coloured, so we
can assume $\bk = \bi \cdot \pi$. Then the sum on the left involves just 
one non-zero term, so we can also assume that $\bj := \bi\cdot\sigma$, and 
are reduced to checking for this  $\bj$ and $\bk$ 
that
$\wt({_\bi}\sigma_{\bj}) \wt({_{\bj}}\tau_{\bk})=\wt({_\bi}\pi_{\bk})$.
This is clear from Lemma~\ref{reide}.
\end{proof}

\begin{Lemma}\label{act2}
There is a well-defined right $B_{r,s}(\delta)$-module structure on 
$V^{\otimes r} \otimes W^{\otimes s}$ such that
\begin{equation}\label{hardact}
v_{\bi^{\mathrm L}} \otimes w_{\bi^{\mathrm R}} 
\cdot \sigma := 
\sum_{\bj \in I^{r+s}} 
\wt(_{\bi}\sigma_\bj)\,
v_{\bj^{\mathrm L}} \otimes w_{\bj^{\mathrm R}}
\end{equation}
for all $\bi \in I^{r+s}$ and
all walled Brauer diagrams $\sigma \in B_{r,s}(\delta)$.
Moreover for $1 \leq a < b \leq r+s$ the ``transposition''
$\overline{(a\:b)}$ acts in the same way as the operator
$\Omega_{a,b}$ from (\ref{genom}).
Hence the action of $B_{r,s}(\delta)$ commutes with the action of 
$\mathfrak{g}$.
\end{Lemma}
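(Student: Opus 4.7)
The plan is to mimic the proof of Lemma~\ref{easy}, but with extra bookkeeping for the two features that distinguish walled Brauer diagrams from permutation diagrams: horizontal strands (which contribute the signs $(-1)^{|h|}$ to the weight) and internal circles produced by concatenation (which contribute the scalar $\delta$ to the multiplication). First I would reduce associativity of the proposed action to the combinatorial identity
\begin{equation*}
\sum_{\bj \in I^{r+s}} \wt({_\bi}\sigma_\bj)\,\wt({_\bj}\tau_\bk) \;=\; \delta^t\,\wt({_\bi}\pi_\bk)
\end{equation*}
for any walled Brauer diagrams $\sigma,\tau$ with $\sigma\tau=\delta^t\pi$, exactly as in Lemma~\ref{easy}.

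Next I would observe that a pair of consistent colourings ${_\bi}\sigma_\bj$ and ${_\bj}\tau_\bk$ is the same data as a consistent colouring of the diagram obtained by stacking $\sigma$ beneath $\tau$ before removing internal circles; such a colouring is equivalent to a consistent colouring ${_\bi}\pi_\bk$ of the reduction together with an independent choice of colour $k \in I$ for each of the $t$ internal circles. The main task is then to show that summing over these circle colours and comparing the resulting sign to $\wt({_\bi}\pi_\bk)$ produces exactly $\delta^t$. Each internal circle is an alternating composite of caps from $\sigma$ and cups from $\tau$; among these, precisely one horizontal strand per circle is a ``bottom'' horizontal strand in its own diagram (contributing a factor $(-1)^{|k|}$ in $\wt$), because after stacking the circle must touch the bottom of the composite an odd number of times once one accounts for the roles reversed by $\tau$ sitting above $\sigma$. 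Summing over $k$ this yields $\sum_{k=1}^{m+n}(-1)^{|k|} = m-n = \delta$ per circle, and the remaining signs from proper crossings and from horizontal strands that survive in $\pi$ are seen to match $\wt({_\bi}\pi_\bk)$ by a Reidemeister-move argument in the spirit of Lemma~\ref{reide}. This sign-matching is where I expect the real work to lie.

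Finally, to identify $\overline{(a\ b)}$ with the operator $\Omega_{a,b}$, I would compute the right-hand side of (\ref{hardact}) for $\sigma = \flip_{r,s}((a\ b))$ directly from the weight formula and compare with Lemma~\ref{oml}. When $a$ and $b$ lie on the same side of the wall the diagram is a permutation-type diagram and the answer is the super-swap of the two tensor factors in $V\otimes V$ or $W\otimes W$; this matches $\Omega$ by the first two formulae in Lemma~\ref{oml}. When $a$ and $b$ lie on opposite sides, the diagram has one cap at the bottom and one cup at the top joining positions $a,b$, so the weight formula forces the two incoming indices to coincide and sums over a common label $k$ at the joined strand, producing $\pm\delta_{i,j}\sum_k v_k\otimes w_k$; the sign from the bottom horizontal strand together with the extra minus sign built into the definition of $\overline{(a\ b)}$ in the opposite-sides case combine to give precisely the formula for $\Omega$ on $V\otimes W$ (or $W\otimes V$) from Lemma~\ref{oml}. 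Since the $\overline{(a\ b)}$'s generate $B_{r,s}(\delta)$ and each $\Omega_{a,b}$ commutes with the diagonal action of $\mathfrak{g}$, the final commutation statement follows at once.
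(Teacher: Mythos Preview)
Your reduction to the combinatorial identity and your reparametrisation of the sum over $\bj$ by colourings of the $t$ internal circles are both exactly what the paper does. The part that goes wrong is your claim that ``precisely one horizontal strand per circle is a `bottom' horizontal strand in its own diagram.'' This is false in general. An internal circle in the stacked diagram $\sigma|\tau$ is built from some number $z\geq 1$ of horizontal strands at the \emph{top} of $\sigma$ alternating with $z$ horizontal strands at the \emph{bottom} of $\tau$. By the definition (\ref{wtdef}) only bottom-edge horizontal strands contribute the sign $(-1)^{|h|}$, so the circle contributes $z$ such factors (all from $\tau$), not one. Your parity heuristic about ``touching the bottom of the composite an odd number of times'' does not apply: the circle is internal and never reaches the bottom edge of $\sigma|\tau$, while it meets the middle line $2z$ times.

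What rescues the count is a second contribution you have overlooked: the $z$ strands of $\alpha$ (from $\sigma$) together with the $z$ strands of $\beta$ (from $\tau$) have, between them, exactly $z-1$ \emph{proper} self-crossings (an easy induction on $z$), each contributing another factor of $(-1)^{|j_a|}$. Thus the total exponent coming from a single circle is $z+(z-1)=2z-1$, which is odd regardless of $z$, and only then does summing over the colour give $\sum_k (-1)^{|k|}=\delta$. The same ``one fewer proper crossings than bottom horizontal strands'' phenomenon is what makes the non-circle components contribute trivially, and it, rather than a pure Reidemeister argument, is the mechanism you need for the sign-matching step as well. Your treatment of $\overline{(a\ b)}$ versus $\Omega_{a,b}$ is fine.
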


\begin{proof}
We just check the first statement.
Let $\sigma, \tau$ and $\pi$ be walled Brauer diagrams such that
$\sigma \tau = \delta^t \pi$ for $t \geq 0$.
Proceeding exactly as in the proof of Lemma~\ref{easy}, one reduces to
checking the identity
\begin{equation}\label{iden}
\sum_{\bj \in I^{r+s}} \wt(_{\bi}\sigma_\bj) \wt({_\bj}\tau_\bk)
=\delta^t 
\wt({_\bi}\pi_\bk) 
\end{equation}
for all $\bi, \bk \in I^{r+s}$.
Both sides are zero unless
${_\bi}\pi_\bk$ is consistently coloured, so assume this is the case from now on.
It is convenient to think in terms of the 
composite diagram $\sigma|\tau$ that is $\sigma$ drawn under $\tau$ separated
by a horizontal boundary line on which the vertices at the top 
of $\sigma$ are
identified with the vertices at the bottom of $\tau$.
Let ${_\bi}\sigma_\bj\tau_\bk$ be the coloured diagram obtained 
by colouring the vertices
of $\sigma|\tau$ on the bottom edge by $\bi$, the vertices on the middle boundary
line by $\bj$,
and the vertices on the top edge by $\bk$.

The diagram $\pi$ is obtained from $\sigma|\tau$ by removing
the middle boundary line together with  
$t$ closed circles from the interior of the diagram. Enumerate these circles 
by $1,\dots,t$ in some order. Given $\bj = (j_1,\dots,j_t) \in I^t$,
there is a unique $\tilde\bmathj \in I^{r+s}$ such that
${_\bi}\sigma_{\tilde\bmathj}$ and ${_{\tilde\bmathj}}\tau_\bk$ are 
consistently coloured
and the $a$th circle in ${_\bi}\sigma_{\tilde\bmathj}\tau_{\bk}$
is of colour $j_a$ for each $a=1,\dots,t$.
In 
this notation, the identity we are trying to prove is equivalent to
\begin{equation}\label{simp}
\sum_{\bj \in I^t} \wt(_{\bi}\sigma_{\tilde\bmathj})\wt({_{\tilde\bmathj}}\tau_\bk)=
\delta^t 
\wt({_\bi}\pi_\bk),
\end{equation}
since all other summands on the left hand side of (\ref{iden})
are zero.
To prove (\ref{simp}), we claim for a fixed $\bj \in I^t$ that
\begin{equation}\label{claim}
\wt({_\bi}\sigma_{_{\tilde\bmathj}})\wt({_{\tilde\bmathj}}\tau_{\bk})=
(-1)^{|\bj|}
\wt({_\bi}\pi_\bk),
\end{equation}
which implies (\ref{simp})
since $\sum_{\bj \in I^t}(-1)^{|\bj|} = \delta^t$.
To prove (\ref{claim}), we compute the contribution to
$\wt({_\bi}\sigma_{_{\tilde\bmathj}})\wt({_{\tilde\bmathj}}\tau_{\bk})
\wt({_\bi}\pi_\bk)$ coming from 
each connected component of $\sigma|\tau$ in turn, showing for $1 \leq a \leq t$
that the $a$th circle contributes a factor of $(-1)^{|j_a|}$
and that all other components contribute the factor $1$.

To start with, consider the $a$th circle
for some $1 \leq a \leq t$, which of course 
is a connected component
of $\sigma|\tau$ which does not appear at all in $\pi$.
This circle is the concatenation $\alpha|\beta$
of two diagrams $\alpha$ and $\beta$, the first consisting of $z$ horizontal
strands from the top of $\sigma$ and the second consisting of $z$
horizontal strands from the bottom of $\tau$ for some $z \geq 1$.
Proceeding by induction on $z$, one checks that the number of proper crossings
in $\alpha$ plus the number of proper crossings in $\beta$ is equal to $(z-1)$.
Each of these $(z-1)$ proper crossings contributes a factor of $(-1)^{|j_a|}$ to
$\wt({_\bi}\sigma_{_{\tilde\bmathj}})\wt({_{\tilde\bmathj}}\tau_{\bk})\wt({_\bi}\pi_\bk)$, as
does 
each of the $z$ horizontal strands in $\beta$. 
Since all other connected components of $\sigma|\tau$ cross the circle $\alpha|\beta$
an even number of times,
we conclude that the total contribution to 
$\wt({_\bi}\sigma_{_{\tilde\bmathj}})\wt({_{\tilde\bmathj}}\tau_{\bk})
\wt({_\bi}\pi_{\bk})$
coming from {\em all} crossings or horizontal strands
involving at least one strand of $\alpha$ or $\beta$
is equal to $(-1)^{|j_a|}$, which is what we wanted.

There are two more types of connected component to consider, corresponding
to the horizontal and vertical strands in $\pi$. The argument for these is quite similar, so let us just
explain in the case of a 
horizontal strand $\gamma$ whose endpoints lie on the bottom edge of $\pi$.
The corresponding connected 
component of $\sigma|\tau$ is of the form $\alpha|\beta$,
where $\alpha$ consists of one or more strands from $\sigma$ and $\beta$
consists of zero or more strands from $\tau$.
Like in the previous paragraph, the number of proper crossings 
in $\alpha$ plus the number of proper crossings in $\beta$
is one less than the total number of horizontal strands in $\beta$.
These proper crossings become self-intersections in $\gamma$, and there is exactly one horizontal strand in $\gamma$ by definition, so all the contributions from these crossings and strands cancel out in the product
$\wt({_\bi}\sigma_{_{\tilde\bmathj}})\wt({_{\tilde\bmathj}}\tau_{\bk})
\wt({_\bi}\pi_{\bk})$.
Combined with Lemma~\ref{reide} this is enough to complete the proof.
\end{proof}

\phantomsubsection{Mixed Schur-Weyl duality}
The right actions constructed in 
Lemmas~\ref{easy}--\ref{act2} induce algebra homomorphisms
\begin{align}
\Phi_{r+s}^{m,n}:\C \Sigma_{r+s} &\rightarrow \End_{\mathfrak{g}}(V^{\otimes(r+s)})^{\op},\label{Phi}\\
\Psi_{r,s}^{m,n}:B_{r,s}(\delta) &\rightarrow \End_{\mathfrak{g}}(V^{\otimes r} \otimes W^{\otimes s})^{\op}.\label{Psi}
\end{align}
The first of these homomorphisms has been extensively studied in the literature
starting from the works of Sergeev \cite{Sergeev} and Berele-Regev \cite{BR}.
In particular, we have the following well-known result.

\begin{Theorem}[Sergeev, Berele-Regev]\label{onto}
The map 
$$
\Phi_{r,s}^{m,n}:\C \Sigma_{r+s} \rightarrow \End_{\mathfrak{g}}(V^{\otimes(r+s)})^{\op}
$$
from (\ref{Phi}) is surjective. It is injective if and only if
$r+s < (m+1)(n+1)$.
\end{Theorem}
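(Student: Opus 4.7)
The plan is to decompose $V^{\otimes t}$ (with $t := r+s$) as a $(\mathfrak{g}, \Sigma_t)$-bimodule and deduce both claims by a dimension count. The starting point is the assertion that $V^{\otimes t}$ is semisimple as a $\mathfrak{g}$-module. To see this, I would first restrict to the reductive even subalgebra $\mathfrak{g}_{\bar 0} = \mathfrak{gl}_m(\C) \oplus \mathfrak{gl}_n(\C)$ to decompose $V^{\otimes t}$ into polynomial $\mathfrak{g}_{\bar 0}$-isotypic components, and then check by a highest weight argument, using the Borel generated by the even upper triangular matrices together with the odd root vectors, that the decomposition lifts to a $\mathfrak{g}$-decomposition into the polynomial simples $L^{m|n}_\lambda$ of $\mathfrak{gl}_{m|n}$ indexed by $(m|n)$-hook partitions.

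Combining this with the trivial semisimplicity on the $\Sigma_t$-side, I would obtain a multiplicity-free bimodule decomposition
\begin{equation*}
V^{\otimes t} \;\cong\; \bigoplus_{\lambda} L^{m|n}_\lambda \boxtimes S^\lambda,
\end{equation*}
where $\lambda$ ranges over $(m|n)$-hook partitions of $t$, $S^\lambda$ is the Specht module, and the $L^{m|n}_\lambda$ are pairwise non-isomorphic. The matching of the two labels, and the fact that $L^{m|n}_\lambda$ is non-zero exactly for hook $\lambda$, is the crux of the argument: one realizes $L^{m|n}_\lambda$ as the image of an appropriate Young symmetrizer $c_\lambda \in \C\Sigma_t$ applied to $V^{\otimes t}$, and writes down an explicit generator in terms of products of column-shaped tensors built from the basis of $V$, verifying its non-vanishing precisely when $\lambda_{m+1} \leq n$.

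Once this bimodule decomposition is in hand, surjectivity is immediate from Artin--Wedderburn, since
\begin{equation*}
\End_{\mathfrak{g}}(V^{\otimes t})^{\op} \;\cong\; \bigoplus_{\lambda\text{ hook}} \End(S^\lambda)^{\op}
\end{equation*}
coincides with the image of $\C\Sigma_t$ acting on the $S^\lambda$ factors. For injectivity, the kernel of $\Phi^{m,n}_{r+s}$ has dimension $\sum_{\lambda\text{ not hook}} (\dim S^\lambda)^2$, which vanishes iff every $\lambda \vdash t$ is $(m|n)$-hook. The smallest non-hook partition is the rectangle $((n+1)^{m+1})$ of size $(m+1)(n+1)$, yielding the stated criterion $t < (m+1)(n+1)$. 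I expect the main obstacle to be the combinatorial identification of the image of $c_\lambda$ on $V^{\otimes t}$ and its non-vanishing criterion: the hook condition is sensitive to the super structure, so the highest weight computation distinguishing it from the classical case must be carried out carefully using the signs built into the $\mathfrak{g}$-action on $V$ and $W$.
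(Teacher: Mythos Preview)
Your proposal is correct and is essentially the same approach as the paper's: the paper simply cites the Berele--Regev and Sergeev results (surjectivity from \cite[Remark~4.15]{BR}, the hook-partition criterion from \cite[Theorem~3.20]{BR}), and what you have sketched is precisely the content of those cited results --- the $(\mathfrak{g},\Sigma_t)$-bimodule decomposition of $V^{\otimes t}$ over $(m|n)$-hook partitions, followed by the observation that every partition of $t$ is a hook iff $t<(m+1)(n+1)$.
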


\begin{proof}
The surjectivity follows from \cite[Remark 4.15]{BR}.
For the injectivity,
\cite[Theorem 3.20]{BR}
implies that $\Phi_{r,s}^{m,n}$ is injective
if and only if all partitions of $(r+s)$ are
{\em $(m,n)$-hook partitions} in the sense of \cite[Definition 2.3]{BR}, i.e.
their Young diagrams do not contain the box in
row $(m+1)$ and column $(n+1)$.
This is easily seen to be 
equivalent to the condition $r+s < (m+1)(n+1)$.
(All the results just cited from \cite{BR} were also established 
independently in \cite{Sergeev}.)
\end{proof}

We now want to prove that the analogue
of Theorem~\ref{onto} holds for the walled Brauer algebra (with $\Phi_{r+s}^{m,n}$ replaced by $\Psi_{r,s}^{m,n}$).
We begin by
recalling some standard facts about the 
tensor category of
finite dimensional $\mathfrak{g}$-supermodules.
Let $K, L, M$ and $N$ be finite dimensional vector superspaces.
We identify
$M \otimes N =  N \otimes M$
so that $m \otimes n = (-1)^{|m||n|} n \otimes m$ for homogeneous
$m\in M, n \in N$ (and $\otimes$ always means $\otimes_{\C}$).
Similarly, we identify
\begin{equation}\label{stupid}
\hom(M, N) \otimes \hom(K, L)
=
\hom(M \otimes K, N \otimes L)
\end{equation}
so that
$f \otimes g \in\hom(M, N) \otimes \hom(K, L)$
is identified with the homomorphism in $\hom(M\otimes K, N \otimes L)$
defined from 
$(f \otimes g)(m \otimes k) := (-1)^{|g||m|} f(m) \otimes g(k)$
for homogeneous $g$ and $m$.
Finally we identify $M$ 
with $M^{**}$, so a homogeneous vector $m \in M$
is identified with the function $m \in M^{**}$
defined from $m(\mu) = (-1)^{|m||\mu|}\mu(m)$ for all homogeneous $\mu \in M^*$.
As usual, for homogeneous $f \in \hom(M,N)$, we have 
the dual map $f^* \in \hom(N^*,M^*)$
defined from $f(\nu)(m) = (-1)^{|f||\nu|}\nu(f(m))$
for homogeneous $\nu \in N^*, m \in M$, and $f^{**} = f$.

The canonical isomorphism
$N \otimes L^* \stackrel{\sim}{\rightarrow} \hom(L,N),
f \mapsto \widetilde{f}$ defined from
$\widetilde{n \otimes \lambda}(l) = \lambda(l) n$
for $n \in N$, 
$\lambda \in L^*, l \in L$
induces the isomorphism
\begin{equation}\label{adj}
\operatorname{adj}:
\hom(M,N\otimes L^*) \stackrel{\sim}{\rightarrow}
\hom(M \otimes L, N),\qquad
f \mapsto \widehat{f}
\end{equation}
such that $\widehat{f}(m \otimes l) = \widetilde{f(m)}(l)$
for $m \in M, l \in L$. 
If
$M, N$ and $L$ are all $\mathfrak{g}$-supermodules then
this is a $\mathfrak{g}$-supermodule isomorphism, so
it restricts to an isomorphism
$\hom_{\mathfrak{g}}(M, N \otimes L^*) \cong \hom_{\mathfrak{g}}(M \otimes L, N)$.
This is the canonical adjunction making
$(? \otimes L, ? \otimes L^*)$ into an adjoint pair of functors on the category
of finite dimensional $\mathfrak{g}$-supermodules.

\begin{Lemma}\label{duals}
Let $K, L, M$ and $N$ be finite dimensional $\mathfrak{g}$-supermodules.
The map $\flip$ defined by the following commutative diagram
is a $\mathfrak{g}$-supermodule isomorphism:
$$
\begin{CD}
\hom(M\otimes K, N \otimes L)
&@>\flip>> &
\hom(M \otimes L^*, N \otimes K^*)\\
@|&&@|\\
\hom(M, N) \otimes \hom(K, L)&@>{f\otimes g \mapsto f \otimes g^*}>>&
\hom(M,N) \otimes \hom(L^*,K^*).
\end{CD}
$$
\end{Lemma}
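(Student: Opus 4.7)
The plan is to verify the lemma by reducing the map $\flip$ to a super-swap of tensor factors, which is manifestly a $\mathfrak{g}$-supermodule isomorphism because $\mathfrak{g}$ acts on tensor products through the cocommutative primitive coproduct $\Delta(x) = x \otimes 1 + 1 \otimes x$.

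First I would unravel both sides using the canonical identifications $\hom(A,B) = B \otimes A^*$ (for finite dimensional $A,B$). This gives
\[
\hom(M \otimes K,\, N \otimes L) \;\cong\; N \otimes L \otimes (M \otimes K)^* \;\cong\; N \otimes L \otimes K^* \otimes M^*,
\]
and similarly
\[
\hom(M \otimes L^*,\, N \otimes K^*) \;\cong\; N \otimes K^* \otimes L \otimes M^*,
\]
where in the middle step I use the canonical (super) isomorphism $(M\otimes K)^* \cong K^* \otimes M^*$ together with $K^{**} \cong K$. Each of these identifications is a $\mathfrak{g}$-supermodule isomorphism since both dualization and tensor product are functorial on the category of $\mathfrak{g}$-supermodules.

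Next I would check that under these identifications the map $\flip$ corresponds precisely to the super-swap $\tau \otimes \id$, where $\tau: L \otimes K^* \to K^* \otimes L$ is the super-braiding $l \otimes \lambda \mapsto (-1)^{|l||\lambda|} \lambda \otimes l$. This comes down to tracking the signs that appear in (\ref{stupid}) and in the identification $m(\mu) = (-1)^{|m||\mu|}\mu(m)$: a pure tensor $f \otimes g$ with $f \in \hom(M,N) = N \otimes M^*$ and $g \in \hom(K,L) = L \otimes K^*$ corresponds after reordering to an element of $N \otimes L \otimes K^* \otimes M^*$, while $f \otimes g^*$ with $g^* \in \hom(L^*, K^*) = K^* \otimes L^{**} = K^* \otimes L$ corresponds to an element of $N \otimes K^* \otimes L \otimes M^*$, and the two differ by exactly the super-swap of the middle two factors.

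Finally, since the super-braiding $\tau$ is a $\mathfrak{g}$-supermodule isomorphism $L \otimes K^* \to K^* \otimes L$ (the standard fact that the symmetric monoidal structure on super vector spaces commutes with any Hopf-algebra action via a primitive coproduct), tensoring on the left with $\id_N$ and on the right with $\id_{M^*}$ yields a $\mathfrak{g}$-supermodule isomorphism, which is exactly $\flip$. The main obstacle is the bookkeeping of the several sign factors introduced by the various identifications — in particular making sure that the signs in (\ref{stupid}) and in $m(\mu) = (-1)^{|m||\mu|}\mu(m)$ combine to give precisely the super-braiding sign and no extra; once this bookkeeping is done carefully, the conclusion is immediate.
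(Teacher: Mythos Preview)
Your proposal is correct but follows a different route from the paper. The paper instead \emph{redefines} $\flip$ via a second commutative diagram built entirely out of the adjunction isomorphisms $\operatorname{adj}$ from (\ref{adj}):
\[
\begin{CD}
\hom(M\otimes K, N \otimes L)&@>\flip>> &\hom(M \otimes L^*, N \otimes K^*)\\
@|&&@VV\operatorname{adj}V\\
\hom(M\otimes K, N\otimes L^{**})&&&&\hom(M\otimes L^* \otimes K, N)\\
@V\operatorname{adj}VV&&@|\\
\hom(M\otimes K \otimes L^*, N)&@=&\hom(M\otimes K\otimes L^*, N).
\end{CD}
\]
Since each $\operatorname{adj}$ is already known to be a $\mathfrak{g}$-supermodule isomorphism, this forces $\flip$ to be one as well; the remaining work is a direct calculation that this alternative definition agrees with $f\otimes g \mapsto f\otimes g^*$. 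By contrast, you identify both hom-spaces with four-fold tensor products via $\hom(A,B)\cong B\otimes A^*$ and reduce $\flip$ to the super-braiding $L\otimes K^* \to K^*\otimes L$. Your approach is more elementary in spirit and avoids invoking the adjunction machinery, but the price is the sign bookkeeping you flag at the end --- you must verify that the identifications $(M\otimes K)^*\cong K^*\otimes M^*$, $L^{**}\cong L$, and (\ref{stupid}) combine compatibly. The paper's approach trades that bookkeeping for a single element-wise check at the end, and has the advantage of reusing the adjunction (\ref{adj}) already established for other purposes in the section.
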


\begin{proof}
Let us instead define $\flip$ so that the following diagram commutes:
$$
\begin{CD}
\hom(M\otimes K, N \otimes L)&@>\flip>> &\hom(M \otimes L^*, N \otimes K^*)\\
@|&&@VV\operatorname{adj}V\\
\hom(M\otimes K, N\otimes L^{**})&&&&\hom(M\otimes L^* \otimes K, N)\\
@V\operatorname{adj}VV&&@|\\
\hom(M\otimes K \otimes L^*, N)&@=&\hom(M\otimes K\otimes L^*, N).\\
\end{CD}
$$
Recalling (\ref{adj}),
all of the other maps in this diagram are $\mathfrak{g}$-supermodule isomorphisms,
hence $\flip$ is one too.
To complete the proof of the lemma, it remains to take
$f \in \hom(M,N)$ and $g \in \hom(K, L)$
and check that $\flip(f \otimes g) = f \otimes g^*$, which is a routine calculation.
\end{proof}

Like in (\ref{stupid}), it is natural to identify $W^{\otimes s}$ with 
$(V^{\otimes s})^*$ so that
\begin{equation}\label{woopdeedoo}
w_{\bi}(v_{\bj}) = (-1)^{p(\bi)} \delta_{\bi,\bj}
\end{equation}
for all $\bi,\bj \in I^s$, recalling (\ref{pdef}).
If we apply Lemma~\ref{duals} to
$M = N = V^{\otimes r}$ and $K = L = V^{\otimes s}$,
we see that the map $\flip_{r,s}$ defined by the following commutative
diagram is a $\mathfrak{g}$-supermodule isomorphism:
\begin{equation}\label{nuals}
\begin{CD}
\End(V^{\otimes (r+s)})
&@>\flip_{r,s}>> &
\End(V^{\otimes r} \otimes W^{\otimes s})
\\
@|&&@|\\
\End(V^{\otimes r}) \otimes \End(V^{\otimes s})
&@>{f\otimes g \mapsto f \otimes g^*}>>&
\End(V^{\otimes r}) \otimes \End(W^{\otimes s}).
\end{CD}
\end{equation}
The main point now is to see that this map is consistent
with the linear isomorphism 
$\flip_{r,s}:\C \Sigma_{r+s} \rightarrow B_{r,s}(\delta)$ from
(\ref{oldflip}) in the following sense.

\begin{Lemma}\label{lastc}
The following diagram commutes:
\begin{equation}\label{ad}
\begin{CD}
\C \Sigma_{r+s}&@>\flip_{r,s}>>&B_{r,s}(\delta)\\
@V\Phi_{r+s}^{m,n} VV&&@VV\Psi_{r,s}^{m,n} V\\
\End(V^{\otimes(r+s)})&@>\flip_{r,s}>>&\End(V^{\otimes r} \otimes W^{\otimes s}).
\end{CD}
\end{equation}
\end{Lemma}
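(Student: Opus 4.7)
The plan is to verify commutativity of (\ref{ad}) on each permutation diagram $\sigma \in \Sigma_{r+s}$ by comparing matrix coefficients with respect to the monomial bases of $V^{\otimes(r+s)}$ and $V^{\otimes r} \otimes W^{\otimes s}$. Since all four arrows in (\ref{ad}) are linear and $\Phi_{r+s}^{m,n}, \Psi_{r,s}^{m,n}$ are determined on these basis elements by the explicit weight formulas (\ref{easyact}) and (\ref{hardact}), the claim reduces to a purely combinatorial identity about weights of labelled diagrams.

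The first step is to make the bottom map $\flip_{r,s}$ fully explicit. Unwinding the composition of adjunctions defining $\flip$ in the proof of Lemma~\ref{duals}, together with the pairing $w_\bi(v_\bj) = (-1)^{p(\bi)} \delta_{\bi,\bj}$ from (\ref{woopdeedoo}) and the Koszul sign convention (\ref{stupid}), a direct calculation should yield, for any $F \in \End(V^{\otimes(r+s)})$ whose matrix coefficients are defined by $F(v_\bk) = \sum_\bh F_{\bk,\bh} v_\bh$, a formula of the shape
\[
\flip_{r,s}(F)(v_{\bi^{\mathrm L}} \otimes w_{\bi^{\mathrm R}}) = \sum_{\bj^{\mathrm L}, \bj^{\mathrm R}} \varepsilon(\bi,\bj)\, F_{(\bi^{\mathrm L} \bj^{\mathrm R}),(\bj^{\mathrm L} \bi^{\mathrm R})}\, v_{\bj^{\mathrm L}} \otimes w_{\bj^{\mathrm R}},
\]
where $\varepsilon(\bi,\bj) \in \{\pm 1\}$ is an explicit sign depending only on $p(\bi^{\mathrm R}), p(\bj^{\mathrm R})$ and the parities of the entries of $\bi^{\mathrm R}, \bj^{\mathrm R}$. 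Taking $F = \Phi_{r+s}^{m,n}(\sigma)$ so that $F_{\bk,\bh} = \wt({}_\bk \sigma_\bh)$, and using (\ref{hardact}) to expand the other side of (\ref{ad}), equating the coefficients of $v_{\bj^{\mathrm L}} \otimes w_{\bj^{\mathrm R}}$ reduces the problem to the combinatorial identity
\[
\varepsilon(\bi,\bj)\, \wt({}_{(\bi^{\mathrm L} \bj^{\mathrm R})}\, \sigma\, {}_{(\bj^{\mathrm L} \bi^{\mathrm R})}) = \wt({}_{(\bi^{\mathrm L} \bi^{\mathrm R})}\, \flip_{r,s}(\sigma)\, {}_{(\bj^{\mathrm L} \bj^{\mathrm R})})
\]
for every permutation diagram $\sigma$ and every $\bi,\bj \in I^{r+s}$.

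Geometrically this identity is natural: the relabelling on the left-hand side is exactly the effect of flipping the right half of $\sigma$, since the flip interchanges the top-right and bottom-right endpoints of every strand that meets the right half of the rectangle. Consequently a consistent colouring of $\sigma$ with bottom labels $(\bi^{\mathrm L}\bj^{\mathrm R})$ and top labels $(\bj^{\mathrm L}\bi^{\mathrm R})$ corresponds bijectively to a consistent colouring of $\flip_{r,s}(\sigma)$ with bottom labels $(\bi^{\mathrm L}\bi^{\mathrm R})$ and top labels $(\bj^{\mathrm L}\bj^{\mathrm R})$. Strands that cross the wall in $\sigma$ turn into horizontal cups or caps in $\flip_{r,s}(\sigma)$, right-side vertical strands remain vertical with endpoints swapped, and left-side strands are untouched.

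The hard part will be the sign bookkeeping. I would verify the identity by analysing how each strand of $\sigma$ contributes to the two weights, breaking into the four cases determined by which side of the wall each endpoint lies on; the most delicate contributions come from horizontal strands newly appearing at the bottom edge of $\flip_{r,s}(\sigma)$, which pick up extra factors $(-1)^{|k|}$ by (\ref{wtdef}), and from proper crossings between right-side strands, whose signs can change under the flip. As a sanity check before attempting the general case, one can verify the identity on the basic transpositions $\sigma_a = (a\:a{+}1)$: for $a \neq r$ it reduces via Lemma~\ref{oml} to the assertion that $\flip_{r,s}$ carries the endomorphism $\Omega_{a,a+1}$ of $V^{\otimes(r+s)}$ to that of $V^{\otimes r} \otimes W^{\otimes s}$, while for $a = r$ the extra minus sign coming from the convention $\tau_r = -\overline{(r\,r{+}1)}$ is produced precisely by dualising the second tensor factor of $\Omega_{r,r+1}$.
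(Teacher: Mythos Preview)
Your plan is correct and follows essentially the same route as the paper: reduce to an identity of matrix coefficients, which becomes the combinatorial sign identity you wrote, and verify it by tracking how crossings and horizontal strands behave when the right half of the diagram is flipped. One efficiency point: rather than unwinding the adjunctions, the paper uses the characterisation $\flip_{r,s}(f\otimes g)=f\otimes g^*$ from Lemma~\ref{duals} and computes $(e_{\bi,\bj})^*=(-1)^{(|\bi|+|\bj|)|\bi|+p(\bi)+p(\bj)}f_{\bj,\bi}$ directly from (\ref{woopdeedoo}), which immediately gives your unspecified sign $\varepsilon(\bi,\bj)$ and makes the remaining bookkeeping cleaner.
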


\begin{proof}
This is just a direct calculation but the signs are 
tricky so we go through the details carefully. 
For $\bi,\bj \in I^r$, let $e_{\bi,\bj} \in  \End(V^{\otimes r})$
be the matrix unit defined from 
$e_{\bi,\bj}(v_\bk) = \delta_{\bj,\bk} v_\bi$.
Similarly for $\bi,\bj \in I^s$, let $f_{\bi,\bj} \in  \End(W^{\otimes s})$
be the matrix unit defined from 
$f_{\bi,\bj}(w_\bk) = \delta_{\bj,\bk} w_\bi$.
Using (\ref{woopdeedoo}), it is straightforward to check for $\bi,\bj \in I^s$ that
\begin{equation}\label{lastt}
(e_{\bi,\bj})^* = 
(-1)^{(|\bi|+|\bj|) |\bi|+p(\bi)+p(\bj)}
f_{\bj,\bi}.
\end{equation}
Now take $\sigma \in \Sigma_{r+s}$ and let $\tau := \flip_{r,s}(\sigma) \in B_{r,s}(\delta)$.
Using (\ref{easyact}) and the identification of
$\End(V^{\otimes (r+s)})$ with $\End(V^{\otimes r}) \otimes \End(V^{\otimes s})$,
we see that
\begin{equation}\label{first}
\Phi_{r+s}^{m,n}(\sigma) = 
\sum_{\bi,\bj \in I^{r+s}}
(-1)^{(|\bi^{\mathrm R}|+|\bj^{\mathrm R}|) |\bi^{\mathrm L}|}
\wt({_\bi}\sigma_\bj)\, e_{\bj^{\mathrm L}, \bi^{\mathrm L}} \otimes e_{\bj^{\mathrm R}, \bi^{\mathrm R}}.
\end{equation}
Similarly using (\ref{hardact}) we have that
\begin{equation}\label{second}
\Psi_{r,s}^{m,n}(\tau) = 
\sum_{\bi,\bj \in I^{r+s}}
(-1)^{(|\bi^{\mathrm R}|+|\bj^{\mathrm R}|) |\bi^{\mathrm L}|}
\wt({_\bi}\tau_\bj)\, e_{\bj^{\mathrm L}, \bi^{\mathrm L}} \otimes f_{\bj^{\mathrm R}, \bi^{\mathrm R}}.
\end{equation}
Reindex the summation on the right hand side of (\ref{first})
to switch the roles of $\bi^{\mathrm R}$ and $\bj^{\mathrm R}$, then apply the map $\flip_{r,s}$ from 
(\ref{nuals})
and use (\ref{lastt})
to deduce that
$$
\flip_{r,s}(\Phi_{r+s}^{m,n}(\sigma)) = \!\!\!\!
\sum_{\bi,\bj \in I^{r+s}}\!\!\!\!
(-1)^{(|\bi^{\mathrm R}|+|\bj^{\mathrm R}|) |\bi|+p(\bi^{\mathrm R})+p(\bj^{\mathrm R})}
\wt({_{\bi^{\mathrm L}\bj^{\mathrm R}}}\sigma_{\bj^{\mathrm L} \bi^{\mathrm R}})\, e_{\bj^{\mathrm L}, \bi^{\mathrm L}} \otimes f_{\bj^{\mathrm R}, \bi^{\mathrm R}}.
$$
Comparing with (\ref{second})
the proof of the lemma is now reduced to verifying the combinatorial identity
\begin{equation}\label{Reduce}
(-1)^{(|\bi^{\mathrm R}|+|\bj^{\mathrm R}|) |\bi^{\mathrm R}|+p(\bi^{\mathrm R})+p(\bj^{\mathrm R})}
\wt({_{\bi^{\mathrm L}\bj^{\mathrm R}}}\sigma_{\bj^{\mathrm L} \bi^{\mathrm R}})
=
\wt({_\bi}\tau_\bj)
\end{equation}
for any $\bi,\bj \in I^{r+s}$.
It is clear that both sides of (\ref{Reduce}) are zero
unless ${_\bi}\tau_\bj$ is consistently coloured.
Assume this is the case from now on.

Partition the set
$\{r+1,\dots,r+s\}$ as
$\{a_1 < \cdots < a_k\} \sqcup\{b_1 < \cdots < b_l\}$
so that the vertices $a_1,\dots,a_k$ 
on the bottom edge
are at the ends of Horizontal strands of $\tau$
and the vertices $b_1,\dots,b_l$ on the bottom edge
are at the ends of Vertical strands of $\tau$.
Let $\bi^{\mathrm{RH}}:= (i_{a_1},\dots,i_{a_k})$
and $\bi^{\mathrm{RV}} := (i_{b_1},\dots,i_{b_l})$.
So the tuple $\bi^{\mathrm{RH}}$ lists the colours of all the horizontal segments
in ${_\bi}\tau_\bj$
whose endpoints lie on the bottom edge
and we have that
\begin{equation}\label{id1}
|\bi^{\mathrm R}| = |\bi^{\mathrm{RH}}| + |\bi^{\mathrm{RV}}|,
\qquad
p(\bi^{\mathrm R}) = p(\bi^{\mathrm{RH}}) + |\bi^{\mathrm{RH}}| |\bi^{\mathrm{RV}}| + 
p(\bi^{\mathrm{RV}}).
\end{equation}
Similarly we define $\bj^{\mathrm{RH}} \in I^k$ and $\bj^{\mathrm{RV}} \in I^l$
so that $\bj^{\mathrm{RH}}$ lists the colours of all the horizontal line segments in ${_\bi}\tau_\bj$ whose endpoints lie on the top edge
and 
\begin{equation}\label{id2}
|\bj^{\mathrm R}| = |\bj^{\mathrm{RH}}| + |\bj^{\mathrm{RV}}|,
\qquad
p(\bj^{\mathrm R}) = p(\bj^{\mathrm{RH}}) + |\bj^{\mathrm{RH}}| |\bj^{\mathrm{RV}}| + 
p(\bj^{\mathrm{RV}}).
\end{equation} 
The assumption that ${_\bi}\tau_\bj$ is consistently
coloured implies that 
\begin{equation}\label{id3}
|\bi^{\mathrm{RV}}| = |\bj^{\mathrm{RV}}|.
\end{equation}

Finally let us assume the diagrams representing $\sigma$ and $\tau$
are reduced in the sense that the total number of crossings is 
as small as possible and each strand crosses the wall at most once.
Then by the definition (\ref{wtdef}) we have that
\begin{equation}\label{a}
\wt({_{\bi^{\mathrm L}\bj^{\mathrm R}}}\sigma_{\bj^{\mathrm L} \bi^{\mathrm R}}) = \prod_c (-1)^{|c|}
\end{equation}
where the product is over all coloured crossings $c$ in the diagram
${_{\bi^{\mathrm L}\bj^{\mathrm R}}}\sigma_{\bj^{\mathrm L} \bi^{\mathrm R}}$. 
Similarly we have that 
\begin{equation}\label{b}
(-1)^{|\bi^{\mathrm{RH}}|}
\wt({_\bi}\tau_{\bj}) = \prod_{c'} (-1)^{|c'|}
\end{equation}
where the product is over all coloured crossings $c'$ in the diagram
${_\bi}\tau_\bj$.
When flipping from $\sigma$ to $\tau$, any
strand of $\sigma$ which does not cross the wall 
transforms into a vertical strand in $\tau$ 
which has exactly the same crossings with other strands after the flip as it did before.
However given two strands in $\sigma$ 
which both cross the wall (transforming to two horizontal strands in $\tau$), 
they cross after the flip if and only if they did {\em not}
cross before the flip, and vice versa.
Combined with (\ref{a})--(\ref{b}), this is enough to see that
\begin{equation*}
\wt({_{\bi^{\mathrm L}\bj^{\mathrm R}}}\sigma_{\bj^{\mathrm L} \bi^{\mathrm R}})
=
(-1)^{p(\bi^{\mathrm{RH}}) + |\bi^{\mathrm{RH}}||\bj^{\mathrm{RH}}|+ p(\bj^{\mathrm{RH}})}
(-1)^{|\bi^{\mathrm{RH}}|} \wt({_\bi}\tau_\bj).
\end{equation*}
Comparing with (\ref{Reduce}),
it remains to observe that
$$
(|\bi^{\mathrm R}|+|\bj^{\mathrm R}|) |\bi^{\mathrm R}|+p(\bi^{\mathrm R})+p(\bj^{\mathrm R})
=p(\bi^{\mathrm{RH}}) + |\bi^{\mathrm{RH}}||\bj^{\mathrm{RH}}|+ p(\bj^{\mathrm{RH}})+|\bi^{\mathrm{RH}}|
$$
in $\Z_2$,
which follows formally from the identities
(\ref{id1})--(\ref{id3}).
\end{proof}

\begin{Theorem}\label{main1}
The map $$
\Psi_{r,s}^{m,n}:B_{r,s}(\delta) \rightarrow \End_{\mathfrak{g}}(V^{\otimes r}\otimes W^{\otimes s})^{\op}
$$ 
from (\ref{Psi}) is surjective. It is injective if and only if
$r+s < (m+1)(n+1)$.
\end{Theorem}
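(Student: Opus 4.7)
The plan is to deduce this theorem directly from the symmetric group case (Theorem~\ref{onto}) by exploiting the commutative diagram in Lemma~\ref{lastc}. All of the serious combinatorial work has already been done in establishing that lemma, so the remaining argument is mostly formal.

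First I would record the observation that the bottom horizontal arrow $\flip_{r,s}$ in (\ref{ad}), being a $\mathfrak{g}$-supermodule isomorphism by Lemma~\ref{duals}, restricts to a linear isomorphism
\[
\flip_{r,s}:\End_{\mathfrak{g}}(V^{\otimes(r+s)})\stackrel{\sim}{\longrightarrow}\End_{\mathfrak{g}}(V^{\otimes r}\otimes W^{\otimes s}),
\]
since the $\mathfrak{g}$-invariants functor sends $\mathfrak{g}$-module isomorphisms to isomorphisms. The top horizontal arrow $\flip_{r,s}:\C\Sigma_{r+s}\to B_{r,s}(\delta)$ is already known to be a linear isomorphism by (\ref{oldflip}).

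Next I would combine these two facts with the commutativity of (\ref{ad}): since both horizontal maps in that diagram are linear isomorphisms (after restricting the bottom one to $\mathfrak{g}$-invariants), the linear maps $\Phi_{r+s}^{m,n}$ and $\Psi_{r,s}^{m,n}$ have isomorphic kernels and their images have the same codimension in the respective endomorphism spaces. Therefore $\Psi_{r,s}^{m,n}$ is surjective (resp.\ injective) if and only if $\Phi_{r+s}^{m,n}$ is surjective (resp.\ injective). Applying Theorem~\ref{onto} immediately gives surjectivity of $\Psi_{r,s}^{m,n}$ together with the criterion $r+s<(m+1)(n+1)$ for injectivity.

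There is no real obstacle: once Lemma~\ref{lastc} is in hand (which is where all the sign bookkeeping lives), the theorem reduces entirely to the classical Sergeev--Berele--Regev Schur--Weyl duality recalled in Theorem~\ref{onto}. The only point that requires any care is noting that $\flip_{r,s}$ on endomorphism spaces need not be an algebra homomorphism, but this is irrelevant because surjectivity and injectivity of the linear maps $\Phi_{r+s}^{m,n}$ and $\Psi_{r,s}^{m,n}$ are purely linear-algebraic properties.
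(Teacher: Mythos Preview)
Your proof is correct and is essentially identical to the paper's own argument: the paper likewise observes that the bottom $\flip_{r,s}$ in diagram (\ref{ad}) is a $\mathfrak{g}$-supermodule isomorphism (hence restricts to an isomorphism on $\mathfrak{g}$-invariants), and then deduces the result from Lemma~\ref{lastc} combined with Theorem~\ref{onto}. Your additional remark that $\flip_{r,s}$ need not be an algebra map is a nice clarification, but the underlying strategy is the same.
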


\begin{proof}
We know from Lemma~\ref{duals}
that the bottom map in the diagram (\ref{ad})
is a $\mathfrak{g}$-supermodule isomorphism, hence it
maps the subspace $\End_{\mathfrak{g}}(V^{\otimes (r+s)})$
isomorphically onto the subspace
$\End_{\mathfrak{g}}(V^{\otimes r} \otimes W^{\otimes s})$.
The theorem follows from this observation combined with Lemma~\ref{lastc} 
and Theorem~\ref{onto}.
\end{proof}

\section{Isomorphism theorem and applications}\label{smain}

In order to prove the main results of the article, we still need to
show that
the graded walled Brauer algebra $B_R(\delta)$ is isomorphic to
$B_{r,s}(\delta)$.
Throughout the section we fix integers $m,n \geq 0$, 
and set $\delta := m-n$, $k := \min(m,n)$. 
This is equivalent to fixing $\delta \in \Z$, $k \geq 0$, and letting
$m := k, n := k-\delta$ if $\delta \leq 0$
or $m := k +\delta, n := k$ if $\delta \geq 0$.

\phantomsubsection{The main isomorphism theorem}
Recall the algebra $B_R(\delta) / B_R(\delta)_{> k}$ from
(\ref{subalgbas}), and
the maps (\ref{iotai}) and (\ref{leg1})--(\ref{leg2}).

\begin{Theorem}\label{isotheorem}
For each $R \in \Seq_{r,s}$,
there is a surjective algebra homomorphism
\begin{equation}\label{thetaonto}
\Theta^{(k)}_{R}:B_{r,s}(\delta) \twoheadrightarrow
B_R(\delta) / B_R(\delta)_{> k}
\end{equation}
with the same kernel as the map $\Psi^{m,n}_{r,s}$ from
Theorem~\ref{main1}.
Moreover the following diagrams commute.
\begin{align*}
\begin{CD}
 B_{r,s}(\delta) &@>\iota_{r,s;i}^{r+1,s} >>&
 B_{r+1,s}(\delta) \\
@V\Theta_{R}^{(k)} VV&&@VV\Theta^{(k)}_{RE} V\\
 B_R(\delta) / B_R(\delta)_{> k}&@>\iota_{R;i}^{RE} >>& B_{RE}(\delta)/
 B_{RE}(\delta)_{> k}
\end{CD}
\end{align*}\begin{align*}
\begin{CD}
B_{r,s}(\delta) &@>\iota_{r,s;i}^{r,s+1} >>&
B_{r,s+1}(\delta) \\
@V\Theta^{(k)}_{R} VV&&@VV\Theta^{(k)}_{RF} V\\
B_{R}(\delta) / B_R(\delta)_{> k}&@>\iota_{R;i}^{RF} >>&
B_{RF}(\delta) / B_{RF}(\delta)_{> k}
\end{CD}
\end{align*}
\end{Theorem}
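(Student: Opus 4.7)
The plan is to build $\Theta_R^{(k)}$ as a factorization of $\Psi_{r,s}^{m,n}$ through a diagrammatic description of $\End_\mathfrak{g}(V^{\otimes r} \otimes W^{\otimes s})^{\op}$ supplied by the results of \cite{BS4}. More precisely, \cite{BS4} provides an equivalence between a suitable block of the category of rational $GL_{m|n}(\C)$-supermodules and the quotient of $\Mod{K(\delta)}$ by the Serre subcategory generated by the irreducibles $L(\la)$ with $k(\la) > k$; this truncation reflects the fact that the atypicality of any $GL_{m|n}$-weight is bounded by $k = \min(m,n)$. Under this equivalence the trivial module corresponds to $P(\eta)$, while tensoring by $V$ and by $W$ correspond respectively to the special projective endofunctors $\lonestar E$ and $\lonestar F$ on $\Mod{K(\delta)}$.

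For any $R = R^{(1)} \cdots R^{(r+s)} \in \Seq_{r,s}$, iterated tensoring with $V$ or $W$ in the order dictated by $R$ produces a $\mathfrak{g}$-supermodule canonically isomorphic to $V^{\otimes r} \otimes W^{\otimes s}$, and under the equivalence this corresponds to the image of $\lonestar R\, P(\eta)$ in the truncated category. Its endomorphism algebra is therefore identified with the quotient of $B_R(\delta) = \End_{K(\delta)}(\lonestar R\, P(\eta))^{\op}$ by the ideal cut out by the truncation, which by Lemma \ref{lehrerzhangsetup} and the definition of $k(\la)$ is precisely $B_R(\delta)_{> k}$. Composing this identification with $\Psi_{r,s}^{m,n}$ from Theorem \ref{main1} defines the homomorphism $\Theta_R^{(k)}$; its surjectivity and the identification $\ker \Theta_R^{(k)} = \ker \Psi_{r,s}^{m,n}$ are then immediate from Theorem \ref{main1}.

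The commutativity of the two diagrams should follow from the naturality of the \cite{BS4}-equivalence with respect to the tensor-by-$V$ and tensor-by-$W$ functors. Indeed, $\iota_{r,s;i}^{r+1,s}$ is the composition of the embedding $\iota_{r,s}^{r+1,s}$ with projection onto the generalized $i$-eigenspace of the Jucys-Murphy element $x_{r,s}^{r+1,s}$; by Lemmas \ref{eigenvalue}--\ref{altdef} this amounts to tensoring with $V$ followed by the $i$-weight decomposition with respect to contents. On the diagrammatic side, $\iota_{R;i}^{RE}$ is by definition tensoring with $\lonestar E$ followed by projection onto the summand $\lonestar E_i$. The weight dictionary (\ref{Iu})--(\ref{dict}), together with Remark \ref{newremark}, matches up the two labellings by $i$, so naturality of the equivalence from \cite{BS4} forces the two squares to commute. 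The argument for $\iota_{r,s;i}^{r,s+1}$ is identical with $V$ and $E$ replaced by $W$ and $F$.

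The main obstacle is the first step: coaxing the \cite{BS4}-equivalence into a form in which $\End_G(V^{\otimes r}\otimes W^{\otimes s})^{\op}$ is canonically identified with $B_R(\delta)/B_R(\delta)_{>k}$ for \emph{every} ordering $R$, in a manner that is simultaneously compatible with both the tensor-by-$V$ and tensor-by-$W$ functors. A priori, the equivalence is stated in a fixed form (essentially $R = E^r F^s$ or $F^s E^r$), and one must parlay the canonical isomorphism $V \otimes W \cong W \otimes V$ of $\mathfrak{g}$-supermodules into a coherent family of identifications, one for each $R$. Once this is in place the rest of the proof, including the compatibility with $i$-induction and $i$-restriction, reduces to chasing naturality squares.
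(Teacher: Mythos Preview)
Your overall strategy—factor $\Psi_{r,s}^{m,n}$ through a diagrammatic identification of $\End_{\mathfrak g}(V^{\otimes r}\otimes W^{\otimes s})^{\op}$ coming from \cite{BS4}, then read off surjectivity and the kernel from Theorem~\ref{main1}, with the commuting squares following by naturality—is exactly the shape of the paper's proof. But your first paragraph misstates what \cite{BS4} actually provides, and this conceals the step that carries the real content.

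The equivalence of \cite{BS4} is \emph{not} between $\sF(m|n)$ and a Serre quotient of $\Mod{K(\delta)}$. It is an equivalence $\E:\sF(m|n)\to\mod{K(m|n)}$ (Theorem~\ref{BS4a}), where $K(m|n)$ is a \emph{different} arc algebra, built from the weight dictionary (\ref{otherw})--(\ref{totherw}); its weight diagrams have infinitely many $\up$'s on both ends, in contrast to the $\up\cdots\down$ pattern underlying $K(\delta)$. Under $\E$ the trivial module $\C$ corresponds to the irreducible $L(\zeta)$ of (\ref{groundstate}), not to $P(\eta)$. What one gets directly is therefore only
\[
\End_{\mathfrak g}(\lonestar R\,\C)^{\op}\;\cong\;\End_{K(m|n)}(\lonestar R\,L(\zeta))^{\op},
\]
which is the isomorphism $\Pi_R^{(k)}$ of Theorem~\ref{iso2}.

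The missing ingredient is an explicit isomorphism $\End_{K(m|n)}(\lonestar R\,L(\zeta))^{\op}\cong B_R(\delta)/B_R(\delta)_{>k}$. This is Theorem~\ref{iso3}, and it rests on the combinatorial bijection of Lemma~\ref{trick}: replacing the boundary pieces $\underline{\zeta},\overline{\zeta}$ in a diagram $\underline{\tS}\overline{\tT}$ by $\underline{\eta},\overline{\eta}$ matches the good-pair basis of Theorem~\ref{byas} for the left-hand side with exactly those basis vectors $f_{\tS'\tT'}\in B_R(\delta)$ satisfying $k(\tS',\tT')\le k$. The truncation by $B_R(\delta)_{>k}$ thus arises from this diagrammatic trick, not from a Serre quotient of $K(\delta)$-modules. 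Incidentally, your ``main obstacle'' about coherence across orderings $R$ is not an issue—the special projective functors $\lonestar R$ are defined uniformly on all three categories, and the maps $\Psi_R^{(k)},\Pi_R^{(k)},\Upsilon_R^{(k)}$ are constructed separately for each $R$—whereas this change of arc algebra is.
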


\begin{proof}
We assume some notation and results 
still to be formulated; see the next three subsections.
The homomorphism $\Theta^{(k)}_R$ 
is the composite $\Upsilon_R^{(k)}\circ \Pi_R^{(k)}\circ \Psi_R^{(k)}$
of certain maps
\begin{align}\label{i1}
\Psi_R^{(k)}:B_{r,s}(\delta) &\twoheadrightarrow
\End_{\mathfrak{g}}(\lonestar R\,\C)^{\op},\\\label{i2}
\Pi_R^{(k)}:
\End_{\mathfrak{g}}(\lonestar R\,\C)^{\op}
&\stackrel{\sim}{\rightarrow}
\End_{K(m|n)}(\lonestar R\,L(\zeta))^{\op},\\
\Upsilon_R^{(k)}:\End_{K(m|n)}(\lonestar R\,L(\zeta))^{\op}
&\stackrel{\sim}{\rightarrow}
B_{r,s}(\delta) / B_{r,s}(\delta)_{> k}\label{i3}
\end{align}
constructed in 
Theorems~\ref{iso1}, \ref{iso2} and \ref{iso3} below. 
The maps $\Upsilon_R^{(k)}$ and $\Pi_R^{(k)}$  are
isomorphisms and $\Psi_R^{(k)}$ is surjective, hence $\Theta_R^{(k)}$
is surjective.
Moreover it is clear
from its explicit definition below that
$\Psi_R^{(k)}$ has the same kernel as $\Psi^{m,n}_{r,s}$,
hence $\Theta^{(k)}_R$ has this as its kernel too.
Finally the commutativity of the diagrams follows immediately from the
commutativity
of the diagrams in the
statements of Theorems~\ref{iso1}, \ref{iso2} and \ref{iso3}.
\end{proof}

\begin{Corollary}\label{isocor}
For any $R \in \Seq_{r,s}$ 
we have that $\dim B_R(\delta)_{> k} = \dim \ker \Psi^{m,n}_{r,s}$.
So $B_R(\delta)_{> k}$ is zero if and only if
$r+s<(m+1)(n+1)$, in which case $\Theta_{R}^{(k)}$ is 
an {isomorphism}
\begin{equation}\label{thiso}
\Theta_R^{(k)}:B_{r,s}(\delta) \stackrel{\sim}{\rightarrow}
B_R(\delta).
\end{equation}
\end{Corollary}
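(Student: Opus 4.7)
The plan is to deduce everything from Theorem~\ref{isotheorem} by a dimension count. That theorem gives a surjection $\Theta_R^{(k)}:B_{r,s}(\delta) \twoheadrightarrow B_R(\delta)/B_R(\delta)_{>k}$ whose kernel coincides with $\ker\Psi^{m,n}_{r,s}$, so the first isomorphism theorem yields
\[
\dim B_{r,s}(\delta) - \dim\ker\Psi^{m,n}_{r,s} \;=\; \dim B_R(\delta) - \dim B_R(\delta)_{>k}.
\]
Thus the first assertion $\dim B_R(\delta)_{>k} = \dim\ker\Psi^{m,n}_{r,s}$ will follow at once provided I can show that $\dim B_R(\delta) = \dim B_{r,s}(\delta) = (r+s)!$.

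The main work is therefore to establish this dimension identity. By Theorem~\ref{maincell}, $B_R(\delta)$ is graded cellular with cell datum indexed by $\mathscr T_R(\delta)$, so
\[
\dim B_R(\delta) \;=\; \sum_{\lambda \in \Lambda_{r,s}} |\mathscr T_R(\delta)_\lambda|^2,
\]
where $\mathscr T_R(\delta)_\lambda$ is the set of $R$-tableaux of shape $\lambda$. The walled Brauer algebra is cellular in the sense of \cite{GL} (as recalled in $\S\ref{s2}$), so by the analogous identity together with (\ref{brauerdim}) I need $|\mathscr T_R(\delta)_\lambda| = \dim C_{r,s}(\lambda)$. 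This is a consequence of the refined branching rule Theorem~\ref{ibranch}, applied iteratively along the chain $B_{0,0}(\delta) < B_{r_1,s_1}(\delta) < \dots < B_{r,s}(\delta)$ dictated by $R$: each step endows the restriction of a cell module with a filtration by cell modules, and following the labelled edges records a path in the corresponding Bratelli diagram. Iterating all the way down to $B_{0,0}(\delta)$, whose only cell module is $\mathbb{C}$, shows that the length of the resulting filtration of $C_{r,s}(\lambda)$ equals both $\dim C_{r,s}(\lambda)$ and $|\mathscr T_R(\delta)_\lambda|$, since the latter is precisely the number of such paths terminating at $\lambda$. This is the one step requiring a little care, though it is essentially routine iteration of results already in place.

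Once the dimension identity is known, the remaining statements are immediate. By Theorem~\ref{main1}, $\ker\Psi^{m,n}_{r,s}=0$ precisely when $r+s<(m+1)(n+1)$, so $\dim B_R(\delta)_{>k}=0$ exactly in this range, i.e.\ $B_R(\delta)_{>k}=\{0\}$. In that case $\Theta_R^{(k)}$ is a surjection $B_{r,s}(\delta)\twoheadrightarrow B_R(\delta)$ between spaces of equal dimension whose kernel is $\ker\Psi^{m,n}_{r,s}=\{0\}$, hence it is the desired isomorphism (\ref{thiso}).
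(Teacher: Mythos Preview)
Your proof is correct, but it takes a genuinely different route from the paper's argument. Both reduce to showing $\dim B_R(\delta)=\dim B_{r,s}(\delta)$ and then read off the rest from Theorem~\ref{isotheorem} and Theorem~\ref{main1}. You establish this equality by a direct combinatorial count: iterating the branching filtrations of Theorem~\ref{ibranch} along the chain determined by $R$ shows $|\mathscr T_R(\delta)_\lambda|=\dim C_{r,s}(\lambda)$, and then cellularity of both algebras gives the equality of total dimensions. The paper instead exploits the freedom in the choice of $m,n$: since $B_R(\delta)$ depends only on $\delta$, one may pick $m',n'$ with $m'-n'=\delta$ and $k'=\min(m',n')$ so large that $B_R(\delta)_{>k'}=\{0\}$ (there are only finitely many basis vectors) and simultaneously $r+s<(m'+1)(n'+1)$, so that $\Psi^{m',n'}_{r,s}$ is injective; then $\Theta_R^{(k')}$ is already an isomorphism, yielding $\dim B_{r,s}(\delta)=\dim B_R(\delta)$ for free. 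The paper's trick is shorter and avoids any combinatorics, while your argument is self-contained (no need to vary $m,n$) and produces the pleasant byproduct $|\mathscr T_R(\delta)_\lambda|=\dim C_{r,s}(\lambda)$, which is implicit in the paper only after Corollary~\ref{othercellid}.
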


\begin{proof}
It is clear from the definition that $B_R(\delta)_{> k} = \{0\}$
if $k$ is sufficiently large. In that case $m$ and $n$ are large too,
so $\Psi_{r,s}^{m,n}$ is injective by Theorem~\ref{main1}.
We deduce that $\Theta_R^{(k)}$ is an 
isomorphism 
$B_{r,s}(\delta) \stackrel{\sim}{\rightarrow} B_R(\delta)$
for $k \gg 0$.
This shows that $\dim B_{r,s}(\delta) = \dim B_R(\delta)$.
Combined with Theorem~\ref{isotheorem}
we deduce for any $k$ that $\dim B_{r,s}(\delta)_{> k} = \dim\ker
\Psi^{m,n}_{r,s}$.
\end{proof}

\begin{Remark}\rm\label{drawback}
There are some implicit choices involved in the definition of 
$\Theta^{(k)}_R$; these come from the isomorphism (\ref{i2})
which we will deduce from results in \cite{BS4}.
We conjecture that these choices can be made so that 
the homomorphisms $\Theta_R^{(k)}$ 
stabilise as $k \rightarrow \infty$. More precisely, 
we expect there are isomorphisms
$$
\Theta_R:B_{r,s}(\delta) \stackrel{\sim}{\rightarrow} B_R(\delta)
$$
for all $R$
such that $\Theta_R^{(k)}$ is the composition of $\Theta_R$ 
and the quotient map $B_R(\delta) \twoheadrightarrow B_R(\delta)
/ B_R(\delta)_{> k}$ for each $k \geq 0$.
The arguments in \cite{BS4} are not well-suited to allowing $k$ to vary,
so we are not able to see this using the present approach.
\end{Remark}

\phantomsubsection{\boldmath The epimorphism $\Psi_R^{(k)}$}
In this subsection we construct the first 
map
(\ref{i1}) needed in the proof of Theorem~\ref{isotheorem}.
Let $\mathfrak{g} := \mathfrak{gl}_{m|n}(\C)$ as in $\S$\ref{s3}.
We begin by recalling the definition of
the {\em special projective functors}
on a certain category of finite dimensional $\mathfrak{g}$-supermodules.
Let $\mathfrak{t}$ be the Cartan subalgebra of $\mathfrak{g}$
consisting of diagonal matrices.
Let $\eps_1,\dots,\eps_{m+n}$ be the usual basis for $\mathfrak{t}^*$, 
i.e. $\eps_r$ picks 
out the $r$th diagonal entry of a diagonal matrix.
Let 
$(.,.)$ be the symmetric bilinear form on $\mathfrak{t}^*$ defined by
$(\eps_i, \eps_j) = (-1)^{|i|} \delta_{i,j}$, and set
$X := \bigoplus_{i=1}^{m+n} \Z \eps_i \subset \mathfrak{t}^*$.
As in the introduction of \cite{BS4}, we 
let $\mathscr F(m|n)$ be the category of all $\mathfrak{g}$-supermodules $M$ such that
\begin{itemize}
\item[(1)] $M$ is finite dimensional;
\item[(2)] $M$ decomposes as
$M = \bigoplus_{\la \in X} M_\la$,
where $M_\la$ denotes the $\la$-weight space of $M$ with respect to $\mathfrak{t}$;
\item[(3)]
the $\Z_2$-grading on $M_\la$ is 
concentrated in degree $(\lambda,\eps_{m+1}+\cdots+\eps_{m+n})
\pmod{2}$
for each $\la \in X$.
\end{itemize}
The morphisms in $\mathscr F(m|n)$ are all (necessarily even) $\mathfrak{g}$-supermodule homomorphisms.
Note further that
$\sF(m|n)$ is an abelian category closed under tensor product, and it contains
both the natural supermodule $V$ and its dual $W$.
We refer to $\sF(m|n)$ as the category of {\em rational $\mathfrak{g}$-supermodules}.

Let $\lonestar E := ? \otimes V$ and $\lonestar F := ? \otimes W$,
viewed as endofunctors of $\mathscr F(m|n)$.
Observe that
multiplication by the element $\Omega$ from (\ref{om}) defines natural 
endomorphisms
of both $\lonestar E M$ and $\lonestar F M$ for every $M \in \mathscr
F(m|n)$.
Let $\lonestar E_i$ and $\lonestar F_i$ be the subfunctors
of $\lonestar E$ and $\lonestar F$, respectively, defined on
$M \in \mathscr F(m|n)$ 
by declaring that
\begin{itemize}
\item[(1)] 
$\lonestar E_i M$ is the generalised $i$-eigenspace of $\Omega$ on
$\lonestar E M$;
\item[(2)]
$\lonestar F_i M$ is the generalised
$(-i-\delta)$-eigenspace of $\Omega$ on $\lonestar F M$.
\end{itemize}
By \cite[Corollary 2.9, Lemma 2.10]{BS4}, we have that
\begin{equation}\label{Dec}
\lonestar E = \bigoplus_{i \in \Z} \lonestar E_i,
\qquad\qquad
\lonestar F = \bigoplus_{i \in \Z} \lonestar F_i.
\end{equation}

Let
\begin{align}\label{rhol}
b_{r,s}^{r+1,s}: V^{\otimes r} \otimes W^{\otimes s} \otimes V \stackrel{\sim}{\rightarrow} V^{\otimes (r+1)} \otimes W^{\otimes s},\\
b_{r,s}^{r,s+1}: V^{\otimes r} \otimes W^{\otimes s} \otimes W \stackrel{\sim}{\rightarrow} V^{\otimes r} \otimes W^{\otimes (s+1)}\label{rhor}
\end{align}
be the canonical isomorphisms defined by supercommuting
$W^{\otimes s}$ past the rightmost copy of $V$ or $W$, respectively.
We stress $b_{r,s}^{r,s+1}$ is {\em not} the identity (unless $s = 0$).
The following lemma connects the special projective functors on $\mathscr F(m|n)$
to the $i$-restriction functors from
(\ref{fred1})--(\ref{fred2}).

\begin{Lemma}\label{eek}
For each $i \in \Z$, the restrictions of the maps $b_{r,s}^{r+1,s}$ 
and $b_{r,s}^{r,s+1}$
just defined give
$(U(\mathfrak{g}), B_{r,s}(\delta))$-bimodule isomorphisms
\begin{align*}
b_{r,s}^{r+1,s}&:\lonestar E_i (V^{\otimes r} \otimes W^{\otimes s}) \stackrel{\sim}{\rightarrow}
(V^{\otimes (r+1)} \otimes W^{\otimes s}) 1_{r,s;i}^{r+1,s},\\
b_{r,s}^{r,s+1}&:\lonestar F_i (V^{\otimes r} \otimes W^{\otimes s}) \stackrel{\sim}{\rightarrow}
(V^{\otimes r} \otimes W^{\otimes (s+1)}) 1_{r,s;i}^{r,s+1}.
\end{align*}
\end{Lemma}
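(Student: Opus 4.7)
The plan is to verify, separately for each of the two maps $b_{r,s}^{r+1,s}$ and $b_{r,s}^{r,s+1}$, three things: (a) it is a $U(\mathfrak g)$-supermodule isomorphism; (b) it is a homomorphism of right $B_{r,s}(\delta)$-modules, where the target carries its structure through $\iota_{r,s}^{r+1,s}$ (respectively $\iota_{r,s}^{r,s+1}$); and (c) it intertwines left multiplication by $\Omega$ on the source with left multiplication by $x_{r,s}^{r+1,s}$ (respectively $x_{r,s}^{r,s+1}$) on the target. Once (a)--(c) are in hand, the lemma follows at once: by definition $\lonestar E_i M$ is the generalised $i$-eigenspace of $\Omega$ on $M\otimes V$, while $(V^{\otimes(r+1)}\otimes W^{\otimes s})1_{r,s;i}^{r+1,s}$ is the generalised $i$-eigenspace of $x_{r,s}^{r+1,s}$; these correspond under any map having property (c), and similarly for the $F$-case with eigenvalue $-i-\delta$.

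For (a), I would observe that $b_{r,s}^{r+1,s}$ (respectively $b_{r,s}^{r,s+1}$) is built from the natural super-braiding in the tensor category of vector superspaces, which is automatically $U(\mathfrak g)$-equivariant. For (b), since $B_{r,s}(\delta)$ is generated by $\tau_1,\dots,\tau_{r+s-1}$, it suffices to check the generators. The cases $a<r$ and $b>r$ are immediate from the fact that $\iota_{r,s}^{r+1,s}$ sends $\tau_a\mapsto\tau_a$ (acting on $V$-positions that $b_{r,s}^{r+1,s}$ leaves alone) and $\tau_b\mapsto\tau_{b+1}$ (acting on $W$-positions that are simply relabelled). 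The only nontrivial case is $\tau_r$, where $\iota_{r,s}^{r+1,s}(\tau_r)=\tau_r\tau_{r+1}\tau_r$ implements a cap--cup between the target positions $r$ and $r+2$, passing \emph{around} the new vertical strand at position $r+1$; under the flip, these are exactly the positions corresponding to source positions $r$ and $r+1$, where $\tau_r$ acts on the source. By Lemma~\ref{act2} both operators are $-\Omega_{a,b}$ on the relevant positions, and the signs introduced by supercommuting $W^{\otimes s}$ past the extra $V$ are compatible because $\Omega_{a,b}$ is $U(\mathfrak g)$-invariant with respect to the super-braiding.

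For (c), which is the key computation, note that $\Omega$ acts on the source $(V^{\otimes r}\otimes W^{\otimes s})\otimes V$ by the operator $\Omega_{M,V}=\sum_{a=1}^{r+s}\Omega_{a,r+s+1}$, since $\Omega$ is summed over the positions of $M$ against the single extra $V$. The flip $b_{r,s}^{r+1,s}$ relabels positions $1,\dots,r$ to $1,\dots,r$, positions $r+1,\dots,r+s$ to $r+2,\dots,r+s+1$, and position $r+s+1$ to $r+1$; so the operator is transported to $\sum_{a\neq r+1}\Omega_{a,r+1}$ on the target. By Lemma~\ref{act2} this equals $\sum_{a\neq r+1}\overline{(a\ r+1)}$, which is exactly $x_{r,s}^{r+1,s}$ by (\ref{xl}). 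The case of $b_{r,s}^{r,s+1}$ is entirely parallel, yielding $\sum_{a\neq r+1}\Omega_{a,r+1}=x_{r,s}^{r,s+1}$ on $V^{\otimes r}\otimes W^{\otimes(s+1)}$; the eigenvalue shift from $i$ to $-i-\delta$ is built into the definitions of $\lonestar F_i$ and $1_{r,s;i}^{r,s+1}$. The main potential obstacle throughout is the careful bookkeeping of signs coming from the super-braiding, but Lemma~\ref{act2} is set up precisely so that $\overline{(a\ b)}=\Omega_{a,b}$ without any further correction, making these signs cancel uniformly.
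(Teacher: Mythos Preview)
Your proof is correct and follows essentially the same route as the paper's. The paper simply asserts that the full map $b_{r,s}^{r+1,s}$ is a $(U(\mathfrak g),B_{r,s}(\delta))$-bimodule homomorphism (your (a) and (b) combined), then carries out your computation (c) verbatim: $\Omega$ on $M\otimes V$ is $\sum_{a=1}^{r+s}\Omega_{a,r+s+1}$, which the super-braiding transports to $\sum_{a\leq r}\Omega_{a,r+1}+\sum_{b\geq r+2}\Omega_{r+1,b}$, and this equals $x_{r,s}^{r+1,s}$ by Lemma~\ref{act2}. Your explicit check of the generator $\tau_r$ in (b) is more detail than the paper gives (it just says ``clear''), but the underlying reason is the same: the super-braiding intertwines $\Omega_{a,b}$ on the source with $\Omega_{\sigma(a),\sigma(b)}$ on the target.
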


\begin{proof}
We just explain for $b_{r,s}^{r+1,s}$; the other case is similar.
It is clear that the map (\ref{rhol}) is a
$(U(\mathfrak{g}), B_{r,s}(\delta))$-module homomorphism,
where the $B_{r,s}(\delta)$-module structure on the
the left hand side is obtained by applying the functor $\lonestar E = ? \otimes V$
to its action on $V^{\otimes r} \otimes W^{\otimes s}$, and 
on the right hand side
it is the restriction of the $B_{r+1,s}(\delta)$-module structure
via the homomorphism $\iota_{r,s}^{r+1,s}$ from (\ref{iotas}).
By definition,
$\lonestar E_i(V^{\otimes r} \otimes W^{\otimes s})$ is the 
generalised $i$-eigenspace of the endomorphism 
of $V^{\otimes r} \otimes W^{\otimes s} \otimes V$
defined by multiplication by
$\sum_{1 \leq a \leq r+s} \Omega_{a, r+s+1}$. This
maps under $b_{r,s}^{r+1,s}$ to the generalised $i$-eigenspace of the
endomorphism of $V^{\otimes (r+1)} \otimes W^{\otimes s}$ defined by multiplication by
$\sum_{1 \leq a \leq r} \Omega_{a, r+1} + \sum_{r+2 \leq b \leq r+s+1} \Omega_{r+1,b}$.
The latter is the same as the endomorphism defined by
$x_{r,s}^{r+1,s} \in B_{r+1,s}(\delta)$ from (\ref{xl}); this follows by the second statement of Lemma~\ref{act2}.
Hence $b_{r,s}^{r+1,s}$ maps
$\lonestar E_i(V^{\otimes r} \otimes W^{\otimes s})$
isomorphically onto
the generalised $i$-eigenspace of $x_{r,s}^{r+1,s}$
on $V^{\otimes(r+1)} \otimes W^{\otimes s}$, which is exactly the subspace
$(V^{\otimes (r+1)} \otimes W^{\otimes s}) 1_{r,s;i}^{r+1,s}$.
\end{proof}

For $R =R^{(1)}\cdots R^{(r+s)} \in \Seq_{r,s}$, 
we can interpret
$\lonestar R
=
\lonestar R^{(r+s)} \cdots \lonestar R^{(1)}$
as an 
endofunctor
of $\mathscr F(m|n)$.
Let $\C$ be the trivial $\mathfrak{g}$-supermodule.
Mimicking the definitions (\ref{EP})--(\ref{FP}),
the functors $\lonestar E_i$ and $\lonestar F_i$ induce non-unital
algebra homomorphisms
\begin{align}
\iota_{R;i}^{RE}:&\End_{\mathfrak{g}}(\lonestar R\,\C)^{\op}
\rightarrow 
\End_{\mathfrak{g}}(\lonestar (RE)\,\C)^{\op},\qquad
f \mapsto \lonestar E_i(f),\label{EPg}\\
\iota_{R;i}^{RF}:&\End_{\mathfrak{g}}(\lonestar R\,\C)^{\op}
\rightarrow \End_{\mathfrak{g}}(\lonestar (RF)\,\C)^{\op}, \qquad
f \mapsto \lonestar F_i(f).\label{FPg}
\end{align}
We recursively define a $\mathfrak{g}$-supermodule isomorphism
\begin{equation*}
\psi_R:\lonestar 
R\, \C \stackrel{\sim}{\rightarrow} V^{\otimes r} \otimes W^{\otimes s}
\end{equation*}
as follows. If $r=s=0$ then we 
take $\psi_R$ to be the identity map $\C \rightarrow \C$.
For the induction step,
assume 
$\psi_R:\lonestar R\,\C\stackrel{\sim}{\rightarrow} V^{\otimes r} \otimes W^{\otimes s}$ has been defined already, then set
\begin{align*}
\psi_{RE} := b_{r,s}^{r+1,s} \circ \lonestar E(\psi_R):\lonestar(RE)\,\C\rightarrow V^{\otimes (r+1)} \otimes W^{\otimes s},\\
\psi_{RF} := b_{r,s}^{r,s+1}\circ \lonestar F(\psi_R):\lonestar(RF)\,\C\stackrel{\sim}{\rightarrow} V^{\otimes r} \otimes W^{\otimes (s+1)}.
\end{align*}

\begin{Theorem}\label{iso1}
For $R \in \Seq_{r,s}$, there is a surjective algebra homomorphism
\begin{equation*}
\Psi^{(k)}_{R}:B_{r,s}(\delta) \twoheadrightarrow
\End_{\mathfrak{g}}(\lonestar R\, \C)^{\op},
\qquad
\sigma \mapsto \psi_R^{-1} \circ \Psi_{r,s}^{m,n}(\sigma) \circ \psi_R.
\end{equation*}
Moreover 
the following diagrams commute for each $i \in \Z$.
\begin{align*}
\begin{CD}
B_{r,s}(\delta)&@>\iota_{r,s;i}^{r+1,s} >>&B_{r+1,s}(\delta)\\
@V\Psi^{(k)}_{R} VV&&@VV\Psi^{(k)}_{RE}V\\
\End_{\mathfrak{g}}(\lonestar R\, \C)^{\op}&@>\iota^{RE}_{R;i}>>&\End_{\mathfrak{g}}(\lonestar(RE)\, \C)^{\op}
\end{CD}\end{align*}\begin{align*}
\begin{CD}
B_{r,s}(\delta)&@>\iota_{r,s;i}^{r,s+1} >>&B_{r,s+1}(\delta)\\
@V\Psi^{(k)}_{R} VV&&@VV\Psi^{(k)}_{RF}V\\
\End_{\mathfrak{g}}(\lonestar R\, \C)^{\op}&@>\iota_{R;i}^{RF}>>&\End_{\mathfrak{g}}(\lonestar(RF)\, \C)^{\op}
\end{CD}
\end{align*}
\end{Theorem}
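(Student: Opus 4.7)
The plan is to use the explicit definition of $\psi_R$ to reduce everything to the already-established surjectivity of $\Psi_{r,s}^{m,n}$ from Theorem~\ref{main1} together with the bimodule isomorphism statement of Lemma~\ref{eek}.

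First I would verify by induction on $r+s$ that $\psi_R:\lonestar R\,\C\stackrel{\sim}{\rightarrow} V^{\otimes r}\otimes W^{\otimes s}$ is a $\mathfrak{g}$-supermodule isomorphism. The base case is the identity map $\C\to\C$. For the inductive step, the functor $\lonestar E=?\otimes V$ (resp.\ $\lonestar F=?\otimes W$) sends the $\mathfrak{g}$-module isomorphism $\psi_R$ to the $\mathfrak{g}$-module isomorphism $\psi_R\otimes\id_V$ (resp.\ $\psi_R\otimes\id_W$), and composition with the $\mathfrak{g}$-module isomorphism $b_{r,s}^{r+1,s}$ (resp.\ $b_{r,s}^{r,s+1}$) from (\ref{rhol})--(\ref{rhor}) yields $\psi_{RE}$ (resp.\ $\psi_{RF}$). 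Given this, conjugation by $\psi_R$ defines an algebra isomorphism $\End_{\mathfrak{g}}(V^{\otimes r}\otimes W^{\otimes s})^{\op}\stackrel{\sim}{\rightarrow}\End_{\mathfrak{g}}(\lonestar R\,\C)^{\op}$, so the composite $\Psi^{(k)}_R$ is a surjective algebra homomorphism by Theorem~\ref{main1}.

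For the two commutative diagrams I would unravel the definitions and invoke Lemma~\ref{eek}. Consider the first diagram; the second is entirely analogous. Fix $\sigma\in B_{r,s}(\delta)$ and $i\in\Z$. By definition, $\iota_{r,s;i}^{r+1,s}(\sigma)=\iota_{r,s}^{r+1,s}(\sigma)\cdot 1_{r,s;i}^{r+1,s}$, so $\Psi^{m,n}_{r+1,s}(\iota_{r,s;i}^{r+1,s}(\sigma))$ factors through multiplication by $1_{r,s;i}^{r+1,s}$ and is therefore the zero extension of a $\mathfrak{g}$-supermodule endomorphism of $(V^{\otimes(r+1)}\otimes W^{\otimes s})\cdot 1_{r,s;i}^{r+1,s}$. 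On the other side, $\iota^{RE}_{R;i}(\Psi^{(k)}_R(\sigma))=\lonestar E_i(\psi_R^{-1}\circ\Psi^{m,n}_{r,s}(\sigma)\circ\psi_R)$ is likewise the zero extension of an endomorphism of $\lonestar E_i\lonestar R\,\C$. Lemma~\ref{eek} asserts that $b_{r,s}^{r+1,s}$ restricts to a $(U(\mathfrak{g}),B_{r,s}(\delta))$-bimodule isomorphism $\lonestar E_i(V^{\otimes r}\otimes W^{\otimes s})\stackrel{\sim}{\rightarrow}(V^{\otimes(r+1)}\otimes W^{\otimes s})\cdot 1_{r,s;i}^{r+1,s}$, where the left $B_{r,s}(\delta)$-structure is via $\lonestar E=?\otimes V$ applied to the action on $V^{\otimes r}\otimes W^{\otimes s}$ and the right one is via $\iota_{r,s;i}^{r+1,s}$. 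Composing this intertwining relation with the $\mathfrak{g}$-module isomorphism $\psi_R\otimes\id_V=\lonestar E(\psi_R)$ on the source, which (by construction of $\psi_{RE}$) fits into the factorisation $\psi_{RE}=b_{r,s}^{r+1,s}\circ\lonestar E(\psi_R)$, immediately gives
\[
\psi_{RE}^{-1}\circ\Psi^{m,n}_{r+1,s}(\iota_{r,s;i}^{r+1,s}(\sigma))\circ\psi_{RE}
=\lonestar E_i\bigl(\psi_R^{-1}\circ\Psi^{m,n}_{r,s}(\sigma)\circ\psi_R\bigr),
\]
which is the required identity $\Psi^{(k)}_{RE}(\iota_{r,s;i}^{r+1,s}(\sigma))=\iota^{RE}_{R;i}(\Psi^{(k)}_R(\sigma))$.

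The main (and essentially only) nontrivial ingredient is Lemma~\ref{eek}, which compares the action of $B_{r,s}(\delta)$ coming from the projective functor $\lonestar E_i$ with the action coming from the walled-Brauer-algebraic map $\iota_{r,s;i}^{r+1,s}$. Everything else is formal manipulation of commuting squares once $\psi_R$ has been identified as a $\mathfrak{g}$-module isomorphism, so there is no serious obstacle; the delicate point to keep in mind is merely that the signs hidden in the supercommuting maps $b_{r,s}^{r+1,s}$ and $b_{r,s}^{r,s+1}$ have already been absorbed into Lemma~\ref{eek}.
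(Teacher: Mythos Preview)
Your proposal is correct and follows essentially the same approach as the paper: the paper's proof simply says that the first part follows from Theorem~\ref{main1} and that the commutative diagrams follow from Lemma~\ref{eek} and the definitions. Your write-up is a faithful expansion of exactly these two sentences, spelling out the induction behind $\psi_R$ being a $\mathfrak{g}$-supermodule isomorphism and the bimodule intertwining statement from Lemma~\ref{eek} that makes the squares commute.
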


\begin{proof}
The first part follows from Theorem~\ref{main1}.
The commutative diagrams follow from Lemma~\ref{eek} and the definitions.
\end{proof}

\phantomsubsection{\boldmath The isomorphism $\Pi^{(k)}_R$}
To construct the second map (\ref{i2}) needed in the proof of
Theorem~\ref{isotheorem}, we apply the main results of
\cite{BS4}.
Recall that the {\em dominance ordering}
on $X$ is defined by $\la \preceq \mu$ if $\mu-\la$ is a sum of 
the positive roots $\{\eps_i-\eps_j\:|\:1 \leq i < j \leq m+n\}$.
Also let
\begin{equation*}
\rho :=
\sum_{r=1}^m (1-r) \eps_r + \sum_{s=1}^n (m-s) \eps_{m+s}.
\end{equation*}
It is well known that the isomorphism classes of irreducible supermodules
in the category $\mathscr F(m|n)$ from the previous subsection
are 
parametrised by their highest weights with respect to the dominance
ordering by 
the set
\begin{equation}
\Xi
:= \left\{
\la \in X\:\bigg|\:
\begin{array}{c}
\:\:\,(\la+\rho,\eps_1) > \cdots > (\la+\rho,\eps_m),\\
(\la+\rho,\eps_{m+1}) < \cdots < (\la+\rho,\eps_{m+n})
\end{array}
\right\}\label{xidef}
\end{equation}
of {\em dominant integral weights}.
We denote the irreducible corresponding to $\la \in \Xi$ by $L(\la)$.

There is another weight dictionary which allows us to identify $\Xi$
with a set of weight diagrams.
Given $\la \in \Xi$, let
\begin{align}\label{otherw}
I_{\down}(\la) &:= \{(\la+\rho,\eps_1),\dots,(\la+\rho,\eps_m)\},\\
I_\up(\la) &:= \Z \setminus \{(\la+\rho,\eps_{m+1}),\dots,(\la+\rho,\eps_{m+n})\}.\label{totherw}
\end{align}
Then identify $\la$ with the weight diagram associated to these
sets via (\ref{dict}).
For example, the zero weight (which parametrises the trivial
$\mathfrak{g}$-supermodule) is identified with the diagram
\begin{equation}\label{groundstate}
\!\!\zeta := \Bigg\{
\begin{array}{ll}
\hspace{88mm}
&\text{if $\delta \geq 0$}\\\\
&\text{if $\delta \leq 0$}\\
\end{array}
\begin{picture}(0,40)
\put(-300,12.5){$\cdots$}
\put(-74,12.5){$\cdots$}
\put(-263.5,14.2){$\overbrace{\phantom{hellow orl}}^{n}$}
\put(-204,14.2){$\overbrace{\phantom{hellow orl}}^{\delta=m-n}$}
\put(-284,15.2){\line(1,0){207}}
\put(-283.7,10.6){$\scriptstyle\up$}
\put(-263.7,15.3){$\scriptstyle\down$}
\put(-243.7,15.3){$\scriptstyle\down$}
\put(-223.7,15.3){$\scriptstyle\down$}
\put(-204.2,13.3){$\scriptstyle\times$}
\put(-184.2,13.3){$\scriptstyle\times$}
\put(-164.2,13.3){$\scriptstyle\times$}
\put(-143.7,10.6){$\scriptstyle\up$}
\put(-123.7,10.6){$\scriptstyle\up$}
\put(-103.7,10.6){$\scriptstyle\up$}
\put(-83.7,10.6){$\scriptstyle\up$}
\end{picture}
\begin{picture}(0,0)
\put(-300,-12.6){$\cdots$}
\put(-74,-12.6){$\cdots$}
\put(-162.5,3){$_0$}
\put(-203.5,-17){$\underbrace{\phantom{hellow orl}}_{m}$}
\put(-144,-17){$\underbrace{\phantom{hellow orl}}_{-\delta=n-m}$}
\put(-284,-10){\line(1,0){207}}
\put(-283.7,-14.6){$\scriptstyle\up$}
\put(-263.7,-14.6){$\scriptstyle\up$}
\put(-243.7,-14.6){$\scriptstyle\up$}
\put(-223.7,-14.6){$\scriptstyle\up$}
\put(-203.7,-9.9){$\scriptstyle\down$}
\put(-183.7,-9.9){$\scriptstyle\down$}
\put(-163.7,-9.9){$\scriptstyle\down$}
\put(-143,-12.6){$\circ$}
\put(-123,-12.6){$\circ$}
\put(-103,-12.6){$\circ$}
\put(-88.7,-14.6){$\scriptstyle\up$}
\end{picture}
\end{equation}
\vspace{4mm}

\noindent
(where there are infinitely many vertices labelled $\up$ to the left
and the right).

Let $K(m|n)$ be the subalgebra $\bigoplus_{\la,\mu \in \Xi} e_\la K
e_\mu$
of the universal arc algebra from $\S$\ref{s4}.
As $\Xi$ is a union of blocks, the algebra $K(m|n)$ is just the sum of
some blocks of $K$ from (\ref{blockdec}).
Let $\Mod{K(m|n)}$ be the full subcategory of $\Mod{K}$
consisting of all modules $M$ such that $M = \bigoplus_{\la \in \Xi}
e_\la M$.
Up to grading shift, the irreducible objects in this category are 
the one-dimensional modules $\{L(\la)\:|\:\la \in \Xi\}$.

The set $\Xi$ is a connected component in
the labelled directed graph defined by (\ref{break1})--(\ref{break2}). It follows
that the special projective functors $\lonestar E_i$ and $\lonestar
F_i$ from (\ref{projf}) restrict to well-defined endofunctors of $\Mod{K(m|n)}$.
These functors are defined by tensoring with certain bimodules.
Forgetting the gradings on these bimodules,
we can view them
instead as endofunctors of 
the category $\mod{K(m|n)}$ of
locally unital, finite dimensional
left $K(m|n)$-modules. 
The main result of \cite{BS4} can now be formulated as follows.

\begin{Theorem}[{\cite[Theorem 5.11, Theorem 5.12]{BS4}}]\label{BS4a}
There is an equivalence of categories
\begin{equation*}
\E:\mathscr F(m|n) \rightarrow \mod{K(m|n)}.
\end{equation*}
Under the equivalence, the irreducible $\mathfrak{g}$-supermodule $L(\la)$
of highest weight $\la \in \Xi$ corresponds to the irreducible
$K(m|n)$-module with the same name.
Moreover for each $i \in \Z$ there are canonical isomorphisms
of functors 
\begin{align}\label{lastlest1}
\lonestar E_i \circ \E&\cong \E \circ \lonestar E_i
:\mathscr F(m|n) \rightarrow \mod{K(m|n)},\\
\lonestar F_i\circ \E &\cong \E\circ \lonestar F_i
:\mathscr F(m|n) \rightarrow \mod{K(m|n)}.\label{lastlest2}
\end{align}
\end{Theorem}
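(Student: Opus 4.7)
The plan is to argue block-by-block: both $\sF(m|n)$ (by the classical linkage principle for $\mathfrak{gl}_{m|n}(\C)$) and $\mod{K(m|n)}$ (by the restriction of (\ref{blockdec}) to $\Xi$) decompose into blocks indexed by the same set of $\sim$-equivalence classes of weight diagrams, the matching provided by (\ref{otherw})--(\ref{totherw}) together with the groundstate (\ref{groundstate}). Fixing a block $\Gamma$, I would construct an equivalence $\E_\Gamma:\sF_\Gamma(m|n) \to \mod{K_\Gamma(m|n)}$ and then assemble.

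The first step is to build a projective generator $P_\Gamma$ of $\sF_\Gamma(m|n)$ by applying iterated special projective functors $\lonestar E_i$ and $\lonestar F_i$ to the irreducible $L(\la)$ attached to a ``maximal'' vertex of $\Gamma$ (a weight with no caps extending to infinity in its cap diagram), then collecting the indecomposable summands; this mirrors Lemma~\ref{pims} on the arc-algebra side. The crucial computation is then the identification of $\End_{\mathfrak{g}}(P_\Gamma)^{\op}$ with the basic algebra Morita equivalent to $K_\Gamma(m|n)$. I would do this using, on the one hand, the diagram-basis description of Theorem~\ref{myas} and Theorem~\ref{byas} (transplanted to the endomorphism rings of iterated $\lonestar E_i, \lonestar F_i$ applied to irreducibles in $\sF(m|n)$), and on the other hand the mixed Schur-Weyl surjection of Theorem~\ref{main1} together with the weight dictionary to reinterpret the walled-Brauer diagrammatics as arc diagrammatics. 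Once the endomorphism algebras match, Morita theory yields the equivalence $\E_\Gamma$; that $\E L(\la) \cong L(\la)$ is automatic, since the unique irreducible quotient of the indecomposable projective summand parametrized by $\la$ corresponds to $\la$ on either side.

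The hardest part will be the functorial compatibilities (\ref{lastlest1}) and (\ref{lastlest2}). The issue is that $\lonestar E_i$ on $\sF(m|n)$ is defined via the generalized $i$-eigenspace of the Casimir-type operator $\Omega$ of (\ref{om}), whereas on $\mod{K(m|n)}$ it is defined by tensoring with a specific geometric bimodule. Reconciling these amounts to showing that $\Omega$ acts on $\E^{-1}$ of each summand of the geometric bimodule by the scalar predicted by its cup-cap configuration. I would verify this first on suitable standard objects in $\sF(m|n)$ (Kac-type modules), where both sides admit explicit descriptions: on the arc-algebra side this is Theorem~\ref{wasaremark}, and on the supermodule side it follows from the filtration of $M \otimes V$ and $M \otimes W$ together with the eigenvalue formula for $\Omega$ on highest-weight vectors. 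The isomorphism of bimodules would then extend to the whole category using exactness of the functors together with the fact that the standards generate the bounded derived category.
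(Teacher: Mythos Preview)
First, note that the paper does not prove this theorem at all: it is quoted from \cite[Theorems~5.11--5.12]{BS4}, and the proof there occupies much of that paper, resting on the super Kazhdan--Lusztig combinatorics for $\mathfrak{gl}_{m|n}$ established in earlier work of Brundan.

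Your sketch has two genuine gaps. The first is structural. Blocks of $\Xi$ are infinite and have no maximal element in the Bruhat order (weight diagrams in $\Xi$ have infinitely many $\up$'s on both sides, so $\down$'s can migrate rightward indefinitely), so there is no ``maximal'' $L(\la)$ to start from. Even granting a starting point, applying $\lonestar E_i, \lonestar F_i$ to an irreducible on the $\sF(m|n)$ side does not in general produce projectives: the indecomposable summands of mixed tensor space are the modules $R(\la)$ of (\ref{peiple}), which are typically not projective in $\sF(m|n)$. (Contrast Lemma~\ref{pims}, which begins with the genuine projective $P(\eta)$ on the arc-algebra side, not an irreducible.) So you never reach a projective generator this way.

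The second gap is circularity. The ``transplantation'' of Theorems~\ref{myas} and \ref{byas} to endomorphism rings computed inside $\sF(m|n)$ is precisely the content you are trying to prove: those theorems are statements about the arc algebra $K$, and knowing that the same diagram bases describe $\End_{\mathfrak g}$-spaces amounts to already having the equivalence. Likewise, Theorem~\ref{main1} gives only a surjection from the walled Brauer algebra onto $\End_{\mathfrak g}(V^{\otimes r}\otimes W^{\otimes s})^{\op}$; reinterpreting walled-Brauer diagrammatics as arc-algebra diagrammatics is exactly what the present paper accomplishes \emph{by invoking} Theorem~\ref{BS4a} (via Theorems~\ref{iso2}--\ref{iso3}). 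Your proposed route to the endomorphism-ring identification therefore assumes what it sets out to prove.
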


Now recall for $R \in \Seq_{r,s}$
that $\lonestar R$ can be viewed either as an endofunctor of
$\mathscr F(m|n)$ as in the previous section, as an endofunctor
of $\Mod{K(m|n)}$ as in (\ref{head}), or even as an endofunctor
of $\mod{K(m|n)}$ on forgetting the gradings.
Using (\ref{lastlest1})--(\ref{lastlest2}) repeatedly, 
we get an isomorphism of functors
$\lonestar R \circ \E\cong \E\circ \lonestar R:
\mathscr F(m|n)\rightarrow \mod{K(m|n)}$.
Fixing also a (unique up to scalars) isomorphism $L(\zeta) \cong \E\,\C$,
we get from this an isomorphism
\begin{equation*}
\pi_R: \lonestar R\,L(\zeta) \stackrel{\sim}{\rightarrow}
\E \lonestar R\,\C.
\end{equation*}
Note although we made a choice of scalar here, the isomorphism 
$\Pi^{(k)}_R$ in the following theorem is independent of this choice.
For the statement, recall also the maps (\ref{EPg})--(\ref{FPg})
and (\ref{EP})--(\ref{FP}).

\begin{Theorem}\label{iso2}
For any $R \in \Seq_{r,s}$, there is an algebra isomorphism
\begin{equation*}
\Pi^{(k)}_R:
\End_{\mathfrak{g}}(\lonestar R\, \C)^{\op}\stackrel{\sim}{\rightarrow}
\End_{K(m|n)}(\lonestar R\, L(\zeta))^{\op},
\qquad
\theta \mapsto \pi_R^{-1} \circ \E(\theta) \circ \pi_R.
\end{equation*}
Moreover the following diagrams commute.
\begin{align*}
\begin{CD}
\End_{\mathfrak{g}}(\lonestar R\,\C)^{\op}&@>\iota_{R;i}^{RE} >>& \End_{\mathfrak{g}}(\lonestar(RE)\,\C)^{\op}\\
@V\Pi^{(k)}_R VV&&@VV\Pi^{(k)}_{RE} V\\
\End_{K(m|n)}(\lonestar R\,L(\zeta))^{\op}&@>\iota_{R;i}^{RE} >>& \End_{K(m|n)}(\lonestar(RE)\,L(\zeta))^{\op}
\end{CD}\end{align*}\begin{align*}
\begin{CD}
\End_{\mathfrak{g}}(\lonestar R\,\C)^{\op}&@>\iota_{R;i}^{RF} >>& \End_{\mathfrak{g}}(\lonestar(RF)\,\C)^{\op}\\
@V\Pi^{(k)}_R VV&&@VV\Pi^{(k)}_{RF} V\\
\End_{K(m|n)}(\lonestar R\,L(\zeta))^{\op}&@>\iota_{R;i}^{RF} >>& \End_{K(m|n)}(\lonestar(RF)\,L(\zeta))^{\op}
\end{CD}
\end{align*}
\end{Theorem}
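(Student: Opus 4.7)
The plan is to recognize both statements as essentially formal consequences of Theorem~\ref{BS4a} together with the recursive construction of $\pi_R$. First, the existence and bijectivity of $\Pi^{(k)}_R$ is immediate: the equivalence $\E$ induces an algebra isomorphism
\[
\E_*:\End_{\mathfrak{g}}(\lonestar R\,\C)^{\op}\stackrel{\sim}{\rightarrow}\End_{K(m|n)}(\E\lonestar R\,\C)^{\op},\qquad \theta\mapsto\E(\theta),
\]
and the fixed $K(m|n)$-module isomorphism $\pi_R:\lonestar R\,L(\zeta)\stackrel{\sim}{\rightarrow}\E\lonestar R\,\C$ induces an algebra isomorphism by conjugation,
\[
(\pi_R)_*:\End_{K(m|n)}(\E\lonestar R\,\C)^{\op}\stackrel{\sim}{\rightarrow}\End_{K(m|n)}(\lonestar R\,L(\zeta))^{\op}.
\]
Then $\Pi^{(k)}_R = (\pi_R)_*\circ \E_*$, which is an algebra isomorphism independent of the choice of $L(\zeta)\cong\E\,\C$ since any rescaling of this chosen isomorphism propagates to an overall rescaling of $\pi_R$, and conjugation is insensitive to scalars.

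For the commutativity of the two diagrams, I would unwind the recursive definition of $\pi_R$. Let $\Xi_{R,i}:\lonestar E_i\circ\E\stackrel{\sim}{\rightarrow}\E\circ\lonestar E_i$ and $\Xi'_{R,i}:\lonestar F_i\circ\E\stackrel{\sim}{\rightarrow}\E\circ\lonestar F_i$ denote the components of the natural isomorphisms of functors from (\ref{lastlest1})--(\ref{lastlest2}), evaluated at $\lonestar R\,\C$. The iterated isomorphism $\lonestar R\circ\E\cong\E\circ\lonestar R$ used to build $\pi_R$ is, by construction, a composition of such factors; so by choosing $\pi_R$ consistently across all $R$ we may arrange that
\[
\pi_{RE}=\Xi_{R,i}\circ \lonestar E_i(\pi_R),\qquad \pi_{RF}=\Xi'_{R,i}\circ \lonestar F_i(\pi_R)
\]
on the respective $i$-summands of $\lonestar(RE)\,L(\zeta)=\bigoplus_i\lonestar E_i\lonestar R\,L(\zeta)$ and $\lonestar(RF)\,L(\zeta)$.

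Granted this compatibility, the first diagram commutes by direct computation: for $\theta\in\End_{\mathfrak{g}}(\lonestar R\,\C)^{\op}$ restricted to the relevant $i$-summand,
\begin{align*}
\Pi^{(k)}_{RE}(\lonestar E_i(\theta))
&=\pi_{RE}^{-1}\circ\E(\lonestar E_i(\theta))\circ\pi_{RE}\\
&=\lonestar E_i(\pi_R)^{-1}\circ\Xi_{R,i}^{-1}\circ\E(\lonestar E_i(\theta))\circ\Xi_{R,i}\circ\lonestar E_i(\pi_R)\\
&=\lonestar E_i(\pi_R)^{-1}\circ\lonestar E_i(\E(\theta))\circ\lonestar E_i(\pi_R)\\
&=\lonestar E_i\bigl(\pi_R^{-1}\circ\E(\theta)\circ\pi_R\bigr)=\iota_{R;i}^{RE}(\Pi^{(k)}_R(\theta)),
\end{align*}
where the third equality uses the naturality of $\Xi_{R,i}$ in $\theta$. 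The second diagram is handled identically with $\Xi'_{R,i}$ in place of $\Xi_{R,i}$.

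The only genuine subtlety, which I would regard as the main point to verify carefully, is that the isomorphism $\pi_R$ arising from iterating (\ref{lastlest1})--(\ref{lastlest2}) can indeed be taken to satisfy the recursive compatibility above for every $R$ simultaneously. This amounts to showing that the tensor product of the adjunctions/natural transformations in \cite{BS4} assemble coherently, which is essentially built into the statement that $\E$ intertwines the functors $\lonestar E_i$ and $\lonestar F_i$; one can formalise it either by defining $\pi_R$ inductively by the displayed formulas (taking $\pi_\emptyset$ to be the chosen isomorphism $L(\zeta)\cong\E\,\C$) or by citing the coherence of the monoidal structure implicit in Theorem~\ref{BS4a}. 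Once this is settled, the rest is a straightforward diagram chase.
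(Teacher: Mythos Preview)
Your proposal is correct and is precisely the expansion of the paper's one-line proof, which reads in its entirety ``This follows from Theorem~\ref{BS4a}.'' You have spelled out exactly what that sentence means: the equivalence $\E$ induces the algebra isomorphism, the recursive construction of $\pi_R$ from the natural isomorphisms (\ref{lastlest1})--(\ref{lastlest2}) gives the compatibility $\pi_{RE}\big|_{\lonestar E_i}=\Xi_{R,i}\circ\lonestar E_i(\pi_R)$, and the commutativity of the diagrams then follows from naturality of $\Xi_{R,i}$.
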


\begin{proof}
This follows from Theorem~\ref{BS4a}.
\end{proof}

\phantomsubsection{\boldmath The isomorphism $\Upsilon^{(k)}_R$}
Let $R \in \mathscr R_{r,s}$ be fixed,
and $\eta$ and $\zeta$ be the weights from (\ref{othergroundstate}) 
and (\ref{groundstate}), respectively.
We can now complete the proof of Theorem~\ref{isotheorem} by constructing the
third map
(\ref{i3}). Here there is an interesting
diagrammatic trick.
Recall Theorem~\ref{byas}. It gives us a basis $\{\bar f_{\tS\tT}\}$ for
the special endomorphism algebra
$\End_{K(m|n)}(\lonestar R\,L(\zeta))^{\op}$
indexed by good pairs $(\tS,\tT)$ of $R$-tableaux of type $\zeta$.
We are going to compare this with the basis
$\{f_{\tS\tT}\}$ for the graded walled Brauer algebra
$B_R(\delta)$ from (\ref{bbas}), 
which is indexed by pairs $(\tS,\tT)$ of
$R$-tableaux of type $\eta$ with $\sh(\tS) = \sh(\tT)$.

\begin{Lemma}\label{trick}
There is a bijection
$$
\left\{
\begin{array}{ll}
\text{Good pairs $(\tS,\tT)$ of}\\
\text{$R$-tableaux of type $\zeta$}
\end{array}
\right\}
\stackrel{\sim}{\rightarrow}
\left\{
\begin{array}{ll}
\text{Pairs $(\tS',\tT')$ of $R$-tableaux of type $\eta$}\\
\text{with $\sh(\tS') = \sh(\tT')$ and $k(\tS',\tT') \leq k$}
\end{array}
\right\}.
$$
It maps $(\tS,\tT)$ to $(\tS',\tT')$ defined as follows.
\begin{itemize}
\item[(1)] 
Starting from the cup diagram $\underline{\tS} \overline{\tT}$,
replace the cap diagram $\overline{\zeta}$ 
on the top 
by
$\overline{\eta}$
and the cup diagram $\underline{\zeta}$ on the bottom by $\underline{\eta}$.
\item[(2)] Next adjust the labels of all the vertices lying on connected 
components passing through the top or bottom number lines so that the 
weight diagrams at the top and bottom are both equal to $\eta$.
\item[(3)]
Then $(\tS',\tT')$ is the unique pair of $R$-tableaux of type $\eta$ 
such that the resulting
diagram is equal to $\underline{\tS}'\overline{\tT}'$.
\end{itemize}
\end{Lemma}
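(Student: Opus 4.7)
The plan hinges on a simple structural observation: because the weight diagram $\eta$ has all $\up$ vertices strictly to the left of all $\down$ vertices, its cup diagram $\underline{\eta}$ and cap diagram $\overline{\eta}$ consist entirely of vertical rays, with no cups or caps at all. In contrast, $\zeta$ has exactly $k = \min(m,n)$ non-trivial $\down\up$ pairs (separated by $\cross$'s or $\circ$'s depending on the sign of $\delta$), so $\underline{\zeta}$ contains exactly $k$ cups and $\overline{\zeta}$ contains exactly $k$ caps. Thus the procedure in (1)--(3) amounts geometrically to snipping these $k$ boundary cups off the bottom of $\underline{\tS}\overline{\tT}$ and the $k$ boundary caps off the top, replacing each by a pair of rays, and then adjusting labels so that the outermost weight diagrams become $\eta$.

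First I would verify that the forward map is well-defined. When a cup of $\underline{\zeta}$ is snipped, its two endpoints, which were labelled $\down$ and $\up$ relative to $\zeta$, must now receive the labels dictated by $\eta$ at the same positions; this label change propagates along the connected components of $\underline{\tS}\overline{\tT}$ containing those endpoints, flipping $\up \leftrightarrow \down$ at every vertex it traverses. The good pair hypothesis is precisely what makes this propagation well-behaved: it guarantees that each connected component meets the top number line at most twice (and as an anti-clockwise circle in the doubly-meeting case), and similarly for the bottom. A finite case analysis---governed by the admissible configurations in (\ref{break1})--(\ref{break2})---confirms that for every such configuration the swapped labels yield a consistently oriented new diagram and that the intermediate matchings are of the forms permitted in (\ref{here}). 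Since the label flips propagate symmetrically from top and bottom, the middle number line receives a coherent relabelling, and $\sh(\tS') = \sh(\tT')$ follows.

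Next I would verify the bound $k(\tS',\tT') \leq k$ and then construct the inverse. Each boundary cup appearing in the new diagram $\underline{\tS}'\overline{\tT}'$ comes from a connected component of $\underline{\tS}\overline{\tT}$ that acquired two bottom crossings only after snipping, so each such component consumes at least one cup of $\underline{\zeta}$; this gives the bound. For the inverse, I would start with $(\tS',\tT')$ of type $\eta$ satisfying $\sh(\tS')=\sh(\tT')$ and $k(\tS',\tT') \leq k$, identify the boundary cups in $\underline{\tS}'$ and boundary caps in $\overline{\tT}'$, and close them off by inserting the cups of $\underline{\zeta}$ and caps of $\overline{\zeta}$. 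The remaining rays of $\underline{\tS}'$ and $\overline{\tT}'$ match up with the remaining rays of $\underline{\zeta}$ and $\overline{\zeta}$ in the unique way compatible with the $\up/\down$ pattern of $\zeta$, and reverse-propagating the label flips produces a good pair $(\tS,\tT)$ of type $\zeta$. That the two constructions are mutual inverses is immediate from their explicit descriptions.

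The main technical obstacle will be the finite case analysis underlying well-definedness, since the label-swap rule must interact compatibly both with the admissible edges from (\ref{break1})--(\ref{break2}) and with the specific $\up/\down/\circ/\cross$ patterns of $\zeta$ versus $\eta$; the bookkeeping here is finite but intricate, and checking that the new intermediate weight diagrams remain legitimate weight diagrams (e.g.\ with no illegal $\down$ appearing to the left of an $\up$ outside a finite range) requires care.
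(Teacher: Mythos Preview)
Your approach is correct and matches the paper's, whose proof consists only of the instruction ``Construct a two-sided inverse.'' One refinement: the inverse is most cleanly described not as ``closing off'' the boundary cups and caps of $(\tS',\tT')$ with the cups and caps of $\underline{\zeta}$ and $\overline{\zeta}$, but simply as replacing $\underline{\eta}$ by $\underline{\zeta}$ and $\overline{\eta}$ by $\overline{\zeta}$ at their fixed positions and then readjusting labels along the affected components---this makes it immediate that the two constructions are mutually inverse, and reduces the remaining work to checking that the result is a good pair (which is where your case analysis on component types $(0,0)$, $(1,1)$, $(2,0)$, $(0,2)$, $(2,2)$ and the good pair conditions comes in).
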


\begin{proof}
Construct a two-sided inverse.
\end{proof}

Here are a couple of examples illustrating 
the definition in Lemma~\ref{trick}; actually we display the diagrams
$\underline{\tS} \overline{\tT} \rightsquigarrow \underline{\tS}'
\overline{\tT}'$
assuming $(\tS,\tT) \mapsto (\tS',\tT')$ under the bijection defined
in the lemma.
In the first example, $m=n=1$ (so $k=1$ and $\delta = 0$) and in the 
second example $m=n=2$ (so $k=2$ and $\delta = 0$).
$$
\begin{picture}(45,138)
\put(0,65){\line(1,0){45}}
\put(-2.5,120.4){$\up$}
\put(27.5,120.4){$\up$}
\put(42.5,120.4){$\up$}
\put(12.5,125.1){$\down$}
\put(-2.5,0.4){$\up$}
\put(12.5,0.4){$\up$}
\put(27.5,5.1){$\down$}
\put(42.5,0.4){$\up$}

\put(-2.5,75.4){$\up$}
\put(42.5,75.4){$\up$}
\put(-2.5,90.4){$\up$}
\put(27.5,90.4){$\up$}
\put(-2.5,105.4){$\up$}
\put(42.5,105.4){$\up$}

\put(-2.5,60.4){$\up$}
\put(27.5,60.4){$\up$}
\put(12.5,65.1){$\down$}
\put(42.5,60.4){$\up$}
\put(12.5,77.2){$\circ$}
\put(27,78.2){$\cross$}
\put(12.5,92.2){$\circ$}
\put(42,93.2){$\cross$}
\put(12.5,107.2){$\circ$}
\put(27,108.2){$\cross$}
\put(0.2,65){\line(0,1){68}}
\put(45.2,110){\line(0,1){23}}
\put(45.2,65){\line(0,1){15}}
\put(22.7,125){\oval(15,15)[b]}
\put(22.7,125){\oval(15,15)[t]}
\put(22.7,65){\oval(15,15)[t]}
\qbezier(45.2,110)\qbezier(45.2,102.5)\qbezier(37.7,102.5)
\qbezier(30.2,95)\qbezier(30.2,102.5)\qbezier(37.7,102.5)
\qbezier(45.2,80)\qbezier(45.2,87.5)\qbezier(37.7,87.5)
\qbezier(30.2,95)\qbezier(30.2,87.5)\qbezier(37.7,87.5)


\put(-2.5,45.4){$\up$}
\put(42.5,45.4){$\up$}
\put(12.5,30.4){$\up$}
\put(42.5,30.4){$\up$}
\put(-2.5,15.4){$\up$}
\put(42.5,15.4){$\up$}

\put(12.5,47.2){$\circ$}
\put(27,48.2){$\cross$}
\put(-2.5,32.2){$\circ$}
\put(27,33.2){$\cross$}
\put(12.5,17.2){$\circ$}
\put(27,18.2){$\cross$}
\put(45.2,65){\line(0,-1){68}}
\put(0.2,20){\line(0,-1){23}}
\put(0.2,65){\line(0,-1){15}}
\put(22.7,5){\oval(15,15)[t]}
\put(22.7,5){\oval(15,15)[b]}
\put(22.7,65){\oval(15,15)[b]}
\qbezier(0.2,20)\qbezier(0.2,27.5)\qbezier(7.7,27.5)
\qbezier(15.2,35)\qbezier(15.2,27.5)\qbezier(7.7,27.5)
\qbezier(0.2,50)\qbezier(0.2,42.5)\qbezier(7.7,42.5)
\qbezier(15.2,35)\qbezier(15.2,42.5)\qbezier(7.7,42.5)
\put(62,62){$\rightsquigarrow$}
\end{picture}
\qquad\qquad
\begin{picture}(45,138)
\put(0,65){\line(1,0){45}}
\put(-2.5,120.4){$\up$}
\put(12.5,120.4){$\up$}
\put(27.5,125.1){$\down$}
\put(42.5,125.1){$\down$}
\put(-2.5,0.4){$\up$}
\put(12.5,0.4){$\up$}
\put(27.5,5.1){$\down$}
\put(42.5,5.1){$\down$}

\put(-2.5,75.4){$\up$}
\put(42.5,80.1){$\down$}
\put(-2.5,90.4){$\up$}
\put(27.5,95.1){$\down$}
\put(-2.5,105.4){$\up$}
\put(42.5,110.1){$\down$}

\put(-2.5,60.4){$\up$}
\put(27.5,60.4){$\up$}
\put(12.5,65.1){$\down$}
\put(42.5,65.1){$\down$}
\put(12.5,77.2){$\circ$}
\put(27,78.2){$\cross$}
\put(12.5,92.2){$\circ$}
\put(42,93.2){$\cross$}
\put(12.5,107.2){$\circ$}
\put(27,108.2){$\cross$}
\put(0.2,65){\line(0,1){68}}
\put(15.2,125){\line(0,1){8}}
\put(30.2,125){\line(0,1){8}}
\put(45.2,110){\line(0,1){23}}
\put(45.2,65){\line(0,1){15}}
\put(22.7,125){\oval(15,15)[b]}
\put(22.7,65){\oval(15,15)[t]}
\qbezier(45.2,110)\qbezier(45.2,102.5)\qbezier(37.7,102.5)
\qbezier(30.2,95)\qbezier(30.2,102.5)\qbezier(37.7,102.5)
\qbezier(45.2,80)\qbezier(45.2,87.5)\qbezier(37.7,87.5)
\qbezier(30.2,95)\qbezier(30.2,87.5)\qbezier(37.7,87.5)


\put(-2.5,45.4){$\up$}
\put(42.5,50.1){$\down$}
\put(12.5,30.4){$\up$}
\put(42.5,35.1){$\down$}
\put(-2.5,15.4){$\up$}
\put(42.5,20.1){$\down$}

\put(12.5,47.2){$\circ$}
\put(27,48.2){$\cross$}
\put(-2.5,32.2){$\circ$}
\put(27,33.2){$\cross$}
\put(12.5,17.2){$\circ$}
\put(27,18.2){$\cross$}
\put(45.2,65){\line(0,-1){68}}
\put(15.2,5){\line(0,-1){8}}
\put(30.2,5){\line(0,-1){8}}
\put(0.2,20){\line(0,-1){23}}
\put(0.2,65){\line(0,-1){15}}
\put(22.7,5){\oval(15,15)[t]}
\put(22.7,65){\oval(15,15)[b]}
\qbezier(0.2,20)\qbezier(0.2,27.5)\qbezier(7.7,27.5)
\qbezier(15.2,35)\qbezier(15.2,27.5)\qbezier(7.7,27.5)
\qbezier(0.2,50)\qbezier(0.2,42.5)\qbezier(7.7,42.5)
\qbezier(15.2,35)\qbezier(15.2,42.5)\qbezier(7.7,42.5)
\end{picture}
\qquad\qquad\qquad
\begin{picture}(45,138)
\put(0,65){\line(1,0){45}}
\put(-2.5,125.1){$\down$}
\put(12.5,125.1){$\down$}
\put(27.5,120.4){$\up$}
\put(42.5,120.4){$\up$}
\put(-2.5,0.4){$\up$}
\put(12.5,0.4){$\up$}
\put(27.5,5.1){$\down$}
\put(42.5,5.1){$\down$}

\put(12.5,75.4){$\up$}
\put(-2.5,80.1){$\down$}
\put(27.5,90.4){$\up$}
\put(-2.5,95.1){$\down$}
\put(42.5,105.4){$\up$}
\put(-2.5,110.1){$\down$}

\put(12.5,60.4){$\up$}
\put(-2.5,65.1){$\down$}
\put(42.5,65.1){$\down$}
\put(27.5,60.4){$\up$}
\put(27.5,77.2){$\circ$}
\put(42,78.2){$\cross$}
\put(12.5,92.2){$\circ$}
\put(42,93.2){$\cross$}
\put(12.5,107.2){$\circ$}
\put(27,108.2){$\cross$}
\put(0.2,65){\line(0,1){60}}
\put(45.2,110){\line(0,1){15}}
\put(15.2,65){\line(0,1){15}}
\put(22.7,125){\oval(15,15)[b]}
\put(22.7,125){\oval(15,15)[t]}
\put(22.7,125){\oval(45,30)[t]}
\put(37.7,65){\oval(15,15)[t]}
\qbezier(45.2,110)\qbezier(45.2,102.5)\qbezier(37.7,102.5)
\qbezier(30.2,95)\qbezier(30.2,102.5)\qbezier(37.7,102.5)
\qbezier(15.2,80)\qbezier(15.2,87.5)\qbezier(22.7,87.5)
\qbezier(30.2,95)\qbezier(30.2,87.5)\qbezier(22.7,87.5)


\put(27.5,45.4){$\up$}
\put(42.5,50.1){$\down$}
\put(12.5,30.4){$\up$}
\put(42.5,35.1){$\down$}
\put(-2.5,15.4){$\up$}
\put(42.5,20.1){$\down$}

\put(-2.5,47.2){$\circ$}
\put(12,48.2){$\cross$}

\put(-2.5,32.2){$\circ$}
\put(27.5,33.2){$\cross$}

\put(12.5,17.2){$\circ$}
\put(27,18.2){$\cross$}
\put(45.2,65){\line(0,-1){60}}
\put(30.2,65){\line(0,-1){15}}
\put(0.2,20){\line(0,-1){15}}
\put(22.7,5){\oval(15,15)[t]}
\put(22.7,5){\oval(15,15)[b]}
\put(22.7,5){\oval(45,30)[b]}
\put(7.7,65){\oval(15,15)[b]}
\qbezier(0.2,20)\qbezier(0.2,27.5)\qbezier(7.7,27.5)
\qbezier(15.2,35)\qbezier(15.2,27.5)\qbezier(7.7,27.5)
\qbezier(30.2,50)\qbezier(30.2,42.5)\qbezier(22.7,42.5)
\qbezier(15.2,35)\qbezier(15.2,42.5)\qbezier(22.7,42.5)
\put(62,62){$\rightsquigarrow$}
\end{picture}
\qquad\qquad
\begin{picture}(45,138)
\put(0,65){\line(1,0){45}}
\put(-2.5,120.4){$\up$}
\put(12.5,120.4){$\up$}
\put(27.5,125.1){$\down$}
\put(42.5,125.1){$\down$}
\put(-2.5,0.4){$\up$}
\put(12.5,0.4){$\up$}
\put(27.5,5.1){$\down$}
\put(42.5,5.1){$\down$}

\put(-2.5,75.4){$\up$}
\put(12.5,80.1){$\down$}
\put(-2.5,90.4){$\up$}
\put(27.5,95.1){$\down$}
\put(-2.5,105.4){$\up$}
\put(42.5,110.1){$\down$}

\put(-2.5,60.4){$\up$}
\put(12.5,65.1){$\down$}
\put(42.5,65.1){$\down$}
\put(27.5,60.4){$\up$}
\put(27.5,77.2){$\circ$}
\put(42,78.2){$\cross$}
\put(12.5,92.2){$\circ$}
\put(42,93.2){$\cross$}
\put(12.5,107.2){$\circ$}
\put(27,108.2){$\cross$}
\put(0.2,65){\line(0,1){68}}
\put(15.2,125){\line(0,1){8}}
\put(30.2,125){\line(0,1){8}}
\put(45.2,110){\line(0,1){23}}
\put(15.2,65){\line(0,1){15}}
\put(22.7,125){\oval(15,15)[b]}
\put(37.7,65){\oval(15,15)[t]}
\qbezier(45.2,110)\qbezier(45.2,102.5)\qbezier(37.7,102.5)
\qbezier(30.2,95)\qbezier(30.2,102.5)\qbezier(37.7,102.5)
\qbezier(15.2,80)\qbezier(15.2,87.5)\qbezier(22.7,87.5)
\qbezier(30.2,95)\qbezier(30.2,87.5)\qbezier(22.7,87.5)


\put(27.5,45.4){$\up$}
\put(42.5,50.1){$\down$}
\put(12.5,30.4){$\up$}
\put(42.5,35.1){$\down$}
\put(-2.5,15.4){$\up$}
\put(42.5,20.1){$\down$}

\put(-2.5,47.2){$\circ$}
\put(12,48.2){$\cross$}

\put(-2.5,32.2){$\circ$}
\put(27.5,33.2){$\cross$}

\put(12.5,17.2){$\circ$}
\put(27,18.2){$\cross$}
\put(45.2,65){\line(0,-1){68}}
\put(30.2,65){\line(0,-1){15}}
\put(15.2,5){\line(0,-1){8}}
\put(30.2,5){\line(0,-1){8}}
\put(0.2,20){\line(0,-1){23}}
\put(22.7,5){\oval(15,15)[t]}
\put(7.7,65){\oval(15,15)[b]}
\qbezier(0.2,20)\qbezier(0.2,27.5)\qbezier(7.7,27.5)
\qbezier(15.2,35)\qbezier(15.2,27.5)\qbezier(7.7,27.5)
\qbezier(30.2,50)\qbezier(30.2,42.5)\qbezier(22.7,42.5)
\qbezier(15.2,35)\qbezier(15.2,42.5)\qbezier(22.7,42.5)
\end{picture}
$$

\vspace{2mm}

\begin{Theorem}\label{iso3}
There is an algebra isomorphism 
\begin{equation*}
\Upsilon_{R}^{(k)}:
\End_{K(m|n)}(\lonestar R\,L(\zeta))^{\op}
\stackrel{\sim}{\rightarrow}
B_R(\delta) / B_R(\delta)_{> k}
\end{equation*}
mapping $\bar f_{\tS,\tT}$ for a good pair $(\tS,\tT)$ of $R$-tableaux
of type $\zeta$
to the canonical image in
$B_R(\delta) / B_R(\delta)_{> k}$ of $f_{\tS',\tT'} \in B_R(\delta)$, 
where $(\tS',\tT')$ is the image
of $(\tS,\tT)$ under the bijection from Lemma~\ref{trick}.
Moreover the following diagrams commute.
\begin{align*}
\begin{CD}
\End_{K(m|n)}(\lonestar R\,L(\zeta))^{\op}&@>\iota_{R;i}^{RE} >>& \End_{K(m|n)}(\lonestar(RE)\,L(\zeta))^{\op}\\
@V\Upsilon_{R}^{(k)} VV&&@VV\Upsilon_{RE}^{(k)} V\\
 B_R(\delta) / B_R(\delta)_{> k}&@>\iota_{R;i}^{RE} >>& B_{RE}(\delta) /
 B_{RE}(\delta)_{> k}
\end{CD}\end{align*}\begin{align*}
\begin{CD}
\End_{K(m|n)}(\lonestar R\,L(\zeta))^{\op}&@>\iota_{R;i}^{RF} >>& \End_{K(m|n)}(\lonestar(RF)\,L(\zeta))^{\op}\\
@V\Upsilon_{R}^{(k)} VV&&@VV\Upsilon_{RF}^{(k)} V\\
B_R(\delta) / B_R(\delta)_{> k}
&@>\iota_{R;i}^{RF} >>& B_{RF}(\delta) / B_{RF}(\delta)_{> k}.
\end{CD}
\end{align*}
\end{Theorem}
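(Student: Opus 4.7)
The plan is to define $\Upsilon_R^{(k)}$ on the distinguished basis $\{\bar{f}_{\tS\tT}\}$ of $\End_{K(m|n)}(\lonestar R\,L(\zeta))^{\op}$ from Theorem~\ref{byas} by the formula in the theorem statement: send $\bar f_{\tS\tT}$ to the image of $f_{\tS'\tT'}$ in $B_R(\delta) / B_R(\delta)_{> k}$. Since by definition $B_R(\delta)_{> k}$ is spanned by those $f_{\tU\tV}$ with $k(\tU,\tV) > k$, the target algebra has basis indexed by pairs of $R$-tableaux of type $\eta$ of common shape with $k(\tS',\tT') \leq k$, and Lemma~\ref{trick} gives a bijection between this indexing set and the good pairs of type $\zeta$. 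Hence $\Upsilon_R^{(k)}$ is automatically a linear isomorphism.

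The main task is to verify multiplicativity. Both products are computed by the same recipe from $\S$\ref{sd}: draw one basis diagram underneath the other, connect corresponding rays, run the extended surgery procedure on the central $\overline{\tT}|\underline{\tU}$ section, and then prune the result---by killing $\bar f_{\tX\tY}$ for non-good pairs $(\tX,\tY)$ on the source, or by killing $f_{\tX'\tY'}$ with $k(\tX',\tY') > k$ on the target. The extended surgery procedure is entirely local to the central section and is insensitive to the labels on the top and bottom number lines of the composite diagram; therefore applying Lemma~\ref{trick}'s relabeling commutes with running the surgery, so that the summands $\underline{\tX}\overline{\tY}$ produced on the $\zeta$-side correspond, term-by-term via the bijection, to the summands $\underline{\tX}'\overline{\tY}'$ produced on the $\eta$-side.

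The main obstacle will therefore be a diagrammatic check that the two pruning conditions match: a summand on the $\zeta$-side survives (i.e.\ $(\tX,\tY)$ is a good pair) if and only if its $\eta$-side image satisfies $k(\tX',\tY') \leq k$. Using Lemma~\ref{trick}, every such surviving summand automatically lands in the $k(\tX',\tY') \leq k$ regime, and conversely any pair $(\tX',\tY')$ with $k(\tX',\tY') \leq k$ corresponds under Lemma~\ref{trick} to a good pair; Lemma~\ref{daft} supplies the arithmetic of boundary cups and caps needed to close this argument. Finally, the commutative diagrams encoding compatibility with $\iota_{R;i}^{RE}$ and $\iota_{R;i}^{RF}$ follow from the algorithms at the end of $\S$\ref{sd}, which express both maps on the $\zeta$-side and on the $\eta$-side by the insertion of identical local configurations (\ref{CKLR2})--(\ref{CKLR3}) at the middle number line; Lemma~\ref{lehrerzhangsetup} ensures that the $\iota$-maps on $B_R(\delta)$ descend to the quotients, so the required diagrams commute.
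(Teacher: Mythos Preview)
Your proposal is correct and follows essentially the same approach as the paper's proof, which is a one-line appeal to the algorithmic descriptions of the multiplications and the $\iota$-maps. You have fleshed out that sentence in the natural way: the linear isomorphism comes from Lemma~\ref{trick} together with (\ref{ks}), multiplicativity comes from comparing the extended surgery procedures on the two sides, the pruning conditions match by Lemma~\ref{trick} again, and the commutative squares follow from the insertion algorithms (\ref{CKLR2})--(\ref{CKLR3}) together with Lemma~\ref{lehrerzhangsetup}.

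One small point worth sharpening: you write that the extended surgery is ``insensitive to the labels on the top and bottom number lines of the composite diagram,'' which is true of the very top and very bottom, but the bijection of Lemma~\ref{trick} also alters the structure \emph{inside} the central $\overline{\tT}|\underline{\tU}$ section at the joining point, replacing the mirror-image pair $\overline{\zeta}|\underline{\zeta}$ (with its $k$ nested cap--cup pairs) by $\overline{\eta}|\underline{\eta}$ (joined rays). The reason this causes no discrepancy is that contracting a symmetric cap--cup pair coming from $\overline{\zeta}|\underline{\zeta}$ has exactly the same effect as having joined the corresponding rays from the outset, so the two surgeries produce term-by-term the same output; this is the diagrammatic check underlying the phrase ``by the algorithmic descriptions,'' and the paper leaves it equally implicit.
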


\begin{proof}
The map $\Upsilon_R^{(k)}$ is a vector space isomorphism by
Lemma~\ref{trick}
and (\ref{ks}).
It is multiplicative 
by the algorithmic descriptions of the multiplications in
$\End_{K(m|n)}(\lonestar R\,L(\zeta))^{\op})$
and $B_R(\delta) / B_R(\delta)_{> k}$.
Finally,
the commutativity of the diagrams 
follows by considering the algorithmic description
of the 
maps $\iota_{R;i}^{RE}$ and $\iota_{R;i}^{RF}$
from the end of $\S$\ref{sd} and (\ref{leg1})--(\ref{leg2}).
\end{proof}

\phantomsubsection{Morita equivalence}
The following proves Theorem~\ref{fin} 
from the introduction.

\begin{Theorem}\label{mt}
Fix $R \in \Seq_{r,s}$ and $k \gg 0$.
Then there is an equivalence of categories
\begin{equation*}
\G_R 
:
\mod{K_{r,s}(\delta)} \rightarrow \mod{B_{r,s}(\delta)}
\end{equation*}
defined 
by applying (the ungraded analogue of) the functor
$\F_R$
from (\ref{life})
followed by pull-back through the isomorphism $\Theta_R^{(k)}$.
Moreover the following hold.
\begin{itemize}
\item[(1)] The equivalence commutes with $i$-restriction
  and $i$-induction, i.e.
there are isomorphisms of functors
\begin{align*}
\G_{R} \circ \ires^{r+1,s}_{r,s} 
\cong \ires^{r+1,s}_{r,s}  \circ \G_{RE}&:\mod{K_{r+1,s}(\delta)} \rightarrow \mod{B_{r,s}(\delta)},\\
\G_{R} \circ \ires^{r,s+1}_{r,s} 
\cong \ires^{r,s+1}_{r,s}  \circ \G_{RF}&:\mod{K_{r,s+1}(\delta)} \rightarrow \mod{B_{r,s}(\delta)},\\
\G_{RE} \circ \iind^{r+1,s}_{r,s}
\cong \iind^{r+1,s}_{r,s} \circ \G_R&:\mod{K_{r,s}(\delta)} \rightarrow \mod{B_{r+1,s}(\delta)},\\
\G_{RF} \circ \iind^{r,s+1}_{r,s} 
\cong \iind^{r,s+1}_{r,s}  \circ \G_R&:\mod{K_{r,s}(\delta)} \rightarrow \mod{B_{r,s+1}(\delta)}.
\end{align*}
\item[(2)]
For any
$\la \in \La_{r,s}$,
we have that 
$\G_R(V_{r,s}(\la)) \cong C_{r,s}(\la)$
as 
$B_{r,s}(\delta)$-modules.
\end{itemize}
\end{Theorem}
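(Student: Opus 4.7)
First, $\G_R$ is an equivalence: the ungraded version of $\F_R$ from Theorem~\ref{morita} is an equivalence $\mod{K_{r,s}(\delta)} \to \mod{B_R(\delta)}$, and Corollary~\ref{isocor} tells us $\Theta_R^{(k)}$ is an algebra isomorphism $B_{r,s}(\delta) \xrightarrow{\sim} B_R(\delta)$ for $k \gg 0$, so pullback through it is an equivalence of module categories; $\G_R$ is their composite. For part~(1), I would combine the natural isomorphisms involving $\F_R$ from Theorem~\ref{morita} with the commutative diagrams in Theorem~\ref{isotheorem}. Since $k \gg 0$ forces $B_R(\delta)_{>k} = 0$, those diagrams say that $\Theta_R^{(k)}$ intertwines the walled Brauer homomorphisms $\iota^{r+1,s}_{r,s;i}, \iota^{r,s+1}_{r,s;i}$ with the graded walled Brauer homomorphisms $\iota^{RE}_{R;i}, \iota^{RF}_{R;i}$; hence pullback through $\Theta^{(k)}$ intertwines $\iind^{RE}_R, \ires^{RE}_R, \iind^{RF}_R, \ires^{RF}_R$ with $\iind^{r+1,s}_{r,s}, \ires^{r+1,s}_{r,s}, \iind^{r,s+1}_{r,s}, \ires^{r,s+1}_{r,s}$, and splicing this with Theorem~\ref{morita} produces the four natural isomorphisms in~(1).

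For part~(2) I would induct on $r+s$; the base case $r=s=0$ is trivial. For the inductive step, fix $\la \in \La_{r,s}$ and, by symmetry, treat only $R^{(r+s)} = E$, writing $R = R'E$ with $R' \in \Seq_{r-1,s}$. Next I would choose $\mu \in \La_{r-1,s}$ admitting an edge $\mu \stackrel{i}{\to} \la$ in the Bratelli diagram: if $\la^{\mathrm L}$ has a removable corner, remove it; otherwise $\la^{\mathrm L} = \emptyset$ forces $t = r \geq 1$, so instead produce $\mu$ by adding a corner box to $\la^{\mathrm R}$. Invoking the inductive hypothesis $\G_{R'}(V_{r-1,s}(\mu)) \cong C_{r-1,s}(\mu)$ together with $\iind^{r,s}_{r-1,s}$ and part~(1) yields $\G_R(\iind^{r,s}_{r-1,s} V_{r-1,s}(\mu)) \cong \iind^{r,s}_{r-1,s} C_{r-1,s}(\mu)$. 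Theorem~\ref{finalbranch}(3) filters the left input by $V_{r,s}(\nu)$ for $\mu \stackrel{i}{\to} \nu$, so applying the exact functor $\G_R$ filters the right-hand module by the $\G_R(V_{r,s}(\nu))$'s, whereas Theorem~\ref{ibranch}(3) independently filters the same module by the $C_{r,s}(\nu)$'s over the very same indexing set. Both filtrations have length at most two, so it suffices to verify that their submodule/quotient labelings agree in the length-two case.

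The compatibility of those labelings is the one delicate point of the argument: the two branching theorems phrase the submodule/quotient distinction in quite different languages, one via partition sizes and the other via the Bruhat order on weight diagrams. The verification reduces to a single short combinatorial check: when both children $\nu_+$ (adding a box to $\mu^{\mathrm L}$) and $\nu_-$ (removing a box from $\mu^{\mathrm R}$) exist, the weight dictionary~(\ref{Iu})--(\ref{dict}) forces $\mu$'s weight diagram to display the pattern $\cross\,\circ$ at positions $i, i+1$, whence in $\nu_+$ those positions acquire labels $\down\,\up$ and in $\nu_-$ they acquire labels $\up\,\down$. Thus $\nu_-$ both has smaller $|L|,|R|$ and is Bruhat-larger, so Theorems~\ref{ibranch}(b) and~\ref{finalbranch} both single out $\nu_-$ as the submodule. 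The filtrations then match term by term, giving $\G_R(V_{r,s}(\nu)) \cong C_{r,s}(\nu)$ for both children $\nu$ (and in particular for $\nu = \la$), and the induction closes.
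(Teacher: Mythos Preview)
Your treatment of the opening statement and of part~(1) is correct and agrees with the paper's.

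In part~(2) there is a genuine gap in the length-two case of the filtration. After applying part~(1) and the inductive hypothesis, you have two length-two filtrations of the \emph{same} module $M \cong \iind^{r,s}_{r-1,s}\,C_{r-1,s}(\mu)$: one with submodule $C_{r,s}(\nu_-)$ and quotient $C_{r,s}(\nu_+)$ from Theorem~\ref{ibranch}(3), and the other with submodule $\G_R(V_{r,s}(\nu_-))$ and quotient $\G_R(V_{r,s}(\nu_+))$ obtained by pushing the filtration of Theorem~\ref{finalbranch}(3) through $\G_R$. Your combinatorial check correctly shows that the two filtrations carry the same labels $\nu_-$ (submodule) and $\nu_+$ (quotient), but matching labels does \emph{not} force the two submodules of $M$ to coincide. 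The only uniqueness available is Theorem~\ref{ibranch}(b): $M$ has a unique submodule whose quotient is isomorphic to the cell module $C_{r,s}(\nu_+)$. To apply this to the second filtration you would need to know already that $\G_R(V_{r,s}(\nu_+)) \cong C_{r,s}(\nu_+)$, which is precisely the assertion you are trying to establish. Since both $\nu_+$ and $\nu_-$ lie in $\La_{r,s}$, your single induction on $r+s$ gives you no prior information about either section, so the argument is circular.

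The paper closes this gap with a \emph{second} induction, on $s - |\mu^{\mathrm R}|$, nested inside the induction on $r+s$. The auxiliary weight is chosen so that, in the length-two situation, the other child $\nu$ has strictly smaller secondary parameter; the inner inductive hypothesis then identifies $\G_{RE}(V_{r+1,s}(\nu)) \cong C_{r+1,s}(\nu)$ in advance, after which the uniqueness in Theorem~\ref{ibranch}(b) pins down the remaining section. The base of the inner induction (where $s = |\mu^{\mathrm R}|$, so $\mu$ is Bruhat-minimal and both $V_{r+1,s}(\mu)$ and $C_{r+1,s}(\mu)$ are irreducible) requires a separate head-of-induced-module argument that your proposal does not supply.
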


\begin{proof}
The opening statement follows from Theorem~\ref{morita}.
Also (1) follows on combining the isomorphisms of functors in
Theorem~\ref{morita}
with the commutativity of the diagrams in
Theorem~\ref{isotheorem}.

To prove (2) we proceed by induction on $r+s$, the base case $r+s=0$ being trivial.
For the induction step, suppose we know already
that $\G_R(V_{r,s}(\la)) \cong C_{r,s}(\la)$
for all $\la \in \La_{r,s}$.
We need to show that
$\G_{RE}(V_{r+1,s}(\mu)) \cong C_{r+1,s}(\mu)$
for all $\mu \in \La_{r+1,s}$
and that
$\G_{RF}(V_{r,s+1}(\mu)) \cong C_{r,s+1}(\mu)$
for all $\mu \in \La_{r,s+1}$.
We just verify the first of these statements, since the proof of the
second is similar.
For this we proceed by another induction, this time on $s-|\mu^{\mathrm R}|$.

First consider the base case when $s-|\mu^{\mathrm R}| = 0$.
Then $\mu$ is minimal in $\La_{r+1,s}$
with respect to the Bruhat order.
So $V_{r+1,s}(\mu)$ is irreducible by Theorem~\ref{iscell}(4).
Let $\la$ be obtained from $\mu$ by removing a box of content $i$ from
$\mu^{\mathrm L}$.
We claim that $\iind_{r,s}^{r+1,s} V_{r,s}(\la)$ has irreducible head
isomorphic to $V_{r+1,s}(\mu)$.
By Theorem~\ref{finalbranch}(3), we either have
that $\iind_{r,s}^{r+1,s} V_{r,s}(\la) \cong V_{r+1,s}(\mu)$, when the
claim is trivially true, or $\la^{\mathrm R}$ has a removable box of content
$-i-\delta$.
In the latter case, let $\nu \in \La_{r+1,s}$ be obtained from $\la$ by removing this
box.
Then $\iind_{r,s}^{r+1,s} V_{r,s}(\la)$ has a two-step filtration with
$V_{r+1,s}(\nu)\langle 1 \rangle$ at the bottom and $V_{r+1,s}(\mu)$
at the top.
It follows that the head of 
$\iind_{r,s}^{r+1,s} V_{r,s}(\la)$ contains
$V_{r+1,s}(\mu)$ and possibly some other constituents which are
also constituents of $L :=
\operatorname{head} V_{r+1,s}(\nu)\langle 1 \rangle$.
We need to show that there are actually no extra constituents.
Suppose first that $\nu \in \dot\La_{r+1,s}$, in which case
$L = L_{r+1,s}(\nu)\langle 1 \rangle$. 
The weight diagram of $\nu$ has $i$th and $(i+1)$th vertices labelled
$\up$ and $\down$, respectively, so by the last part of Theorem~\ref{ipf}
we have that $E_i L(\nu) = \{0\}$.
This implies using also Lemma~\ref{comm} that
$\ires^{r+1,s}_{r,s} L_{r+1,s}(\nu)
= \ires^{r+1,s}_{r,s} (e_{r,s} L(\nu))
\cong e_{r,s}(E_i L(\nu)) = \{0\}$, hence
\begin{multline*}
\dim \hom_{K_{r+1,s}(\delta)}(\iind_{r,s}^{r+1,s} V_{r,s}(\la),
L_{r+1,s}(\nu))
= \\\dim
\hom_{K_{r,s}(\delta)}(V_{r,s}(\la),
\ires^{r+1,s}_{r,s} L_{r+1,s}(\nu))
= 0.
\end{multline*}
This proves that $L$ does not appear in this case.
It remains to treat the exceptional situation in which $\nu =
(\varnothing,\varnothing)$, $\delta = 0$, $r=0$ and $s=1$.
In this case Theorem~\ref{iscell}(4) implies 
that $L \cong V_{r+1,s}(\mu)\langle 2 \rangle$.
As $\ires^{r+1,s}_{r,s} V_{r+1,s}(\mu) \cong V_{r,s}(\la)$
by Theorem~\ref{finalbranch}(1), we have that
\begin{multline*}
\dim\hom_{K_{r+1,s}(\delta)}(\iind_{r,s}^{r+1,s} V_{r,s}(\la),
V_{r+1,s}(\mu))
=\\
\dim\hom_{K_{r,s}(\delta)}(V_{r,s}(\la),
\ires^{r+1,s}_{r,s} V_{r+1,s}(\mu))
= 1,
\end{multline*}
which is good enough to complete the proof of the claim.
Now to show that $\G_{RE} (V_{r+1,s}(\mu)) \cong C_{r+1,s}(\mu)$, we note 
by the definition (\ref{cell}) that $C_{r+1,s}(\mu)$ is irreducible
too.
Hence using the claim just established, it suffices to show that
$C_{r+1,s}(\mu)$ appears in the head of $\G_{RE} (\iind_{r,s}^{r+1,s}
V_{r,s}(\la))$.
By (1) and induction,
$\G_{RE} (\iind_{r,s}^{r+1,s}
V_{r,s}(\la))
\cong \iind_{r,s}^{r+1,s} C_{r,s}(\la)$, which has $C_{r+1,s}(\mu)$
in its head by Theorem~\ref{ibranch}(3).
This verifies the base case of the second induction.

For the induction step,
we take $\mu \in \La_{r+1,s}$ with $s-|\mu^{\mathrm R}| > 0$.
Let $\la$ be obtained from $\mu$ by adding a box of content
$-i-\delta$
to $\mu^{\mathrm R}$.
By induction we know that $\G_R(V_{r,s}(\la)) \cong C_{r,s}(\la)$.
Now apply the functor $\iind_{r,s}^{r+1,s}$.
If $\mu^{\mathrm L}$ has no removable box of content $i$ then
we have that $\iind_{r,s}^{r+1,s} V_{r,s}(\la)
\cong V_{r+1,s}(\mu)$ and $\iind_{r,s}^{r+1,s} C_{r,s}(\la)
\cong C_{r+1,s}(\mu)$ by Theorems~\ref{finalbranch}(3) 
and \ref{ibranch}(3), respectively. Using also (1), 
this implies that $\G_{RE}(V_{r+1,s}(\mu)) \cong C_{r+1,s}(\mu)$ as
required.
If $\mu^{\mathrm L}$ has a removable box of content $i$, let $\nu$
be obtained from $\la$ by adding this box to $\la^{\mathrm L}$.
Then Theorem~\ref{ibranch}(3) asserts that 
$M := \iind_{r,s}^{r+1,s} C_{r,s}(\la)$
has a unique submodule $M'$ such that $M / M' \cong C_{r+1,s}(\nu)$,
and this submodule $M'$ is isomorphic to $C_{r+1,s}(\mu)$.
Also Theorem~\ref{finalbranch}(3) and (1) show that
$\G_{RE}(\iind_{r,s}^{r+1,s} V_{r,s}(\la))
\cong M$ has a submodule $M'' \cong \G_{RE}(V_{r+1,s}(\mu))$
such that $M / M'' \cong \G_{RE}(V_{r+1,s}(\nu))$.
By the induction on $s-|\mu^{\mathrm R}|$ we know already 
that $\G_{RE}(V_{r+1,s}(\nu)) \cong C_{r+1,s}(\nu)$.
Hence by the unicity of $M'$ we deduce that $M'' \cong M'$.
This shows that $C_{r+1,s}(\mu) \cong \G_{RE}(V_{r+1,s}(\mu))$.
\end{proof}

\begin{Corollary}\label{othercellid}
For $\la \in \La_{r,s}$,
the graded cell module $C_R(\lambda)$
is isomorphic to the ordinary cell module $C_{r,s}(\lambda)$
on forgetting the grading and viewing it as a $B_{r,s}(\delta)$-module
via the isomorphism $\Theta_{R}^{(k)}$ from (\ref{thiso}).
\end{Corollary}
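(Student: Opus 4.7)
The plan is to read off the result by combining Lemma~\ref{sun} with Theorem~\ref{mt}(2), using that the equivalence $\G_R$ in Theorem~\ref{mt} was constructed precisely as the composition of (the ungraded analogue of) the functor $\F_R$ from (\ref{life}) with pull-back along the isomorphism $\Theta_R^{(k)}:B_{r,s}(\delta)\stackrel{\sim}{\to}B_R(\delta)$ of Corollary~\ref{isocor}.

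Spelling this out, fix $\lambda\in\La_{r,s}$ and choose $k\gg 0$ large enough that $\Theta_R^{(k)}$ is an honest isomorphism onto $B_R(\delta)$. By Lemma~\ref{sun}, $\F_R(V_{r,s}(\lambda))\cong C_R(\lambda)$ as graded $B_R(\delta)$-modules; in particular this is an isomorphism of ungraded $B_R(\delta)$-modules. Pulling back through $\Theta_R^{(k)}$ gives an isomorphism of ungraded $B_{r,s}(\delta)$-modules
\[
\G_R(V_{r,s}(\lambda))\;\cong\;C_R(\lambda),
\]
where the right-hand side is viewed as a $B_{r,s}(\delta)$-module via $\Theta_R^{(k)}$. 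On the other hand, Theorem~\ref{mt}(2) identifies the left-hand side with the ordinary cell module $C_{r,s}(\lambda)$. Composing the two isomorphisms yields the required identification $C_R(\lambda)\cong C_{r,s}(\lambda)$.

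Since all the heavy lifting (the existence of $\Theta_R^{(k)}$, the Morita equivalence, and the compatibility of $\F_R$ with standard/cell modules) has already been carried out, there is really no remaining obstacle: the corollary is a direct unwinding of definitions. The only mild subtlety is to observe that the choice of $k$ is immaterial because the two cell modules $C_R(\lambda)$ and $C_{r,s}(\lambda)$ are both independent of $k$, so it suffices to fix any single sufficiently large $k$ to obtain the ungraded isomorphism claimed in the statement.
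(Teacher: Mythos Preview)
Your proof is correct and follows exactly the same approach as the paper, which simply cites Theorem~\ref{mt}(2) and Lemma~\ref{sun}. You have merely made explicit the composition that the paper leaves to the reader: $\G_R$ is by construction $\F_R$ followed by pull-back along $\Theta_R^{(k)}$, so $\G_R(V_{r,s}(\lambda))\cong C_{r,s}(\lambda)$ together with $\F_R(V_{r,s}(\lambda))\cong C_R(\lambda)$ yields the result.
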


\begin{proof}
Immediate from Theorem~\ref{mt}(2) and Lemma~\ref{sun}.
\end{proof}

\begin{Corollary}\label{irredu}
For $\la \in \dot\La_{r,s}$,
$D_R(\lambda)$
is isomorphic to $D_{r,s}(\lambda)$
on forgetting the grading and viewing it as a $B_{r,s}(\delta)$-module
via the isomorphism $\Theta_{R}^{(k)}$.
\end{Corollary}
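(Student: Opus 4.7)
The plan is to deduce this directly from Corollary~\ref{othercellid} together with the fact that taking irreducible heads is preserved under Morita equivalence. Recall that for $\lambda \in \dot\Lambda_{r,s}$, the ordinary cell module $C_{r,s}(\lambda)$ has irreducible head $D_{r,s}(\lambda)$ by definition (see the discussion around (\ref{hing})), while the graded cell module $C_R(\lambda)$ has irreducible head $D_R(\lambda)$ by Theorem~\ref{irreddimensions}. So the statement amounts to identifying these two heads under the isomorphism $\Theta_R^{(k)}$.

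First I would apply the equivalence $\G_R$ from Theorem~\ref{mt} to the canonical surjection $V_{r,s}(\lambda) \twoheadrightarrow L_{r,s}(\lambda)$ supplied by Theorem~\ref{iscell}(2). Since $\G_R$ is an equivalence of categories, the image is a surjection $\G_R(V_{r,s}(\lambda)) \twoheadrightarrow \G_R(L_{r,s}(\lambda))$ with irreducible target, realising $\G_R(L_{r,s}(\lambda))$ as the head of $\G_R(V_{r,s}(\lambda))$. By Theorem~\ref{mt}(2) we have $\G_R(V_{r,s}(\lambda)) \cong C_{r,s}(\lambda)$ as $B_{r,s}(\delta)$-modules (viewed via $\Theta_R^{(k)}$), so this identifies $\G_R(L_{r,s}(\lambda))$ with $D_{r,s}(\lambda)$.

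To finish, I would compare with the ungraded version of Theorem~\ref{irreddimensions}, which asserts $D_R(\lambda) \cong \F_R(L_{r,s}(\lambda))$ as $B_R(\delta)$-modules. Since by construction $\G_R$ is the composition of (the ungraded analogue of) $\F_R$ with pull-back through $\Theta_R^{(k)}$, the pull-back of $D_R(\lambda)$ to a $B_{r,s}(\delta)$-module via $\Theta_R^{(k)}$ is precisely $\G_R(L_{r,s}(\lambda)) \cong D_{r,s}(\lambda)$, as required.

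There is no real obstacle here: every step is either a direct application of an earlier result (Theorems~\ref{mt}, \ref{iscell}(2), \ref{irreddimensions}) or the standard fact that an equivalence of module categories sends projective covers of simples to projective covers of simples and hence preserves heads of cyclic modules. The only mildly subtle point is bookkeeping with the pull-back through $\Theta_R^{(k)}$, but this is forced by the definition of $\G_R$ given in Theorem~\ref{mt}.
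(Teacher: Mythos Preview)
Your argument is correct and is essentially the approach the paper has in mind: the corollary is stated without proof in the paper, as it follows immediately from Corollary~\ref{othercellid} (equivalently Theorem~\ref{mt}(2) and Lemma~\ref{sun}) together with Theorem~\ref{irreddimensions} and the fact that the equivalence $\G_R$ preserves irreducible heads. Your write-up spells this out carefully and accurately.
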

 
\phantomsubsection{Weight idempotents}
Recall the Jucys-Murphy elements
from (\ref{jm1})--(\ref{jm2}) and the corresponding
weight idempotents
$e(\bi) \in B_{r,s}(\delta)$; these depend
implicitly on $R$.
Recall also the idempotents 
$e(\bi) \in B_R(\delta)$. We observed in Remark~\ref{isotypic}
that these are isotypic, hence so are their canonical images
$\bar e(\bi) \in B_R(\delta) / B_R(\delta)_{> k}$.

\begin{Lemma}\label{imid}
For $\bi \in \Z^{r+s}$ and any $k \geq 0$, the map
$\Theta^{(k)}_R$ from (\ref{thetaonto}) sends $e(\bi) \in B_{r,s}(\delta)$
to 
$\bar e(\bi) \in B_R(\delta) / B_R(\delta)_{> k}$.
\end{Lemma}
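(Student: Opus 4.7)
The plan is to proceed by induction on $r+s$ using the commutative diagrams of Theorem~\ref{isotheorem}. The base case $r+s=0$ is trivial: both $B_{0,0}(\delta)$ and $B_\emptyset(\delta)$ are one-dimensional (spanned by $e(\emptyset) = 1$), and the unital homomorphism $\Theta_\emptyset^{(k)}$ preserves the identity.

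For the inductive step, given $\bi i \in \Z^{r+s+1}$, I will reduce the claim to the analogous one for $\bi \in \Z^{r+s}$. The idea is to establish that on both sides of $\Theta_\cdot^{(k)}$, the weight idempotents satisfy compatible recursions. On the $B_R(\delta)$ side, the paragraph following (\ref{FP}) observes directly that $\iota_{R;i}^{RE}(e(\bi)) = e(\bi i)$ and $\iota_{R;i}^{RF}(e(\bi)) = e(\bi i)$; by Lemma~\ref{lehrerzhangsetup} these pass to $\iota_{R;i}^{RE}(\bar e(\bi)) = \bar e(\bi i)$ and $\iota_{R;i}^{RF}(\bar e(\bi)) = \bar e(\bi i)$ in the appropriate quotients. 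On the walled Brauer side, the corresponding claim is
\begin{equation*}
e(\bi i) = \iota_{r,s;i}^{r+1,s}(e(\bi)) \in B_{r+1,s}(\delta)
\end{equation*}
(together with the obvious $F$-analogue). This is immediate from the uniqueness characterisation (\ref{jmid}): by the recursive definition (\ref{jm1}) of the JM elements, $x_a^{RE} = \iota_{r,s}^{r+1,s}(x_a^R)$ for $a \leq r+s$ and $x_{r+s+1}^{RE} = x_{r,s}^{r+1,s}$; since $x_{r,s}^{r+1,s}$ centralises $\iota_{r,s}^{r+1,s}(B_{r,s}(\delta))$, the commuting projections $\iota_{r,s}^{r+1,s}(e(\bi))$ and $1_{r,s;i}^{r+1,s}$ multiply to give the joint generalised eigenspace projection, which is $e(\bi i)$ by uniqueness. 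By construction this product is also $\iota_{r,s;i}^{r+1,s}(e(\bi))$.

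With both recursions in hand, the commutative diagram of Theorem~\ref{isotheorem} gives
\begin{align*}
\Theta_{RE}^{(k)}(e(\bi i))
= \Theta_{RE}^{(k)}(\iota_{r,s;i}^{r+1,s}(e(\bi)))
= \iota_{R;i}^{RE}(\Theta_R^{(k)}(e(\bi)))
= \iota_{R;i}^{RE}(\bar e(\bi))
= \bar e(\bi i),
\end{align*}
using the induction hypothesis $\Theta_R^{(k)}(e(\bi)) = \bar e(\bi)$ in the third equality. The argument for $RF$ is identical. Since every sequence in $\Seq_{r+1,s}$ or $\Seq_{r,s+1}$ is obtained from one in $\Seq_{r,s}$ by appending either $E$ or $F$, this completes the induction. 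There is no genuine obstacle to overcome: the argument is essentially a formal unwinding of the recursive definitions of the weight idempotents on both sides, combined with the commutative diagrams already recorded in Theorem~\ref{isotheorem}.
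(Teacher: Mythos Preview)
Your proof is correct and follows essentially the same approach as the paper: induction on $r+s$, establishing the recursions $e(\bi i) = \iota_{r,s;i}^{r+1,s}(e(\bi))$ on the walled Brauer side and $\bar e(\bi i) = \iota_{R;i}^{RE}(\bar e(\bi))$ on the quotient side, then invoking the commutative diagrams of Theorem~\ref{isotheorem}. The only cosmetic difference is that the paper phrases the walled Brauer recursion as $\iota_{r,s}^{r+1,s}(e(\bi)) = \sum_{j\in\Z} e(\bi j)$ followed by multiplication by $1_{r,s;i}^{r+1,s}$, whereas you argue directly via the uniqueness property (\ref{jmid}); these are equivalent.
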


\begin{proof}
We proceed by induction on $r+s$, the lemma being vacuous in the case
$r+s=0$.
For the induction step assume we have already proved that
$\Theta^{(k)}_R(e(\bi)) = \bar e(\bi)$.
We need to show that
 $\Theta^{(k)}_{RE}:B_{r+1,s}(\delta)
\rightarrow B_{RE}(\delta) / B_{RE}(\delta)_{> k}$
and
 $\Theta^{(k)}_{RF}:B_{r,s+1}(\delta)
\rightarrow B_{RF}(\delta) / B_{RF}(\delta)_{> k}$
map $e(\bi i)$ to $\bar e(\bi i)$
for $i \in \Z$.
We just explain the former.
By the definition (\ref{jm1})
we have in $B_{r+1,s}(\delta)$
that $\iota_{r,s}^{r+1,s} (e(\bi)) = \sum_{j \in \Z} e(\bi j)$.
Recalling the idempotent $1_{r,s;i}^{r+1,s} \in B_{r+1,s}(\delta)$
defined just after (\ref{xr}), it follows in $B_{r+1,s}(\delta)$ that
$$
e(\bi i) = \iota_{r,s}^{r+1,s} (e(\bi)) 1_{r,s;i}^{r+1,s}
= \iota_{r,s;i}^{r+1,s} (e(\bi)).
$$
Also by the observations after (\ref{EP})--(\ref{FP}), we know that
$\iota_{R;i}^{RE}(\bar e(\bi)) = \bar e(\bi i)$
in $B_{RE}(\delta) / B_{RE}(\delta)_{> k}$.
Now we are done by the induction hypothesis and 
the commutativity of the first square in Theorem~\ref{isotheorem}.
\end{proof}

\begin{Lemma}\label{idcorr}
Fix $R \in \Seq_{r,s}$.
For any $\bi \in \Z^{r+s}$, the idempotent
$e(\bi) \in B_{r,s}(\delta)$ is non-zero if and only if
there exists a restricted $R$-tableau $\tT$
with $\bi^\tT = \bi$.
Assuming this is the case, let $\la := \sh(\tT) \in \dot \La_{r,s}$.
Then $e(\bi)$ is isotypic
in the sense that the projective module
$B_{r,s}(\delta) e(\bi)$ 
is a direct sum of copies of 
the projective cover of
$D_{r,s}(\la)$.
\end{Lemma}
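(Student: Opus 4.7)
The plan is to transport everything we know about the idempotents $e(\bi) \in B_R(\delta)$ across the isomorphism of Corollary~\ref{isocor}. Concretely, choose $k \gg 0$ so that $r+s < (m+1)(n+1)$ and hence $\Theta_R^{(k)}:B_{r,s}(\delta)\stackrel{\sim}{\to} B_R(\delta)$ is an isomorphism of (ungraded) algebras. By Lemma~\ref{imid} this isomorphism sends the Jucys-Murphy idempotent $e(\bi) \in B_{r,s}(\delta)$ to the idempotent of the same name $e(\bi) \in B_R(\delta)$, so that every statement about one can be read off from the corresponding statement about the other.

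Now inside $B_R(\delta)$ we apply Lemma~\ref{primitives}: the idempotent $e(\bi)$ decomposes as a sum $\sum_\tT e_\tT$ over all restricted $R$-tableaux $\tT$ with $\bi^\tT = \bi$, and all such $\tT$ share a common shape $\la \in \dot\La_{r,s}$. This immediately yields the first statement of the lemma, since $e(\bi)$ is non-zero precisely when the indexing set is non-empty, and shows also that whenever $e(\bi) \neq 0$ the value of $\la = \sh(\tT)$ is well-defined.

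For the isotypic statement, Lemma~\ref{primitivespre} tells us (after forgetting the grading) that each $B_R(\delta) e_\tT$ is the projective cover of the irreducible $D_R(\la)$. Consequently $B_R(\delta) e(\bi) = \bigoplus_\tT B_R(\delta) e_\tT$ is a direct sum of copies of this projective cover. Pulling back through $\Theta_R^{(k)}$ and using Corollary~\ref{irredu} to identify $D_R(\la)$ with $D_{r,s}(\la)$ then yields the claim for $B_{r,s}(\delta) e(\bi)$.

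The only subtlety is the question of whether the conclusion depends on the auxiliary choice of $k$, but this is not an issue: the statement of the lemma is intrinsic to $B_{r,s}(\delta)$, and the argument goes through for \emph{any} sufficiently large $k$, which by Corollary~\ref{isocor} makes $\Theta_R^{(k)}$ an honest isomorphism. No genuine obstacle is present; the lemma is essentially a translation through the dictionary already established in Theorem~\ref{isotheorem} and Lemmas~\ref{primitivespre}--\ref{primitives}.
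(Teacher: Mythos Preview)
Your proof is correct and follows exactly the same route as the paper's: choose $k\gg 0$ so that $\Theta_R^{(k)}$ is an isomorphism, transport $e(\bi)$ via Lemma~\ref{imid}, and then invoke Lemmas~\ref{primitivespre}--\ref{primitives} together with Corollary~\ref{irredu}. The paper's proof is just a one-line citation of these same ingredients; you have simply spelled out the argument in full.
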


\begin{proof}
Assuming $k \gg 0$ so 
that
$\Theta^{(k)}_{R}$ is an isomorphism,
this follows from
Lemmas~\ref{primitivespre}--\ref{primitives}
together with
Lemma~\ref{imid} 
and Corollary~\ref{irredu}.
\end{proof}

\begin{Theorem}\label{lehrerzhang}
If either $m=0$ or $n=0$, i.e. $k =0$, then
$\ker \Psi^{m,n}_{r,s}$
is generated by the idempotents $e(\bi)$ for all $\bi \in \Z^{r+s}$
with $k(\bi) > 0$. 
\end{Theorem}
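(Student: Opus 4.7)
The plan is to use the diagrammatic machinery of $B_R(\delta)$ together with the surjection $\Theta_R^{(0)}\colon B_{r,s}(\delta) \twoheadrightarrow B_R(\delta)/B_R(\delta)_{>0}$ furnished by Theorem~\ref{isotheorem} (valid since $k = \min(m,n) = 0$), and compare dimensions against the isomorphism $\Theta_R^{(k_0)}\colon B_{r,s}(\delta) \xrightarrow{\sim} B_R(\delta)$ obtained from Corollary~\ref{isocor} after taking $k_0$ large enough that $B_R(\delta)_{>k_0} = 0$. Fix any $R \in \Seq_{r,s}$ and let $J \subseteq B_{r,s}(\delta)$ be the two-sided ideal generated by the idempotents $e(\bi)$ with $k(\bi) > 0$. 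The goal is to prove that $J = \ker \Psi_{r,s}^{m,n}$, where by Theorem~\ref{isotheorem} the right-hand side coincides with $\ker \Theta_R^{(0)}$.

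First I would show the inclusion $J \subseteq \ker \Psi_{r,s}^{m,n}$. For $k(\bi) > 0$, the idempotent $e(\bi) \in B_R(\delta)$ lies in $B_R(\delta)_{k(\bi)} \subseteq B_R(\delta)_{>0}$ by the explicit description in terms of $f_{\tT\tT^*}$ for restricted $\tT$ with $\bi^\tT = \bi$ (Lemma~\ref{primitives}) and the fact that $k(\tT,\tT^*) = k(\tT) = k(\bi)$ noted just before Lemma~\ref{lehrerzhangsetup}. By Lemma~\ref{imid} the map $\Theta_R^{(0)}$ sends $e(\bi) \in B_{r,s}(\delta)$ to $\bar e(\bi) \in B_R(\delta)/B_R(\delta)_{>0}$, and this image is zero by what we just observed. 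Hence each generator of $J$ lies in $\ker \Theta_R^{(0)} = \ker \Psi_{r,s}^{m,n}$, so $J \subseteq \ker \Psi_{r,s}^{m,n}$.

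For the reverse inclusion I would argue by dimension count. Pick $k_0$ sufficiently large that $B_R(\delta)_{> k_0} = 0$; then Corollary~\ref{isocor} gives an algebra isomorphism $\Theta_R^{(k_0)}\colon B_{r,s}(\delta) \xrightarrow{\sim} B_R(\delta)$, which by Lemma~\ref{imid} sends $e(\bi) \in B_{r,s}(\delta)$ to $e(\bi) \in B_R(\delta)$. Under this isomorphism $J$ corresponds to the two-sided ideal in $B_R(\delta)$ generated by those $e(\bi)$ with $k(\bi) > 0$, and this ideal is exactly $B_R(\delta)_{>0}$ by the lemma stated just before Remark~\ref{tempt}. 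Consequently
\[
\dim B_{r,s}(\delta)/J \;=\; \dim B_R(\delta)/B_R(\delta)_{>0}.
\]
On the other hand, Theorem~\ref{isotheorem} applied with $k = 0$ yields $B_{r,s}(\delta)/\ker\Psi_{r,s}^{m,n} \cong B_R(\delta)/B_R(\delta)_{>0}$, so the two quotients $B_{r,s}(\delta)/J$ and $B_{r,s}(\delta)/\ker\Psi_{r,s}^{m,n}$ have the same dimension. Combined with the inclusion $J \subseteq \ker \Psi_{r,s}^{m,n}$ from the previous paragraph, this forces equality.

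The main point to verify carefully is that the ideal $J$ and the kernel of $\Theta_R^{(0)}$ are correctly compared in the two different quotients of $B_R(\delta)$ (one by $B_R(\delta)_{>0}$, one the trivial quotient giving an iso), so that the key identity $B_R(\delta)_{>0} = \langle e(\bi) : k(\bi) > 0 \rangle$ can be transported back across $\Theta_R^{(k_0)}$. This is the only step that is not a direct citation, but it is straightforward: under the iso $\Theta_R^{(k_0)}$, generators map to generators, so the claim reduces to the lemma preceding Remark~\ref{tempt}. No genuine obstacle arises—the interest of the statement lies precisely in the fact that the parallel description fails for $k > 0$ (as flagged in Remark~\ref{tempt}), but for $k = 0$ the ideal $B_R(\delta)_{>0}$ happens to be idempotent-generated and that is enough.
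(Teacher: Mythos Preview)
Your proof is correct and follows essentially the same approach as the paper's: both establish the inclusion $J \subseteq \ker\Psi_{r,s}^{m,n}$ via Lemma~\ref{imid} at $k=0$ (using that $e(\bi)\in B_R(\delta)_{k(\bi)}$), and then obtain equality by a dimension count that transports the identity $B_R(\delta)_{>0}=\langle e(\bi):k(\bi)>0\rangle$ across the isomorphism $\Theta_R^{(k_0)}$ for large $k_0$. The paper phrases the dimension comparison via Corollary~\ref{isocor} (equating $\dim B_R(\delta)_{>0}$ with $\dim\ker\Psi_{r,s}^{m,n}$) rather than via the two quotients directly, but this is only a cosmetic difference.
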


\begin{proof}
We know by Lemma~\ref{lehrerzhangsetup} that the two-sided ideal
of $B_R(\delta)$ generated by the idempotents $e(\bi)$ with $k(\bi) >
0$
is of the same dimension as $B_R(\delta)_{> 0}$.
Applying Lemma~\ref{imid} with $k \gg 0$, we deduce
that the two-sided ideal of $B_{r,s}(\delta)$ generated by
these
idempotents has the same dimension as $B_R(\delta)_{> 0}$.

Now assume that $k=0$ as in the statement of the theorem.
Combining the observation made in the 
previous paragraph with the opening statement of 
Corollary~\ref{isocor}, we deduce that
the two-sided ideal of $B_{r,s}(\delta)$ generated by the idempotents 
$e(\bi)$ with $k(\bi) > 0$ has the same dimension as $\ker
\Psi^{m,n}_{r,s}$.
It remains to show that all these idempotents belong to $\ker
\Psi^{m,n}_{r,s}$.
Since $\ker \Psi^{m,n}_{r,s} 
= \ker \Theta^{(k)}_R$
this amounts to showing that $\Theta^{(k)}_R(e(\bi)) = 0$ when $k(\bi)
> 0$. This follows from Lemmas~\ref{imid} and \ref{lehrerzhangsetup}
\end{proof}

\phantomsubsection{Cross bipartitions and irreducible representations}
Following \cite{CW}, a bipartition $\la \in \La$ is
an {\em $(m,n)$-cross bipartition} if there exists some $1 \leq i \leq
m+1$ with $\la_i^{\mathrm L} + \la_{m+2-i}^{\mathrm R} < n+1$.
If we represent $\la$ by its picture
obtained by rotating $\la^{\mathrm R}$ through $180^\circ$ as in the following example,
then $\la$ is an $(m,n)$-cross bipartition if and only if its picture can
be fitted into a cross of height $m$ and width $n$.
$$
\begin{picture}(280,95)
\put(-6,38){$\la = \left((3,2),(2,1^2)\right)
\qquad
\leftrightarrow
\quad$}
\put(139,37.3){$m\Big\{$}
\put(200.5,77){$\overbrace{\phantom{helloo}}^{\textstyle n}$}
\put(240,30){\line(0,1){10}}
\put(230,20){\line(0,1){20}}
\put(220,20){\line(0,1){20}}
\put(210,20){\line(0,1){50}}
\put(200,40){\line(0,1){30}}
\put(190,40){\line(0,1){10}}
\put(210,20){\line(1,0){20}}
\put(210,30){\line(1,0){30}}
\put(190,40){\line(1,0){50}}
\put(190,50){\line(1,0){20}}
\put(200,60){\line(1,0){10}}
\put(200,70){\line(1,0){10}}
\dashline{3}(200,30)(200,0)
\dashline{3}(230,30)(230,0)
\dashline{3}(200,50)(200,80)
\dashline{3}(230,50)(230,80)
\dashline{3}(230,50)(270,50)
\dashline{3}(230,30)(270,30)
\dashline{3}(200,50)(160,50)
\dashline{3}(200,30)(160,30)
\end{picture}
$$
Recall the number $k(\la)$ defined from the weight diagram $\la$
by (\ref{kdefla}), and recall that $k = \min(m,n)$.

\begin{Lemma}\label{comeslem}
Given $\la \in \La$, we have that
$k(\la) \leq k$ if and only if $\la$
is an $(m,n)$-cross bipartition.
\end{Lemma}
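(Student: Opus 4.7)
The plan is to translate both sides via the weight dictionary. Writing $I_\up(\la) = \{a_i\}_{i \ge 1}$ with $a_i := \la^{\mathrm L}_i - (i-1)$ (strictly decreasing) and $I_\down(\la) = \{b_j\}_{j \ge 1}$ with $b_j := j - \delta - \la^{\mathrm R}_j$ (strictly increasing), the algebraic identity $a_i - b_{m+2-i} = \la^{\mathrm L}_i + \la^{\mathrm R}_{m+2-i} - (n+1)$ shows that $\la$ is $(m,n)$-cross if and only if there exist integers $i, j \ge 1$ with $i + j = m+2$ and $a_i < b_j$.

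Define $N(p) := |I_\up(\la) \cap [p, \infty)| + |I_\down(\la) \cap (-\infty, p]|$ for $p \in \Z$. At $p = a_i$ one has $N(a_i) = i + |\{j \ge 1 : b_j \le a_i\}|$, and the smallest $j \ge 1$ with $a_i < b_j$ equals $|\{j : b_j \le a_i\}| + 1$; hence $\min\{i + j : i, j \ge 1,\, a_i < b_j\} = \min_{i \ge 1} N(a_i) + 1$. Writing $\delta_+ := \max(\delta, 0)$ and noting that $m = k + \delta_+$, the cross condition becomes $\min_{i \ge 1} N(a_i) - 1 - \delta_+ \le k$, so the lemma reduces to the identity
\begin{equation*}
k(\la) = \min_{i \ge 1} N(a_i) - 1 - \delta_+.
\end{equation*}

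To prove this, set $g(p) := \#\{v > p : v \in I_\up \setminus I_\down\} + \#\{v < p : v \in I_\down \setminus I_\up\}$. Counting particles and antiparticles in a large symmetric interval yields $c(\la) - h(\la) = \delta$, where $c(\la) := |I_\up \cap I_\down|$ and $h(\la) := |\Z \setminus (I_\up \cup I_\down)|$; a case analysis at $p \in I_\up$ (distinguishing whether $p \in I_\down$) then gives $N(p) = c(\la) + 1 + g(p)$. Since $\rk(\la) = c(\la) - \delta_+$, the displayed identity further reduces to the combinatorial claim $\min_{p \in I_\up(\la)} g(p) = \defect(\la)$. The inequality $g(p) \ge \defect(\la)$ follows from any cup matching $\{(d_r, u_r)\}_{r=1}^{\defect(\la)}$ of $\underline{\la}$: since $p \in I_\up$ and $d_r \in I_\down \setminus I_\up$, one has $p \ne d_r$, so either $d_r < p$ or $d_r > p$; the latter forces $u_r > p$, and so each pair contributes at least $1$ to $g(p)$. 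The main obstacle is the reverse inequality: when $\defect(\la) > 0$, one takes $p$ to be the rightmost $\up$-endpoint of a cup in $\underline{\la}$, and when $\defect(\la) = 0$ one chooses any $p \in I_\up$ at the transition between the bulks of $\up$'s and $\down$'s; a careful analysis of the stack-based construction of $\underline{\la}$ then shows that no matched pair straddles $p$ and no unmatched $\up$ or $\down$ contributes, yielding $g(p) = \defect(\la)$.
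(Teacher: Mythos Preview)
Your approach is genuinely different from the paper's. The paper argues by box-removal: it shows that removing a box from $\lambda$ can only decrease $k(\lambda)$ and preserves the cross property, then reduces to checking that the $(m,n)$-minimal bipartitions coincide with the $k$-minimal ones, which it verifies by direct inspection. Your argument is a direct translation through the weight dictionary, aiming at the explicit identity $k(\lambda)=\min_{p\in I_\up}N(p)-1-\delta_+$; this is more computational but yields a useful closed formula. Everything up through the reduction to $\min_{p\in I_\up}g(p)=\defect(\lambda)$ and the inequality $g(p)\ge\defect(\lambda)$ is correct and nicely done.

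However, your choice of $p$ for the reverse inequality is wrong. Take $\delta=0$ and $\lambda=((2),(1^2))$. Then the weight diagram has $\up$ at positions $\le -1$ and at $2$, $\down$ at positions $0,1$ and $\ge 3$; the unique cup is $(1,2)$, so $\defect(\lambda)=1$. Your recipe gives $p=2$ (the rightmost $\up$-endpoint of a cup), but
\[
g(2)=\#\{\up>2\}+\#\{\down<2\}=0+2=2\neq 1,
\]
because the unmatched $\down$-ray at position $0$ lies to the left of $p$ and contributes. So the asserted ``no unmatched $\up$ or $\down$ contributes'' fails.

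The fix is simple and actually unifies your two cases: take $p$ to be the rightmost $\up$-\emph{ray} (the rightmost unmatched $\up$ vertex), which always exists since there are infinitely many $\up$'s and only finitely many cups. In the stack-based construction the stack is empty when this $p$ is processed, so every $\down<p$ is matched to some $\up<p$; hence no cup straddles $p$, there are no $\down$-rays $<p$, and by maximality there are no $\up$-rays $>p$. Thus $\#\{\down<p\}$ counts exactly the cups lying to the left of $p$ and $\#\{\up>p\}$ counts exactly the cups lying to the right, giving $g(p)=\defect(\lambda)$ as required. With this correction your proof goes through.
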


\begin{proof}
We say $\la$ is $(m,n)$-minimal if $\la$ is not an $(m,n)$-cross
bipartition but every bipartition $\mu$ obtained from $\la$ by removing a box is
an $(m,n)$-cross bipartition.
We say that $\la$ is $k$-minimal if $k(\la) > k$ but every bipartition
$\mu$
obtained from $\la$ by removing a box satisfies $k(\mu) \leq k$.

Suppose that $\mu$ is obtained from $\la$ by removing a box.
In terms of weight diagrams,
this means that $\mu$ is obtained from $\la$ by relabelling the $i$th
and $(i+1)$th vertices according to one of the entries in the
following table.
$$
\begin{array}{|l|llllllll|}
\hline
\la&\circ\up&\down\circ&\cross \up&\down \cross&\down\up&\down\up&\circ\cross&\cross\circ
\\\hline
\mu&\up\circ&\circ\down&\up \cross&\cross
\down&\circ\cross&\cross\circ&
\up\down&\up\down\\
\hline
\end{array}
$$
In all cases in this table, it is clear from
(\ref{kdefla}) either that $k(\mu) = k(\la)$
or that $k(\mu) = k(\la)-1$; the latter can hold only for the rightmost two
entries in the table.
In particular, 
we always have that $k(\mu) \leq k(\la)$.
Hence if $k(\la) \leq k$ then $k(\mu) \leq k$.
Also it is also obvious that
if $\la$ is an $(m,n)$-cross bipartition then so is $\mu$.
The last two observations reduce the proof of
the lemma to showing that $\la$ is $(m,n)$-minimal if and only if
$\la$ is $k$-minimal.

By the definition, $\la$ is $(m,n)$-minimal if and
only if
$\la^{\mathrm L}_i + \la^{\mathrm R}_{m+2-i} = n+1$ for each $i=1,\dots,m+1$
and $\la^{\mathrm L}_j = \la^{\mathrm R}_j = 0$ for each $j \geq m+2$.

Finally suppose that $\la$ is $k$-minimal.
Considering the cases in the above table, $k$-minimality implies
that $k(\la) = k+1$. Moreover
the weight diagram $\la$ has infinitely many
$\up$'s to the left, infinitely many
$\down$'s to the right, and in between 
there are only $\circ$'s and $\times$'s.
More precisely, there are exactly
$(n+1)$ vertices labelled $\circ$ and $(m+1)$ vertices 
labelled $\times$.
Now it follows from the weight dictionary that
$\la^{\mathrm L}_i + \la^{\mathrm R}_{m+2-i} = n+1$ for each $i=1,\dots,m+1$
and $\la^{\mathrm L}_j = \la^{\mathrm R}_j = 0$ for each $j \geq m+2$
just like in the previous paragraph.
\end{proof}

\begin{Theorem}\label{irrclass2}
If $\la \in \dot \La_{r,s}$ is an $(m,n)$-cross bipartition
then $\ker \Psi^{m,n}_{r,s}$ acts as zero on $D_{r,s}(\la)$,
hence $D_{r,s}(\la)$ induces an irreducible
$B_{r,s}(\delta) / \ker \Psi^{m,n}_{r,s}$-module.
The modules
$$\{D_{r,s}(\la)\:|\:\text{for all $(m,n)$-cross bipartitions }\la \in \dot \La_{r,s}\}$$ give
a complete set of pairwise non-isomorphic irreducible
$B_{r,s}(\delta) / \ker \Psi^{m,n}_{r,s}$-modules.
\end{Theorem}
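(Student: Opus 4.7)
The plan is to assemble this result from the machinery already in place: the isomorphism theorem (Theorem~\ref{isotheorem}), the classification of irreducibles over graded truncations (Theorem~\ref{irrclass}), the identification of irreducibles given by Corollary~\ref{irredu}, and the combinatorial translation provided by Lemma~\ref{comeslem}. Fix any choice of $R \in \Seq_{r,s}$. By Theorem~\ref{isotheorem}, the surjective map $\Theta^{(k)}_R$ has kernel exactly $\ker \Psi^{m,n}_{r,s}$, so it factors to induce an algebra isomorphism
\begin{equation*}
B_{r,s}(\delta) / \ker \Psi^{m,n}_{r,s} \stackrel{\sim}{\longrightarrow} B_R(\delta) / B_R(\delta)_{> k}.
\end{equation*}
Thus the irreducible representations of these two quotients are in bijection via pullback along $\Theta^{(k)}_R$.

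Next I would apply Theorem~\ref{irrclass}, which tells us that a complete set of pairwise non-isomorphic irreducible $B_R(\delta) / B_R(\delta)_{> k}$-modules is given by
\begin{equation*}
\{D_R(\la) \mid \la \in \dot\La_{r,s},\ k(\la) \leq k\}.
\end{equation*}
By Corollary~\ref{irredu}, the pullback of $D_R(\la)$ through $\Theta^{(k)}_R$ (composed with the isomorphism $\Theta_R^{(k)}$ for $k$ large to identify the modules on both sides) is isomorphic to $D_{r,s}(\la)$ as a $B_{r,s}(\delta)$-module. The slight subtlety worth checking here is that ``$D_R(\la)$ viewed as an irreducible module for the truncation $B_R(\delta) / B_R(\delta)_{> k}$'' and ``$D_R(\la)$ viewed via the isomorphism $\Theta_R^{(k')}$ for $k' \gg 0$'' match up once pulled back to $B_{r,s}(\delta)$; but this is automatic since both arise by restricting the identification to the appropriate quotient.

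Finally, Lemma~\ref{comeslem} identifies the condition $k(\la) \leq k = \min(m,n)$ with the condition that $\la$ be an $(m,n)$-cross bipartition. Substituting, we conclude that $\{D_{r,s}(\la) \mid \la \in \dot\La_{r,s}\text{ an }(m,n)\text{-cross bipartition}\}$ is a complete set of pairwise non-isomorphic irreducible $B_{r,s}(\delta)/\ker \Psi^{m,n}_{r,s}$-modules, and each of these is automatically annihilated by $\ker\Psi^{m,n}_{r,s}$. Essentially no step is a real obstacle: the entire content has been pushed into the preceding apparatus, with the only point requiring mild care being the bookkeeping between the truncation $B_R(\delta)/B_R(\delta)_{>k}$ and the algebra $B_R(\delta)$ itself under the family of maps $\Theta_R^{(k)}$.
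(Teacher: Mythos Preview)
Your overall strategy matches the paper's: factor through the isomorphism $B_{r,s}(\delta)/\ker\Psi^{m,n}_{r,s}\cong B_R(\delta)/B_R(\delta)_{>k}$ induced by $\Theta_R^{(k)}$, invoke Theorem~\ref{irrclass}, and translate via Lemma~\ref{comeslem}. The gap is in the step where you identify the labelling of irreducibles on the two sides.

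You invoke Corollary~\ref{irredu} to say that pulling $D_R(\la)$ back through $\Theta_R^{(k)}$ yields $D_{r,s}(\la)$. But Corollary~\ref{irredu} is proved (via Theorem~\ref{mt}) only for $k\gg 0$, where $\Theta_R^{(k)}$ is an \emph{isomorphism} $B_{r,s}(\delta)\stackrel{\sim}{\to}B_R(\delta)$. Here $k=\min(m,n)$ is fixed and possibly small. Your justification that the two viewpoints are ``automatic[ally]'' compatible because ``both arise by restricting the identification to the appropriate quotient'' would require that $\Theta_R^{(k')}$ for large $k'$ followed by the quotient $B_R(\delta)\twoheadrightarrow B_R(\delta)/B_R(\delta)_{>k}$ agrees with $\Theta_R^{(k)}$. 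This is exactly the stabilisation statement flagged in Remark~\ref{drawback} as an open \emph{conjecture}; with the paper's construction (which involves non-canonical choices from \cite{BS4}) it is not known. In particular you cannot even conclude that the preimage of $B_R(\delta)_{>k}$ under $\Theta_R^{(k')}$ equals $\ker\Psi_{r,s}^{m,n}$, only that it has the same dimension.

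The paper avoids this issue by not comparing different values of $k$ at all. Instead it matches labels directly through the weight idempotents: Lemma~\ref{imid} shows $\Theta_R^{(k)}(e(\bi))=\bar e(\bi)$ for \emph{every} $k$, and Lemma~\ref{idcorr} (together with Remark~\ref{isotypic}) shows that on each side $e(\bi)$ is isotypic of type $\la$ determined by the shape of any restricted tableau with content $\bi$. Since the irreducible labelled $\la$ is characterised as the unique irreducible not killed by $e(\bi)$, the labels must correspond. You should replace your appeal to Corollary~\ref{irredu} by this idempotent argument.
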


\begin{proof}
The surjection $\Theta^{(k)}_R$ induces an isomorphism
between $B_{r,s}(\delta) / \ker \Psi^{m,n}_{r,s}$
and $B_R(\delta) / B_R(\delta)_{>k}$.
The pull-backs through this isomorphism of the irreducible
$B_R(\delta) / B_R(\delta)_{>k}$-modules
classified in Theorem~\ref{irrclass} give us a complete set of 
pairwise non-isomorphic irreducible $B_{r,s}(\delta) / \ker
\Psi^{m,n}_{r,s}$-modules. 
We do not
know {\em a priori} that the labellings match up correctly, i.e. that
the pull-back of $D_R(\la)$ is the same as
the push-forward of $D_{r,s}(\la)$, but this follows from
Lemmas~\ref{imid}--\ref{idcorr}.
Finally the resulting irreducible modules are indexed by the $(m,n)$-cross
bipartitions in
$\dot\La_{r,s}$ thanks to Lemma~\ref{comeslem}. 
\end{proof}

\phantomsubsection{Indecomposable summands of mixed tensor space}
In the final subsection we explain how to deduce 
the classification of indecomposable
summands of the mixed
tensor spaces $V^{\otimes r} \otimes W^{\otimes s}$ 
obtained recently by Comes and Wilson \cite{CW}.
Recall by Theorem~\ref{BS4a} 
that the category $\mathscr F(m|n)$ of rational
$\mathfrak{g}$-supermodules is equivalent to the
category of finite dimensional $K(m|n)$-modules,
and that the irreducible objects in either of these categories are
indexed
by the set $\Xi$ from (\ref{xidef}).

We begin by defining a map
\begin{equation}\label{crazymap}
\{\text{$(m,n)$-cross bipartitions}\}
\rightarrow
\Xi,
\quad
\la \mapsto \la^\dagger.
\end{equation}
Let $\la \in \La$ be an $(m,n)$-cross bipartition and
define $k(\la)$ according to (\ref{kdefla}).
Let $\alpha$ be the weight diagram obtained from the diagram $\eta$
from (\ref{othergroundstate}) 
by switching the rightmost $k(\la)$ $\up$'s with the
leftmost $k(\la)$ $\down$'s.
Let $t$ be the crossingless matching obtained by drawing the
cap diagram $\overline{\la}$ underneath the cup diagram
$\underline{\alpha}$,
then joining rays according to the unique order-preserving bijection
that is the identity
outside of some finite interval.
Then take the oriented crossingless matching
$\la t \alpha$, replace $\alpha$ with the weight diagram 
$\zeta$ from (\ref{groundstate}), and finally adjust
the labels of $\la$ that are at the bottoms of 
line segments, to obtain $\la^\dagger \in \Xi$ 
such that $\la^\dagger t \zeta$ is consistently oriented.
Here is an example, taking $m=4, n=5$
and $\la = ((5,3,2),(5,4,1^2))$, so $k = 4$ and $k(\la) = 3$:
\begin{align*}
\begin{picture}(354,55)
\put(0,-1){$\la = $}
\put(0,49){$\alpha = $}
\put(170,26){$\rightsquigarrow$}
\put(25,1){\line(1,0){120}}
\put(25,51){\line(1,0){120}}
\put(331,49){$=\zeta$}
\put(331,-1){$=\la^\dagger$}
\put(205,1){\line(1,0){120}}
\put(205,51){\line(1,0){120}}
\put(22.5,-3.5){$\up$}
\put(34,-.8){$\cross$}
\put(46.6,-1.8){$\circ$}
\put(58.5,1.2){$\down$}
\put(70.5,-3.5){$\up$}
\put(83.6,-1.8){$\circ$}
\put(94.5,-3.5){$\up$}
\put(106.5,1.2){$\down$}
\put(118.5,1.2){$\down$}
\put(130.5,-3.5){$\up$}
\put(142.5,1.2){$\down$}
\put(22.5,46.5){$\up$}
\put(34.5,46.5){$\up$}
\put(46.5,51.2){$\down$}
\put(58.5,51.2){$\down$}
\put(70.5,51.2){$\down$}
\put(83.6,48.2){$\circ$}
\put(94.5,46.5){$\up$}
\put(106.5,46.5){$\up$}
\put(118.5,46.5){$\up$}
\put(130.5,51.2){$\down$}
\put(142.5,51.2){$\down$}
\put(202.5,-3.5){$\up$}
\put(214,-.8){$\cross$}
\put(226.6,-1.8){$\circ$}
\put(238.5,1.2){$\down$}
\put(250.5,-3.5){$\up$}
\put(263.6,-1.8){$\circ$}
\put(274.5,1.2){$\down$}
\put(286.5,-3.5){$\up$}
\put(298.5,1.2){$\down$}
\put(310.5,-3.5){$\up$}
\put(322.5,-3.5){$\up$}
\put(202.5,46.5){$\up$}
\put(214.5,51.2){$\down$}
\put(226.5,51.2){$\down$}
\put(238.5,51.2){$\down$}
\put(250.5,51.2){$\down$}
\put(263.6,48.2){$\circ$}
\put(274.5,46.5){$\up$}
\put(286.5,46.5){$\up$}
\put(298.5,46.5){$\up$}
\put(310.5,46.5){$\up$}
\put(322.5,46.5){$\up$}
\put(25.2,1){\line(0,1){50}}
\put(145.2,1){\line(0,1){50}}
\put(205.2,1){\line(0,1){50}}
\put(325.2,1){\line(0,1){50}}
\put(67.2,1){\oval(12,20)[t]}
\put(127.2,1){\oval(12,20)[t]}
\put(247.2,1){\oval(12,20)[t]}
\put(307.2,1){\oval(12,20)[t]}
\put(265.4,51){\oval(24,20)[b]}
\put(265.4,51){\oval(48,33)[b]}
\put(265.4,51){\oval(72,47)[b]}
\put(85.4,51){\oval(24,20)[b]}
\put(85.4,51){\oval(48,33)[b]}
\put(85.4,51){\oval(72,47)[b]}
\qbezier(109.3,1)(109.3,20)(117,24)
\qbezier(117,24)(133.3,32)(133.3,51)
\qbezier(289.3,1)(289.3,20)(297,24)
\qbezier(297,24)(313.3,32)(313.3,51)
\qbezier(97.3,1)(97.3,15)(65,20)
\qbezier(65,20)(40,25)(37.3,51)
\qbezier(277.3,1)(277.3,15)(245,20)
\qbezier(245,20)(220,25)(217.3,51)
\end{picture}
\end{align*}
In general the map (\ref{crazymap}) is neither injective nor surjective.

Continuing with $\la, \la^\dagger$ and $t$ as in the previous paragraph, 
let $\Ga$ be the block containing $\zeta$, let $\Delta$ be the
block
containing $\la^\dagger$, and define
\begin{equation}\label{peiple}
R(\la) := G^t_{\De\Ga} L(\zeta) \in \Mod{K(m|n)},
\end{equation}
where $G^t_{\De\Ga}$ is the indecomposable projective functor from
(\ref{legg1}).
This is a finite dimensional 
graded $K(m|n)$-module. Theorem~\ref{ipf} (and also \cite[Theorem 4.11]{BS2})
gives detailed information about the structure of 
this module. In particular the last
part of Theorem~\ref{ipf} implies the following lemma, recalling that the
defect $\defect(\la)$ is the number of caps in $\overline{\la}$.

\begin{Lemma}
For an $(m,n)$-cross bipartition $\la$, the graded $K(m|n)$-module
$R(\la)$ is indecomposable with irreducible head isomorphic
to $L(\la^\dagger) \langle - \defect(\la)\rangle$ and irreducible socle
isomorphic
to $L(\la^\dagger) \langle \defect(\la) \rangle$.
\end{Lemma}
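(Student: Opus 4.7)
The plan is to deduce the lemma as a direct application of the final statement of Theorem~\ref{ipf}, taking $\mu := \zeta$. Once this is set up, the desired conclusions (self-dual indecomposability, the identification of head and socle, and the degree shifts) follow immediately. All that remains is to match the data in Theorem~\ref{ipf} with the data in the statement of the lemma, which amounts to two checks: first, that $\caps(t) = \defect(\la)$; and second, that $\la^\dagger$ is the unique weight diagram with $\la^\dagger \stackrel{t}{\longline} \zeta$ and $\deg(\la^\dagger t \zeta) = 0$.

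The first check is immediate from the definitions. By construction, $t$ is obtained by drawing $\overline{\la}$ underneath $\underline{\alpha}$ and joining rays via the canonical order-preserving bijection; the caps of $t$ are thus precisely the caps of $\overline{\la}$, and the number of such caps is $\defect(\la)$ by definition.

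For the second check, I would argue as follows. By construction of $\la^\dagger$, the diagram $\la t \alpha$ is consistently oriented, and the passage $\alpha \rightsquigarrow \zeta$ together with the induced relabelling on the line-segment bottoms alters the labels only along line segments, never along cups or caps. Hence $\la^\dagger t \zeta$ is again consistently oriented, showing $\la^\dagger \stackrel{t}{\longline} \zeta$. For the degree computation, observe that every cap of $t$ appears in $\overline{\la}$ with bottom endpoints labelled according to $\la$ itself (unchanged by the relabelling), and every cup appears in $\underline{\alpha}$ with top endpoints labelled according to $\alpha$ (also unchanged, since cups of $\underline{\alpha}$ sit inside the original weight diagram). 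Both of these inherit the standard anti-clockwise orientation from the cap/cup diagrams of $\la$ and $\alpha$, respectively. Therefore every cup and every cap in $\la^\dagger t \zeta$ is anti-clockwise, giving $\deg(\la^\dagger t \zeta) = 0$. Uniqueness is then easy: any other weight $\nu$ with $\nu \stackrel{t}{\longline} \zeta$ of degree zero must have every cup and cap anti-clockwise, and since $\zeta$ is fixed at the top and $t$ is fixed, this forces the labels at the bottom to coincide with $\la^\dagger$ by propagation along the connected components of $t$.

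The main obstacle, and the step that genuinely uses the $(m,n)$-cross hypothesis, is verifying that $\la^\dagger$ actually lies in $\Xi$ (equivalently, that $\alpha$ is a legitimate weight diagram accessible from $\zeta$ by a $\sim$-permutation compatible with the $k(\la)$-fold $\up \leftrightarrow \down$ swap). This is exactly where Lemma~\ref{comeslem} enters: the $(m,n)$-cross hypothesis is equivalent to $k(\la) \leq k = \min(m,n)$, which guarantees that $\zeta$ has enough $\down$'s available on the left of the swapped interval and enough $\up$'s available on the right for the swap-and-relabel procedure to terminate inside $\Xi$. Once this is verified the proof is complete.
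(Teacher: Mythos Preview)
Your proposal is correct and follows exactly the approach the paper intends: the paper does not give an explicit proof but simply states that ``the last part of Theorem~\ref{ipf} implies the following lemma,'' and you have supplied the verification that $\caps(t)=\defect(\la)$ and that $\la^\dagger$ is the unique weight with $\la^\dagger\stackrel{t}{\longline}\zeta$ of degree zero. Your observation that the $(m,n)$-cross hypothesis (via Lemma~\ref{comeslem}) is what guarantees $k(\la)\le k$, hence that there are enough $\down$'s and $\up$'s in $\zeta$ for the cup endpoints of $\underline{\alpha}$ to retain their anti-clockwise orientation under the relabelling $\alpha\rightsquigarrow\zeta$, is precisely the point.
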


For the final theorem, we transport the $K(m|n)$-module 
$R(\la)$ through the equivalence of categories from Theorem~\ref{BS4a} to obtain 
a finite dimensional $\mathfrak{g}$-supermodule, 
which we also denote by
$R(\la)$.

\begin{Theorem}\label{soupedup}
Up to isomorphism, the 
indecomposable summands of the mixed tensor
space
$V^{\otimes r} \otimes W^{\otimes s}$ are the $\mathfrak{g}$-supermodules
$$
\{R(\la)\:|\:\text{for all $(m,n)$-cross bipartitions }\la \in \dot
\La_{r,s}\}.
$$
More precisely, given any primitive idempotent
$e \in B_{r,s}(\delta)$,
let $\la \in \dot \La_{r,s}$
be the bipartition labelling the irreducible head of the projective
indecomposable module
$B_{r,s}(\delta) e$.
Then $e$ is non-zero on $V^{\otimes r} \otimes W^{\otimes s}$ if and
only if
$\la$ is an $(m,n)$-cross bipartition, in which case
$(V^{\otimes r} \otimes W^{\otimes s}) e \cong R(\lambda)$ as $\mathfrak{g}$-supermodules.
\end{Theorem}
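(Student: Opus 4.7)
The plan is to transport the problem through the sequence of equivalences built up in the preceding sections. Via the $\mathfrak{g}$-module isomorphism $\psi_R: \lonestar R\,\C \stackrel{\sim}{\to} V^{\otimes r} \otimes W^{\otimes s}$ and the equivalence $\E$ of Theorem~\ref{BS4a}, the mixed tensor space corresponds to $\lonestar R\,L(\zeta)$ in $\mod{K(m|n)}$, and the homomorphism $\Pi^{(k)}_R \circ \Psi^{(k)}_R$ of Theorems~\ref{iso1}--\ref{iso2} identifies $\End_{\mathfrak{g}}(V^{\otimes r} \otimes W^{\otimes s})^{\op} \cong B_{r,s}(\delta)/\ker\Psi^{m,n}_{r,s}$ with $\End_{K(m|n)}(\lonestar R\,L(\zeta))^{\op}$. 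Under this identification, primitive idempotents and indecomposable summands transfer between the two sides, so it suffices to classify primitive idempotents in $\End_{K(m|n)}(\lonestar R\,L(\zeta))^{\op}$ and identify the corresponding summands of $\lonestar R\,L(\zeta)$.

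Next I would describe the primitive idempotents using the diagram basis of Theorem~\ref{byas}. Exactly as in Lemmas~\ref{primitivespre}--\ref{primitives} for $B_R(\delta)$, the idempotents $\bar f_{\tT\tT^*}$ for restricted $\tT \in \mathscr T_R(\zeta)$ give a complete system of mutually orthogonal primitive idempotents, with $\bar f_{\tT\tT^*}$ labelling the irreducible $L(\sh(\tT))\langle -\caps(\underline{\tT})\rangle$. Pulling this back through Theorem~\ref{iso3} using the bijection of Lemma~\ref{trick} yields the primitive decomposition of $B_R(\delta)/B_R(\delta)_{>k}$; composing with $\Theta^{(k)}_R$ and invoking Corollary~\ref{irredu} together with Lemmas~\ref{imid} and \ref{idcorr}, the primitive idempotent $e \in B_{r,s}(\delta)$ associated to $\lambda \in \dot\La_{r,s}$ becomes non-zero on $V^{\otimes r} \otimes W^{\otimes s}$ precisely when $k(\lambda) \leq k$, i.e., by Lemma~\ref{comeslem}, when $\lambda$ is an $(m,n)$-cross bipartition, exactly matching Theorem~\ref{irrclass2}. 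Moreover, $e$ then corresponds under these identifications to $\bar f_{\tT\tT^*}$ for a uniquely determined restricted $\tT$ of type $\zeta$ whose shape $\mu \in \Xi$ is readable from $\lambda$.

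It remains to identify $(\lonestar R\,L(\zeta))\,\bar f_{\tT\tT^*}$ with the module $R(\lambda) = G^t_{\Delta\Gamma}L(\zeta)$ of~\eqref{peiple}. Iterating Lemma~\ref{composition} decomposes $\lonestar R$ restricted to the block of $\zeta$ as a direct sum of indecomposable projective functors $G^{t_\alpha}_{\De_\alpha \Gamma}$, one per way of threading a compatible chain of weights through the steps; each primitive idempotent $\bar f_{\tT\tT^*}$ picks out one such summand, namely $G^{t_\tT}_{\De\Gamma}L(\zeta)$, where $t_\tT$ is the $\De\Gamma$-matching obtained by reducing the composite matching encoded in $\underline{\tT}$ (collapsing all internal number lines and discarding internal circles). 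The remaining combinatorial obstacle, which I expect to be the main one, is to verify that this matching $t_\tT$ coincides with the matching $t$ constructed in~\eqref{peiple} from $\overline{\lambda}$ and $\underline{\alpha}$, and that $\sh(\tT) = \lambda^\dagger$. This is precisely what the bijection of Lemma~\ref{trick} is engineered to achieve: rewriting $\underline{\tT}\overline{\tT^*}$ diagrammatically, the caps at the bottom (from $\underline{\eta}$ on the $\dot\La_{r,s}$ side) become the cap pattern $\overline{\lambda}$, the top number line labels become $\zeta$, and the order-preserving ray matching in the definition of $t$ matches the reduction procedure, after which the label adjustment defining $\lambda^\dagger$ matches the change of orientation forced by replacing $\alpha$ by $\zeta$. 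Once this diagrammatic identification is established, $(V^{\otimes r} \otimes W^{\otimes s})e \cong G^t_{\De\Gamma}L(\zeta) = R(\lambda)$ and the theorem follows.
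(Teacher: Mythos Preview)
Your strategy matches the paper's: transport to $K(m|n)$ via $\psi_R$ and $\E$, invoke Theorem~\ref{irrclass2} to determine which $\lambda$ survive, and then decompose $\lonestar R$ into indecomposable projective functors to identify each summand as some $G^u_{\De\Ga}L(\zeta)$. The paper streamlines the middle portion by conjugating $e$ into a weight idempotent $e(\bi)$ and using its isotypicity (Lemma~\ref{idcorr}) rather than tracking individual primitives $\bar f_{\tT\tT^*}$ through the diagram basis; this sidesteps having to make sense of ``restricted'' for tableaux of type $\zeta$ and lets one work with the whole summand $\lonestar R_{\bi}\,\C \cong (V^{\otimes r}\otimes W^{\otimes s})e(\bi)$ at once. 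For the final identification of the reduced matching with the specific $t$ of~\eqref{peiple}, the paper cites only Lemma~\ref{composition}; your appeal to Lemma~\ref{trick} is slightly misplaced, since that lemma concerns the algebra isomorphism $\Upsilon^{(k)}_R$ between basis elements rather than the reduction of composite matchings, but the underlying diagrammatics you describe (swapping $\eta$-cup/cap patterns for $\zeta$-ones and tracing the reduction) is exactly the content behind the paper's terse assertion.
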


\begin{proof}
The endomorphism algebra $\End_{\mathfrak{g}}(V^{\otimes r} \otimes
W^{\otimes s})^{\op}$ is identified with the quotient algebra
$B_{r,s}(\delta) / \ker \Psi^{m,n}_{r,s}$ thanks to
Theorem~\ref{main1}.
Hence the isomorphism classes of 
indecomposable summands of $V^{\otimes r} \otimes
W^{\otimes s}$ are parametrised by the isomorphism classes of
irreducible $B_{r,s}(\delta) / \ker \Psi^{m,n}_{r,s}$-modules, which
we determined in Theorem~\ref{irrclass2}. We deduce 
for a primitive idempotent $e \in B_{r,s}(\delta)$
of type $\la \in \dot \La_{r,s}$ as in
the statement of the theorem that $(V^{\otimes r} \otimes W^{\otimes
  s}) e = \{0\}$ unless $\la$ is an $(m,n)$-cross-bipartition, in
which
case $(V^{\otimes r} \otimes W^{\otimes s}) e$ is a representative for
the class of
indecomposable summands parametrised by $\la$.

It remains to show that $(V^{\otimes r} \otimes W^{\otimes s}) e
\cong R(\la)$.
Pick any $R \in \Seq_{r,s}$ and let $e(\bi) \in B_{r,s}(\delta)$
denote the corresponding weight idempotents.
Replacing $e$ by a conjugate if
necessary, we may assume that $e$ is contained in 
$e(\bi)$ for some $\bi \in \Z^{r+s}$, i.e. $e(\bi) =  e + f$
for some idempotent $f$ orthogonal to $e$.
By Lemma~\ref{idcorr}, we know that 
$e(\bi)$ is isotypic of type $\la$. Now we need to show that
$(V^{\otimes r} \otimes W^{\otimes s}) e(\bi)$ is isomorphic to a
direct sum of copies of $R(\la)$.
For this we observe from
Theorem~\ref{iso1}
that $(V^{\otimes r} \otimes W^{\otimes s}) e(\bi)
\cong \lonestar R_{\bi} \C$ as $\mathfrak{g}$-supermodules.
Transporting to $K(m|n)$ using Theorem~\ref{BS4a} we are reduced to
showing
that all indecomposable summands of
$\lonestar R_{\bi} L(\zeta)$ are isomorphic to $R(\lambda)$ (up to
degree shift).
Finally observe using Lemma~\ref{composition} that 
$\lonestar R_{\bi}$ is a direct sum of copies
of the indecomposable projective functor $G^t_{\De\Ga}$
appearing in
the definition (\ref{peiple}).
\end{proof}

In particular Theorem~\ref{soupedup} implies that the indecomposable summands of
the mixed tensor spaces 
$V^{\otimes r} \otimes W^{\otimes s}$ for all $r, s \geq 0$ are
parametrised by the $(m,n)$-cross bipartitions, exactly as in \cite{CW}.

\end{document}